\DeclareMathOperator*{\argmin}{arg\,min}
\numberwithin{equation}{section}
\theoremstyle{plain}
\newtheorem{definition}{Definition}
\newtheorem{theorem}{Theorem}[section]
\newtheorem{lemma}{Lemma}
\newtheorem{corollary}{Corollary}
\newtheorem{proposition}{Proposition}
\begin{document}

\begin{frontmatter}
\title{Overcoming The Limitations of Phase Transition by Higher Order Analysis of Regularization Techniques}
\runtitle{Higher Order Analysis of Regularization Techniques}

\begin{aug}
\author{\fnms{Haolei} \snm{Weng}\thanksref{m1}\ead[label=e1]{hw2375@columbia.edu}},
\author{\fnms{Arian} \snm{Maleki}\thanksref{m2}\ead[label=e2]{arian@stat.columbia.edu}}
\and
\author{\fnms{Le} \snm{Zheng}\thanksref{m3}
\ead[label=e3]{le.zheng.cn@gmail.com}
\ead[label=u1,url]{http://www.foo.com}}

\runauthor{Haolei Weng, Arian Maleki and Le Zheng}

\affiliation{Columbia University\thanksmark{m1}\thanksmark{m2}\thanksmark{m3}}

\address{H. Weng\\
Department of Statistics \\
Columbia University \\
1255 Amsterdam Avenue \\
New York, NY, 10027 \\
USA \\
\printead{e1}\\
}

\address{A. Maleki \\
Department of Statistics \\
Columbia University \\
1255 Amsterdam Avenue \\
New York, NY, 10027 \\
USA \\
\printead{e2}\\
}

\address{L. Zheng \\
Department of Electrical Engineering \\
Columbia University \\
1300 S. W. Mudd Building, MC 4712 \\
500 W. 120th Street \\
New York, NY 10027 \\
USA \\
\printead{e3}\\
}

\end{aug}

\begin{abstract}
We study the problem of estimating a sparse vector $\beta \in \mathbb{R}^p$ from the response variables $y= X\beta+ w$, where $w \sim N(0, \sigma_w^2 I_{n\times n})$, under the following high-dimensional asymptotic regime: given a fixed number $\delta$, $p \rightarrow \infty$, while $n/p \rightarrow \delta$. We consider the popular class of $\ell_q$-regularized least squares (LQLS), a.k.a. bridge estimators, given by the optimization problem:
\begin{equation*}
\hat{\beta} (\lambda, q ) \in \arg\min_\beta \frac{1}{2} \|y-X\beta\|_2^2+ \lambda \|\beta\|_q^q,
\end{equation*}
and characterize the almost sure limit of $\frac{1}{p} \|\hat{\beta} (\lambda, q )- \beta\|_2^2$, and call it asymptotic mean square error (AMSE).  The expression we derive for this limit does not have explicit forms and hence is not useful in comparing LQLS for different values of $q$, or providing information in evaluating the effect of $\delta$ or sparsity level of $\beta$. To simplify the expression, researchers have considered the ideal ``error-free'' regime, i.e. $w=0$,  and have characterized the values of $\delta$ for which AMSE is zero. This is known as the phase transition analysis. 
 
In this paper, we first perform the phase transition analysis of LQLS. Our results reveal some of the limitations and misleading features of the phase transition analysis. To overcome these limitations, we propose the small error analysis of LQLS. Our new analysis framework not only sheds light on the results of the phase transition analysis, but also describes when phase transition analysis is reliable, and presents a more accurate comparison among different regularizers. 
\end{abstract}

\begin{keyword}[class=MSC]
\kwd[Primary ]{60K35}
\kwd{60K35}
\kwd[; secondary ]{60K35}
\end{keyword}

\begin{keyword}
\kwd{Bridge regression}
\kwd{phase transition}
\kwd{comparison of estimators}
\kwd{small error regime}
\kwd{second order term}
\kwd{asymptotic mean square error}
\kwd{optimal tuning}
\end{keyword}

\end{frontmatter}

\section{Introduction}\label{sec:intro}

\subsection{Objective}
Consider the linear regression problem where the goal is to estimate the parameter vector $\beta \in\mathbb{R}^p$ from a set of $n$ response variables $y \in \mathbb{R}^n$, under the model $y= X\beta+ w$. This problem has been studied extensively in the last two centuries since Gauss and Legendre developed the least squares estimate of $\beta$. The instability or high variance of the least squares estimates led to the development of the regularized least squares. One of the most popular regularization classes is the $\ell_q$-regularized least squares (LQLS), a.k.a. bridge regression \cite{frank1993statistical, fu1998penalized}, given by the following optimization problem:
\begin{equation}\label{eq:lqls}
\hat{\beta} (\lambda, q ) \in \arg\min_\beta \frac{1}{2} \|y-X\beta\|_2^2+ \lambda \|\beta\|_q^q,
\end{equation}
where $\| \beta\|_q^q = \sum_{i=1}^p |\beta_i|^q$ and $1 \leq q \leq 2$.\footnote{Bridge regression is a name used for LQLS with any $q \geq 0$. In this paper we focus on $1\leq q\leq 2$. To analyze the case $0\leq q< 1$, \cite{zheng2015does} has used the replica method from statistical physics.}  LQLS has been extensively studied in the literature. In particular, one can prove the consistency of $\hat{\beta} (\lambda, q )$ under the classical asymptotic analysis ($p$ fixed while $n \rightarrow \infty$) \cite{knight2000asymptotics}. However, this asymptotic regime becomes irrelevant for high-dimensional problems in which $n$ is \textit{not} much larger than $p$. Under this high dimensional setting, if $\beta$ does not have any specific ``structure'', we do not expect any estimator to perform well. One of the structures that has attracted attention in the last twenty years is the sparsity, that assumes only $k$ of the elements of $\beta$ are non-zero and the rest are zero. 
To understand the behavior of the estimators under structured linear model in high dimension, a new asymptotic framework has been proposed in which it is assumed that $X_{ij} \overset{iid}{\sim} N(0, 1/n)$, $k,n, p \rightarrow \infty$, while $n/p \rightarrow \delta$ and $k/p \rightarrow \epsilon$, where $\delta$ and $\epsilon$ are fixed numbers \cite{donoho2005sparse, donoho2009message, amelunxen2014living, el2013robust, bradic2015robustness}. 

One of the main notions that has been widely studied in this asymptotic framework, is the phase transition \cite{donoho2005sparse, donoho2009message, amelunxen2014living, stojnic2009various}. Intuitively speaking, phase transition analysis assumes the error $w$ equals zero and characterizes the value of $\delta$ above which an estimator converges to the true $\beta$ (in certain sense that will be clarified later). While there is always error in the response variables, it is believed that phase transition analysis provides reliable information when the errors are small. 
In this paper, we start by studying the phase transition diagrams of LQLS for $1 \leq q \leq 2$. Our analysis reveals several limitations of the phase transition analysis. We will clarify these limitations in the next section. We then propose a higher-order analysis of LQLS in the small-error regime. As we will explain in the next section, our new framework sheds light on the peculiar behavior of the phase transition diagrams, and explains when we can rely on the results of phase transition analysis in practice.

\subsection{Limitations of the phase transition and our solution} \label{limitations}
In this section, we intuitively describe the results of phase transition analysis, its limitations, and our new framework. 
Consider the class of LQLS  estimators and suppose that we would like to compare the performance of these estimators through the phase transition diagrams. For the purpose of this section, we assume that the vector $\beta$ has only $k$ non-zero elements, where $k/p \rightarrow \epsilon$ with $\epsilon \in (0,1)$. Since phase transition analysis is concerned with $w=0$ setting, it considers $\lim_{\lambda \rightarrow 0} \hat{\beta} (\lambda, q )$ which is equivalent to the following estimator:
\begin{eqnarray}\label{eq:ellqmin}
&&\arg \min_\beta \| \beta\|_q^q,  \nonumber \\
 && {\rm subject}   \ {\rm to \ } y= X\beta. 
\end{eqnarray}
Below we informally state the results of the phase transition analysis. We will formalize the statement and describe in details the conditions under which this result holds in Section \ref{sec:our}.\\

\noindent \textbf{Informal Result 1.}   For a given $\epsilon>0$ and $q \in [1,2]$, there exists a number $M_q(\epsilon)$ such that as $p \rightarrow \infty$, if $\delta \geq M_q(\epsilon)+ \gamma$ ($\gamma>0$ is an arbitrary number), then  \eqref{eq:ellqmin} succeeds in recovering $\beta$, while if $\delta \leq M_q(\epsilon)- \gamma$, \eqref{eq:ellqmin}  fails.\footnote{Different notions of success have been studied in the phase transition analysis. We will mention one notion later in our paper. } 

\vspace{.3cm}

\noindent The curve $\delta = M_q(\epsilon)$ is called the phase transition curve of \eqref{eq:ellqmin}. We will show that $M_q(\epsilon)$ is given by the following formula:
\begin{eqnarray}
M_q(\epsilon) = \left\{ \begin{array}{l}
1 \ \ \mbox{if} \ 2\geq q>1,\\
\inf_{\chi\geq 0} (1-\epsilon) \mathbb{E} \eta_1^2(Z; \chi) + \epsilon (1+ \chi^2) \ \ \mbox{if} \ q=1,
\end{array} \right.
\end{eqnarray}
where $\eta_1(u;\chi) = (|u|- \chi)_+ {\rm sign}(u)$ denotes the soft thresholding function and $Z\sim N(0,1)$. While the above phase transition curves can be obtained with different techniques, such as statistical dimension framework proposed in \cite{amelunxen2014living} and Gordon's lemma applied in \cite{oymak2013squared, thrampoulidis2016precise}, we will derive them as a simple byproduct of our main results in Section \ref{sec:our} under message passing framework. Also, the phase transition analysis of the regularized least squares has already been performed in the literature \cite{donoho2005neighborliness, donoho2009message, oymak2016sharp}. Hence, we should emphasize that the presentation of the phase transition results for bridge regression is not our main contribution here. We rather use it to motivate our second-order analysis of the asymptotic risk, which will appear later in this section. Before we proceed further let us mention some of the properties of $M_1(\epsilon)$ that will be useful in our later discussions.

\begin{lemma}\label{lem:M1epsprop}
$M_1(\epsilon)$ satisfies the following properties:
\begin{enumerate}
\item[(i)] $M_1(\epsilon)$ is an increasing function of $\epsilon$.
\item[(ii)] $\lim_{\epsilon \rightarrow 0} M_1(\epsilon) =0$.
\item[(iii)] $\lim_{\epsilon \rightarrow 1} M_1(\epsilon) =1$.
\item[(iv)] $M_1(\epsilon)>  \epsilon$, for $\epsilon \in (0,1)$.
\end{enumerate}
\end{lemma}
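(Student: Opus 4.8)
The plan is to treat everything as statements about the bivariate objective
$g(\chi,\epsilon) := (1-\epsilon)\,\mathbb{E}\,\eta_1^2(Z;\chi)+\epsilon(1+\chi^2)$,
so that $M_1(\epsilon)=\inf_{\chi\ge 0}g(\chi,\epsilon)$, and to deduce all four items from a short list of elementary features of $E(\chi):=\mathbb{E}\,\eta_1^2(Z;\chi)=\mathbb{E}[(|Z|-\chi)_+^2]$. First I would record: (a) because $(|Z|-\chi)_+\le|Z|$ for every $\chi\ge 0$, we have $E(\chi)\le\mathbb{E}Z^2=1$, with $E(0)=1$ and $E$ strictly decreasing; (b) $E(\chi)\to 0$ as $\chi\to\infty$, at a Gaussian-tail rate; and (c) for each fixed $\chi$, the map $\epsilon\mapsto g(\chi,\epsilon)=E(\chi)+\epsilon\,(1+\chi^2-E(\chi))$ is affine with slope $1+\chi^2-E(\chi)\ge 0$, and the slope is \emph{strictly} positive once $\chi>0$. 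The single evaluation $g(0,\epsilon)=(1-\epsilon)\cdot 1+\epsilon=1$ will be used several times.

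For (iv) I would write $g(\chi,\epsilon)-\epsilon=(1-\epsilon)E(\chi)+\epsilon\chi^2=:h_\epsilon(\chi)$ and observe that $h_\epsilon$ is continuous, coercive (it blows up as $\chi\to\infty$ since $\epsilon>0$), and strictly positive on $[0,\infty)$ --- it equals $(1-\epsilon)>0$ at $\chi=0$ and contains the positive term $\epsilon\chi^2$ when $\chi>0$. Continuity and coercivity force the infimum to be attained at some finite $\chi^\star$ with $h_\epsilon(\chi^\star)>0$, hence $M_1(\epsilon)=\epsilon+\inf_\chi h_\epsilon(\chi)>\epsilon$; in particular this gives the lower bound $M_1(\epsilon)\ge\epsilon$. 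Property (iii) then follows by squeezing: $\epsilon\le M_1(\epsilon)\le g(0,\epsilon)=1$, and letting $\epsilon\to 1$ pins the limit to $1$. For (ii), since $M_1\ge 0$ it is enough to exhibit an upper bound tending to $0$; taking a threshold $\chi(\epsilon)\to\infty$ slowly enough that $\epsilon\,\chi(\epsilon)^2\to 0$ (e.g.\ $\chi(\epsilon)=\epsilon^{-1/4}$) yields $M_1(\epsilon)\le g(\chi(\epsilon),\epsilon)\le E(\chi(\epsilon))+\epsilon(1+\chi(\epsilon)^2)\to 0$ by (b).

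Property (i) splits into an easy half and the one step I expect to be the main obstacle. The easy half is weak monotonicity: by (c) every $g(\chi,\cdot)$ is nondecreasing in $\epsilon$, so the pointwise infimum $M_1$ is nondecreasing too. The hard part will be upgrading this to \emph{strict} monotonicity, since an infimum of nondecreasing functions may be locally flat. Here I would use that the optimal threshold is strictly positive: differentiating under the expectation gives $\partial_\chi g(0,\epsilon)=-2(1-\epsilon)\,\mathbb{E}|Z|<0$ for $\epsilon\in(0,1)$, so the minimizer $\chi^\star(\epsilon)$ of $g(\cdot,\epsilon)$ satisfies $\chi^\star(\epsilon)>0$. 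Then for $\epsilon_1<\epsilon_2$ in $(0,1)$, evaluating at the minimizer $\chi_2^\star$ of $g(\cdot,\epsilon_2)$,
$$M_1(\epsilon_1)\le g(\chi_2^\star,\epsilon_1)<g(\chi_2^\star,\epsilon_2)=M_1(\epsilon_2),$$
where the strict inequality is precisely the strict positivity of the slope $1+(\chi_2^\star)^2-E(\chi_2^\star)$ from (c) together with $\chi_2^\star>0$. The only genuinely analytic inputs are the tail estimate (b) and the differentiation-under-the-expectation used for $\partial_\chi g$ and $\partial_\chi E$; both are routine for the Gaussian density, so the remainder is bookkeeping.
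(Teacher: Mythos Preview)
Your proof is correct and, in several places, more elementary than the paper's. The paper attacks (i) via the envelope theorem, computing $\frac{dM_1}{d\epsilon} = 1+(\chi^*)^2 - \mathbb{E}\eta_1^2(Z;\chi^*) > 0$; this yields an explicit formula for the derivative but tacitly requires differentiability of $\chi^*(\epsilon)$. Your route---weak monotonicity of the infimum of nondecreasing functions, upgraded to strictness via $\chi_2^\star>0$ (from $\partial_\chi g(0,\epsilon)<0$) and the strictly positive slope at any $\chi>0$---avoids that regularity issue entirely. For (iii) the paper argues by contradiction that $\chi^*(\epsilon)\to 0$ as $\epsilon\to 1$ and then evaluates the limit; your squeeze $\epsilon\le M_1(\epsilon)\le g(0,\epsilon)=1$, which recycles (iv), is considerably shorter. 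For (ii) both proofs plug in a threshold $\chi(\epsilon)\to\infty$ with $\epsilon\chi(\epsilon)^2\to 0$; the paper uses $\chi=\log(1/\epsilon)$ and explicit Gaussian tail bounds, while your $\chi=\epsilon^{-1/4}$ needs only $E(\chi)\to 0$. Finally, the paper dismisses (iv) as ``clear from the definition''; your coercivity argument for $h_\epsilon$ actually spells it out. In short, your argument trades the paper's sharper quantitative information (the derivative formula in (i)) for fewer analytic prerequisites.
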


\begin{proof}

Define $F(\chi, \epsilon) \triangleq (1-\epsilon) \mathbb{E} \eta^2_1(Z; \chi) + \epsilon (1+ \chi^2)$. It is straightforward to verify that $F(\chi, \epsilon)$, as a function of $\chi$ over $[0,\infty)$, is strongly convex and has a unique minimizer. Let $\chi^*(\epsilon)$ be the minimizer. We write it as $\chi^*(\epsilon)$ to emphasize its dependence on $\epsilon$. By employing the chain rule we have
\begin{eqnarray*}
\frac{dM_1(\epsilon)}{ d \epsilon} &=& \frac{\partial F(\chi^*(\epsilon), \epsilon)}{ \partial  \epsilon}  + \frac{\partial F(\chi^*(\epsilon), \epsilon)}{ \partial  \chi} \cdot \frac{d\chi^*(\epsilon)}{ d \epsilon} =   \frac{\partial F(\chi^*(\epsilon), \epsilon)}{ \partial  \epsilon} \nonumber \\
&=& 1+ (\chi^*(\epsilon))^2 - \mathbb{E}\eta^2_1(Z; \chi^*(\epsilon))> 1+ (\chi^*(\epsilon))^2 - \mathbb{E} |Z|^2 \\
&=&(\chi^*(\epsilon))^2>0,
\end{eqnarray*}
which completes the proof of part (i). To prove (ii) note that
\begin{eqnarray}
0 &\leq& \lim_{\epsilon \rightarrow 0} \min_{\chi\geq 0}  (1- \epsilon) \mathbb{E}\eta^2_1(Z; \chi)+ \epsilon (1+ \chi^2) \nonumber \\
& \leq&  \lim_{\epsilon \rightarrow 0} (1- \epsilon) \mathbb{E} \eta^2_1(Z; \log (1/ \epsilon))+ \epsilon (1+ \log^2(1/\epsilon)) \nonumber \\
&=&  \lim_{\epsilon \rightarrow 0} 2(1- \epsilon) \int_{ \log(1/\epsilon)}^{\infty} (z- \log(1/\epsilon))^2 \phi(z) dz \nonumber \\
&=& \lim_{\epsilon \rightarrow 0} 2(1- \epsilon) \int_{0}^{\infty} z^2 \phi(z + \log(1/\epsilon)) dz \nonumber \\
&\leq& \lim_{\epsilon \rightarrow 0} 2(1- \epsilon) {\rm e}^{-\frac{\log^2(1/\epsilon)}{2}}  \int_{0}^{\infty} z^2 \phi(z) dz =0, \nonumber
\end{eqnarray}
where $\phi(\cdot)$ is the density function of standard normal. Regarding the proof of part (iii), first note that as $\epsilon \rightarrow 1$, $\chi^*(\epsilon) \rightarrow 0$. Otherwise suppose $\chi^*(\epsilon) \rightarrow \chi_0>0$ (taking a convergent subsequence if necessary). Since $\mathbb{E}\eta^2_1(Z;\chi^*(\epsilon)) \leq \mathbb{E}|Z|^2=1$, we obtain 
\[
  \lim_{\epsilon \rightarrow 1} F(\chi^*(\epsilon),\epsilon)= 1+ \chi^2_0 >1.
\]
On the other hand, it is clear that 
\[
\lim_{\epsilon \rightarrow 1}F(\chi^*(\epsilon), \epsilon)\leq \lim_{\epsilon \rightarrow 1}F(0,\epsilon)=1. 
\]
A contradiction arises. Hence the fact $\chi^*(\epsilon) \rightarrow 0$ as $\epsilon \rightarrow 1$ leads directly to 
\[
\lim_{\epsilon \rightarrow 1} M_1(\epsilon ) =\lim_{\epsilon \rightarrow 1} F(\chi^*(\epsilon),\epsilon)=1.
\]
Part (iv) is clear from the definition of $M_1(\epsilon)$. 
\end{proof}

Figure \ref{fig:phasetrans1} shows $M_q(\epsilon)$ for different values of $q$. We observe several peculiar features: (i) As is clear from both Lemma \ref{lem:M1epsprop} and Figure \ref{fig:phasetrans1}, $q=1$ requires much fewer observations than all the other values of $q>1$ for successful recovery of $\beta$. (ii) The values of the non-zero elements of $\beta$ do not have any effect on the phase transition curves. In fact, even the sparsity level does not have any effect on the phase transition for $q>1$. (iii) For every $q>1$, the phase transition of \eqref{eq:ellqmin} happens at exactly the same value. 

\begin{figure}
\begin{center}
\includegraphics[width=7cm]{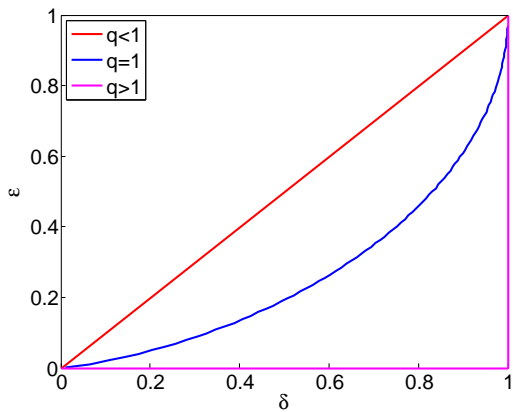}
\caption{Phase transition curves of LQLS for (i) $q<1$: these results are derived in \cite{zheng2015does} by the non-rigorous replica method from statistical physics. We have just included them for comparison purposes. In this paper we have focused on $q\geq 1$. (ii)  $q=1$: the blue curve exhibits the phase transition of LASSO. Below this curve LASSO can ``successfully'' recover $\beta$. (iii) $q>1$: The magenta curve represents the phase transition of LQLS for any $q>1$. This figure is based on Informal Result 1 and will be carefully defined and derived in Section \ref{sec:our}.  }
\label{fig:phasetrans1}
\end{center}
\end{figure}

These features raise the following question: how much and to what extent are these phase transition results useful in applications, where at least small amount of error is present in the response variables? For instance, intuitively speaking, we do not expect to see much difference between the performance of LQLS for $q=1.01$ and $q=1$. However, according to the phase transition analysis, $q=1$ outperforms $q=1.01$ by a wide margin. In fact the performance of LQLS for $q=1.01$ seems to be closer to that of $q=2$ than $q=1$. Also, in contrast to the phase transition implication, we may not expect LQLS to perform the same for $\beta$ with different values of non-zero elements. The main goal of this paper is to present a new analysis that will shed light on the misleading features of the phase transition analysis. It will also clarify when and under what conditions the phase transition analysis is reliable for practical guidance. 

 In our new framework, the variance $\sigma_w^2$ of the error $w$ is assumed to be small. We consider \eqref{eq:lqls} with the optimal value of $\lambda$ for which the asymptotic mean square error, i.e., $\lim_{p \rightarrow \infty} \frac{\|\hat{\beta}(\lambda, q) - \beta\|_2^2}{p}$, is minimized. We first obtain the formula for the asymptotic mean square error (AMSE) characterized through a series of non-linear equations. Since $\sigma_w$ is assumed small, we then derive the asymptotic expansions for AMSE as $\sigma_w \rightarrow 0$. As we will describe later, the phase transition of LQLS for different values of $q$ can be obtained from the first dominant term in the expansion. More importantly, we will show that the second dominant term is capable of evaluating the importance of the phase transition analysis for practical situations and also provides a much more accurate analysis of different bridge estimators. Here is one of the results of our paper, presented informally to clarify our claims. All the technical conditions will be determined in Sections \ref{sec:def} and \ref{sec:our}. \\

\noindent \textbf{Informal Result 2.}
If $\lambda_*$ denotes the optimal value of $\lambda$, then for any $q \in (1,2)$, $\delta>1$, and $\epsilon<1$
\begin{eqnarray*}
 \lim_{p\rightarrow \infty} \frac{1}{p}\|\hat{\beta}(\lambda_*,q)-\beta\|_2^2 = \frac{\sigma^2_w}{1-1/\delta} - \sigma^{2q}_w\frac{\delta^{q+1}(1- \epsilon)^2(\mathbb{E} |Z|^q)^2}{(\delta-1)^{q+1}\epsilon  \mathbb{E} |G|^{2q-2}} + o(\sigma_w^{2q}), \hspace{-1.5cm}
\end{eqnarray*}
where $Z \sim N(0,1)$ and $G$ is a random variable whose distribution is specified by the non-zero elements of $\beta$. We will clarify this in the next section. Finally, the limit notation we have used above is the almost sure limit.  

\vspace{.3cm}

As we will discuss in Section \ref{sec:our}, the first term $\frac{\sigma^2_w}{1-1/\delta}$ determines the phase transition. Moreover, we have further derived the second dominant term in the expansion of the asymptotic mean square error. This term enables us to clarify some of the confusing features of the phase transitions. Here are  some important features of this term: (i) It is negative. Hence, the AMSE that is predicted by the first term (and phase transition analysis) is overestimated specially when $q$ is close to $1$. (ii) Fixing $q$, the magnitude of the second dominant term grows as $\epsilon$ decreases. Hence, for small values of $\sigma_w$ all values of $1<q<2$ benefit from the sparsity of $\beta$.  Also, smaller values of $q$ seem to benefit more. (iii) Fixing $\epsilon$ and $\delta$, the power of $\sigma_w$ decreases as $q$ decreases. This makes the absolute value of the second dominant term bigger. As $q$ decreases to one, the order of the second dominant term gets closer to that of the first dominant term and thus the predictions of phase transition analysis become less accurate. We will present a more detailed discussion of the second order term in Section \ref{sec:our}. To show some more interesting features of our approach, we also informally state a result we prove for LASSO. \\

\noindent \textbf{Informal Result 3.}  
Suppose that the non-zero elements of $\beta$ are all larger than a fixed number $\mu$. If $\lambda_*$ denotes the value of $\lambda$ that leads to the smallest AMSE, and if $\delta > M_1(\epsilon)$, then
\begin{eqnarray}\label{asymp:l1}
\lim_{p\rightarrow \infty}{\frac{1}{p}\|\hat{\beta}(\lambda_*,1)-\beta\|_2^2}-\frac{\delta M_1(\epsilon)\sigma^2_w}{\delta-M_1(\epsilon)}  =O(\exp(-\tilde{\mu}/\sigma_w^2)),
\end{eqnarray}
 where $\tilde{\mu}$ is a constant that depends on $\mu$. 
 
 \vspace{.3cm}
 
 As can be seen here, compared to the other values of $q$, $q=1$ has smaller first order term (according to Lemma \ref{lem:M1epsprop}), but much smaller (in magnitude) second order term. The first implication of this result is that the first dominant term provides an accurate approximation of AMSE. Hence, phase transition analysis in this case is reliable even if small amount of noise is present; that is one of the main reasons why the theoretically derived phase transition curve matches the empirical one for LASSO. Furthermore, note that in order to obtain Informal Result 3, we have made certain assumption about the non-zero components of $\beta$. As will be shown in Section \ref{sec:our}, any violation of this assumption has major impact on the second dominant term.  
  
 In the rest of the paper, we first state all the assumptions required for our analysis. We then present the formal statements of aforementioned and related results and provide a more comprehensive discussion.

\vspace{.1cm}

\subsection{Organization of the paper}
The rest of the paper is organized as follows: Section \ref{sec:def} presents the asymptotic framework of our analysis. Section \ref{sec:our} discusses the main contributions of our paper. Section \ref{sec:relatedwork} compares our results with the related work. Section \ref{sec:simanddis} shows some simulation results and discusses some open problems that require further research.  Section \ref{sec:proofthm4full} and Supplementary material are devoted to the proofs of all the results. 

\section{The asymptotic framework}\label{sec:def}
The main goal of this section is to formally introduce the asymptotic setting under which we study LQLS. In the current and next sections only, we may write vectors and matrices as $\beta(p), X(p), w(p)$ to emphasize the dependence on the dimension of $\beta$. Similarly, we may use $\hat{\beta}(\lambda,q,p)$ as a substitute for $\hat{\beta}(\lambda, q)$. Note that since we assume $n/p \rightarrow \delta$, we do not include $n$ in our notations. Now we define a specific type of a sequence known as a converging sequence. Our definition is borrowed from other papers \cite{DoMaMoNSPT, BaMo10, BaMo11} with some minor modifications. 
 
\begin{definition}
A sequence of instances $\{\beta(p), X(p), w(p)\}$ is called a converging sequence if the following conditions hold:
\begin{itemize}
\item[-] The empirical distribution\footnote{It is the distribution that puts a point mass $1/p$ at each of the $p$ elements of the vector.} of $\beta(p) \in \mathbb{R}^p$ converges weakly to a probability measure $p_{\beta}$ with bounded second moment. Further, $\frac{1}{p} \|\beta(p)\|_2^2$ converges to the second moment of $p_{\beta}$.
\item[-] The empirical distribution of $w(p) \in \mathbb{R}^n$ converges weakly to a zero mean distribution with variance $\sigma_w^2$. And, $\frac{1}{n} \|w(p)\|_2^2 \rightarrow \sigma_w^2$. 
\item[-] The elements of $X(p)$ are iid with distribution $N(0, 1/n)$. 
\end{itemize}
\end{definition}
For each of the problem instances in a converging sequence, we solve the LQLS problem \eqref{eq:lqls} and obtain $\hat{\beta}( \lambda,q,p)$ as the estimator. The interest is to evaluate the accuracy of this estimator. Below we define the asymptotic mean square error.  

\begin{definition}
Let $\hat{\beta}(\lambda,q,p)$ be the sequence of solutions of LQLS for the converging sequence of instances $\{\beta(p), X(p), w(p)\}$. The asymptotic mean square error is defined as the almost sure limit of 
\[
{\rm AMSE}(\lambda, q, \sigma_w) \triangleq \lim_{p \rightarrow \infty} \frac{1}{p} \sum_{i=1}^p |\hat{\beta}_i(\lambda,q,p)- \beta_{i}(p)|^2,
\]
where the subscript $i$ is used to denote the $i$th component of a vector.
\end{definition}
Note that we have suppressed $\delta$ and $p_{\beta}$ in the notation of AMSE for simplicity, despite the fact that the asymptotic mean square error depends on them as well. In the above definition, we have assumed that the almost sure limit exists. Under the current asymptotic setting, the existence of AMSE can be proved. In fact we are able to derive the asymptotic limit for general loss functions as presented in the following theorem.

\begin{theorem}\label{thm:eqpseudolip} Consider a converging sequence $\{\beta(p), X(p), w(p)\}$. For any given $q\in [1,2]$, suppose that $\hat{\beta}(\lambda,q,p)$ is the solution of LQLS defined in \eqref{eq:lqls}. Then for any pseudo-Lipschitz function\footnote{A function $\psi:\mathbb{R}^2 \rightarrow \mathbb{R}$ is pseudo-Lipschitz of order $k$ if there exists a constant $L>0$ such that for all $x,y \in \mathbb{R}^2$, we have $|\psi(x)-\psi(y)|\leq L(1+\|x\|_2^{k-1}+\|y\|_2^{k-1}) \|x-y\|_2$. We consider pseudo-Lipschitz functions with order 2 in this paper.} $\psi: \mathbb{R}^2 \rightarrow \mathbb{R}$, almost surely
\begin{equation}\label{eq:lassoobs}
\lim_{p \rightarrow \infty} \frac{1}{p} \sum_{i=1}^p \psi \left(\hat{\beta}_i(\lambda,q,p),{\beta}_{i}(p) \right) = \mathbb{E}_{B,Z} [\psi(\eta_q(B+\bar{\sigma} Z; \bar{\chi} \bar{\sigma}^{2-q}), B)],
\end{equation}
where $B$ and $Z$ are two independent random variables with distributions $p_\beta$ and $N(0, 1)$, respectively; the expectation $\mathbb{E}_{B,Z}(\cdot)$ is taken with respect to both $B$ and $Z$; $\eta_q(\cdot;\cdot)$ is the proximal operator for the function $\| \cdot \|_q^q$\footnote{Proximal operator of $\| \cdot \|_q^q$ is defined as $\eta_q (u; \chi) \triangleq \arg \min_z \frac{1}{2} (u-z)^2 + \chi |z|^q$. For further information on these functions, please refer to the supplementary material. }; and $\bar{\sigma}$ and $\bar{\chi}$ satisfy the following equations:
\begin{eqnarray} \label{eq:fixedpoint}
\bar{\sigma}^2 &=& \sigma_{\omega}^2+\frac{1}{\delta} \mathbb{E}_{B, Z} [(\eta_q(B +\bar{\sigma} Z; \bar{\chi} \bar{\sigma}^{2-q}) -B)^2], \label{eq:fixedpoint11}  \\
\lambda &=& \bar{\chi} \bar{\sigma}^{2-q} \Big(1-\frac{1}{\delta} \mathbb{E}_{B,Z}[\eta_q'(B +\bar{\sigma} Z;  \bar{\chi} \bar{\sigma}^{2-q})] \Big), \label{eq:fixedpoint21} 
\end{eqnarray}
where $\eta_q'(\cdot; \cdot)$ denotes the derivative of $\eta_q$ with respect to its first argument. 
\end{theorem}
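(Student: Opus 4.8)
The plan is to prove this result through the Approximate Message Passing (AMP) framework and its rigorous state evolution analysis, following the template established in \cite{BaMo11, DoMaMoNSPT}. The central idea is that although LQLS is defined as a global optimization problem coupled through the matrix $X$, its solution is asymptotically characterized by a \emph{decoupled} scalar denoising problem. I would realize this characterization by constructing an iterative algorithm whose fixed point coincides with $\hat\beta(\lambda,q,p)$ and whose per-iteration dynamics are exactly trackable in the large-$p$ limit.

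First I would introduce the AMP iteration
\begin{align*}
x^{t+1} &= \eta_q\!\left(x^t + X^\top z^t;\, \theta_t\right),\\
z^t &= y - X x^t + \frac{1}{\delta}\, z^{t-1}\,\frac{1}{p}\sum_{i=1}^p \eta_q'\!\left(x_i^{t-1} + (X^\top z^{t-1})_i;\, \theta_{t-1}\right),
\end{align*}
where $\eta_q$ is the proximal operator in the statement and the Onsager correction (the last summand of $z^t$) is what decouples the iterates. Because $\eta_q(\cdot;\theta)$ is Lipschitz in its first argument for every $q\in[1,2]$ (soft thresholding when $q=1$, smooth otherwise), the hypotheses of the Bayati--Montanari state evolution theorem are met. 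That theorem yields, for each fixed $t$ and every pseudo-Lipschitz $\psi$ of order $2$, the almost sure identity
\[
\lim_{p\to\infty}\frac{1}{p}\sum_{i=1}^p \psi\!\left(x_i^{t+1}, \beta_i(p)\right) = \mathbb{E}_{B,Z}\big[\psi\big(\eta_q(B + \tau_t Z;\, \theta_t), B\big)\big],
\]
with the effective noise level governed by the scalar recursion $\tau_{t+1}^2 = \sigma_w^2 + \frac{1}{\delta}\mathbb{E}_{B,Z}[(\eta_q(B+\tau_t Z;\theta_t)-B)^2]$.

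Next I would perform the \emph{calibration} step that ties the free parameter of AMP back to the LQLS penalty $\lambda$. Setting the threshold policy $\theta_t = \bar{\chi}\,\tau_t^{2-q}$ with a fixed scale parameter $\bar{\chi}$ and passing to the limit $t\to\infty$, the recursion converges to the fixed point $\bar{\sigma}$ satisfying \eqref{eq:fixedpoint11}, and the threshold converges to $\bar{\chi}\bar{\sigma}^{2-q}$; the relation \eqref{eq:fixedpoint21} between $\lambda$ and $\bar{\chi}$ is precisely the condition under which an AMP fixed point $(x^\infty, z^\infty)$ satisfies the stationarity (KKT) conditions of the convex program \eqref{eq:lqls}. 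Existence and uniqueness of a solution to the pair \eqref{eq:fixedpoint11}--\eqref{eq:fixedpoint21} would be established using the strong convexity of the scalar objective underlying $\eta_q$ together with a monotonicity argument on the state evolution map, in the same spirit as the scalar analysis in Lemma \ref{lem:M1epsprop}.

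The main obstacle, and the step requiring the most care, is to transfer the convergence statement from the AMP iterates $x^t$ to the actual minimizer $\hat\beta(\lambda,q,p)$, i.e. to justify interchanging the limits $p\to\infty$ and $t\to\infty$. Following the argument of \cite{BaMo11} for LASSO, I would show that
\[
\lim_{t\to\infty}\;\limsup_{p\to\infty}\;\frac{1}{p}\big\|x^t - \hat\beta(\lambda,q,p)\big\|_2^2 = 0
\]
almost surely, exploiting the convexity (and, for $q>1$, strong convexity) of the LQLS objective together with the fact that the residual $z^t$ tracks the subgradient optimality condition. Once this uniform control is in place, the pseudo-Lipschitz property of $\psi$ lets me replace $x_i^{t+1}$ by $\hat\beta_i$ up to a vanishing error, and sending $t\to\infty$ in the state-evolution identity yields the claimed formula \eqref{eq:lassoobs}. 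The delicate points are controlling the optimization error uniformly in $p$ across the full range $q\in[1,2]$ --- the case $q=1$ being degenerate since strong convexity fails and must instead be handled through the strict complementarity structure of soft thresholding.
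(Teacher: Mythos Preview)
Your proposal follows essentially the same route as the paper: AMP iteration, Bayati--Montanari state evolution for the per-iterate law, calibration via $\theta_t=\chi\,\tau_t^{2-q}$ so that the fixed point reproduces \eqref{eq:fixedpoint11}--\eqref{eq:fixedpoint21}, and then the transfer step $\lim_{t}\limsup_{p} p^{-1}\|x^t-\hat\beta\|_2^2=0$ to pass from AMP iterates to the LQLS minimizer.

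The one place where your sketch would not go through as written is the appeal to strong convexity of the LQLS objective for $q>1$. When $\delta<1$ the matrix $X^\top X$ is rank-deficient, and the penalty Hessian $q(q-1)|\beta_i|^{q-2}$ vanishes as $|\beta_i|\to\infty$; the objective is strictly but not strongly convex, so a strong-convexity bound does not by itself control $\|x^t-\hat\beta\|_2$. The paper's argument instead splits $r=\hat\beta-\beta^t$ into $r^\perp\in\ker(X)^\perp$ and $r^\parallel\in\ker(X)$. The orthogonal part is controlled by the quadratic term and Bai--Yin. For the parallel part the paper proves a pointwise Taylor-type lower bound
\[
|x+r|^q-|x|^q-qr|x|^{q-1}\mathrm{sign}(x)\ \ge\ \tfrac{q(q-1)}{2}|x+\mu r|^{q-2}r^2
\]
for some $\mu\in[0,1]$, combines it with Cauchy--Schwarz to get control of $\|r\|_1$, and then invokes Kashin's theorem for $\ell_1$--$\ell_2$ equivalence on the random null space. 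This step requires an a priori bound on $p^{-1}\sum_i|\beta_i^t+\mu_i r_i|^{2-q}$, which is an extra ingredient specific to $q\in(1,2)$. So the intuition that ``$q=1$ is the delicate case and $q>1$ comes for free from strong convexity'' is not quite right: the $\ker(X)$ obstruction is present across the whole range, and $q\in(1,2)$ needs its own tailored argument rather than a shortcut.
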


The result for $q=1$ has been proved in \cite{BaMo11}. The key ideas of the proof for generalizing to $q \in (1,2]$ are similar to those of \cite{BaMo11}. We describe the main proof steps in Appendix \ref{ssec:App:firstresult} of the supplementary material. According to Theorem \ref{thm:eqpseudolip}, in order to calculate the asymptotic mean square error (or any other loss) of $\hat{\beta}(\lambda,q,p)$, we have to solve \eqref{eq:fixedpoint11} and \eqref{eq:fixedpoint21} for $(\bar{\sigma},\bar{\chi})$. The following lemma shows  these two nonlinear equations have a unique solution. 

\begin{lemma}\label{lem:eq7and8solutionexists}
For any positive values of $\lambda, \delta, \sigma_w>0$, any random variable $B$ with finite second moment, and any $q \in [1,2]$, there exists a unique pair $(\bar{\sigma}, \bar{\chi})$ that satisfies both \eqref{eq:fixedpoint11} and \eqref{eq:fixedpoint21}.
\end{lemma}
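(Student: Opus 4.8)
The plan is to decouple the two equations by exploiting the positive homogeneity of the proximal operator, and then reduce the problem to a pair of one-dimensional monotonicity statements. First I would record the properties of $\eta_q$ that drive everything: $\eta_q(\cdot\,;\chi)$ is odd, nondecreasing and $1$-Lipschitz in its first argument (so $0\le \eta_q'\le 1$, since it is the proximal map of a convex penalty), it is jointly continuous in $(u,\chi)$ with $\eta_q(u;\chi)\to u$ as $\chi\to 0$ and $\eta_q(u;\chi)\to 0$ as $\chi\to\infty$, and — crucially — it is positively homogeneous in the sense $\eta_q(cu;\chi c^{2-q})=c\,\eta_q(u;\chi)$ for $c>0$. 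This last identity is exactly why the threshold in \eqref{eq:fixedpoint11}--\eqref{eq:fixedpoint21} is written as $\bar\chi\bar\sigma^{2-q}$: setting $c=\bar\sigma$ gives $\eta_q(B+\bar\sigma Z;\bar\chi\bar\sigma^{2-q})=\bar\sigma\,\eta_q(B/\bar\sigma+Z;\bar\chi)$ and, after differentiating, $\eta_q'(B+\bar\sigma Z;\bar\chi\bar\sigma^{2-q})=\eta_q'(B/\bar\sigma+Z;\bar\chi)$. Thus $\bar\chi$ is a genuinely scale-free parameter, which I would treat as the ``free'' variable throughout. I would also record the Stein identity $\mathbb{E}[\eta_q'(Z;\chi)]=\mathbb{E}[Z\eta_q(Z;\chi)]$ for controlling the second equation.

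Step 1 (inner equation: solve \eqref{eq:fixedpoint11} for $\bar\sigma$ at fixed $\bar\chi$). Writing $m=\bar\sigma^2$ and $V(m;\bar\chi)=\mathbb{E}[(\eta_q(B+\sqrt m\,Z;\bar\chi m^{(2-q)/2})-B)^2]$, equation \eqref{eq:fixedpoint11} becomes the scalar fixed point $m=g(m):=\sigma_w^2+\tfrac1\delta V(m;\bar\chi)$. The homogeneity identity yields the two endpoints: since $q<2$ one checks $V(m;\bar\chi)=o(m)$ as $m\to0$, so $g(0^+)=\sigma_w^2>0$ and the graph of $g$ starts strictly above the diagonal; and $V(m;\bar\chi)/m\to \mathbb{E}[\eta_q^2(Z;\bar\chi)]=:E_0(\bar\chi)<1$ as $m\to\infty$, so $g(m)\sim (E_0(\bar\chi)/\delta)\,m$. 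Hence whenever $E_0(\bar\chi)<\delta$ (every $\bar\chi>0$ when $\delta\ge1$, and $\bar\chi$ above the threshold $\bar\chi_{\min}(\delta)$ solving $E_0(\bar\chi_{\min})=\delta$ when $\delta<1$) the graph of $g$ eventually lies below the diagonal, so by the intermediate value theorem a fixed point $\bar\sigma(\bar\chi)>0$ exists. For uniqueness I would prove that $m\mapsto V(m;\bar\chi)$ is concave (the classical fact that the risk of a proximal denoiser is concave in the noise variance); concavity of $g$ with $g(0^+)>0$ forces the diagonal to be crossed exactly once, giving a unique $\bar\sigma(\bar\chi)$, continuous in $\bar\chi$ by the implicit function theorem since $g'(\bar\sigma(\bar\chi)^2)<1$ at the crossing.

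Step 2 (outer equation: solve \eqref{eq:fixedpoint21} for $\bar\chi$). Substituting $\bar\sigma(\bar\chi)$ into \eqref{eq:fixedpoint21} defines
\[
\Lambda(\bar\chi):=\bar\chi\,\bar\sigma(\bar\chi)^{2-q}\Bigl(1-\tfrac1\delta\,\mathbb{E}[\eta_q'(B+\bar\sigma(\bar\chi)Z;\bar\chi\bar\sigma(\bar\chi)^{2-q})]\Bigr),
\]
and it remains to show $\Lambda$ is a continuous strictly increasing bijection of the admissible $\bar\chi$-interval onto $(0,\infty)$; injectivity then yields the unique $\bar\chi$ matching the prescribed $\lambda$, and with it the unique pair $(\bar\sigma,\bar\chi)$. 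I would verify the endpoints — $\Lambda\to0$ at the lower end (where either $\bar\chi\to0$ with $\bar\sigma$ bounded, or the denoiser tends to the identity so the bracket vanishes) and $\Lambda\to\infty$ as $\bar\chi\to\infty$ (where $\bar\sigma(\bar\chi)$ stays bounded, the bracket tends to $1$, and the prefactor $\bar\chi\bar\sigma^{2-q}\to\infty$) — and then establish strict monotonicity by differentiating $\Lambda$, using $0\le\eta_q'\le1$, the scale-free form of $\mathbb{E}[\eta_q']$, and the sign of $\bar\sigma'(\bar\chi)$ from Step 1.

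I expect the main obstacle to be the uniqueness half of Step 1, i.e. proving concavity (equivalently, the strict bound $\partial V/\partial m<\delta$ at the fixed point) of the state-evolution map in $m=\bar\sigma^2$; this is where the convexity of $\|\cdot\|_q^q$ and the $1$-Lipschitz property of $\eta_q$ must be turned into a quantitative statement, and it is the step that genuinely uses $1\le q\le2$. A secondary difficulty is the case analysis $\delta\ge1$ versus $\delta<1$ in locating the admissible range of $\bar\chi$ and checking that $\Lambda$ still sweeps all of $(0,\infty)$; the monotonicity computation in Step 2, routine in spirit, must be organized so that the contributions of $\bar\sigma'(\bar\chi)$ and of the explicit $\bar\chi$-dependence combine with a definite sign.
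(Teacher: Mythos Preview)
Your existence argument has the same skeleton as the paper's: fix $\bar\chi$, solve \eqref{eq:fixedpoint11} uniquely for $\sigma_{\bar\chi}$, then vary $\bar\chi$ and use the intermediate value theorem on $\Lambda$. Two details differ. For the inner uniqueness the paper does not prove concavity of $m\mapsto V(m;\bar\chi)$; it proves instead that $R_q(\chi,\sigma):=\mathbb{E}[(\eta_q(B/\sigma+Z;\chi)-B/\sigma)^2]$ is strictly decreasing in $\sigma$ (equivalently, your $V(m;\bar\chi)/m$ is decreasing in $m$), which already forces $\sigma_w^2/\sigma^2+\delta^{-1}R_q(\chi,\sigma)=1$ to have a unique root. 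This monotonicity is nontrivial and is handled case by case over $q\in\{1\}\cup(1,3/2]\cup(3/2,2)\cup\{2\}$; your ``classical concavity'' would need its own proof. Your endpoint claim $V(m;\bar\chi)=o(m)$ is actually false for $q>1$ (in fact $R_q(\bar\chi,\sigma)\asymp\sigma^{2-2q}\to\infty$ for fixed $\bar\chi>0$), though harmless since $g(0^+)=\sigma_w^2>0$ is all that is needed. And for $\delta<1$ the lower endpoint of $\Lambda$ is $-\infty$, not $0$: there $\sigma_{\bar\chi}\to\infty$ as $\bar\chi\downarrow\chi_{\min}$ and the bracket $1-\delta^{-1}\mathbb{E}[\eta_q']$ becomes strictly negative.

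The real departure is uniqueness. You plan to show $\Lambda$ is strictly increasing; the paper does not attempt this and instead argues indirectly via Theorem~\ref{thm:eqpseudolip}. If $(\sigma_1,\chi_1)$ and $(\sigma_2,\chi_2)$ both solve \eqref{eq:fixedpoint11}--\eqref{eq:fixedpoint21} for the same $\lambda$, then applying Theorem~\ref{thm:eqpseudolip} with $\psi(a,b)=(a-b)^2$ shows the almost-sure limit of $p^{-1}\|\hat\beta-\beta\|_2^2$ equals $\delta(\sigma_i^2-\sigma_w^2)$ for each $i$, forcing $\sigma_1=\sigma_2$; applying it again with $\psi(a,b)=|a|$ and using that $\chi\mapsto\mathbb{E}|\eta_q(B+\sigma_1 Z;\chi)|$ is strictly decreasing then forces $\chi_1=\chi_2$. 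This sidesteps entirely the sign analysis of $\Lambda'$ (which must track the implicit dependence $\bar\sigma'(\bar\chi)$). Your direct route may well go through, but the monotonicity of $\Lambda$ is substantially harder than you suggest; the paper's argument is cheap once Theorem~\ref{thm:eqpseudolip} is available, whereas yours would have the advantage of being purely analytic and self-contained.
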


The proof of this lemma can be found in Appendix \ref{subsec:solutionfixedpoint} of the supplementary material. Theorem \ref{thm:eqpseudolip} provides the first step in our analysis of LQLS. We first calculate $\bar{\sigma}$ and $\bar{\chi}$ from \eqref{eq:fixedpoint} and \eqref{eq:fixedpoint21}. Then, incorporating $\bar{\sigma}$ and $\bar{\chi}$ in \eqref{eq:lassoobs} gives the following expression for the asymptotic mean square error:
\begin{equation}\label{eq:def:AMSE}
{\rm AMSE}(\lambda, q, \sigma_w) =  \mathbb{E}_{B,Z} (\eta_q(B+\bar{\sigma} Z;  \bar{\chi} \bar{\sigma}^{2-q})- B)^2.
\end{equation}
Given the distribution of $B$ (the sparsity level $\epsilon$ included), the variance of the error $\sigma_w^2$, the number of response variables (normalized by the number of predictors) $\delta$, and the regularization parameter $\lambda$, it is straightforward to write a computer program to find the solution of \eqref{eq:fixedpoint11} and \eqref{eq:fixedpoint21} and then compute the value of AMSE. However, it is needless to say that this approach does not shed much light on the performance of bridge regression estimates, since there are many factors involved in the computation and each affects the result in a non-trivial fashion. In this paper, we would like to perform an analytical study on the solution of \eqref{eq:fixedpoint} and \eqref{eq:fixedpoint21} and obtain an explicit characterization of AMSE in the small-error regime.

\section{Our main contributions}\label{sec:our}

\subsection{Optimal tuning of $\lambda$}
The performance of LQLS, as defined in \eqref{eq:lqls}, depends on the tuning parameter $\lambda$. 
In this paper, we consider the value of $\lambda$ that gives the minimum AMSE. Let $\lambda_{*,q}$ denote the value of $\lambda$ that minimizes AMSE given in \eqref{eq:def:AMSE}. Then LQLS is solved with this specific value of $\lambda$, i.e.,
\begin{equation}
\hat{\beta} (\lambda_{*,q}, q,p ) \in \arg\min_{\beta} \frac{1}{2} \|y-X\beta\|_2^2+ \lambda_{*,q} \|\beta\|_q^q. \label{optimaldef}
\end{equation}
Note that this is the best performance that LQLS can achieve in terms of the AMSE. Theorem \ref{thm:eqpseudolip} enables us to evaluate this optimal AMSE of LQLS for every $q\in[1,2]$. The key step is to compute the solution of \eqref{eq:fixedpoint} and \eqref{eq:fixedpoint21} with $\lambda=\lambda_{*,q}$. Since $\lambda_{*,q}$ has to be chosen optimally, it seemingly causes an extra complication for our analysis. However, as we show in the following corollary, the study of Equations \eqref{eq:fixedpoint} and \eqref{eq:fixedpoint21} can be simplified to some extent.

\begin{corollary}\label{thm:mseoptimalasymptot} 
Consider a converging sequence $\{\beta(p), X(p), w(p)\}$. Suppose that $\hat{\beta}(\lambda_{*,q}, q,p)$ is the solution of LQLS defined in \eqref{optimaldef}. Then for any $q\in [1,2]$
\begin{equation}\label{eq:lassoobs:optimal}
{\rm AMSE}(\lambda_{*,q}, q, \sigma_w) = \min_{\chi \geq 0}\mathbb{E}_{B,Z} (\eta_q(B+\bar{\sigma} Z; \chi )- B)^2,
\end{equation}
where $B$ and $Z$ are two independent random variables with distributions $p_\beta$ and $N(0, 1)$, respectively; and $\bar{\sigma}$ is the unique solution of the following equation:
\begin{eqnarray} \label{eq:fixedpointoptimalnew}
\bar{\sigma}^2 = \sigma_{\omega}^2+\frac{1}{\delta} \min_{\chi \geq 0} \mathbb{E}_{B, Z} [(\eta_q(B +\bar{\sigma} Z; \chi) -B)^2].
\end{eqnarray}
\end{corollary}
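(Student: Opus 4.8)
The plan is to reparametrize the two fixed point equations so that the optimization over $\lambda$ collapses to an optimization over a single effective threshold, and then to interchange that minimization with the state evolution fixed point. Throughout write $\Phi(\sigma,\chi) \triangleq \mathbb{E}_{B,Z}(\eta_q(B+\sigma Z;\chi)-B)^2$ and introduce the \emph{effective threshold} $\chi \triangleq \bar{\chi}\bar{\sigma}^{2-q}$, which is exactly the quantity appearing inside $\eta_q$ in \eqref{eq:lassoobs} and \eqref{eq:fixedpoint11}. With this substitution, \eqref{eq:def:AMSE} reads $\mathrm{AMSE}=\Phi(\bar{\sigma},\chi)$ and \eqref{eq:fixedpoint11} reads $\bar{\sigma}^2=\sigma_w^2+\frac1\delta\Phi(\bar{\sigma},\chi)$; subtracting yields the key identity
\[
\mathrm{AMSE}(\lambda,q,\sigma_w)=\delta(\bar{\sigma}^2-\sigma_w^2).
\]
Hence, for fixed $\sigma_w$ and $\delta$, minimizing AMSE over $\lambda$ is equivalent to minimizing $\bar{\sigma}^2$.

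First I would show that, as $\lambda$ ranges over the admissible positive values, the effective threshold $\chi$ sweeps out all of $[0,\infty)$. For a fixed $\chi\geq0$ the equation $\sigma^2=\sigma_w^2+\frac1\delta\Phi(\sigma,\chi)$ has a unique root $\bar{\sigma}(\chi)$; this is the same contraction argument used for Lemma \ref{lem:eq7and8solutionexists}, since $\Phi(\cdot,\chi)$ is nondecreasing and the slope of $\sigma^2\mapsto\frac1\delta\Phi(\sigma,\chi)$ is strictly below one. Given $\bar{\sigma}(\chi)$, equation \eqref{eq:fixedpoint21} then pins down $\lambda=\chi\big(1-\frac1\delta\mathbb{E}_{B,Z}[\eta_q'(B+\bar{\sigma}(\chi)Z;\chi)]\big)$, which is nonnegative because $\eta_q'\in[0,1]$ in the relevant range of $\delta$; this map is continuous with $\lambda=0$ at $\chi=0$ and $\lambda\to\infty$ as $\chi\to\infty$, hence onto $[0,\infty)$. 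Together with the uniqueness in Lemma \ref{lem:eq7and8solutionexists}, this identifies the set of admissible pairs $(\bar{\sigma},\chi)$ with $\{(\bar{\sigma}(\chi),\chi):\chi\geq0\}$, so that
\[
\mathrm{AMSE}(\lambda_{*,q},q,\sigma_w)=\min_{\lambda}\delta(\bar{\sigma}^2-\sigma_w^2)=\delta\Big(\min_{\chi\geq0}\bar{\sigma}(\chi)^2-\sigma_w^2\Big).
\]

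The heart of the argument is to interchange the minimization over $\chi$ with the fixed point defining $\bar{\sigma}(\chi)$, i.e. to prove that $\min_{\chi\geq0}\bar{\sigma}(\chi)$ equals the unique root $\sigma^\star$ of the optimized equation \eqref{eq:fixedpointoptimalnew}. Define $T_\star(\sigma)^2\triangleq\sigma_w^2+\frac1\delta\min_{\chi\geq0}\Phi(\sigma,\chi)$, so $\sigma^\star$ is its fixed point. For any $\chi$, $\bar{\sigma}(\chi)^2=\sigma_w^2+\frac1\delta\Phi(\bar{\sigma}(\chi),\chi)\geq T_\star(\bar{\sigma}(\chi))^2$; since the slope of $\sigma^2\mapsto\frac1\delta\min_\chi\Phi(\sigma,\chi)$ is also strictly below one, $T_\star$ has a unique fixed point crossed once from above, which forces $\bar{\sigma}(\chi)\geq\sigma^\star$ for every $\chi$. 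Conversely, choosing $\chi^\star\in\arg\min_\chi\Phi(\sigma^\star,\chi)$ makes $(\sigma^\star,\chi^\star)$ a solution of the first equation, so $\bar{\sigma}(\chi^\star)=\sigma^\star$ and the lower bound is attained. Therefore $\min_{\chi\geq0}\bar{\sigma}(\chi)=\sigma^\star$, and at the minimizer $\mathrm{AMSE}=\Phi(\sigma^\star,\chi^\star)=\min_{\chi\geq0}\Phi(\sigma^\star,\chi)$, which is exactly \eqref{eq:lassoobs:optimal} with $\bar{\sigma}=\sigma^\star$.

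The main obstacle is this interchange step: establishing the monotonicity of $\Phi(\sigma,\chi)$ in $\sigma$ together with the strict sub-unit slope (contraction) of the optimized state evolution map, so that $T_\star$ has a unique, stable fixed point and the comparison $\bar{\sigma}(\chi)\geq\sigma^\star$ is valid. This is precisely what simultaneously legitimizes the uniqueness of $\bar{\sigma}$ asserted in \eqref{eq:fixedpointoptimalnew} and the optimality. The continuity and surjectivity of the $\chi\mapsto\lambda$ correspondence, including the boundary behavior at $\chi=0$ and the $q=1$ case where $\eta_q$ degenerates to the soft threshold, is a secondary technical point that still requires care.
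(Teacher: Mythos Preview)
Your high-level strategy---reduce AMSE minimization to $\bar{\sigma}$ minimization via $\mathrm{AMSE}=\delta(\bar{\sigma}^2-\sigma_w^2)$, then interchange the minimum over the effective threshold with the fixed point---matches the paper's. But the mechanism you invoke for uniqueness and for the comparison $\bar{\sigma}(\chi)\geq\sigma^\star$ is wrong. The claim that ``the slope of $\sigma^2\mapsto\frac1\delta\Phi(\sigma,\chi)$ is strictly below one'' fails in general: for $\delta<1$ and small $\chi$, $\Phi(\sigma,\chi)/\sigma^2\to\mathbb{E}\eta_q^2(Z;\chi)>\delta$ as $\sigma\to\infty$, so the fixed-point equation $\sigma^2=\sigma_w^2+\frac1\delta\Phi(\sigma,\chi)$ has \emph{no} solution and your $\bar{\sigma}(\chi)$ is undefined on $(0,\chi_{\min})$ with $\chi_{\min}>0$ (this is exactly Corollary~\ref{cor:fixedpointeqanalysis}). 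For the same reason the boundary claim ``$\lambda=0$ at $\chi=0$'' breaks when $\delta<1$; Lemmas~\ref{lem:limitoflambdas}--\ref{lem:limitoflambdascopy} give the correct endpoint behavior.

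The tool the paper uses in place of contraction is Lemma~\ref{lem:posfirstlemma}: $R_q(\chi,\sigma)\triangleq\Phi(\sigma,\chi)/\sigma^2$ is strictly \emph{decreasing} in $\sigma$. This is a nontrivial monotonicity, proved case by case in $q$ using Stein's lemma and sign properties of $\partial_1^2\eta_q$, and it is precisely what makes $G_q(\sigma)=\sigma_w^2/\sigma^2+\frac1\delta\min_{\chi}R_q(\chi,\sigma)$ strictly decreasing, yielding the unique $\sigma^\star$ and the single crossing from above that your interchange argument needs. With this lemma in hand the paper's sandwich is short: $\bar{\sigma}\leq\sigma^\star$ follows from optimality of $\lambda_{*,q}$ (since $(\sigma^\star,\chi^\star)$ corresponds to some admissible $\lambda^*$), and $G_q(\bar{\sigma})\leq1=G_q(\sigma^\star)$ gives $\bar{\sigma}\geq\sigma^\star$. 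No global parametrization over $\chi$ or surjectivity of $\chi\mapsto\lambda$ is required. So you have correctly located the ``main obstacle,'' but the hypothesis to establish is the decrease of $R_q(\chi,\sigma)$ in $\sigma$, not a sub-unit slope of $\Phi$ in $\sigma^2$.
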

The proof of Corollary \ref{thm:mseoptimalasymptot} is shown in Appendix \ref{sec:optimaltuning1} of the supplementary material. Corollary \ref{thm:mseoptimalasymptot} enables us to focus the analysis on a single equation \eqref{eq:fixedpointoptimalnew}, rather than two equations \eqref{eq:fixedpoint} and \eqref{eq:fixedpoint21}. The results we will present in the next section are mainly based on investigating the solution of \eqref{eq:fixedpointoptimalnew}.

\subsection{Analysis of AMSE}\label{secondorder}
In this paper since we are focused on the sparsity structure of $\beta$, from now on we assume that the distribution, to which the empirical distribution of $\beta \in \mathbb{R}^p$ converges, has the form 
\[
p_\beta(b) = (1-\epsilon) \delta_0 (b) + \epsilon g(b),
\]
where $\delta_0(\cdot)$ denotes a point mass at zero, and $g(\cdot)$ is a generic distribution that does not have any point mass at $0$. Here, the mixture proportion $\epsilon \in (0,1)$ is a fixed number that represents the sparsity level of $\beta$. The smaller $\epsilon$ is, the sparser $\beta$ will be. The distribution $g(b)$ specifies the values of non-zero components of $\beta$. We will use $G$ to denote a random variable having such a distribution. Since our results and proof techniques look very different for the case $q>1$ and $q=1$, we study these cases separately. 

\subsubsection{ Results for $q>1$} \label{discussion:q12}
 Our first result is concerned with the optimal AMSE of LQLS  for $1<q \leq 2$, when the number of response variables is larger than the number of predictors $p$, i.e., $\delta>1$. 
\begin{theorem}\label{asymp:lqbelowpt}
Suppose $\mathbb{P}(|G|\leq  t)=O(t)$ (as $t \rightarrow 0$) and $\mathbb{E}|G|^2<\infty$, then for $1<q <2$, $\delta>1$ and $\epsilon \in (0,1)$, we have
\begin{eqnarray}\label{mse:lp}
{\rm AMSE}(\lambda_{*,q}, q, \sigma_w)=\frac{\sigma^2_w}{1-1/\delta}-\frac{\delta^{q+1}(1- \epsilon)^2(\mathbb{E} |Z|^q)^2}{(\delta-1)^{q+1}\epsilon  \mathbb{E} |G|^{2q-2}}\sigma_w^{2q}+o(\sigma_w^{2q}).
\hspace{-1.5cm}
\end{eqnarray}
For $q=2, \delta>1$ and $\epsilon\in (0,1)$, if $\mathbb{E}|G|^2<\infty$, we have
\[
{\rm AMSE}(\lambda_{*,q},q, \sigma_w)= \frac{\sigma^2_w}{1-1/\delta}- \frac{\delta^{3}\sigma^{4}_w}{(\delta-1)^{3}\epsilon  \mathbb{E} |G|^{2}}+o(\sigma^4_w). 
\]
Note that $Z \sim N(0,1)$ and $G\sim g(\cdot)$ are independent.
\end{theorem}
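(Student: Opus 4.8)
The plan is to reduce the statement to a small-$\bar\sigma$ expansion of the optimally-tuned scalar risk and then invert the fixed-point equation supplied by Corollary~\ref{thm:mseoptimalasymptot}. Write $R(\bar\sigma)\triangleq \min_{\chi\geq 0}\mathbb{E}_{B,Z}(\eta_q(B+\bar\sigma Z;\chi)-B)^2$, so that Corollary~\ref{thm:mseoptimalasymptot} gives ${\rm AMSE}(\lambda_{*,q},q,\sigma_w)=R(\bar\sigma)$ with $\bar\sigma$ the unique solution of $\bar\sigma^2=\sigma_w^2+\tfrac1\delta R(\bar\sigma)$. Since $0\le R(\bar\sigma)\le \bar\sigma^2$ (the upper bound follows by taking $\chi=0$), the fixed-point equation forces $\bar\sigma\to 0$ as $\sigma_w\to 0$ and yields the exact identity ${\rm AMSE}=\delta(\bar\sigma^2-\sigma_w^2)$. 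Everything therefore hinges on expanding $R(\bar\sigma)$ as $\bar\sigma\to0$.

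The core step is to prove that for $1<q<2$, with $C'\triangleq (1-\epsilon)^2(\mathbb{E}|Z|^q)^2/(\epsilon\,\mathbb{E}|G|^{2q-2})$, one has $R(\bar\sigma)=\bar\sigma^2-C'\bar\sigma^{2q}+o(\bar\sigma^{2q})$. Because $\eta_q(u;\chi)=u-q\chi|u|^{q-1}{\rm sign}(u)+O(\chi^2)$ for $u\neq0$ (from the stationarity relation $u=z+q\chi|z|^{q-1}{\rm sign}(z)$), setting $\psi_q(u)\triangleq|u|^{q-1}{\rm sign}(u)$ gives, to second order in $\chi$,
\begin{equation*}
\mathbb{E}(\eta_q(B+\bar\sigma Z;\chi)-B)^2=\bar\sigma^2-2q\chi\bar\sigma\,\mathbb{E}[Z\psi_q(B+\bar\sigma Z)]+q^2\chi^2\,\mathbb{E}|B+\bar\sigma Z|^{2q-2}+\cdots.
\end{equation*}
Minimizing this quadratic-in-$\chi$ approximation shows the optimal decrease below $\bar\sigma^2$ is $\bar\sigma^2(\mathbb{E}[Z\psi_q(B+\bar\sigma Z)])^2/\mathbb{E}|B+\bar\sigma Z|^{2q-2}$, attained at $\chi^*\sim\bar\sigma^q\to0$. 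I would then evaluate the two expectations under $p_\beta=(1-\epsilon)\delta_0+\epsilon g$: the numerator is dominated by the mass at zero, giving $\mathbb{E}[Z\psi_q(B+\bar\sigma Z)]=(1-\epsilon)\bar\sigma^{q-1}\mathbb{E}|Z|^q(1+o(1))$ (the nonzero part contributes only at order $\bar\sigma$, which is smaller than $\bar\sigma^{q-1}$ since $q<2$), while the denominator is dominated by the nonzero part, $\mathbb{E}|B+\bar\sigma Z|^{2q-2}=\epsilon\,\mathbb{E}|G|^{2q-2}(1+o(1))$. Substituting produces exactly $C'\bar\sigma^{2q}$. The hypothesis $\mathbb{P}(|G|\le t)=O(t)$ enters precisely here, ensuring that $\mathbb{E}|G|^{q-2}$ and $\mathbb{E}|G|^{2q-2}$ are finite for $q>1$.

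The boundary case $q=2$ must be handled separately, and it is in fact easier: $\eta_2(u;\chi)=u/(1+2\chi)$ is linear shrinkage, so $R(\bar\sigma)=\min_{c\in(0,1]}\{(c-1)^2\epsilon\mathbb{E}|G|^2+c^2\bar\sigma^2\}=\bar\sigma^2\epsilon\mathbb{E}|G|^2/(\epsilon\mathbb{E}|G|^2+\bar\sigma^2)$, whose expansion is $\bar\sigma^2-\bar\sigma^4/(\epsilon\mathbb{E}|G|^2)+o(\bar\sigma^4)$. The reason $q=2$ needs its own constant is already visible in the numerator: at $q=2$ the zero and nonzero contributions to $\mathbb{E}[Z\psi_2(B+\bar\sigma Z)]$ are both of order $\bar\sigma$ and must be combined, whereas for $q<2$ the zero part strictly dominates, which is exactly why the factor $(1-\epsilon)^2$ present for $q<2$ disappears at $q=2$. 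Once the expansion of $R(\bar\sigma)$ is in hand, I would finish by a routine perturbative inversion of $\bar\sigma^2=\sigma_w^2+\tfrac1\delta R(\bar\sigma)$: to leading order $\bar\sigma^2=\tfrac{\delta}{\delta-1}\sigma_w^2$, and feeding this back through ${\rm AMSE}=R(\bar\sigma)$ gives the first term $\tfrac{\sigma_w^2}{1-1/\delta}$ together with the stated $\sigma_w^{2q}$ coefficient $\tfrac{\delta^{q+1}}{(\delta-1)^{q+1}}C'$.

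The main obstacle is making the ``expand in $\chi$, then minimize'' heuristic rigorous. The pointwise expansion $\eta_q(u;\chi)\approx u-q\chi\psi_q(u)$ fails for $u$ near $0$, precisely where $B+\bar\sigma Z$ concentrates on the zero component; there $\eta_q(\bar\sigma Z;\chi)$ scales like $(|\bar\sigma Z|/(q\chi))^{1/(q-1)}$ rather than linearly. I would address this by proving two-sided bounds on $R(\bar\sigma)$: an upper bound by inserting the explicit near-optimal $\chi=\chi^*\sim\bar\sigma^q$ and controlling the remainder of the $\chi$-expansion, and a matching lower bound by showing that any $\chi$ outside a shrinking neighborhood of $\chi^*$ yields risk at least $\bar\sigma^2-o(\bar\sigma^{2q})$. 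Showing that the small-$u$ region contributes only $o(\bar\sigma^{2q})$, using $\mathbb{P}(|G|\le t)=O(t)$ and the Gaussian tail of $Z$, is the delicate part; the remainder is bookkeeping.
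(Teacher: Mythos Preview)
Your proposal is correct and follows essentially the same route as the paper: reduce to a small-$\bar\sigma$ expansion of the optimally tuned scalar risk, locate the optimal threshold at order $\chi^*\sim\bar\sigma^{\,q}$ (equivalently $\sigma^{2q-2}$ in the paper's scaled coordinates $B/\sigma+Z$), and invert the fixed-point equation. The paper's implementation pins down $\chi^*$ via the first-order optimality condition $\partial_\chi R_q=0$ rather than through two-sided bounds, and handles the problematic small-$|G/\sigma+Z|$ region by a nested-interval decomposition, but the overall structure and the obstacle you flagged are exactly the ones the paper addresses.
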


The proof of the result is presented in Appendix \ref{sec:prooftheorem2full} of the supplementary material. There are several interesting features of this result that we would like to discuss: (i) The second dominant term of AMSE is negative. This means that the actual AMSE is smaller than the one predicted by the first order term, especially for smaller values of $q$. (ii) Neither the sparsity level nor the distribution of the non-zero components of $\beta$ appear in the first dominant term, i.e. $\frac{\sigma^2_w}{1-1/\delta}$. As we will discuss later in this section, the first dominant term is the one that specifies the phase transition curve. Hence, these calculations show a peculiar feature of phase transition analysis we discussed in Section \ref{limitations}, that the phase transition of $q \in (1,2]$ is neither affected by non-zero components of $\beta$ or the sparsity level. However, we see that both factors come into play in the second dominant term.  (iii) For the fully dense vector, i.e. $\epsilon =1$, \eqref{mse:lp} may imply that for $1<q<2$,
\begin{eqnarray*}
{\rm AMSE}(\lambda_{*,q},q, \sigma_w)=\frac{\sigma_w^2}{1-1/\delta}+o(\sigma^{2q}_w).
\end{eqnarray*}
 Hence, we require a different analysis to obtain the second dominant term (with different orders). We refer the interested readers to \cite{2016arXiv160307377W} for further information about this case. (iv) For $\epsilon<1$, the choice of $q \in (1,2]$ does not affect the first dominant term. That is the reason why all the values of $q\in (1,2]$ share the same phase transition curve. However, the value of $q$ has a major impact on the second dominant term. In particular, as $q$ approaches $1$, the order of the second dominant term in terms of $\sigma_w$ gets closer to that of the first dominant term. This means that in any practical setting, phase transition analysis may lead to misleading conclusions. {Specifically, in contrast to the conclusion from phase transition analysis that $q\in (1,2]$ have the same performance, the second order expansion enables us to conclude that the closer to $1$ the value of $q$ is, the better its performance will be. Our next theorem discusses the AMSE when $\delta<1$.  

\begin{theorem}\label{asymp:lqabovept}
Suppose $\mathbb{E}|G|^2<\infty$, then for $1<q \leq 2$ and $\delta<1$,
\begin{eqnarray}\label{mse:lp2}
\lim_{\sigma_w \rightarrow 0} {\rm AMSE}(\lambda_{*,q},q,\sigma_w)>0.
\end{eqnarray}
\end{theorem}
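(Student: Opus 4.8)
The plan is to argue by contradiction, working entirely with the single fixed-point equation \eqref{eq:fixedpointoptimalnew} supplied by Corollary \ref{thm:mseoptimalasymptot}. Write
\[
R(\bar{\sigma}) \triangleq \min_{\chi\geq 0}\mathbb{E}_{B,Z}\big[(\eta_q(B+\bar{\sigma}Z;\chi)-B)^2\big],
\]
so that \eqref{eq:lassoobs:optimal} reads ${\rm AMSE}(\lambda_{*,q},q,\sigma_w)=R(\bar{\sigma})$, while \eqref{eq:fixedpointoptimalnew} becomes $\bar{\sigma}^2=\sigma_w^2+\frac1\delta R(\bar{\sigma})$, i.e. ${\rm AMSE}(\lambda_{*,q},q,\sigma_w)=R(\bar{\sigma})=\delta(\bar{\sigma}^2-\sigma_w^2)$. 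Since $R\geq 0$ we always have $\bar{\sigma}^2\geq\sigma_w^2$, hence $\sigma_w^2/\bar{\sigma}^2\in[0,1]$, and therefore the crucial inequality
\[
\frac{R(\bar{\sigma})}{\bar{\sigma}^2}=\delta\Big(1-\frac{\sigma_w^2}{\bar{\sigma}^2}\Big)\leq\delta<1 .
\]
Because $R$ is nondecreasing in $\bar{\sigma}$ and $\bar{\sigma}$ is nondecreasing in $\sigma_w$, the map $\sigma_w\mapsto{\rm AMSE}$ is monotone and the limit $\lim_{\sigma_w\to0}{\rm AMSE}$ exists; it therefore suffices to rule out the value $0$.

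Suppose, for contradiction, that $\lim_{\sigma_w\to0}{\rm AMSE}(\lambda_{*,q},q,\sigma_w)=0$. Then $R(\bar{\sigma})\to0$, and from $\bar{\sigma}^2=\sigma_w^2+\frac1\delta R(\bar{\sigma})$ we conclude $\bar{\sigma}\to0$ as $\sigma_w\to0$. The heart of the argument is then the claim
\[
\lim_{\bar{\sigma}\to0}\frac{R(\bar{\sigma})}{\bar{\sigma}^2}=1 .
\]
Granting this, combining it with the displayed inequality forces $1\leq\delta$, contradicting $\delta<1$; this contradiction establishes the theorem. Note that $R$, and hence this limit, is a function of $q$, $\epsilon$ and the law of $G$ only, and does not involve $\delta$, so the computation is insensitive to whether $\delta<1$ or $\delta>1$.

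It remains to prove the claim, which is the main obstacle. The upper bound $\limsup_{\bar{\sigma}\to0}R(\bar{\sigma})/\bar{\sigma}^2\leq1$ is immediate: taking $\chi=0$ gives $\eta_q(B+\bar{\sigma}Z;0)=B+\bar{\sigma}Z$, so $R(\bar{\sigma})\leq\mathbb{E}(\bar{\sigma}Z)^2=\bar{\sigma}^2$. The hard part is the matching lower bound $\liminf_{\bar{\sigma}\to0}R(\bar{\sigma})/\bar{\sigma}^2\geq1$, which expresses that for $q>1$ no threshold can denoise below the noise floor at leading order. I would split $R_\chi(\bar{\sigma})=(1-\epsilon)\mathbb{E}_Z\eta_q(\bar{\sigma}Z;\chi)^2+\epsilon\,\mathbb{E}_{G,Z}(\eta_q(G+\bar{\sigma}Z;\chi)-G)^2$ and treat the threshold by magnitude: for $\chi$ bounded away from $0$ the heavy shrinkage of the nonzero coordinates makes the second term exceed a fixed positive constant, hence larger than the value $\bar{\sigma}^2$ attained at $\chi=0$, so any minimizer $\chi^*$ must satisfy $\chi^*\to0$; and in this small-$\chi$ regime, exploiting that $\eta_q(\cdot;\chi)$ is smooth for $q>1$, a Taylor expansion of $\eta_q$ about its noiseless argument (with Stein's identity handling the Gaussian cross terms) yields $R(\bar{\sigma})=\bar{\sigma}^2-c\,\bar{\sigma}^{2q}+o(\bar{\sigma}^{2q})$ with $c=\frac{(1-\epsilon)^2(\mathbb{E}|Z|^q)^2}{\epsilon\,\mathbb{E}|G|^{2q-2}}>0$. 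Since $2q>2$, the correction is negligible relative to $\bar{\sigma}^2$ and the claim follows. This expansion is precisely the leading-order computation already carried out in the proof of Theorem \ref{asymp:lqbelowpt}, so in practice I would invoke that analysis rather than repeat it; the only genuinely new ingredient needed here is the control of the large-$\chi$ regime that justifies restricting attention to $\chi^*\to0$, and that is where I expect the main technical care to lie.
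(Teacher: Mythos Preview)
Your overall architecture matches the paper's: write ${\rm AMSE}=\delta(\bar\sigma^2-\sigma_w^2)$, assume $\bar\sigma\to0$, and derive a contradiction from the key limit
\[
\lim_{\sigma\to0}\frac{R(\sigma)}{\sigma^2}\;=\;\lim_{\sigma\to0}R_q\big(\chi_q^*(\sigma),\sigma\big)\;=\;1,
\]
combined with the inequality $R(\bar\sigma)/\bar\sigma^2\le\delta<1$. The upper bound via $\chi=0$ is exactly what the paper does.

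There is, however, a genuine gap in how you propose to obtain the \emph{lower} bound. You plan to invoke the second-order expansion from Theorem~\ref{asymp:lqbelowpt}, but that expansion (specifically Lemmas~\ref{thm:riskell_qgeneralqless2}, \ref{general:lemma} and~\ref{cor:upperbountau*}) is proved under the extra hypothesis $\mathbb{P}(|G|\le t)=O(t)$, which Theorem~\ref{asymp:lqabovept} does \emph{not} assume --- only $\mathbb{E}|G|^2<\infty$. The paper instead gets $\liminf_{\sigma\to0}R_q(\chi_q^*(\sigma),\sigma)\ge1$ by a short direct argument that needs only the second-moment condition: from the decomposition \eqref{uu1},
\[
R_q(\chi_q^*(\sigma),\sigma)\;\ge\;(1-\epsilon)\mathbb{E}\eta_q^2(Z;\chi^*)-\epsilon+2\epsilon\,\mathbb{E}\frac{1}{1+\chi^*q(q-1)|\eta_q(G/\sigma+Z;\chi^*)|^{q-2}},
\]
and then Lemmas~\ref{eq:infinity} and~\ref{lem:optimaltaugoestozero} (both requiring only $\mathbb{E}|G|^2<\infty$) give $\chi^*\to0$ and $|\eta_q(G/\sigma+Z;\chi^*)|\to\infty$, so Fatou yields the bound. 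This is both simpler than the full expansion and valid under the present hypotheses; you should use this in place of invoking Theorem~\ref{asymp:lqbelowpt}.

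A secondary point: your claim that ``$R$ is nondecreasing in $\bar\sigma$'' is not justified (indeed $R(\bar\sigma)=\bar\sigma^2\cdot R_q(\chi_q^*(\bar\sigma),\bar\sigma)$ is $\bar\sigma^2$ times a \emph{decreasing} function, so monotonicity is not automatic). You don't actually need it: working with $\liminf$ already gives the contradiction, and the paper establishes existence of the limit by identifying $\lim_{\sigma_w\to0}\bar\sigma$ as the unique solution of $R_q(\chi_q^*(\sigma),\sigma)=\delta$, using that $R_q(\chi_q^*(\sigma),\sigma)$ is strictly decreasing and continuous in $\sigma$.
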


The proof of this theorem is presented in Appendix \ref{sec:proofthm3full} of the supplementary material. Theorems \ref{asymp:lqbelowpt} and \ref{asymp:lqabovept} together show a notion of phase transition. For $\delta>1$, as $\sigma_w \rightarrow 0$, $\mbox{AMSE} = O(\sigma_w^2)$, and hence it will go to zero,  while $\mbox{AMSE} \nrightarrow 0$ for $\delta<1$. In fact, the phase transition curve $\delta=1$ can be derived from the first dominant term in the expansion of AMSE. If $\delta=1$, the first dominant term is infinity and there will be no successful recovery, while it becomes zero when $\sigma_w=0$ if $\delta>1$. A more rigorous justification can be found in the proof of Theorems \ref{asymp:lqbelowpt} and \ref{asymp:lqabovept}. Therefore, we may conclude that the first order term contains the phase transition information. Moreover, the derived second order term offers us additional important information regarding the accuracy of the phase transition analysis. To provide a comprehensive understanding of these two terms, in Section \ref{sec:simanddis} we will evaluate the accuracy of first and second order approximations to AMSE through numerical studies.

\subsubsection{Results for $q=1$}

So far we have studied the case $1<q \leq 2$. In this section, we study $q=1$, a.k.a. LASSO. In Theorems \ref{asymp:lqbelowpt} and \ref{asymp:lqabovept}, we have characterized the behavior of LQLS with $q \in (1,2]$ for a general class of $G$. It turns out that the distribution of $G$  has a more serious impact on the second dominant term of AMSE for LASSO. We thus analyze it in two different settings. Our first theorem considers the distributions that do not have any mass around zero. 

\begin{theorem}\label{asymp:sparsel1_1}
Suppose $\mathbb{P}(|G|>\mu)=1$ with $\mu$ being a positive constant and $\mathbb{E}|G|^2<\infty, $ then for $\delta> M_1(\epsilon)$\footnote{Recall $M_1(\epsilon)=\inf_{\chi\geq 0} (1-\epsilon) \mathbb{E} \eta_1^2(Z; \chi) + \epsilon (1+ \chi^2)$ with $Z\sim N(0,1)$.}
\begin{eqnarray}\label{asymp:l1}
\hspace{1.cm}{\rm AMSE}(\lambda_{*,1},1,\sigma_w)=\frac{\delta M_1(\epsilon)}{\delta-M_1(\epsilon)}\sigma^2_w+o\Big(\phi \Big(\sqrt{\frac{\delta-M_1(\epsilon)}{\delta}}\frac{\tilde{\mu}}{\sigma_w}\Big)\Big),
\end{eqnarray}
where $\tilde{\mu}$ is any positive constant smaller than $\mu$ and $\phi(\cdot)$ is the density function of standard normal.
\end{theorem}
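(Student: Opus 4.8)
The plan is to reduce everything to the single-equation characterization of Corollary \ref{thm:mseoptimalasymptot} and then perform a careful small-$\bar{\sigma}$ expansion in which the boundedness of $|G|$ away from zero forces all corrections to be Gaussian-tail small. Writing $R(\bar{\sigma},\chi) \triangleq \mathbb{E}_{B,Z}(\eta_1(B+\bar{\sigma} Z;\chi)-B)^2$, Corollary \ref{thm:mseoptimalasymptot} gives ${\rm AMSE}(\lambda_{*,1},1,\sigma_w)=\min_{\chi\ge0}R(\bar{\sigma},\chi)$, where $\bar{\sigma}$ is the unique root of $\bar{\sigma}^2=\sigma_w^2+\tfrac1\delta\min_{\chi\ge0}R(\bar{\sigma},\chi)$. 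The first step is to parametrize the threshold as $\chi=\alpha\bar{\sigma}$ and exploit the scaling identity $\eta_1(au;a\chi)=a\,\eta_1(u;\chi)$. On the event $B=0$ (probability $1-\epsilon$) this yields exactly $\bar{\sigma}^2\,\mathbb{E}\eta_1^2(Z;\alpha)$, reproducing the first piece of $M_1(\epsilon)$.

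The heart of the argument is the event $B=G$ with $|G|>\mu$. Here I would split the expectation over $Z$ according to whether $G+\bar{\sigma} Z$ lies in the ``linear'' regime $|G+\bar{\sigma} Z|>\alpha\bar{\sigma}$ or is annihilated by the threshold. On the linear regime (taking $G>0$, the case $G<0$ being symmetric) one has $\eta_1(G+\bar{\sigma} Z;\alpha\bar{\sigma})-G=\bar{\sigma}(Z-\alpha)$, so the main contribution is $\bar{\sigma}^2\mathbb{E}(Z-\alpha)^2=\bar{\sigma}^2(1+\alpha^2)$, which is the second piece of $M_1(\epsilon)$. The complementary events force $Z$ to be of order $-G/\bar{\sigma}$, i.e. a Gaussian deviation of magnitude at least $\mu/\bar{\sigma}$; their total contribution (the mass on $\{|G+\bar{\sigma} Z|\le\alpha\bar{\sigma}\}$, which costs $G^2$, together with the truncated tail of $\bar{\sigma}^2(Z\mp\alpha)^2$) is bounded by $\mathrm{poly}(1/\bar{\sigma})\,\mathbb{E}[|G|^2\phi(|G|/\bar{\sigma})]\le\mathrm{poly}(1/\bar{\sigma})\,\phi(\mu/\bar{\sigma})$, using $\mathbb{E}|G|^2<\infty$ and $|G|>\mu$. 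Combining the two events,
\[
R(\bar{\sigma},\alpha\bar{\sigma})=\bar{\sigma}^2\big[(1-\epsilon)\mathbb{E}\eta_1^2(Z;\alpha)+\epsilon(1+\alpha^2)\big]+O\!\big(\mathrm{poly}(1/\bar{\sigma})\,\phi(\mu/\bar{\sigma})\big),
\]
uniformly for $\alpha$ in a neighborhood of the minimizer.

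Next I would minimize over $\alpha$. The bracket is exactly $F(\alpha,\epsilon)$ from the proof of Lemma \ref{lem:M1epsprop}, which is strongly convex with a unique minimizer and minimum $M_1(\epsilon)$; strong convexity guarantees that adding the exponentially small perturbation moves the minimizing $\alpha$ by at most an exponentially small amount and perturbs the minimum value by a quantity of the same negligible order, so $\min_{\chi\ge0}R(\bar{\sigma},\chi)=\bar{\sigma}^2 M_1(\epsilon)+O(\mathrm{poly}(1/\bar{\sigma})\phi(\mu/\bar{\sigma}))$. Substituting into the fixed-point equation gives $\bar{\sigma}^2(1-M_1(\epsilon)/\delta)=\sigma_w^2+O(\mathrm{poly}(1/\bar{\sigma})\phi(\mu/\bar{\sigma}))$; since $\delta>M_1(\epsilon)$ the leading coefficient is bounded away from zero, so a short bootstrap yields $\bar{\sigma}^2=\tfrac{\delta}{\delta-M_1(\epsilon)}\sigma_w^2$ up to the same negligible correction, and in particular $\bar{\sigma}\to0$. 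Feeding $\bar{\sigma}^2$ back produces ${\rm AMSE}=\bar{\sigma}^2 M_1(\epsilon)+O(\cdots)=\tfrac{\delta M_1(\epsilon)}{\delta-M_1(\epsilon)}\sigma_w^2+O(\mathrm{poly}(1/\sigma_w)\phi(\sqrt{(\delta-M_1(\epsilon))/\delta}\,\mu/\sigma_w))$.

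The final, and most delicate, step is to convert the remainder into the clean stated form. Because $\bar{\sigma}^2\sim\tfrac{\delta}{\delta-M_1(\epsilon)}\sigma_w^2$, the exponent becomes $\mu/\bar{\sigma}\sim\sqrt{(\delta-M_1(\epsilon))/\delta}\,\mu/\sigma_w$; the polynomial prefactor is then absorbed by lowering $\mu$ to any $\tilde{\mu}<\mu$, since $\phi(c\mu/\sigma_w)/\phi(c\tilde{\mu}/\sigma_w)=\exp(-\tfrac{c^2(\mu^2-\tilde{\mu}^2)}{2\sigma_w^2})$ decays faster than any power of $1/\sigma_w$, giving the asserted $o(\phi(\sqrt{(\delta-M_1(\epsilon))/\delta}\,\tilde{\mu}/\sigma_w))$. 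I expect the main obstacle to be making the error control uniform and self-consistent: the remainder depends on $\bar{\sigma}$, which is itself determined implicitly through $R$, so one must run the tail estimate, the strong-convexity stability of the minimizer, and the fixed-point solve together (e.g. by first establishing $\bar{\sigma}=\Theta(\sigma_w)$ crudely and then refining) rather than in isolation.
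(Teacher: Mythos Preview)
Your proposal is correct and follows the same overall roadmap as the paper: start from Corollary~\ref{thm:mseoptimalasymptot}, show that $\min_\chi R(\bar\sigma,\chi)=\bar\sigma^2 M_1(\epsilon)$ plus an exponentially small correction, feed this into the fixed-point equation to get $\bar\sigma^2\sim\tfrac{\delta}{\delta-M_1(\epsilon)}\sigma_w^2$, and finally absorb polynomial prefactors by passing from $\mu$ to any $\tilde\mu<\mu$. The final cleanup step is handled exactly as you describe.

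The one substantive organizational difference lies in how the minimum of $R$ is controlled. The paper first proves $\chi^*(\sigma)\to\chi^{**}$ by a compactness/subsequence argument (Lemma~\ref{lassotune}), then extracts the exponential convergence rate $|\chi^*(\sigma)-\chi^{**}|=O(\phi(\mu/\sigma-\chi^{**}))$ directly from the first-order condition $\partial_\chi R=0$ (Lemma~\ref{l1:optimal_rate}), and finally bounds $R(\chi^*(\sigma),\sigma)-M_1(\epsilon)$ term by term at the already-located optimum (Lemma~\ref{risk:lq}). You instead obtain the expansion $R(\bar\sigma,\alpha\bar\sigma)=\bar\sigma^2 F(\alpha,\epsilon)+O(\mathrm{poly}(1/\bar\sigma)\phi(\mu/\bar\sigma))$ uniformly in $\alpha$ and then invoke strong convexity of $F(\cdot,\epsilon)$ to transfer the perturbation to the minimum. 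Your route is a bit more conceptual and avoids the explicit first-order-condition algebra, but it does need one preliminary step you only gesture at: before the uniform expansion can be used you must know the true minimizer $\alpha^*(\bar\sigma)$ stays in a bounded set, which is exactly the content of the paper's Lemma~\ref{lassotune} (and follows from $R(\chi^*,\sigma)\le R(0,\sigma)=1$ together with Fatou). Once that is in place, both arguments yield the same bound.
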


The proof of Theorem \ref{asymp:sparsel1_1} is given in Section \ref{sec:proofthm4full}. Different from LQLS with $q\in (1,2]$, we have not derived the exact analytical expression of second dominant term for LASSO. However, since it is exponentially small, the first order term (or phase transition analysis) is sufficient for evaluating the performance of LASSO in the small-error regime. This will be further confirmed by the numerical studies in Section \ref{sec:simanddis}.  Below is our result for the distributions of $G$ that have more mass around zero.

\begin{theorem}\label{asymp:sparsel1_2}
Suppose that $\mathbb{P}(|G|\leq t)=\Theta(t^{\ell})$ (as $t \rightarrow 0$) with $\ell>0$ and $\mathbb{E}|G|^2<\infty$, then for $\delta > M_1(\epsilon)$, 
\begin{eqnarray*}
 - \Theta(\sigma_w^{\ell+2}) &\gtrsim& {\rm AMSE}(\lambda_{*,1},1,\sigma_w)-\frac{\delta M_1(\epsilon)}{\delta-M_1(\epsilon)} \sigma^2_w \\
 &\gtrsim&  -\Theta(\sigma_w^{\ell+2}) \cdot \left(\underbrace{\log \log \ldots \log}_{m\  \rm times} \left(\frac{1}{\sigma_w}\right) \right)^{\ell/2},
\end{eqnarray*}
where $m$ is an arbitrary but finite natural number, and $a \gtrsim b$ means $a \geq b$ holds for sufficiently small $\sigma_w$.
\end{theorem}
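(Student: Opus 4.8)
The plan is to reduce the whole statement to a sharp two-sided estimate of one scalar quantity. By Corollary~\ref{thm:mseoptimalasymptot} together with the positive homogeneity $\eta_1(au;a\chi)=a\,\eta_1(u;\chi)$, the substitution $\chi=\alpha\bar{\sigma}$ rewrites the optimally tuned risk as $\mathrm{AMSE}(\lambda_{*,1},1,\sigma_w)=\bar{\sigma}^2 h(\bar{\sigma})$, where
\[
h(\bar{\sigma})\triangleq\min_{\alpha\ge0}\Big[(1-\epsilon)\,\mathbb{E}\,\eta_1^2(Z;\alpha)+\epsilon\,\mathbb{E}_{G,Z}\big(\eta_1(|G|/\bar{\sigma}+Z;\alpha)-|G|/\bar{\sigma}\big)^2\Big],
\]
and the fixed-point equation \eqref{eq:fixedpointoptimalnew} collapses to $\bar{\sigma}^2=\sigma_w^2+\tfrac1\delta\bar{\sigma}^2 h(\bar{\sigma})$, i.e. $\mathrm{AMSE}=\tfrac{\delta h(\bar{\sigma})}{\delta-h(\bar{\sigma})}\sigma_w^2$ exactly. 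First I would check that $\delta>M_1(\epsilon)$ forces $\bar{\sigma}\to0$ with $\bar{\sigma}=\Theta(\sigma_w)$, and that $h(\bar{\sigma})\to M_1(\epsilon)$, since for each fixed nonzero $G$ the inner expectation converges to $1+\alpha^2$ and the minimand tends to $(1-\epsilon)\mathbb{E}\eta_1^2(Z;\alpha)+\epsilon(1+\alpha^2)$, whose minimum is $M_1(\epsilon)$ by definition. Because $t\mapsto\tfrac{\delta t}{\delta-t}$ is smooth and strictly increasing near $M_1(\epsilon)<\delta$, the theorem reduces to proving that $h(\bar{\sigma})-M_1(\epsilon)$ lies between $-\Theta\big(\bar{\sigma}^\ell(\log\cdots\log(1/\bar{\sigma}))^{\ell/2}\big)$ and $-\Theta(\bar{\sigma}^\ell)$; substituting $\bar{\sigma}=\Theta(\sigma_w)$ then gives the stated orders in $\sigma_w$.

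The second step isolates the excess risk of soft-thresholding at a single spike. A Stein/SURE computation, or equivalently a direct split into the three thresholding regions, gives for $s=|G|/\bar{\sigma}\ge0$
\[
\mathbb{E}_Z\big(\eta_1(s+Z;\alpha)-s\big)^2=1+\alpha^2-q(\alpha;s),
\]
\[
q(\alpha;s)=2\,\mathbb{P}(|s+Z|\le\alpha)+\mathbb{E}\big[(\alpha^2-(s+Z)^2)\mathbf{1}(|s+Z|\le\alpha)\big],
\]
where both summands are nonnegative because the indicator forces $(s+Z)^2\le\alpha^2$; hence $q(\alpha;s)\asymp\mathbb{P}(|s+Z|\le\alpha)$ is bounded, strictly positive on compact $s$-sets, and decays like $e^{-cs^2}$ as $s\to\infty$. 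Writing $\alpha^*=\chi^*(\epsilon)$ for the unique minimizer of the limiting objective (unique by the strong convexity established in Lemma~\ref{lem:M1epsprop}), an envelope argument supplies both bounds: evaluating the $\bar{\sigma}$-objective at $\alpha^*$ yields $h(\bar{\sigma})-M_1(\epsilon)\le-\epsilon\,\mathbb{E}_G\,q(\alpha^*;|G|/\bar{\sigma})$, while evaluating the limiting objective at the true minimizer $\alpha_{\bar{\sigma}}$ gives the reverse inequality, the resulting gap in the limiting objective being $O((\alpha_{\bar{\sigma}}-\alpha^*)^2)$, of higher order by strong convexity. Everything therefore hinges on the two-sided size of $C(\bar{\sigma})\triangleq\mathbb{E}_G\,q(\alpha^*;|G|/\bar{\sigma})$.

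The decisive step estimates $C(\bar{\sigma})$ under $\mathbb{P}(|G|\le t)=\Theta(t^\ell)$. The lower bound is clean: $q(\alpha^*;\cdot)$ is bounded below on $[0,1]$, and the law of $|G|$ puts mass $\Theta(\bar{\sigma}^\ell)$ on $[0,\bar{\sigma}]$, so restricting the expectation to $\{|G|\le\bar{\sigma}\}$ gives $C(\bar{\sigma})\gtrsim\bar{\sigma}^\ell$; this yields the upper bound $\mathrm{AMSE}-\tfrac{\delta M_1(\epsilon)}{\delta-M_1(\epsilon)}\sigma_w^2\lesssim-\Theta(\sigma_w^{\ell+2})$. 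The iterated logarithms enter only through the matching upper bound on $C(\bar{\sigma})$, where the Gaussian decay $q(\alpha^*;s)\lesssim e^{-cs^2}$ competes with the polynomial mass growth $\mathbb{P}(|G|\le t)\lesssim t^\ell$: a single split of the range of $|G|$ at a cutoff $\tau$ that balances $\tau^\ell$ against $e^{-c\tau^2/\bar{\sigma}^2}$, namely $\tau\asymp\bar{\sigma}\sqrt{\log(1/\bar{\sigma})}$, costs one factor $(\log(1/\bar{\sigma}))^{\ell/2}$; refining the small-$|G|$ part by splitting again turns $\log$ into $\log\log$, and iterating the refinement $m$ times produces $C(\bar{\sigma})\lesssim\bar{\sigma}^\ell\big(\underbrace{\log\cdots\log}_{m}(1/\bar{\sigma})\big)^{\ell/2}$ for every finite $m$, exactly the stated lower bound on AMSE.

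I expect this recursive upper bound on $C(\bar{\sigma})$ to be the main obstacle. Since $\mathbb{P}(|G|\le t)=\Theta(t^\ell)$ controls only the distribution function and not a density, one cannot integrate by parts against a density, and the recursion is precisely what the iterated-log factor in the statement records; an integration by parts against the two-sided CDF bound suggests the truth is a clean $\Theta(\bar{\sigma}^\ell)$, but extracting it would demand extra regularity of $G$ near zero. A secondary difficulty is making the reduction quantitative: one must control the drift $\alpha_{\bar{\sigma}}-\alpha^*$ and the deviation of $\bar{\sigma}$ from $\sqrt{\delta/(\delta-M_1(\epsilon))}\,\sigma_w$ at order $o(\bar{\sigma}^\ell)$, so that $C(\bar{\sigma})$ indeed governs the second-order correction rather than being swamped by the error in locating $\bar{\sigma}$ and $\alpha_{\bar{\sigma}}$.
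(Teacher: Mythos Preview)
Your plan is correct and coincides with the paper's proof: the same fixed-point reduction $\mathrm{AMSE}=\tfrac{\delta h(\bar\sigma)}{\delta-h(\bar\sigma)}\sigma_w^2$, the same envelope upper bound $h(\bar\sigma)-M_1(\epsilon)\le R(\chi^{**},\bar\sigma)-M_1(\epsilon)\le -\Theta(\bar\sigma^\ell)$, and exactly the same iterated-logarithm splitting of the range of $|G|$ to control the Gaussian-tail expectation (the paper carries this out on $\mathbb{E}\phi(\chi^*-|G|/\sigma)$ in Lemma~\ref{l1:optimal_rate_general}). The one organizational difference is that the paper first establishes the rate of $\chi^*(\sigma)-\chi^{**}$ via the first-order condition and then bounds $R(\chi^*,\sigma)-M_1(\epsilon)$ term by term as in Lemma~\ref{risk:lq}, whereas your envelope argument goes directly. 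Your route is in fact slightly cleaner, and your ``secondary difficulty'' is overstated: since the limiting objective at $\alpha_{\bar\sigma}$ already exceeds $M_1(\epsilon)$, you get $h(\bar\sigma)-M_1(\epsilon)\ge-\epsilon\,\mathbb{E}_G q(\alpha_{\bar\sigma};|G|/\bar\sigma)$ without any control on $\alpha_{\bar\sigma}-\alpha^*$ beyond boundedness (supplied by Lemma~\ref{lassotune}); the iterated-log upper bound on $\mathbb{E}_G q(\alpha;\cdot)$ is uniform for $\alpha$ in compact sets, so no rate on the drift is needed. Likewise the passage from $\bar\sigma$ to $\sigma_w$ needs only $\bar\sigma/\sigma_w\to\sqrt{\delta/(\delta-M_1(\epsilon))}$, which your fixed-point identity already delivers.
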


The proof of this theorem can be found in Appendix \ref{sec:prooflassogeralepsless1} of the supplementary material. It is important to notice the difference between Theorems \ref{asymp:sparsel1_1} and \ref{asymp:sparsel1_2}. The first point we would like to emphasize is that the first dominant terms are the same in both cases. The second dominant terms are different though. As we will show in Section \ref{sec:proofthm4full} and Appendix \ref{sec:prooflassogeralepsless1}, similar to LQLS for $1<q\leq 2$, the second dominant terms are in fact negative. Hence, the actual AMSE will be smaller than the one predicted by the first dominant term. Furthermore, note that the magnitude of the second dominant term in Theorem \ref{asymp:sparsel1_2} is much larger than that in Theorem \ref{asymp:sparsel1_1}. This seems intuitive. LASSO tends to shrink the parameter coefficients towards zero, and hence, if the true $\beta$ has more mass around zero, the AMSE will be smaller. The more mass the distribution of $G$ has around zero, the better the second order term will be. Our next theorem discusses what happens if $\delta < M_1(\epsilon)$.

\begin{theorem}\label{them:ellabovpt}
Suppose that $\mathbb{E}|G|^2<\infty$. Then for $\delta < M_1(\epsilon)$, 
\begin{eqnarray}\label{mse:lp2}
\lim_{\sigma_w \rightarrow 0} {\rm AMSE}(\lambda_{*,1},1,\sigma_w) >0.
\end{eqnarray}
\end{theorem}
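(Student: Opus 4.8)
The plan is to lean on Corollary~\ref{thm:mseoptimalasymptot}, which collapses the optimally tuned LASSO problem to the single equation \eqref{eq:fixedpointoptimalnew} with $q=1$. Writing $\bar\sigma$ for its unique solution, \eqref{eq:lassoobs:optimal} gives ${\rm AMSE}(\lambda_{*,1},1,\sigma_w)=\min_{\chi\ge0}\mathbb{E}(\eta_1(B+\bar\sigma Z;\chi)-B)^2$, which by \eqref{eq:fixedpointoptimalnew} equals $\delta(\bar\sigma^2-\sigma_w^2)$. The first move is a rescaling: using the homogeneity $\eta_1(au;a\chi)=a\,\eta_1(u;\chi)$ and the substitution $\chi=\bar\sigma\alpha$, together with the mixture form of $p_\beta$, I define
\[
\psi(\bar\sigma)\ \triangleq\ \min_{\alpha\ge0}\Big[(1-\epsilon)\,\mathbb{E}\,\eta_1^2(Z;\alpha)+\epsilon\,\mathbb{E}\big(\eta_1(G/\bar\sigma+Z;\alpha)-G/\bar\sigma\big)^2\Big].
\]
Then the fixed-point relation reads $\psi(\bar\sigma)=\delta\,(1-\sigma_w^2/\bar\sigma^2)$ and ${\rm AMSE}=\bar\sigma^2\,\psi(\bar\sigma)=\delta(\bar\sigma^2-\sigma_w^2)$. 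The point of this normalization is that the bracketed integrand is exactly $F(\alpha,\epsilon)$ of Lemma~\ref{lem:M1epsprop} once $G/\bar\sigma$ is sent to its large-magnitude limit, which links the small-noise behavior of $\psi$ to $M_1(\epsilon)$.

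The core of the argument is the claim $\lim_{\bar\sigma\to0}\psi(\bar\sigma)=M_1(\epsilon)$. For each fixed $\alpha$, as $\bar\sigma\downarrow0$ the argument $|G|/\bar\sigma\to\infty$ (recall $g$ carries no mass at $0$), and the soft-thresholding risk $r(u,\alpha)\triangleq\mathbb{E}(\eta_1(u+Z;\alpha)-u)^2$ increases monotonically to its large-signal value $1+\alpha^2$; hence the bracketed function rises pointwise to $F(\alpha,\epsilon)$, and $\psi$ is monotone nonincreasing in $\bar\sigma$. Monotone convergence to the continuous limit $F(\cdot,\epsilon)$ yields, via Dini's theorem, uniform convergence on compact sets, so the minima over any fixed $[0,A]$ converge to $\min_{[0,A]}F(\cdot,\epsilon)$. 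The hard part will be to confine the minimizers of the bracket to a bounded set uniformly in small $\bar\sigma$: this requires a coercivity (lower-bound) estimate on $r$ showing that if the rescaled threshold $\alpha$ escapes to infinity then either the surviving-signal contribution, of order $1+\alpha^2$, or the fully-thresholded contribution, of order $\mathbb{E}|G|^2/\bar\sigma^2$, blows up, forcing the bracket far above its minimum. Once minimizers are pinned inside a fixed interval containing $\chi^*(\epsilon)$, the local uniform convergence gives $\psi(\bar\sigma)\to\min_{\alpha\ge0}F(\alpha,\epsilon)=M_1(\epsilon)$.

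To finish, I record that $\psi$ is continuous and nonincreasing with $\psi(\bar\sigma)\to M_1(\epsilon)$ as $\bar\sigma\to0$ and $\psi(\bar\sigma)\to0$ as $\bar\sigma\to\infty$ (the latter because $G/\bar\sigma\to0$ makes the bracket $\mathbb{E}\eta_1^2(Z;\alpha)$, whose infimum is $0$). Since $\delta<M_1(\epsilon)$ by hypothesis, there is a value $\bar\sigma_0>0$ with $\psi(\bar\sigma_0)=\delta$. The fixed point forces $\psi(\bar\sigma)=\delta(1-\sigma_w^2/\bar\sigma^2)<\delta$, so monotonicity gives $\bar\sigma>\bar\sigma_0$ for every $\sigma_w>0$; letting $\sigma_w\to0$ makes the right-hand side tend to $\delta=\psi(\bar\sigma_0)$, and continuity plus monotonicity drive $\bar\sigma\downarrow\bar\sigma_0$. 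Therefore ${\rm AMSE}=\delta(\bar\sigma^2-\sigma_w^2)\to\delta\,\bar\sigma_0^2>0$, which not only establishes \eqref{mse:lp2} but shows the limit exists and identifies its value. The only genuinely delicate ingredient is the uniform boundedness of the minimizers in the second paragraph; the monotonicity and continuity of $r(u,\alpha)$ that it relies on are standard soft-thresholding facts that I would import from the supplementary estimates.
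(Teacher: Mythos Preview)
Your proposal is correct and follows essentially the same route as the paper. Your function $\psi(\bar\sigma)$ is precisely $R(\chi^*(\bar\sigma),\bar\sigma)$ in the paper's notation; the two key ingredients you isolate---$\lim_{\bar\sigma\to0}\psi(\bar\sigma)=M_1(\epsilon)$ and the monotonicity of $\psi$---are exactly Lemma~\ref{lassotune} and Lemma~\ref{lem:posfirstlemma}, and the paper's proof of Theorem~\ref{them:ellabovpt} simply invokes these (via the template of Theorem~\ref{asymp:lqabovept}) to conclude $\bar\sigma\not\to0$. The only cosmetic differences are that you sketch a Dini-type argument for the limit (the paper uses a subsequence/DCT argument) and that you push slightly further to identify the limiting AMSE as $\delta\bar\sigma_0^2$; for the latter you implicitly need \emph{strict} monotonicity of $\psi$, which the paper has, so you should say ``decreasing'' rather than ``nonincreasing''.
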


The proof  is presented in Appendix \ref{sec:proofthm6full} of the supplementary material. Similarly as we discussed in Section \ref{discussion:q12}, Theorems \ref{asymp:sparsel1_1}, \ref{asymp:sparsel1_2} and \ref{them:ellabovpt} imply the phase transition curve of LASSO. Such information can be obtained from the first dominant term in the expansion of AMSE as well.

\section{Related work}\label{sec:relatedwork}

\subsection{Other phase transition analyses and $n/ p \rightarrow \delta$ asymptotic results}

The asymptotic framework that we considered in this paper evolved in a series of papers by Donoho and Tanner \cite{donoho2004most, donoho2006high, donoho2005sparse, donoho2005neighborliness}. This framework was used before on similar problems in engineering and physics \cite{guo2005randomly, tanaka2002statistical, coolen2005mathematical}. Donoho and Tanner characterized the phase transition curve for LASSO and some of its variants. Inspired by this framework, many researchers started exploring the performance of different algorithms or estimates under this asymptotic settings \cite{ stojnic2009various, amelunxen2014living, thrampoulidis2016precise, el2013robust,  karoui2013asymptotic, donoho2013high, donoho2013phase, donoho2015variance, bradic2015robustness, donoho2011noise, zheng2015does,  rangan2009asymptotic, krzakala2012statistical, BaMo10, BaMo11}.

Our paper performs the analysis of LQLS under such asymptotic framework. Also, we adopt the message passing analysis that was developed in a series of papers \cite{DoMaMoNSPT, donoho2009message, maleki2010approximate, BaMo10, BaMo11}. The notion of phase transition we consider is similar to the one introduced in \cite{DoMaMoNSPT}. However, there are three major differences: (i) The analysis of \cite{DoMaMoNSPT} is performed for LASSO, while we have generalized the analysis to any LQLS with $1<q\leq2$. (ii) The analysis of \cite{DoMaMoNSPT} is performed on the least favorable distribution for LASSO, while here we characterize the effect of the distribution of $G$ on the AMSE as well. (iii) Finally, \cite{DoMaMoNSPT} is only concerned with the first dominant term in AMSE of LASSO, while we derive the second dominant term whose importance has been discussed in the last few sections. 

Another line of research that has connections with our analysis is presented in a series of papers \cite{oymak2013squared, oymak2016sharp, thrampoulidis2016precise}. In  \cite{thrampoulidis2016precise} the authors have derived a minimax formulation that (if it has a unique solution and is solved) can give an accurate characterization of the asymptotic mean square error. Compared with Theorem \ref{thm:eqpseudolip} in our paper, that result works for more general penalized M-estimators, while Theorem  \ref{thm:eqpseudolip} holds for general pseudo-Lipschitz loss functions. When applying the minimax formulation in \cite{thrampoulidis2016precise} to bridge regression, the AMSE formula in \eqref{eq:def:AMSE} can be recovered. However, the derivation of phase transition curves for bridge regression under optimal tuning is not found in \cite{thrampoulidis2016precise}. Furthermore, \cite{oymak2013squared, oymak2016sharp} proposed a geometric approach to characterize the risk of penalized least square estimates with general convex penalties. In particular, both papers obtained phase transition results based on a key convex geometry quantity called ``Gaussian squared-distences". However, \cite{oymak2013squared} only rigorously proved the negative results (equivalent to Theorems \ref{asymp:lqabovept} and \ref{them:ellabovpt}) and left the positive part as a conjecture. The phase transition results in \cite{oymak2016sharp} are concerned with the prediction errors $\|y-X\hat{\beta}\|_2^2$ and $\|X\beta-X\hat{\beta}\|_2^2$, rather than the estimation error $\|\hat{\beta}-\beta\|_2^2$. Also, neither of the two papers went beyond the first-order or phase transition analysis of the risk.

Several researchers have also worked on the analysis of LQLS for $q <1$  \cite{kabashima2009typical, zheng2015does, rangan2009asymptotic}. These analyses are based on non-rigorous, but widely accepted replica method from statistical physics. The current paper extends the analysis of \cite{zheng2015does} to $q \geq 1$ case, makes the analysis rigorous by using the message passing framework rather than the replica method, and finally provides a higher order analysis.

\subsection{Other analysis frameworks}
One of the first papers that compared the performance of penalization techniques is \cite{hoerl1970ridge} which showed that there exists a value of $\lambda$ with which Ridge regression, i.e. LQLS with $q=2$, outperforms the vanilla least squares estimator. Since then, many more regularizers have been introduced to the literature each with a certain purpose. For instance, we can mention LASSO \cite{tibshirani1996regression}, elastic net \cite{zou2005regularization}, SCAD \cite{fan2001variable}, bridge regression \cite{frank1993statistical}, and more recently SLOPE \cite{bogdan2015slope}. There has been a large body of work on studying all these regularization techniques. We partition all the work into the following categories and explain what in each category has been done about the bridge regression:

\begin{itemize}
\item[(i)] Simulation results: One of the main motivations for our work comes from the nice simulation study of the bridge regression presented in \cite{fu1998penalized}.  This paper finds the optimal values of $\lambda$ and $q$ by generalized cross validation and compares the performance of the resulting estimator with both LASSO and ridge. The main conclusion is that the bridge regression can outperform both LASSO and ridge. Given our results we see that if sparsity is present in $\beta$, then smaller values of $q$ perform better than ridge (in their second dominant term). 


\item[(ii)] Asymptotic study: Knight and Fu \cite{knight2000asymptotics} studied the asymptotic properties of bridge regression under the setting where $n \rightarrow \infty$, while $p$ is fixed. They established the consistency and asymptotic normality of the estimates under quite general conditions. Huang et al. \cite{huang2008asymptotic} studied LQLS for $q<1$ under a high-dimensional asymptotic setting in which $p$ grows with $n$ but is still assumed to be less than $n$. They not only derived the asymptotic distribution of the estimators, but also proved LQLS has oracle properties in the sense of Fan and Li \cite{fan2001variable}. They have also considered the case $p>n$, and have shown that under partial orthogonality assumption on $X$, bridge regression distinguishes correctly between covariates with zero and non-zero coefficients.  Note that under the asymptotic regime of our paper, both LASSO and the other bridge estimators have false discoveries \cite{su2015false} and possibly non-zero AMSE. Hence, they may not provide consistent estimates. We should also mention that the analysis of bridge regression with $q\in [0,1)$ under the asymptotic regime $n/p \rightarrow \delta$ is presented in \cite{zheng2015does}. Finally, the performance of LASSO under a variety of conditions has been studied extensively. We refer the reader to 
\cite{buhlmann2011statistics} for the review of those results.

\item [(iii)] Non-asymptotic bounds: One of the successful approaches that has been employed for studying the performance of regularization techniques such as LASSO is the minimax analysis \cite{bickel2009simultaneous}, \cite{raskutti2011minimax}. We refer the reader to \cite{buhlmann2011statistics} for a complete list of references on this direction. In this minimax approach, a lower bound for the prediction error or mean square error of any estimation technique is first derived. Then a specific estimate, like the one returned by LASSO, is considered and an upper bound is derived assuming the design matrices satisfy certain conditions such as restrictive eigenvalue assumption \cite{bickel2009simultaneous, koltchinskii2009sparsity}, restricted isometry condition \cite{candes2008restricted}, or coherence conditions \cite{bunea2007sparsity}. These conditions can be confirmed for matrices with iid subgaussian elements. Based on these evaluations, if the order of the upper bound for the estimate under study matches the order of the lower bound, we can claim that the estimate (e.g. LASSO) is minimax rate-optimal. This approach has some advantages and disadvantages compared to our asymptotic approach: (i) It works under more general conditions. (ii) It provides information for any sample size. The price paid in the minimax analysis is that the constants derived in the results are usually not sharp and hence many schemes have similar guarantees and cannot be compared to each other. Our asymptotic framework looses the generality and in return gives sharp constants that can then be used in evaluating and  comparing different schemes as we do in this paper. Along similar directions, \cite{koltchinskii2009sparsity} has studied the penalized empirical risk minimization with $\ell_q$ penalties for the values of $q \in [1,1+ \frac{1}{\log p}]$ and has found upper bounds on the excess risk of these estimators (oracle inequalities). Similar to minimax analysis, although the results of this analysis enjoy generality, they suffer from loose constants that impede an accurate comparisons of different bridge estimators.  
\end{itemize}

\begin{figure}
	\centering
	\subfloat[][]{\includegraphics[width=1.9in]{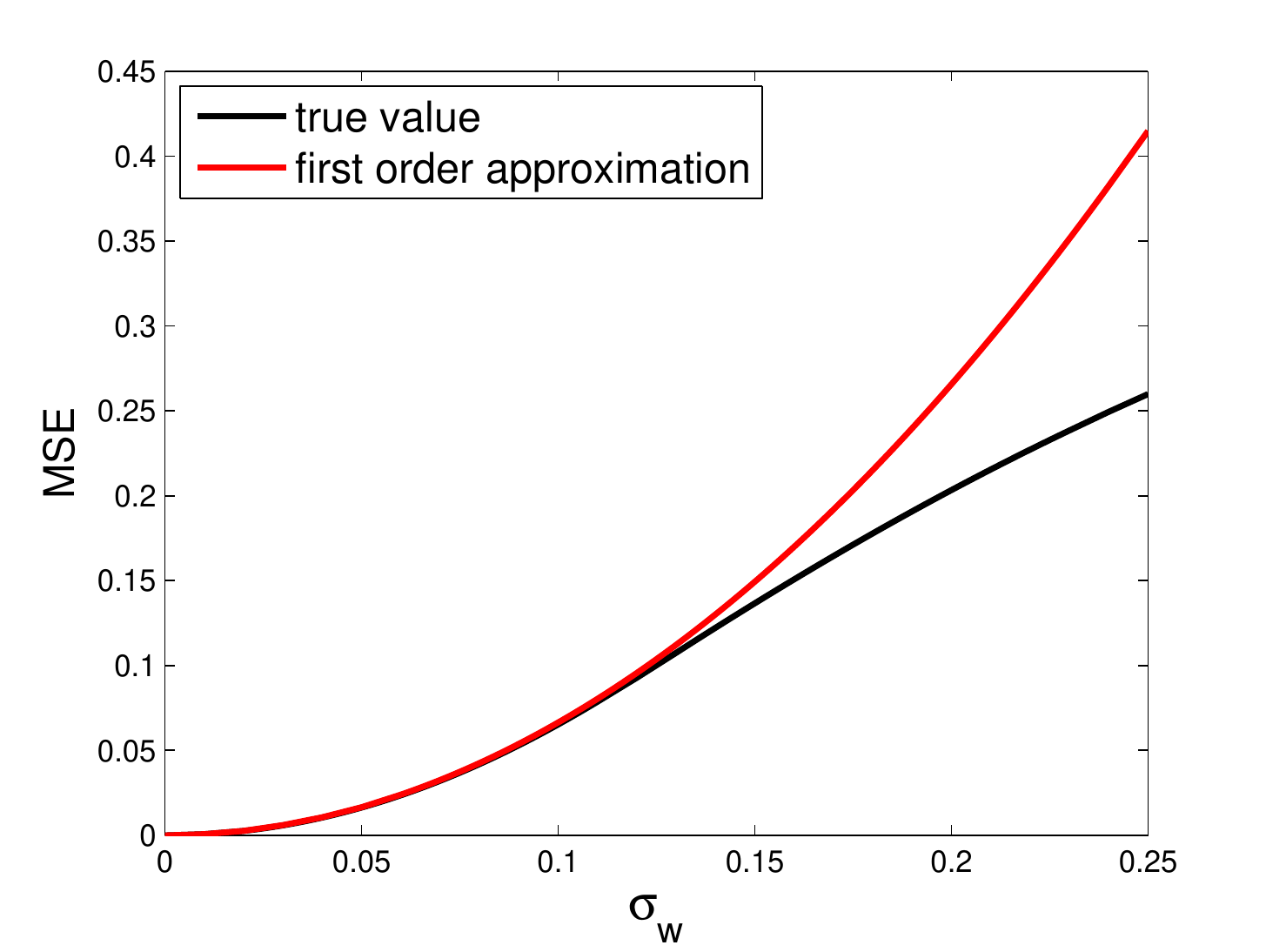}}
	\subfloat[][]{\includegraphics[width=1.9in]{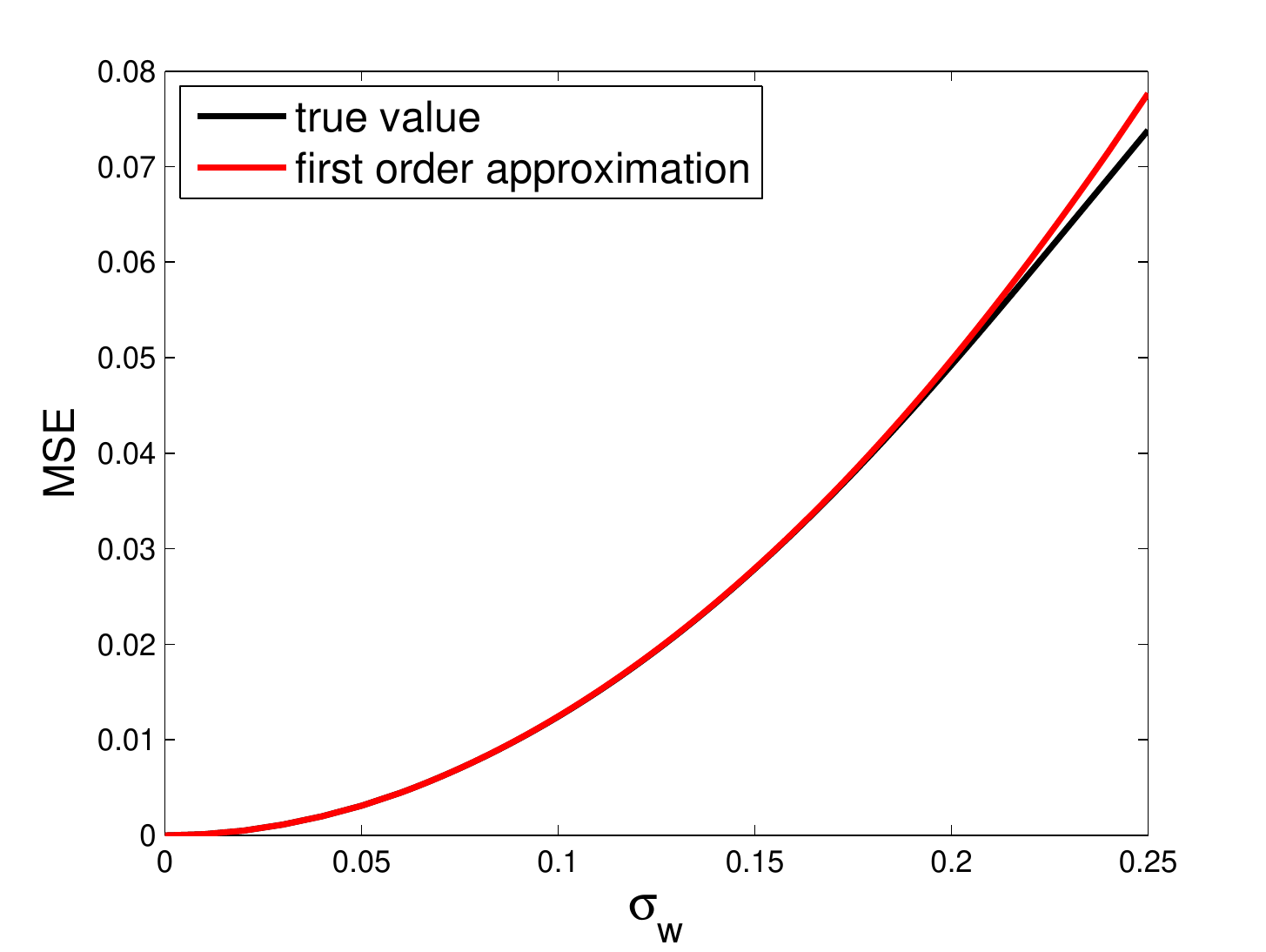}}
	
	\subfloat[][]{\includegraphics[width=1.9in]{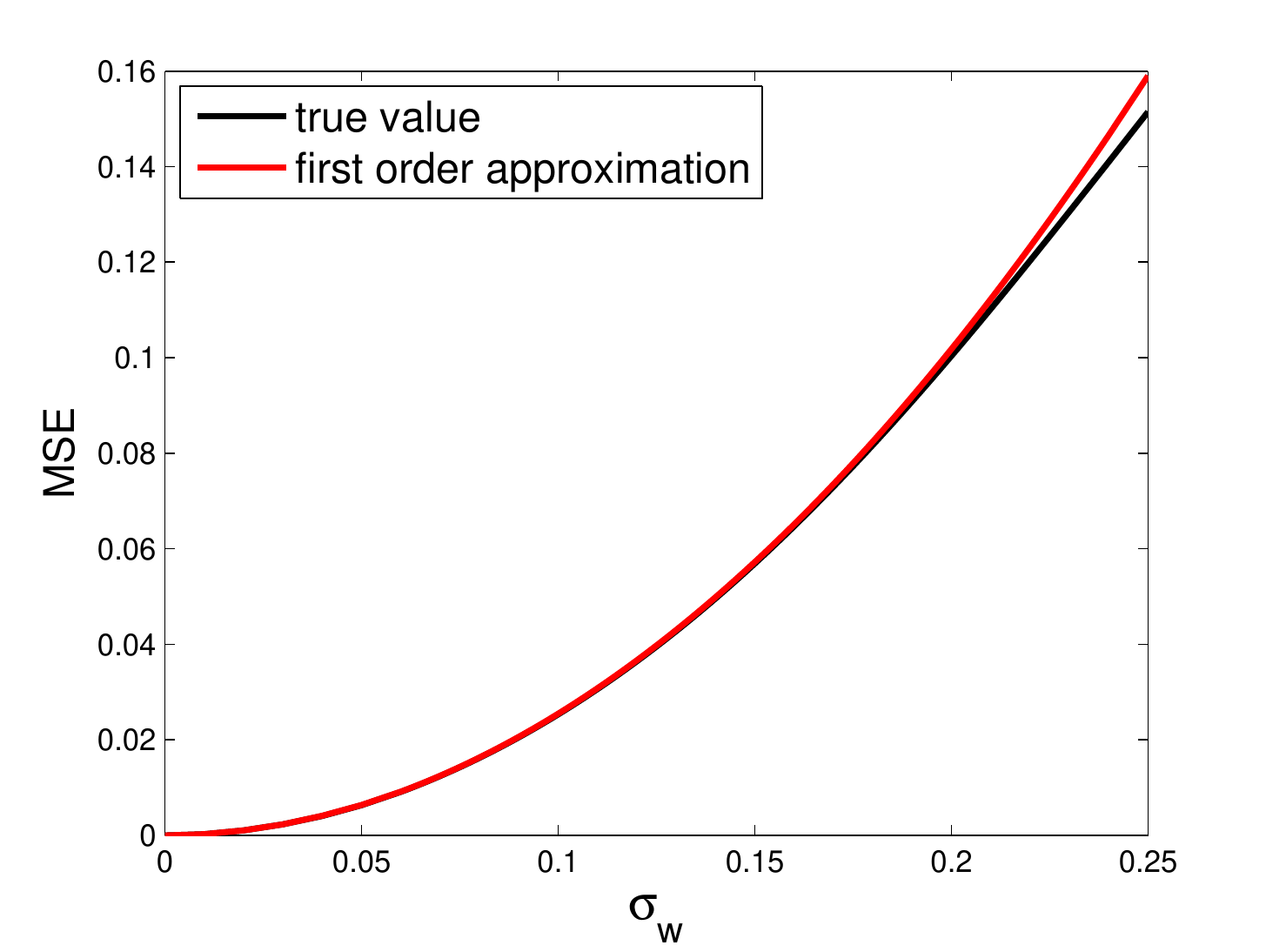}}
	\subfloat[][]{\includegraphics[width=1.9in]{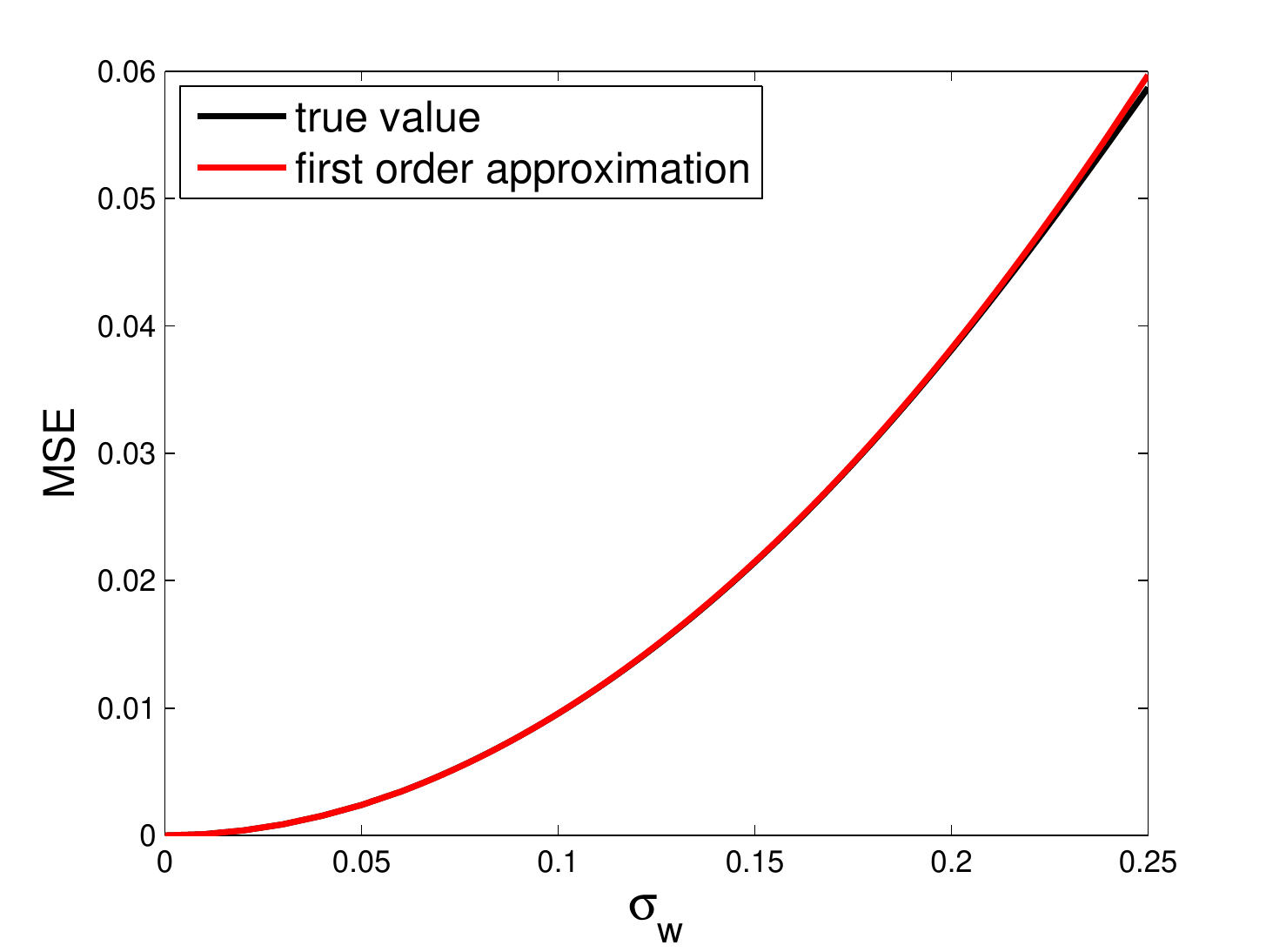}}
	
	\subfloat[][]{\includegraphics[width=1.9in]{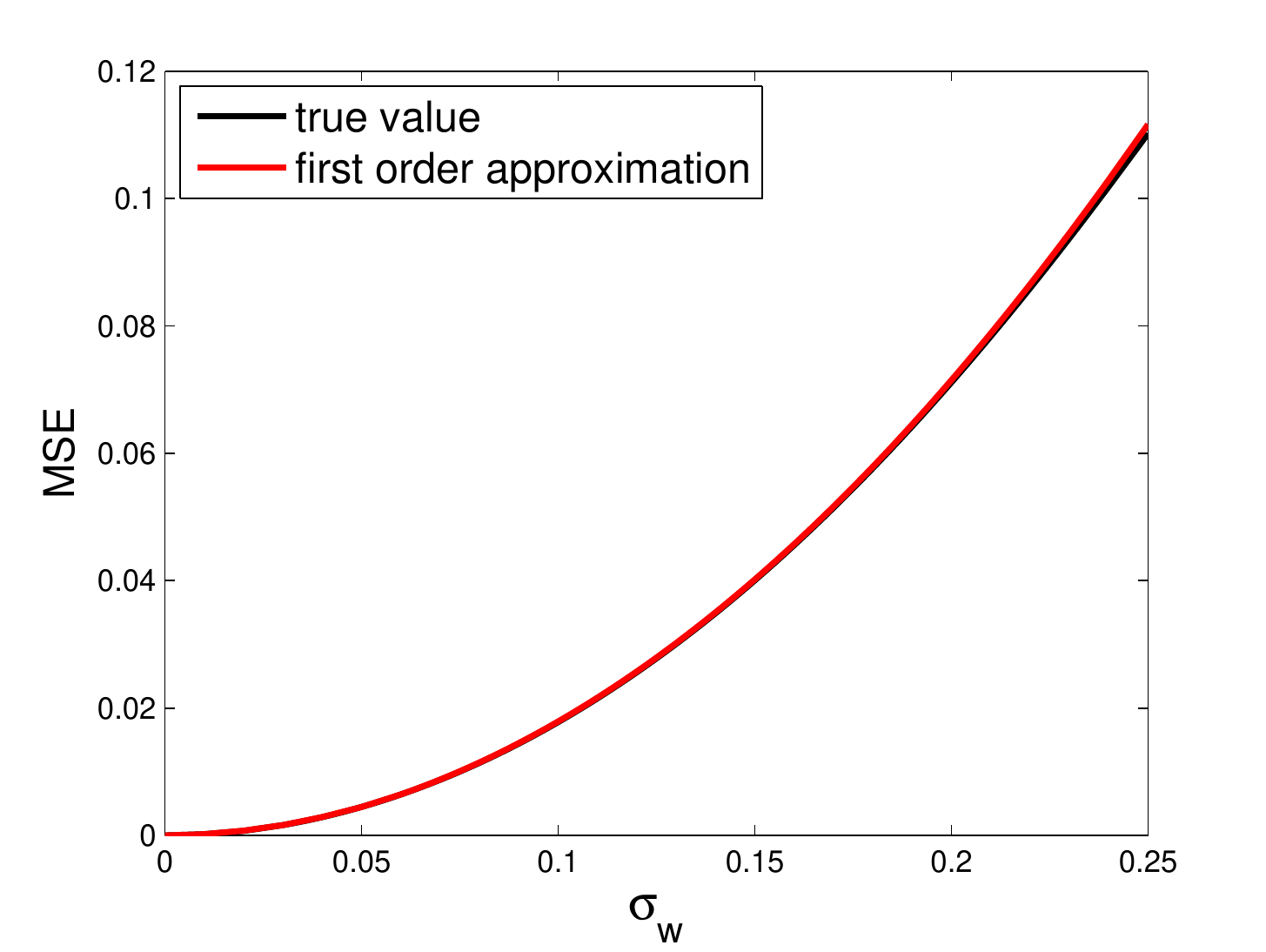}}
	\subfloat[][]{\includegraphics[width=1.9in]{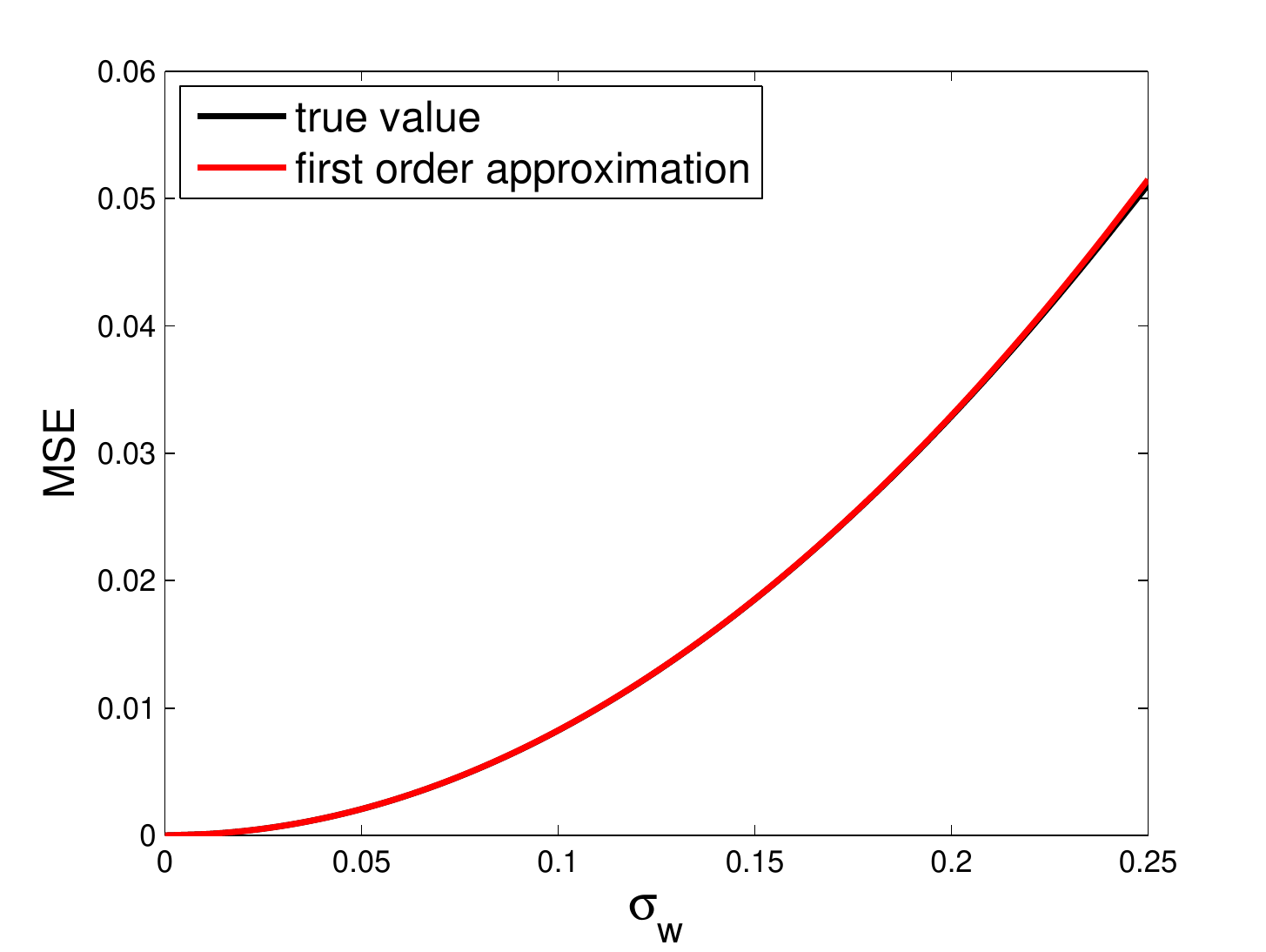}}	

	
	\caption{Plots of actual AMSE and its approximations for (a) $\delta = 1.1$ and $\epsilon =0.7$, (b) $\delta= 1.1$ and $\epsilon =0.25$, (c) $\delta= 1.5$ and $\epsilon =0.7$, (d) $\delta = 1.5$  and $\epsilon =0.25$, (e) $\delta=2$ and $\epsilon =0.7$, (f) $\delta=2$ and $\epsilon =0.25$. }
	
	\label{fig:delta15}
\end{figure}

\section{Numerical results and discussions}\label{sec:simanddis}
\subsection{Summary}

The analysis of AMSE we presented in Section \ref{secondorder} is performed as $\sigma_w \rightarrow 0$. For such asymptotic analysis, it would be interesting to check the approximation accuracy of the first and second order expansions of AMSE over a reasonable range of $\sigma_w$. Towards this goal, this section performs several numerical studies to (i) evaluate the  accuracy of the first and second order expansions discussed in Section \ref{secondorder}, (ii) discover situations in which the first order approximation is not accurate (for reasonably small noise levels) while the second order expansion is, and (iii) identify situations where both first and second orders are inaccurate and propose methods for improving the approximations. Sections \ref{ssec:sim:lasso} and \ref{ssec:sim:bridgeq1} study the performance of LASSO and other bridge regression estimators with $q>1$ respectively. Finally, we should also mention that all the results presented in this paper are concerned with the asymptotic setting $n,p \rightarrow \infty$ and $n/p \rightarrow \delta$. To evaluate the accuracy of these results for finite sample sizes, we have performed additional simulations whose results are presented in Appendix \ref{add:simulations} of the supplementary material.

\subsection{LASSO}\label{ssec:sim:lasso}
One of the conclusions from Theorem \ref{asymp:sparsel1_1} is that the first dominant term provides a good approximation of AMSE for the LASSO problem when the distribution of $G$ does not have a large mass around 0. To test this claim we conduct the following numerical experiment. We set the parameters of our problem instances in the following way:
 
\begin{enumerate}

\item $\delta$ can take any value in $\{1.1, 1.5, 2\}$.

\item $\epsilon$ can take values in $\{0.25, 0.7 \}$.

\item $\sigma_w$ ranges within the interval $[0, 0.25]$.

\item the distribution of $G$ is specified as $g(b)=0.5 \delta_1(b) + 0.5\delta_{-1}(b)$, where $\delta_a(\cdot)$ denotes a point mass at point $a$.
\end{enumerate}
 We then use the formula in Corollary \ref{thm:mseoptimalasymptot} to calculate AMSE$(\lambda_{*,1}, 1, \sigma_w)$. Finally, we compare AMSE$(\lambda_{*,1}, 1, \sigma_w)$, computed numerically from \eqref{eq:lassoobs:optimal} and \eqref{eq:fixedpointoptimalnew},  with its first order approximation provided in Theorem \ref{asymp:sparsel1_1}. The results of this experiment are summarized in Figure \ref{fig:delta15}. As is clear in this figure, the first order expansion gives a very good approximation for AMSE over a large range of $\sigma_w$.

\subsection{Bridge regression estimators with $q>1$}\label{ssec:sim:bridgeq1}
In this numerical experiment, we would like to vary $\sigma_w$ and see under what conditions our first order or second order expansions can lead to accurate approximation of AMSE for a wide range of $\sigma_w$. Throughout this section, we set the distribution of $G$ to $g(b)=0.5 \delta_1(b) + 0.5\delta_{-1}(b)$, as we did in Section \ref{ssec:sim:lasso}. We then investigate different conditions by specifying various values of other parameters in our problem instances. The expansion of AMSE for $q>1$ is presented in Theorem \ref{asymp:lqbelowpt}. For $q\in(1,2)$, recall the two terms in the expansion below
\begin{eqnarray}\label{refresh}
\hspace{0.9cm} {\rm AMSE}(\lambda_{*,q}, q, \sigma_w)=\frac{\sigma^2_w}{1-1/\delta}-\frac{\delta^{q+1}(1- \epsilon)^2(\mathbb{E} |Z|^q)^2}{(\delta-1)^{q+1}\epsilon  \mathbb{E} |G|^{2q-2}}\sigma_w^{2q}+o(\sigma_w^{2q}).
\end{eqnarray}

We expect the first order term to present a good approximation over a reasonably large range of $\sigma_w$, when the second order term is sufficiently small. According to the analytical form of the second order term in \eqref{refresh}, it is small if the following three conditions hold simultaneously: (i) $\delta$ is not close to $1$, (ii) $\epsilon$ is not small, and  (iii) $q$ is not close to 1. Our first numerical result shown in Figure \ref{fig:everythingOk1} is in agreement with this claim. In this simulation we have set three different cases for $\delta, \epsilon$ and $q$ so that they satisfy the above three conditions. The non-zero elements of $\beta$ are independently drawn from $0.5 \delta_1(b) + 0.5 \delta_{-1}(b)$. As demonstrated in this figure, the first order term approximates AMSE accurately. Another interesting finding is that the second order expansion provides an even better approximation.

\begin{figure}
\centering
	\subfloat[][]{\includegraphics[width=1.6in]{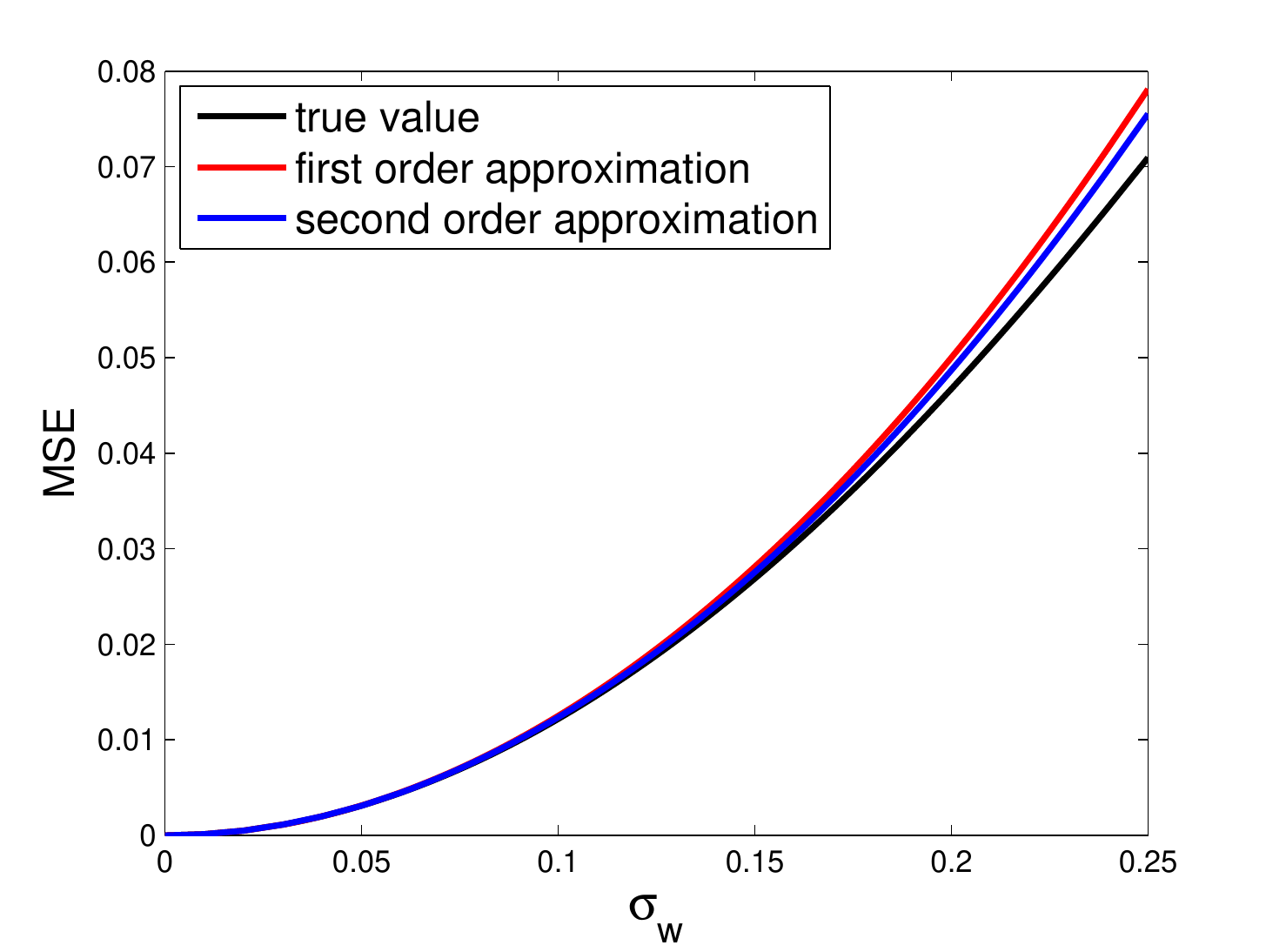}}
	\subfloat[][]{\includegraphics[width=1.6in]{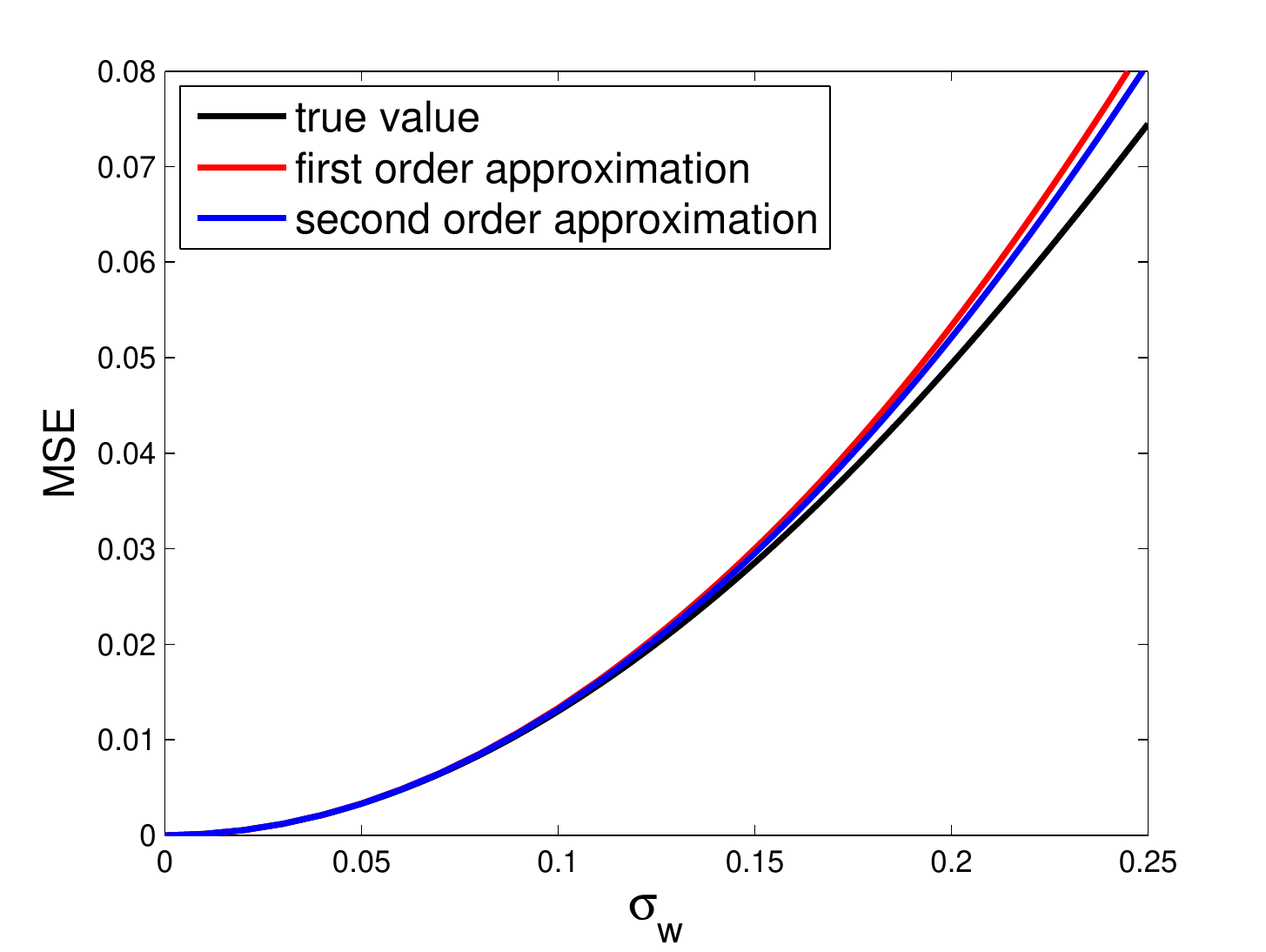}}
	\subfloat[][]{\includegraphics[width=1.6in]{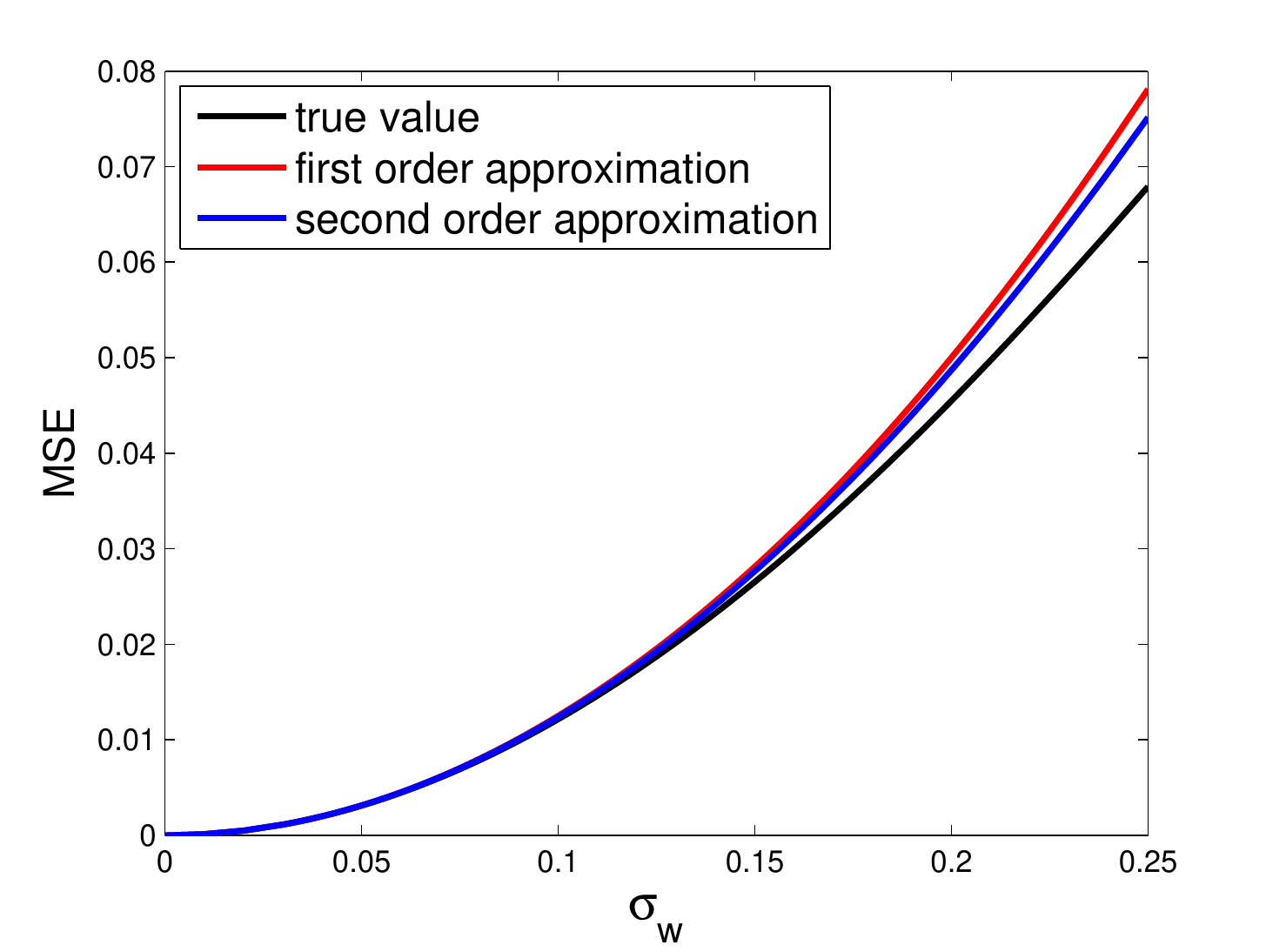}}
          \caption{Plots of actual AMSE and its approximations for (a) $\delta= 5, \epsilon=0.7, q= 1.5$, (b) $\delta=4, \epsilon=0.7, q=1.6$, (c) $\delta=5, \epsilon=0.6, q=1.8.$}
\label{fig:everythingOk1}
\end{figure}

To understand the limitation of the first order approximation, we consider the cases in which the second order term is large and suggests that at least the first order approximation is not necessarily good. This happens when either $\delta$ decreases to $1$, $\epsilon$ decreases to $0$ or $q$ decreases to $1$. The settings of our experiments and the results are summarized below. 

\begin{enumerate}
\item We keep $q=1.5$ and $\epsilon=0.7$ fixed and study different values of $\delta \in \{5,2,1.5, 1.1 \}$. Figure \ref{fig:deltachange} summarizes the results of this simulation. As is clear in this figure (and is consistent with the message of the second dominant term), as we decrease $\delta$ the first order approximation becomes less accurate. The second order approximation in these cases is more accurate than the first order approximation. However interestingly, the second order approximation becomes less accurate as $\delta$ decreases too. These observations suggest that to have a good approximation for the values of $\delta$ that are very close to $1$,  although the second order approximation outperforms the first order, it may not be sufficient and higher order terms are required. Such terms can be derived with strategies similar to the ones we used in the proof of Theorem \ref{asymp:lqbelowpt}. Note that the insufficiency of the second order expansion partially results from the wide range of $\sigma_w \in [0, 0.25]$. If we evaluate the approximation when $\sigma_w$ is small enough, we will expect the success of the second order expansion. 

\begin{figure} 
	\centering
	\subfloat[][]{\includegraphics[width=1.9in]{delta5eps07p15}}
	\subfloat[][]{\includegraphics[width=1.9in]{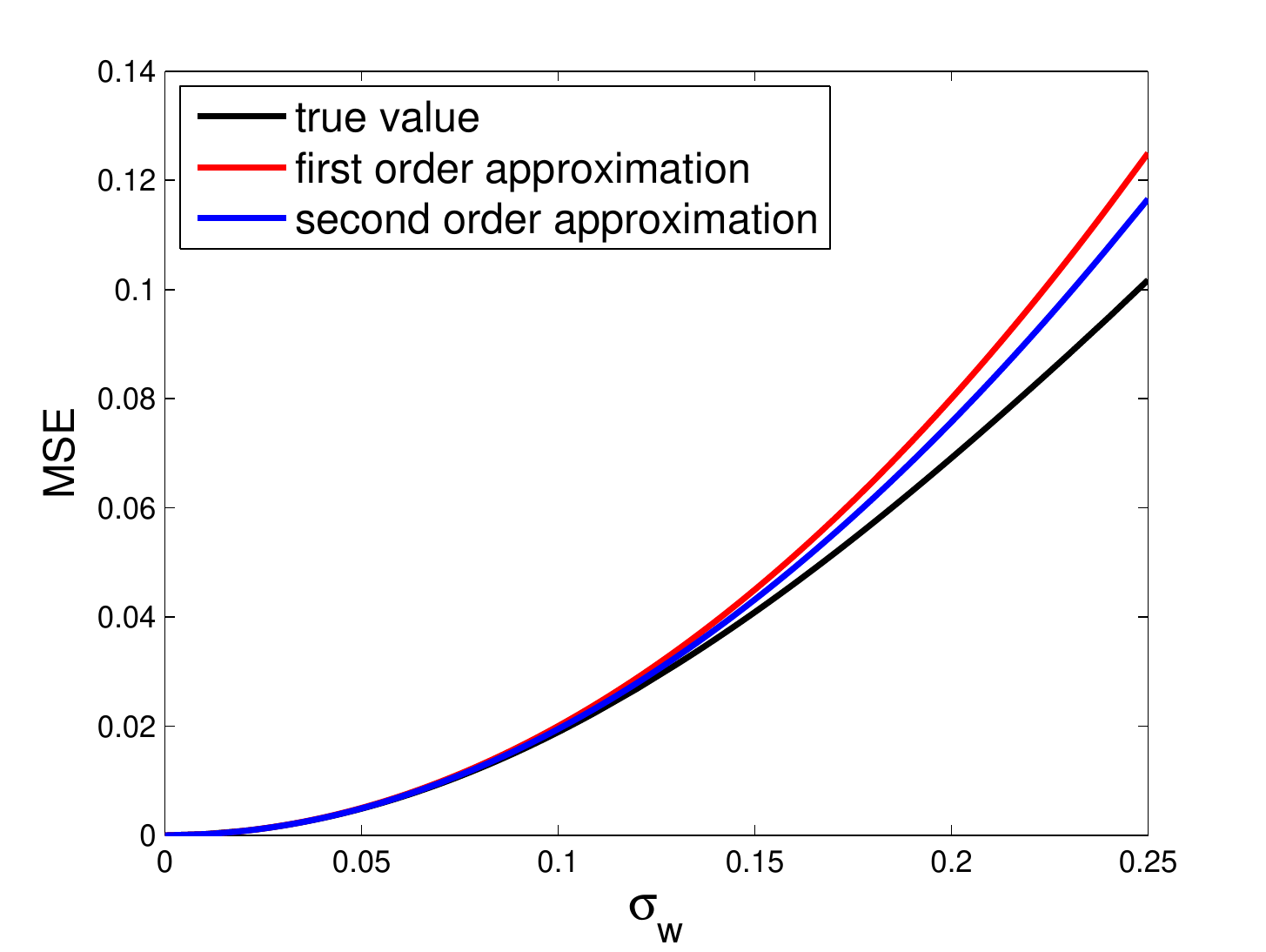}}

	\subfloat[][]{\includegraphics[width=1.9in]{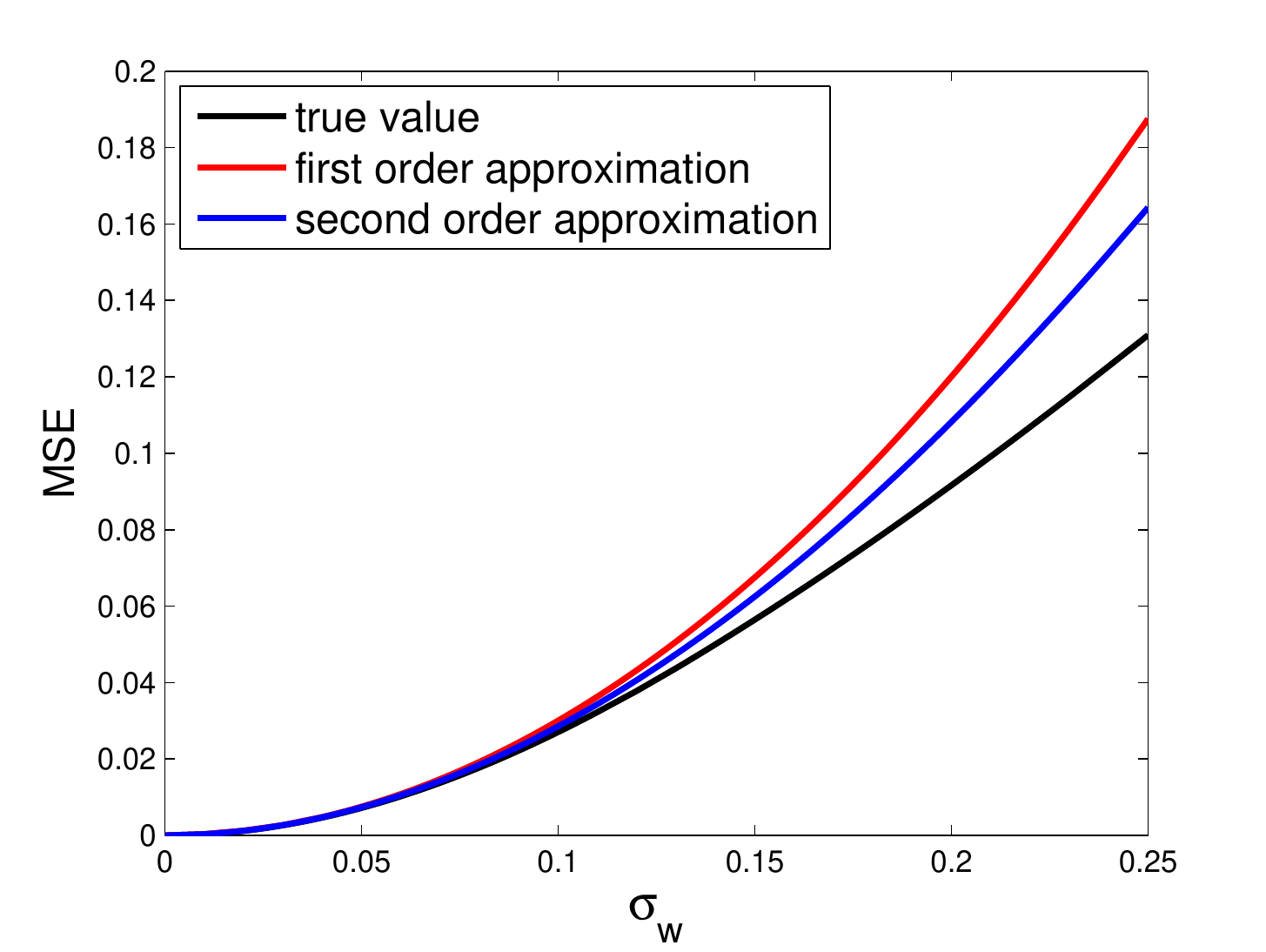}}
	\subfloat[][]{\includegraphics[width=1.9in]{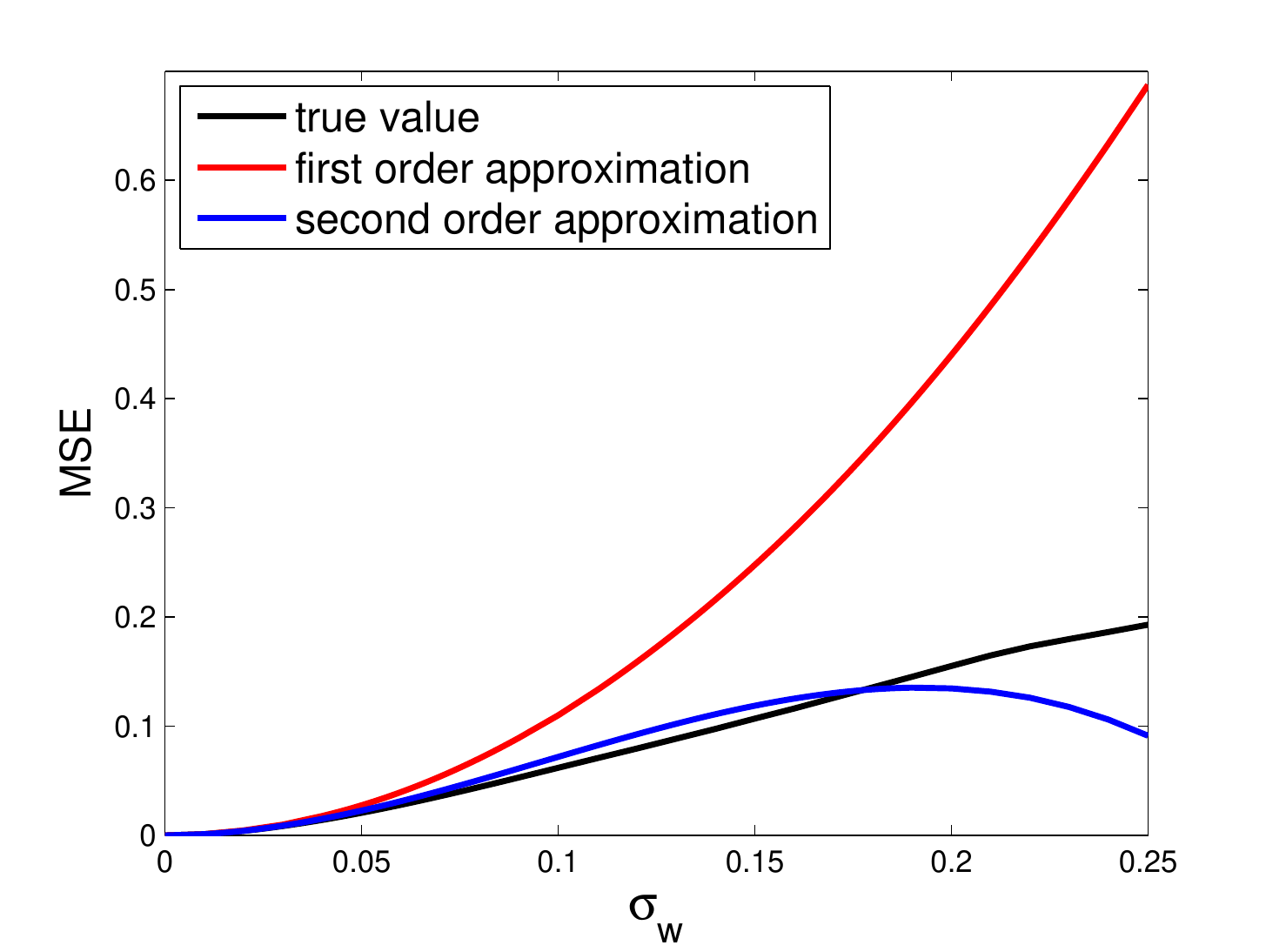}}

	\caption{Plots of actual AMSE and its approximations for $q=1.5, \epsilon =0.7$ with (a) $\delta = 5$, (b) $\delta= 2$, (c) $\delta= 1.5$ and (d) $\delta = 1.1$. }
	
	\label{fig:deltachange}
\end{figure}

\item In our second simulation, we fix $\delta =5, \epsilon=0.4$ and let $q \in \{1.8, 1.5, 1.1\}$. All the simulation results are summarized in Figure \ref{fig:qchange}. As we expected, the first order approximation becomes less accurate when $q$ decreases. Furthermore, we notice that when $q$ is very close to $1$ (check $q=1.1$ in the figure), even the second order approximation is not necessarily good. This again calls for higher order approximation of the AMSE.  

\begin{figure} 
	\centering
	\subfloat[][]{\includegraphics[width=1.7in]{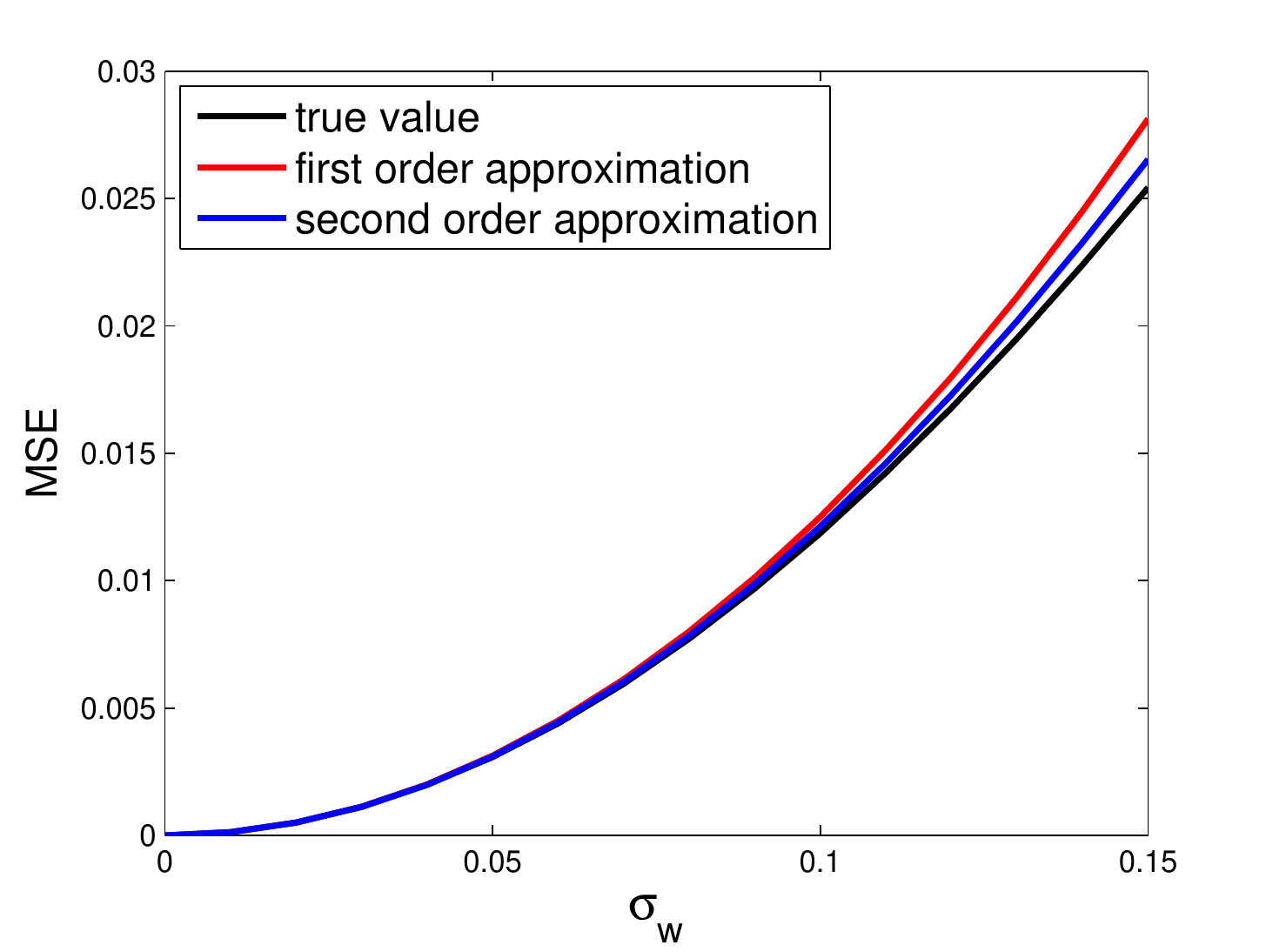}}
	\subfloat[][]{\includegraphics[width=1.7in]{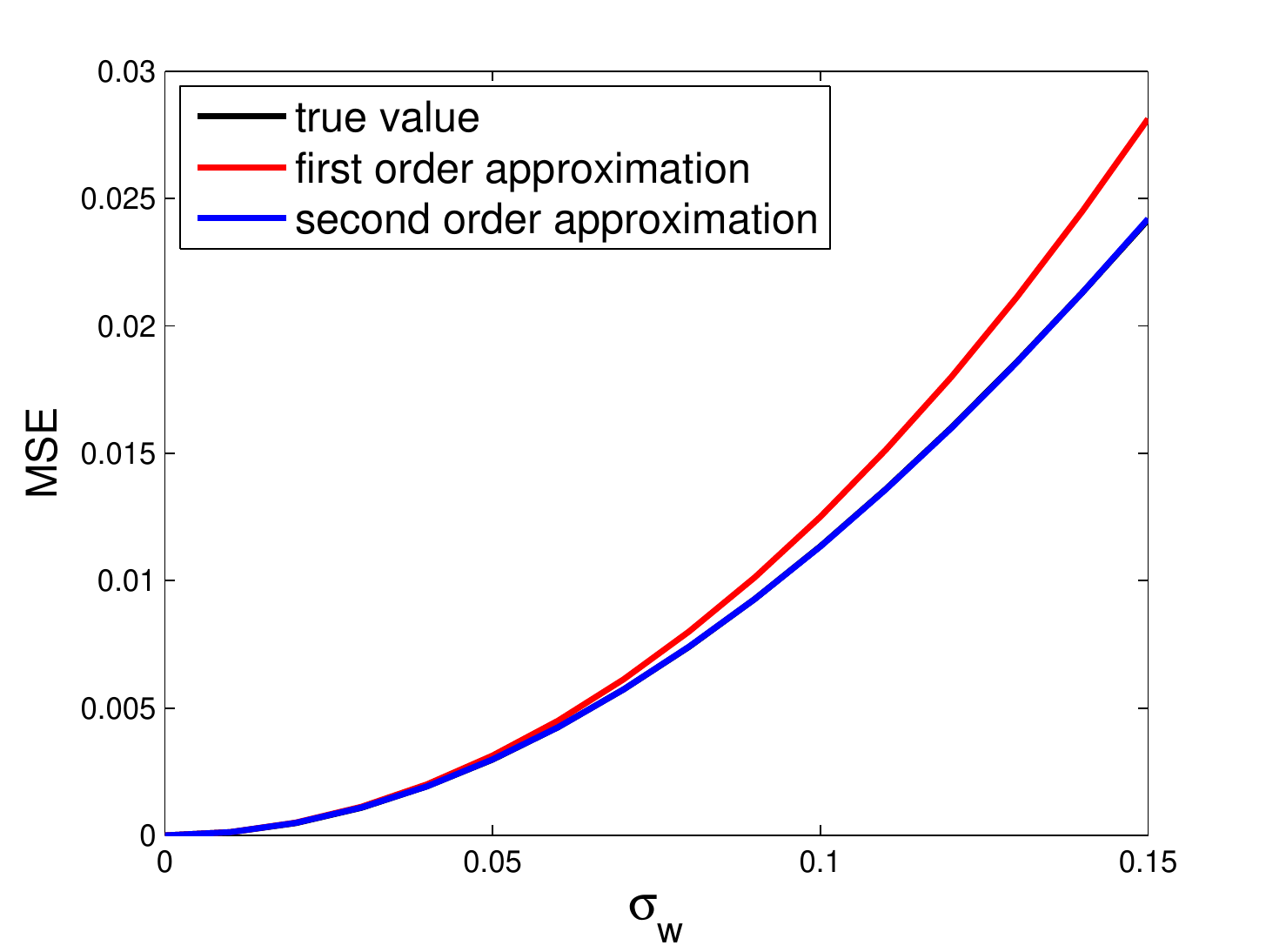}}
	\subfloat[][]{\includegraphics[width=1.7in]{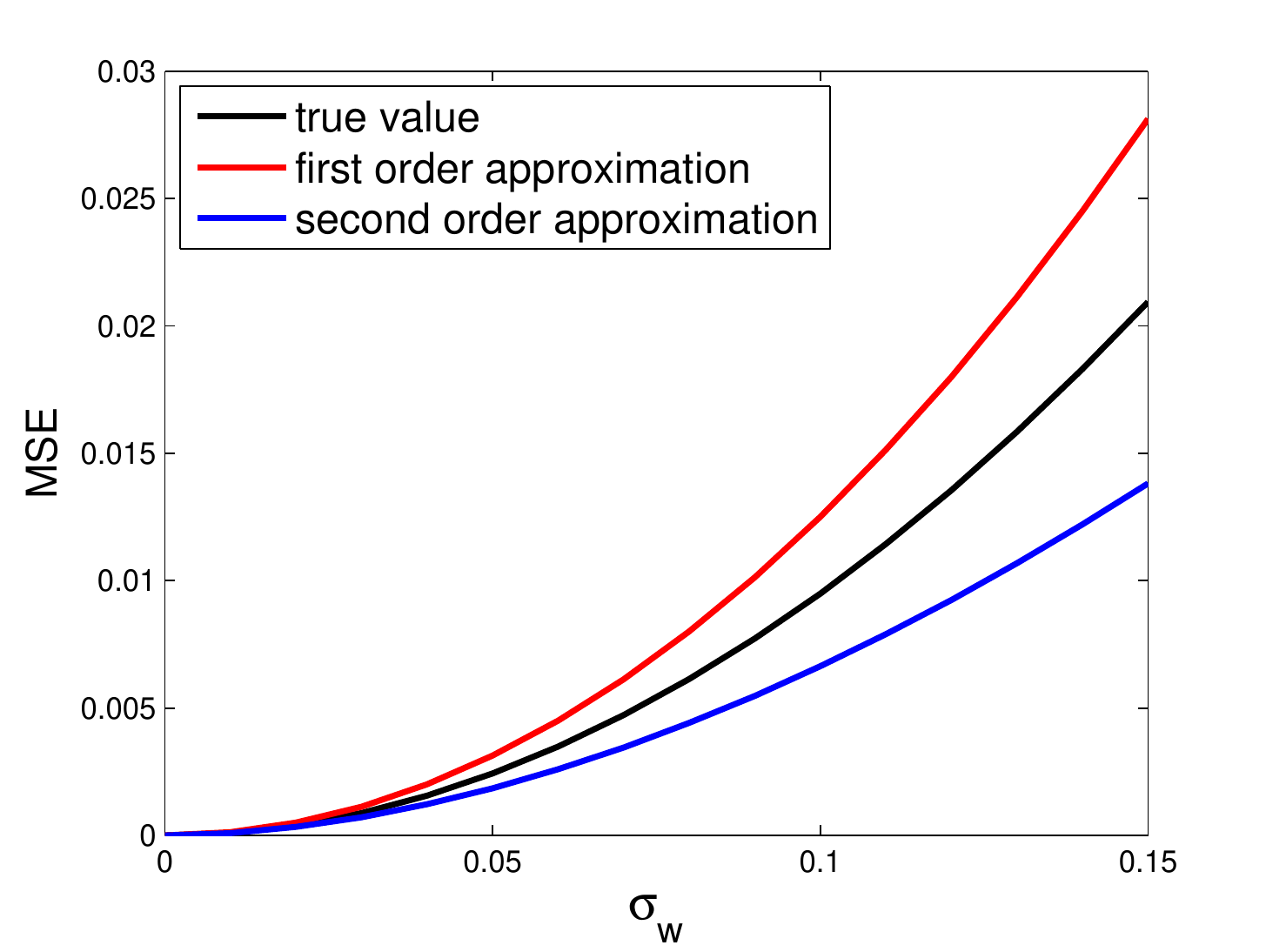}}
	\caption{Plots of actual AMSE and its approximations for $\delta=5, \epsilon =0.4$ with (a) $q = 1.8$, (b) $q= 1.5$, and (c) $q= 1.1$.  }
	\label{fig:qchange}
\end{figure}

\item For the last simulation, we fix $\delta=5$, $q=1.8$, and let $\epsilon \in \{0.7,0.5, 0.3, 0.1\}$. Our simulation results are presented in Figure \ref{fig:epschange}. We see that as $\epsilon$ decreases the first order approximation becomes less accurate. The second order approximation is always better than the first one. Moreover, we observe that when $\epsilon$ is very close to $0$ (check $\epsilon=0.1$ in the figure), even the second order approximation is not necessarily sufficient. As we discussed in the previous two simulations, we might need higher order approximation of the AMSE in such cases. 
   
\end{enumerate}
\begin{figure} 
	\centering
	\subfloat[][]{\includegraphics[width=1.7in]{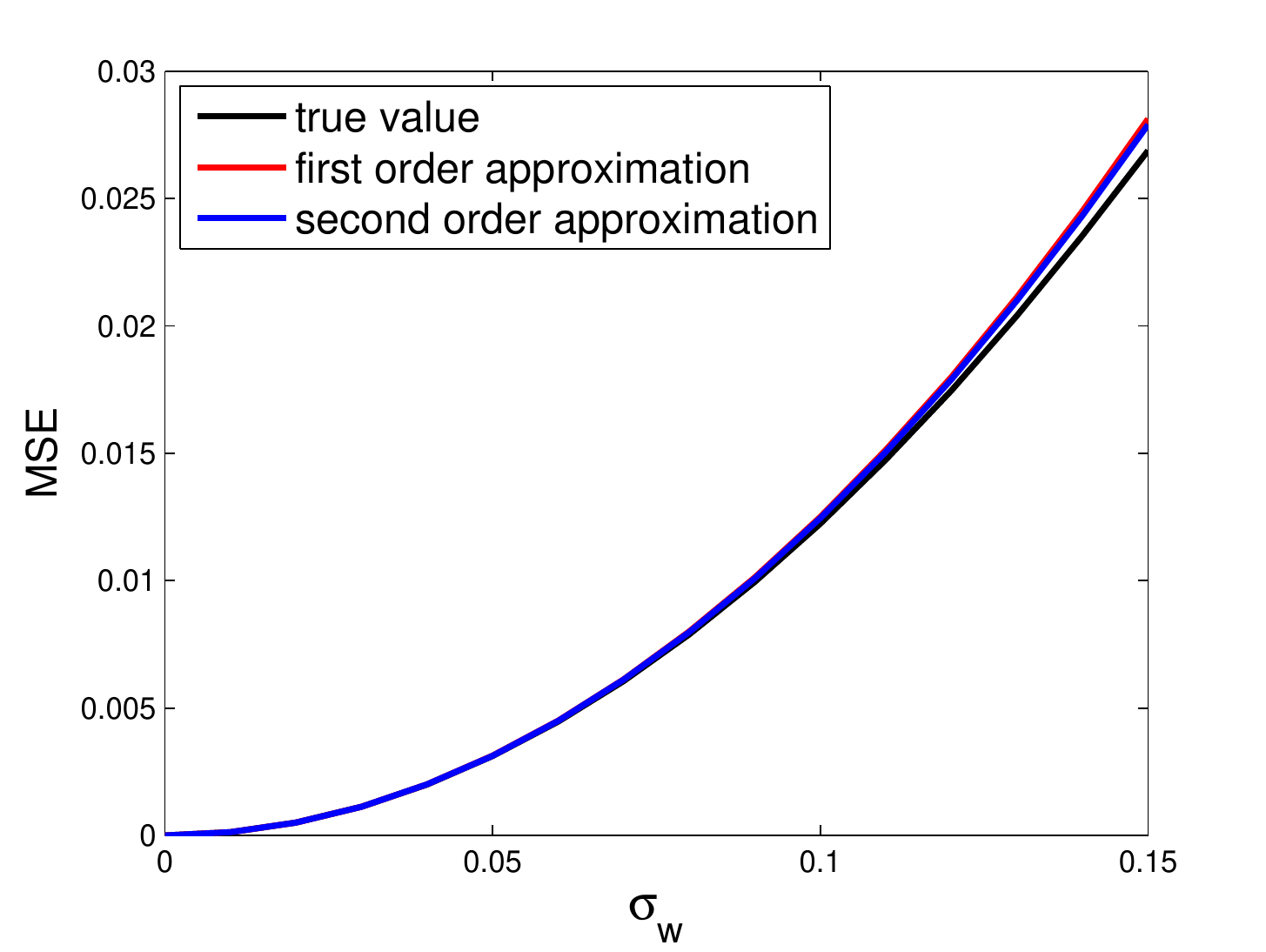}}
	\subfloat[][]{\includegraphics[width=1.7in]{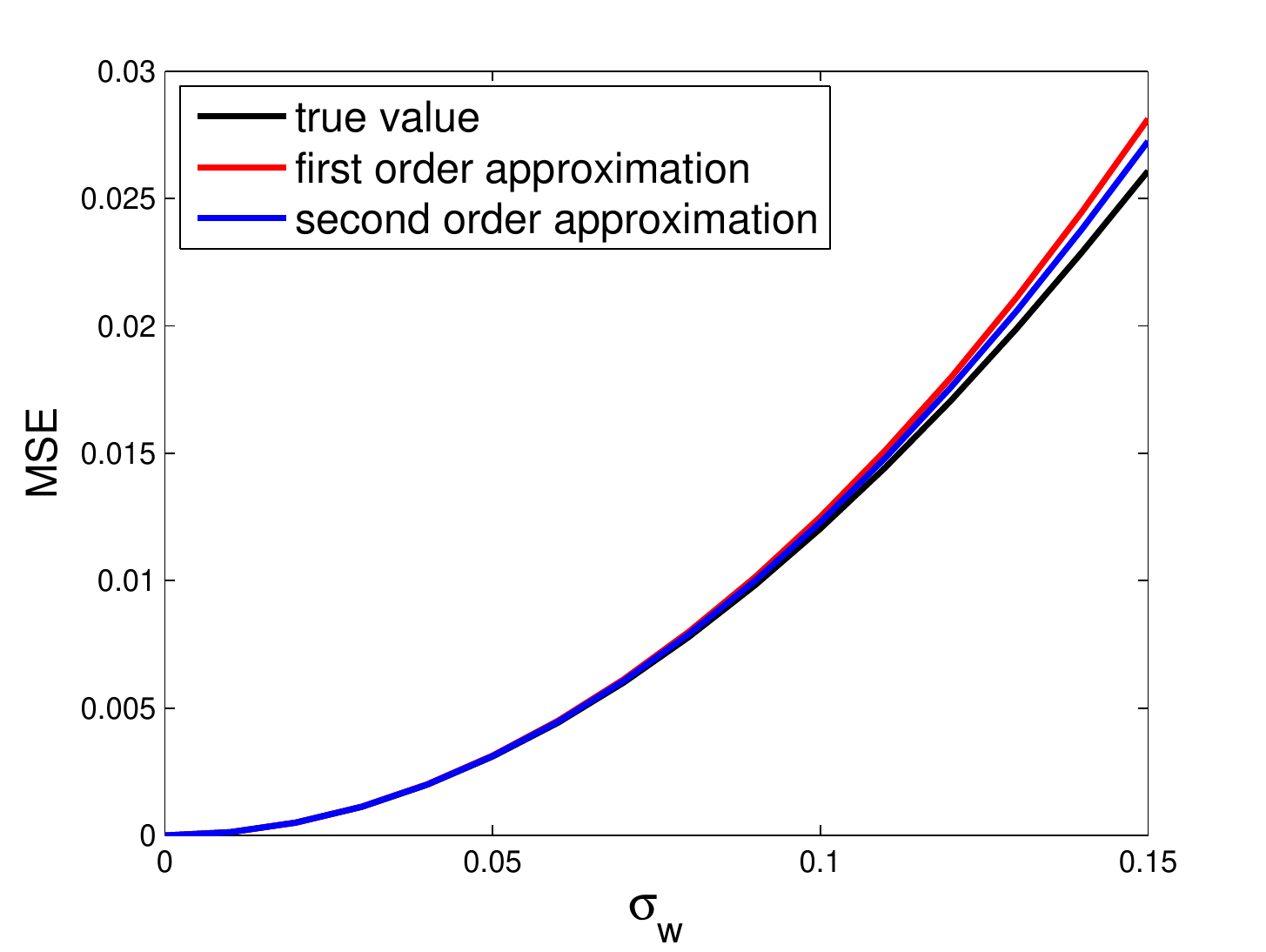}}

	\subfloat[][]{\includegraphics[width=1.7in]{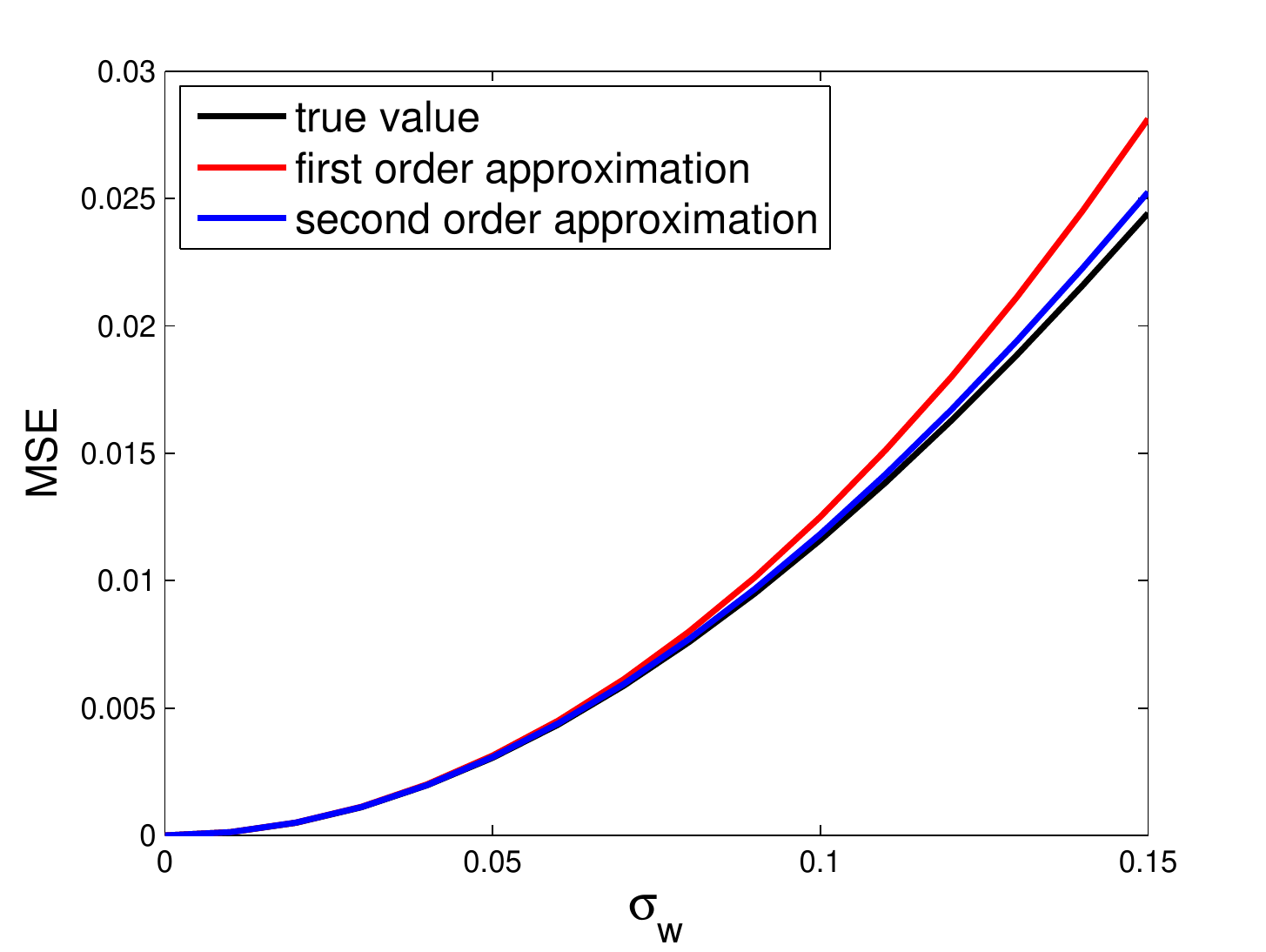}}
	\subfloat[][]{\includegraphics[width=1.7in]{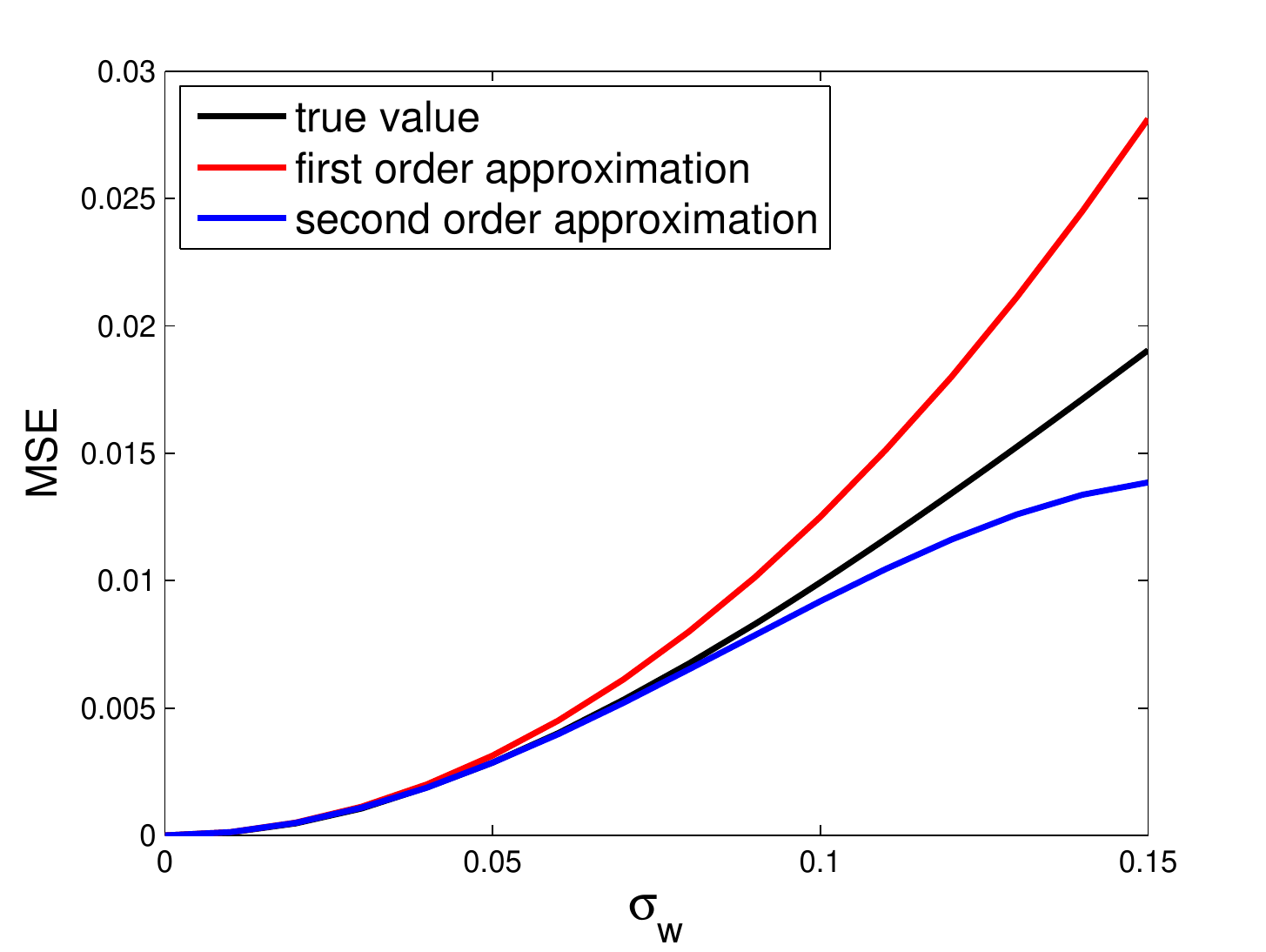}}
	\caption{Plots of actual AMSE and its approximations for $\delta=5$ and $q=1.8$ with (a) $\epsilon =0.7$, (b) $\epsilon= 0.5$,  (c) $\epsilon= 0.3$, and (d) $\epsilon=0.1$.}
	\label{fig:epschange}
\end{figure}
\subsection{Discussion}
Firstly, our numerical studies confirm that the first order term gives good approximations of AMSE for LASSO in the case where the distribution of non-zero elements of $\beta$ is bounded away from zero. Secondly, as the numerical results for $q>1$ demonstrate, while the second order approximation always improves over the first order term and works well in many cases, in the following situations it may not provide very accurate evaluation of AMSE: (i) when $\delta$ is close to $1$, (ii) $\epsilon$ is close to zero, and (iii) $q$ is close to $1$. In such cases, the value of the second order term becomes large and hence the approximation is only accurate for very small value of $\sigma_w$. The remedy that one can propose is to derive higher order expansions. Such terms can be calculated with the same strategy that we used to obtain the second dominant term.

\section{Proof of Theorem \ref{asymp:sparsel1_1}} \label{sec:proofthm4full}

Due to the limited space in the main text, we only present the proof of Theorem \ref{asymp:sparsel1_1}, one of our main results for LASSO, in this section. The proofs of all the other results are deferred to the supplementary material. Although some techniques used in the proofs  are quite different for LASSO and LQLS with $q \in (1,2]$, the roadmap remains the same. Hence we suggest readers to first read the proof in this section. Once this relatively simple proof is clear, the other more complicated proofs will be easier to read.

\subsection{Roadmap of the proof}

Since the proof of this result has several steps and is long, we lay out the roadmap of the proof here to help readers navigate through the details. According to Corollary \ref{thm:mseoptimalasymptot} (let us accept Corollary \ref{thm:mseoptimalasymptot} for the moment; its proof will be fully presented in Appendix \ref{sec:optimaltuning1} of the supplement), in order to evaluate AMSE$(\lambda_{*,1}, 1, \sigma_w)$ as $\sigma_w \rightarrow 0$, the crucial step is to characterize $\bar{\sigma}$ from the following equation
\begin{eqnarray}\label{roadmap:eq1}
\bar{\sigma}^2 = \sigma_{\omega}^2+\frac{1}{\delta} \min_{\chi \geq 0} \mathbb{E}_{B, Z} [(\eta_1(B +\bar{\sigma} Z; \chi) -B)^2].
\end{eqnarray} 
To study \eqref{roadmap:eq1}, the key part is to analyze the term $\min_{\chi \geq 0} \mathbb{E}_{B, Z} [(\eta_1(B +\bar{\sigma} Z; \chi) -B)^2]$. A useful fact that we will prove in Section \ref{sec:lassofixedpointproof} can simplify the analysis of  \eqref{roadmap:eq1}: The condition $\delta > M_1(\epsilon)$ implies that $\bar{\sigma} \rightarrow 0$, as $\sigma_w \rightarrow 0$. Hence one of the main steps of this proof is to derive the convergence rate of $\min_{\chi \geq 0}  \mathbb{E}_{B, Z} [(\eta_1(B +\sigma Z; \chi) -B)^2]$, as $\sigma \rightarrow 0$. Once we obtain that rate, we then characterize the convergence rate for $\bar{\sigma}$ as $\sigma_w \rightarrow 0$ from \eqref{roadmap:eq1}. Finally we connect $\bar{\sigma}$ to AMSE$(\lambda_{*,1}, 1, \sigma_w)$ based on Corollary \ref{thm:mseoptimalasymptot}, and derive the expansion for AMSE$(\lambda_{*,1}, 1, \sigma_w)$ as $\sigma_w \rightarrow 0$. We introduce the following notations:
\begin{eqnarray*}
R(\chi, \sigma)= \mathbb{E}_{B,Z} [(\eta_1(B/\sigma + Z; \chi) -B/\sigma)^2],  \quad \chi^*(\sigma)=\argmin_{\chi \geq 0} R(\chi, \sigma),
\end{eqnarray*}
where we have suppressed the subscript $B, Z$ in $\mathbb{E}$ for notational simplicity. According to \cite{mousavi2015consistent}, $R(\chi, \sigma)$ is a quasi-convex function of $\chi$ and has a unique global minimizer. Hence $\chi^*(\sigma)$ is well defined. It is straightforward to confirm 
\[
\min_{\chi \geq 0} \mathbb{E}_{B, Z} [(\eta_1(B +\sigma Z; \chi) -B)^2]=\sigma^2 R(\chi^*(\sigma),\sigma). 
\]
Throughout the proof, we may write $\chi^*$ for $\chi^*(\sigma)$ when no confusion is caused, and we use $F(g)$ to denote the distribution function of $|G|$. The rest of the proof of Theorem \ref{asymp:sparsel1_1} is organized in the following way:
\begin{enumerate}
\item We first prove $R(\chi^*(\sigma),\sigma) \rightarrow M_1(\epsilon)$, as $\sigma \rightarrow 0$ in Section \ref{ssec:zeroconv1}.
\item We further bound the convergence rate of $R(\chi^*(\sigma),\sigma)$ in Section \ref{ssec:zeroconv2}.
\item We finally utilize the convergence rate bound derived in Section \ref{ssec:zeroconv2} to characterize the convergence rate of $\bar{\sigma}$ and then derive the expansion for AMSE$(\lambda_{*,1}, 1, \sigma_w)$ in Section \ref{sec:lassofixedpointproof}.
\end{enumerate}

\subsection{Proof of $R(\chi^*(\sigma),\sigma) \rightarrow M_1(\epsilon)$, as $\sigma \rightarrow 0$} \label{ssec:zeroconv1}

Our goal in this section is to prove the following lemma. 
\begin{lemma}\label{lassotune}
Suppose $\mathbb{E}|G|^2<\infty$, then $\lim_{\sigma \rightarrow 0}\chi^*(\sigma)=\chi^{**}$ and 
\begin{eqnarray*}
\lim_{\sigma \rightarrow 0}R(\chi^*(\sigma),\sigma)= (1-\epsilon)\mathbb{E} (\eta_1(Z; \chi^{**}))^2+ \epsilon(1+ (\chi^{**})^2),
\end{eqnarray*}
where $\chi^{**}$ is the unique minimizer of $(1-\epsilon)\mathbb{E} (\eta_1(Z; \chi))^2+ \epsilon(1+ \chi^2)$ over $[0, \infty)$, and $Z \sim N(0,1)$.
\end{lemma}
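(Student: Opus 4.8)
The plan is to exploit the mixture structure $p_\beta(b)=(1-\epsilon)\delta_0(b)+\epsilon g(b)$ to split $R(\chi,\sigma)$ into a ``null'' part and a ``signal'' part,
\[
R(\chi,\sigma)=(1-\epsilon)\,\mathbb{E}_Z[\eta_1^2(Z;\chi)]+\epsilon\,\mathbb{E}_{G,Z}\big[(\eta_1(G/\sigma+Z;\chi)-G/\sigma)^2\big],
\]
and to first establish pointwise convergence in $\chi$ as $\sigma\to 0$. The first term is independent of $\sigma$. For the second I would use the elementary identity $(\eta_1(u;\chi)-u)^2=\min(\chi^2,u^2)$, which yields the uniform integrable bound $(\eta_1(G/\sigma+Z;\chi)-G/\sigma)^2\le\chi^2$ and, for each fixed $(G,Z)$ with $G\neq 0$, the pointwise limit $(Z-\chi\,\mathrm{sign}(G))^2$ once $\sigma$ is small enough that $|G/\sigma+Z|>\chi$. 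By dominated convergence and the symmetry of $Z$, for every fixed $\chi\ge 0$,
\[
\lim_{\sigma\to 0}R(\chi,\sigma)=(1-\epsilon)\,\mathbb{E}_Z[\eta_1^2(Z;\chi)]+\epsilon(1+\chi^2)=:F(\chi,\epsilon),
\]
which is exactly the strongly convex function from the proof of Lemma \ref{lem:M1epsprop} whose unique minimizer is $\chi^{**}$.

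The hard part will be passing from this pointwise convergence to convergence of the \emph{minimizers}: since $R(\cdot,\sigma)$ is only quasi-convex (by \cite{mousavi2015consistent}) and not convex, one cannot invoke the standard fact that pointwise limits of convex functions converge locally uniformly. Instead I would use that a quasi-convex function with a unique minimizer is unimodal, so $R(\cdot,\sigma)$ is non-increasing on $[0,\chi^*(\sigma)]$ and non-decreasing on $[\chi^*(\sigma),\infty)$. Fix $\eta>0$ with $\chi^{**}-\eta\ge 0$. Strong convexity and uniqueness give $F(\chi^{**}\pm\eta,\epsilon)>F(\chi^{**},\epsilon)$, so the pointwise convergence at the three points $\chi^{**}-\eta,\chi^{**},\chi^{**}+\eta$ yields, for all sufficiently small $\sigma$, both $R(\chi^{**},\sigma)<R(\chi^{**}+\eta,\sigma)$ and $R(\chi^{**},\sigma)<R(\chi^{**}-\eta,\sigma)$. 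Unimodality then traps the minimizer: if $\chi^*(\sigma)\ge\chi^{**}+\eta$, monotonicity on $[0,\chi^*(\sigma)]$ would force $R(\chi^{**},\sigma)\ge R(\chi^{**}+\eta,\sigma)$, a contradiction, and symmetrically if $\chi^*(\sigma)\le\chi^{**}-\eta$. Hence $\chi^*(\sigma)\in(\chi^{**}-\eta,\chi^{**}+\eta)$, and since $\eta$ is arbitrary, $\chi^*(\sigma)\to\chi^{**}$. The boundary case $\chi^{**}=0$ is handled identically using only the right perturbation.

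Finally I would obtain $R(\chi^*(\sigma),\sigma)\to F(\chi^{**},\epsilon)$ by a two-sided bound. For the upper bound, optimality of $\chi^*(\sigma)$ gives $R(\chi^*(\sigma),\sigma)\le R(\chi^{**},\sigma)\to F(\chi^{**},\epsilon)$, so $\limsup_{\sigma\to 0}R(\chi^*(\sigma),\sigma)\le F(\chi^{**},\epsilon)$. For the lower bound I cannot use fixed-$\chi$ pointwise convergence, since $\chi^*(\sigma)$ moves, so I would treat the two pieces separately: the null part tends to $(1-\epsilon)\mathbb{E}\eta_1^2(Z;\chi^{**})$ by continuity of $\chi\mapsto\mathbb{E}\eta_1^2(Z;\chi)$ together with $\chi^*(\sigma)\to\chi^{**}$, while Fatou's lemma applied to the nonnegative signal part gives
\[
\liminf_{\sigma\to 0}\mathbb{E}_{G,Z}\big[(\eta_1(G/\sigma+Z;\chi^*(\sigma))-G/\sigma)^2\big]\ge\mathbb{E}_{G,Z}\big[(Z-\chi^{**}\mathrm{sign}(G))^2\big]=1+(\chi^{**})^2,
\]
using the pointwise limit of the integrand from the first step combined with $\chi^*(\sigma)\to\chi^{**}$. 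Adding the two pieces yields $\liminf_{\sigma\to 0}R(\chi^*(\sigma),\sigma)\ge F(\chi^{**},\epsilon)$, and together with the upper bound this gives the claimed limit, since $F(\chi^{**},\epsilon)=(1-\epsilon)\mathbb{E}(\eta_1(Z;\chi^{**}))^2+\epsilon(1+(\chi^{**})^2)=M_1(\epsilon)$.
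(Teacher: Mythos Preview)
Your proof strategy is sound and genuinely different from the paper's, but you have one slip: the bound $(\eta_1(G/\sigma+Z;\chi)-G/\sigma)^2\le\chi^2$ is false. The identity $(\eta_1(u;\chi)-u)^2=\min(\chi^2,u^2)$ controls $\eta_1(u;\chi)-u$, not $\eta_1(G/\sigma+Z;\chi)-G/\sigma$; in the regime $|G/\sigma+Z|>\chi$ the latter equals $Z-\chi\,\mathrm{sign}(G/\sigma+Z)$, which can be large. The fix is immediate: the triangle inequality gives $|\eta_1(G/\sigma+Z;\chi)-G/\sigma|\le|Z|+\chi$, so $(|Z|+\chi)^2$ is a valid integrable dominating function for each fixed $\chi$, and your DCT step goes through unchanged. (The paper handles this by first writing $(\eta_1-G/\sigma)^2=1+(\eta_1-G/\sigma-Z)^2+2Z(\eta_1-G/\sigma-Z)$ and bounding each piece.)

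With that correction, your route diverges from the paper's in an interesting way. The paper argues by compactness: it first shows $\{\chi^*(\sigma_n)\}$ is bounded (via Fatou and $R(\chi^*(\sigma),\sigma)\le R(0,\sigma)=1$), extracts a convergent subsequence $\chi^*(\sigma_{n_k})\to\tilde\chi$, passes to the limit in $R$ by DCT to get $F(\tilde\chi)$, and uses optimality $R(\chi^*(\sigma_{n_k}),\sigma_{n_k})\le R(\chi^{**},\sigma_{n_k})\to F(\chi^{**})$ together with uniqueness of the minimizer of $F$ to force $\tilde\chi=\chi^{**}$. Your unimodality-trapping argument is more direct and more structural: it exploits the quasi-convexity of $R(\cdot,\sigma)$ from \cite{mousavi2015consistent} to sandwich $\chi^*(\sigma)$ in $(\chi^{**}-\eta,\chi^{**}+\eta)$ using pointwise convergence at just three values of $\chi$, bypassing subsequences and the separate boundedness step. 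The cost is an extra dependence on the quasi-convexity result, whereas the paper's argument needs only existence of a minimizer of $R(\cdot,\sigma)$ and uniqueness for $F$. For the value limit, the paper gets $R(\chi^*(\sigma),\sigma)\to M_1(\epsilon)$ in one stroke as part of its subsequence DCT; your two-sided sandwich (optimality for the upper bound, continuity plus Fatou for the lower) is a clean alternative that avoids re-running DCT with the moving threshold $\chi^*(\sigma)$.
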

\begin{proof}
By taking derivatives, it is straightforward to verify that $(1-\epsilon)\mathbb{E} (\eta_1(Z; \chi))^2+ \epsilon(1+ \chi^2)$, as a function of $\chi$ over $[0,\infty)$, is strongly convex and has a unique minimizer. Hence $\chi^{**}$ is well defined. 

We first claim that $\chi^*(\sigma_n)$ is bounded for any given sequence $\sigma_n \rightarrow 0$. Otherwise there exists an unbounded subsequence $\chi^*(\sigma_{n_k}) \rightarrow +\infty$ with $\sigma_{n_k}\rightarrow 0$. Since the distribution of $G$ does not have point mass at zero and
\[
\eta_1(G/\sigma_{n_k}+Z;\chi^*(\sigma_{n_k}))={\rm sign}(G/\sigma_{n_k}+Z)(|G/\sigma_{n_k}+Z|-\chi^*(\sigma_{n_k}))_+,
\] 
 it is not hard to conclude that 
 \[
 |\eta_1(G/\sigma_{n_k}+Z;\chi^*(\sigma_{n_k}))-G/\sigma_{n_k}| \rightarrow +\infty, a.s. 
 \]
 By Fatou's lemma, we then have
 \begin{eqnarray}
 \hspace{1.cm} R(\chi^*(\sigma_{n_k}),\sigma_{n_k}) \geq \epsilon \mathbb{E}(\eta_1(G/\sigma_{n_k}+Z;\chi^*(\sigma_{n_k}))-G/\sigma_{n_k})^2  \rightarrow +\infty.  \label{contra:one}
 \end{eqnarray}
 On the other hand, the optimality of $\chi^*(\sigma_{n_k})$ implies
 \[
 R(\chi^*(\sigma_{n_k}),\sigma_{n_k}) \leq R(0, \sigma_{n_k})=1,
 \]
contradicting the unboundedness in \eqref{contra:one}. 

We next show the sequence $\chi^*(\sigma_n)$ converges to a finite constant, for any $\sigma_n \rightarrow 0$. Taking a convergent subsequence $\chi^*(\sigma_{n_k})$, due to the boundedness of $\chi^*(\sigma_n)$, the limit of the subsequence is finite. Call it $\tilde{\chi}$. Note that
\begin{eqnarray}
\lefteqn{\mathbb{E}  (\eta_1(G/\sigma_{n_k} +Z; \chi^*(\sigma_{n_k})) -G/\sigma_{n_k})^2} \nonumber \\ 
&=& 1+ \mathbb{E} (\eta_1(G/\sigma_{n_k} +Z; \chi^*(\sigma_{n_k})) -G/\sigma_{n_k}-Z)^2+ \nonumber \\
&& 2\mathbb{E} Z(\eta_1(G/\sigma_{n_k} +Z; \chi^*(\sigma_{n_k})) -G/\sigma_{n_k}-Z). \nonumber
\end{eqnarray}
Since $\eta_1(u;\chi)={\rm sign}(u)(|u|-\chi)_+$, we have the following three inequalities:
\begin{eqnarray}
&&|\eta_1(Z;\chi^*(\sigma_{n_k}))|^2 \leq |Z|^2, \nonumber \\
&&(\eta_1(G/\sigma_{n_k} +Z; \chi^*(\sigma_{n_k})) -G/\sigma_{n_k}-Z)^2 \leq (\chi^*(\sigma_{n_k}))^2, \nonumber \\
&&| Z( \eta_1(G/\sigma_{n_k} +Z; \chi^*(\sigma_{n_k})) -G/\sigma_{n_k}-Z)| \leq |Z| \chi^*(\sigma_{n_k}). \nonumber
\end{eqnarray}
Furthermore, all the terms on the right hand side of the above inequalities are integrable. Therefore we can apply the Dominated Convergence Theorem (DCT) to obtain
\begin{eqnarray*}
&&\lim_{n_k \rightarrow \infty} R(\chi^*(\sigma_{n_k}),\sigma_{n_k}) \\
&&=\lim_{n_k \rightarrow \infty} (1- \epsilon) \mathbb{E} (\eta_1 (Z; \chi^*(\sigma_{n_k})))^2 + \epsilon \mathbb{E}(\eta_1(G/\sigma_{n_k} +Z; \chi^*(\sigma_{n_k})) -G/\sigma_{n_k})^2 \\
&&= (1-\epsilon)\mathbb{E} (\eta_1(Z; \tilde{\chi}))^2+ \epsilon(1+ \tilde{\chi}^2).
\end{eqnarray*}
Moreover, since $\chi^*(\sigma_{n_k})$ is the optimal threshold value for $R(\chi, \sigma_{n_k})$, 
\[
\lim_{n_k \rightarrow \infty} R(\chi^*(\sigma_{n_k}),\sigma_{n_k}) \leq \lim_{n_k \rightarrow \infty}R(\chi^{**},\sigma_{n_k})= (1-\epsilon)\mathbb{E} (\eta_1(Z; \chi^{**}))^2+ \epsilon(1+ (\chi^{**})^2)
\] 
Combining the last two limiting results, we can conclude $\tilde{\chi}=\chi^{**}$.
  Since $\chi^*(\sigma_{n_k})$ is an arbitrary convergent subsequence, this implies that the sequence $\chi^*(\sigma_n)$ converges to $\chi^{**}$ as well. This is true for any $\sigma_n \rightarrow 0$, hence $\chi^*(\sigma) \rightarrow \chi^{**}$, as $\sigma \rightarrow 0$. $\lim_{\sigma \rightarrow 0} R(\chi^*(\sigma), \sigma)$ can then be directly derived.
\end{proof}

\subsection{Bounding the convergence rate of $R(\chi^*(\sigma), \sigma)$} \label{ssec:zeroconv2}

In Section \ref{ssec:zeroconv1} we have shown $R(\chi^*(\sigma), \sigma) \rightarrow M_1(\epsilon)$ as $\sigma \rightarrow 0$. Our goal in this section is to bound the difference $R(\chi^*(\sigma), \sigma)- M_1(\epsilon)$. For that purpose, we first bound the convergence rate of $\chi^*(\sigma)$.

\begin{lemma}\label{l1:optimal_rate}
Suppose $\mathbb{P}(|G| \geq \mu) = 1$ with $\mu$ being a positive constant and $\mathbb{E}|G|^2<\infty$, then as $\sigma \rightarrow 0$
\[
|\chi^*(\sigma) - \chi^{**}| = O(\phi(-\mu/\sigma + \chi^{**})),
\]
where $\phi(\cdot)$ is the density function of the standard normal.
\end{lemma}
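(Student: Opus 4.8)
The plan is to compare the two minimizers $\chi^*(\sigma)$ and $\chi^{**}$ through their first-order optimality conditions. Writing $F(\chi):=(1-\epsilon)\mathbb{E}\eta_1^2(Z;\chi)+\epsilon(1+\chi^2)$ for the limiting objective of Lemma \ref{lassotune}, note that $F'(0)=-2(1-\epsilon)\mathbb{E}|Z|<0$, so its minimizer $\chi^{**}$ is strictly positive; combined with $\chi^*(\sigma)\to\chi^{**}$ from Lemma \ref{lassotune}, this guarantees that for all small $\sigma$ both minimizers are interior, so that $\partial_\chi R(\chi^*(\sigma),\sigma)=0$ and $F'(\chi^{**})=0$. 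Setting $\Psi(\chi,\sigma):=\partial_\chi R(\chi,\sigma)$ and applying the mean value theorem in $\chi$,
\[
\Psi(\chi^*(\sigma),\sigma)-\Psi(\chi^{**},\sigma)=\partial_\chi\Psi(\tilde{\chi},\sigma)\,(\chi^*(\sigma)-\chi^{**})
\]
for some $\tilde{\chi}$ between $\chi^*(\sigma)$ and $\chi^{**}$, whence, using $\Psi(\chi^*(\sigma),\sigma)=0=F'(\chi^{**})$,
\[
|\chi^*(\sigma)-\chi^{**}|=\frac{|\Psi(\chi^{**},\sigma)-F'(\chi^{**})|}{|\partial_\chi\Psi(\tilde{\chi},\sigma)|}.
\]
It then suffices to (a) bound the numerator by $O(\phi(-\mu/\sigma+\chi^{**}))$ and (b) bound the denominator below by a fixed positive constant.

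For the numerator, observe that $\Psi(\chi,\sigma)$ and $F'(\chi)$ share the same contribution $-2(1-\epsilon)\mathbb{E}(|Z|-\chi)_+$, so that $\Psi(\chi,\sigma)-F'(\chi)=\epsilon\big(\partial_\chi h(\chi,\sigma)-2\chi\big)$, where $h(\chi,\sigma):=\mathbb{E}_{G,Z}(\eta_1(G/\sigma+Z;\chi)-G/\sigma)^2$. Differentiating the soft-thresholded quadratic piecewise in $\chi$ gives
\[
\partial_\chi h(\chi,\sigma)=\mathbb{E}_{G,Z}\big[-2(Z-\chi)\mathbbm{1}\{G/\sigma+Z>\chi\}+2(Z+\chi)\mathbbm{1}\{G/\sigma+Z<-\chi\}\big],
\]
and $2\chi$ is precisely what this converges to once the rare events are discarded (on $\{G>0\}$ the first indicator is typically $1$ and the second $0$, and symmetrically on $\{G<0\}$). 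Subtracting $2\chi$ leaves only the tail events $\{Z\le\chi-|G|/\sigma\}$ and $\{Z<-\chi-|G|/\sigma\}$; for each fixed $G$ these integrals evaluate to combinations of $\phi(|G|/\sigma\mp\chi)$ and $\Phi(\chi\mp|G|/\sigma)$ (with $\Phi$ the standard normal distribution function), and the Mills-ratio bound $\Phi(-t)\le\phi(t)/t$ shows each is $O(\phi(|G|/\sigma-\chi))$. Since $|G|\ge\mu$ and $\phi$ is decreasing on the positive axis, $\phi(|G|/\sigma-\chi)\le\phi(\mu/\sigma-\chi)$ pointwise for small $\sigma$, so integrating over $G$ and evaluating at $\chi=\chi^{**}$ yields $|\Psi(\chi^{**},\sigma)-F'(\chi^{**})|=O(\phi(-\mu/\sigma+\chi^{**}))$, using that $\phi$ is even.

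For the denominator I would show $\partial_\chi\Psi(\chi,\sigma)=\partial_\chi^2 R(\chi,\sigma)=4(1-\epsilon)\Phi(-\chi)+\epsilon\,\partial_\chi^2 h(\chi,\sigma)$ and that $\partial_\chi^2 h(\chi,\sigma)\to 2$ uniformly for $\chi$ near $\chi^{**}$: a second piecewise differentiation produces, besides the leading $2\Phi(|G|/\sigma-\chi)\to 2$, only corrections of the form $\tfrac{|G|}{\sigma}\phi(|G|/\sigma-\chi)$, which are pointwise dominated by $\tfrac{\mu}{\sigma}\phi(\mu/\sigma-\chi)=o(1)$ by monotonicity of $a\mapsto a\phi(a-\chi)$ on $[\mu/\sigma,\infty)$. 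Thus $\partial_\chi^2 R(\chi,\sigma)\ge 2\epsilon-o(1)$ near $\chi^{**}$, mirroring the strong convexity $F''\ge 2\epsilon$, and the denominator is bounded below by $\epsilon$ for all small $\sigma$; since $\tilde{\chi}$ lies between $\chi^*(\sigma)$ and $\chi^{**}$, both tending to $\chi^{**}$, it falls in this neighborhood and the displayed ratio gives the claim. The main obstacle I anticipate is the numerator tail estimate: one must track the polynomial-in-$|G|/\sigma$ prefactors, justify differentiating $h$ under the expectation via dominated convergence with integrable, $\sigma$-uniform envelopes despite the $1/\sigma$ scaling, and control the $G$-average uniformly over $\chi$ in a neighborhood of $\chi^{**}$ so that the bound transfers cleanly to the minimizer.
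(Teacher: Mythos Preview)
Your argument is correct and rests on the same idea as the paper's proof---compare the first-order optimality conditions for $\chi^*(\sigma)$ and $\chi^{**}$, and bound the discrepancy by the Gaussian tail $\phi(\mu/\sigma-\chi^{**})$ using $|G|\ge\mu$---but the bookkeeping is organized differently. The paper rewrites $\partial_\chi R(\chi^*,\sigma)=0$ in the fixed-point form $\chi^*=N(\chi^*,\sigma)/D(\chi^*,\sigma)$ (and $\chi^{**}=S/T$ for the limit), introduces correction terms $e_1,e_2$ for the $G$-dependent pieces of $N,D$, applies the mean value theorem to $\phi$ and $\Phi$ separately, and solves the resulting linear equation for $\chi^*-\chi^{**}$. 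You instead apply the mean value theorem once to $\Psi=\partial_\chi R$ itself and control the denominator via a lower bound on $\partial_\chi^2 R$. Your route is somewhat cleaner and more standard (it is essentially the implicit-function-theorem argument), while the paper's ratio manipulation yields a slightly more explicit error identity that they reuse later to determine the \emph{sign} of $\chi^*(\sigma)-\chi^{**}$. Your flagged ``main obstacle'' is not really one: the polynomial prefactor $|G|/\sigma$ appears only in the second-derivative analysis, where you need merely $o(1)$, and the numerator terms carry no such prefactor, so the tail bound transfers directly.
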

\begin{proof}
Since $\chi^*(\sigma)$ minimizes $R(\chi, \sigma)$, we have $\frac{\partial R(\chi^*(\sigma),\sigma)}{\partial \chi}=0$, which gives the following expression for $\chi^*(\sigma)$: 
\begin{eqnarray*}
\chi^*(\sigma)=\frac{2(1- \epsilon) \phi(\chi^*) + \epsilon \mathbb{E}\phi(\chi^*-G/\sigma)+ \epsilon \mathbb{E}\phi(\chi^*+G/\sigma)}{2(1- \epsilon)\int_{\chi^*}^{\infty}\phi(z)dz + \epsilon \mathbb{E} \int_{\chi^*-G/\sigma}^{\infty}\phi(z)dz+ \epsilon \mathbb{E} \int_{-\infty}^{-\chi^*-G/\sigma}\phi(z)dz }.   
\end{eqnarray*}
Letting $\sigma$ go to zero on both sides in the above equation, we then obtain
\begin{eqnarray*}
\hspace{-0cm} \chi^{**} =\frac{ 2(1- \epsilon) \phi(\chi^{**}) }{2(1- \epsilon)\int_{\chi^{**}}^{\infty}\phi(z)dz + \epsilon}, \hspace{-0cm}
\end{eqnarray*}
where we have applied Dominated Convergence Theorem (DCT). To bound $|\chi^*(\sigma) - \chi^{**}|$, we first bound  the convergence rate of the terms in the expression of $\chi^*(\sigma)$. A direct application of the mean value theorem leads to
\begin{eqnarray}
&&\phi(\chi^*)-\phi(\chi^{**})= (\chi^{**}- \chi^{*}) \tilde{\chi} \phi(\tilde{\chi}), \label{mean:one}\\
&&\int_{\chi^*}^{\infty}\phi(z)dz-\int_{\chi^{**}}^{\infty}\phi(z)dz= (\chi^{**}- \chi^{*})  \phi(\tilde{\tilde{\chi}}),\label{mean:two}
\end{eqnarray}
with $\tilde{\chi}, \tilde{\tilde{\chi}}$ being two numbers between  $\chi^*$ and $\chi^{**}$. We now consider the other four terms. By the condition $\mathbb{P}(|G| \geq \mu)=1$, we can conclude that for sufficiently small $\sigma$
\begin{eqnarray}
&& \mathbb{E}\phi(\chi^*-G/\sigma)\leq \mathbb{E}\phi(\chi^*-|G|/\sigma) \leq \phi(\mu/\sigma - \chi^*), \label{eq:e2_1} \\
&& \mathbb{E}\phi(\chi^*+G/\sigma)\leq \mathbb{E}\phi(\chi^*-|G|/\sigma) \leq \phi(\mu/\sigma - \chi^*).\label{eq:e2_2}
\end{eqnarray}
Moreover, it is not hard to derive 
\begin{eqnarray}\label{eq:e1firstlasso}
&&1-\mathbb{E} \int_{\chi^*-G/\sigma}^{\infty}\phi(z)dz- \mathbb{E} \int_{-\infty}^{-\chi^*-G/\sigma}\phi(z)dz  \\
&= &  \int_{0}^\infty \int_{ - \chi^*-g/\sigma}^{\chi^*-g/\sigma} \phi(z) dzdF(g)\leq  \int_{-\chi^*-\mu/\sigma}^{\chi^*-\mu/\sigma}\phi(z)dz \leq 2\chi^*\phi(\mu/\sigma-\chi^*), \nonumber
\end{eqnarray}
where to obtain the last two inequalities we have used the condition $\mathbb{P}(|G| \geq \mu)=1$ and the fact $\chi^*-\mu/\sigma <0$ for $\sigma$ small enough. We are now in the position to bound $|\chi^*(\sigma) - \chi^{**}|$. Define the following notations:
\begin{eqnarray*}
e_1 &  \triangleq& \epsilon \mathbb{E} \int_{\chi^*-G/\sigma}^{\infty}\phi(z)dz+ \epsilon \mathbb{E} \int_{-\infty}^{-\chi^*-G/\sigma}\phi(z)dz-\epsilon, \nonumber \\
e_2 &\triangleq& \epsilon\mathbb{E}\phi(\chi^*-G/\sigma)+ \epsilon \mathbb{E}\phi(\chi^*+G/\sigma), \\
S &\triangleq& 2(1- \epsilon) \phi(\chi^{**}), \quad T \triangleq 2(1- \epsilon)\int_{\chi^{**}}^{\infty}\phi(z)dz + \epsilon.
\end{eqnarray*}
Using the new notations and Equations \eqref{mean:one} and \eqref{mean:two}, we obtain
\begin{eqnarray*}
\chi^*(\sigma) = \frac{S+ 2(1-\epsilon) (\chi^{**} - \chi^* ) \tilde{\chi} \phi(\tilde{\chi}) +e_2 }{T+ 2(1-\epsilon) (\chi^{**} - \chi^* ) \phi(\tilde{\tilde{\chi}}) +e_1}, \quad \chi^{**} = \frac{S}{T}.
\end{eqnarray*}
Hence we can do the following calculations:
\begin{eqnarray}\label{eq:lastequationtauell_1}
\chi^*(\sigma)- \chi^{**} &=&  \frac{S+ 2(1-\epsilon) (\chi^{**} - \chi^* ) \tilde{\chi} \phi(\tilde{\chi}) + e_2 }{T+ 2(1-\epsilon) (\chi^{**} - \chi^* ) \phi(\tilde{\tilde{\chi}}) +e_1}- \frac{S}{T} \nonumber \\
&=& \frac{ 2(1-\epsilon) (\chi^{**} - \chi^* ) \tilde{\chi} \phi(\tilde{\chi}) +  e_2 }{T+ 2(1-\epsilon) (\chi^{**} - \chi^* ) \phi(\tilde{\tilde{\chi}}) + e_1}- \nonumber \\
&& \frac{S(2(1-\epsilon) (\chi^{**} - \chi^* ) \phi(\tilde{\tilde{\chi}}) + e_1) }{T(T+ 2(1-\epsilon) (\chi^{**} - \chi^* ) \phi(\tilde{\tilde{\chi}}) + e_1)} \nonumber \\
&=&  \frac{ 2(1-\epsilon) (\chi^{**} - \chi^* )( \tilde{\chi} \phi(\tilde{\chi}) -\chi^{**} \phi(\tilde{\tilde{\chi}})) }{T+ 2(1-\epsilon) (\chi^{**} - \chi^* ) \phi(\tilde{\tilde{\chi}}) +e_1} + \nonumber \\
&& \frac{e_2 - \chi^{**} e_1}{T+ 2(1-\epsilon) (\chi^{**} - \chi^* ) \phi(\tilde{\tilde{\chi}}) +e_1}. 
\end{eqnarray}
From \eqref{eq:lastequationtauell_1} we obtain
\begin{eqnarray} 
&&(\chi^*(\sigma) - \chi^{**}) \left(1+\frac{ 2(1-\epsilon)( \tilde{\chi} \phi(\tilde{\chi}) -\chi^{**} \phi(\tilde{\tilde{\chi}}))  }{T+ 2(1-\epsilon) (\chi^{**} - \chi^*(\sigma) ) \phi(\tilde{\tilde{\chi}}) +e_1} \right ) \label{diff} \\
&=&  \frac{e_2 - \chi^{**} e_1}{T+ 2(1-\epsilon) (\chi^{**} - \chi^*(\sigma) ) \phi(\tilde{\tilde{\chi}}) +e_1}.  \nonumber
\end{eqnarray}
Note that in the above expression we have $\tilde{\chi} \rightarrow \chi^{**}$ and $\tilde{\tilde{\chi}} \rightarrow \chi^{**}$ since $\chi^*(\sigma) \rightarrow \chi^{**}$. Therefore, we conclude that $\tilde{\chi} \phi(\tilde{\chi}) -\chi^{**} \phi(\tilde{\tilde{\chi}}) \rightarrow 0$ and $(\chi^{**} - \chi^*(\sigma) ) \phi(\tilde{\tilde{\chi}}) \rightarrow 0$. Moreover, since \eqref{eq:e2_1}, \eqref{eq:e2_2} and \eqref{eq:e1firstlasso} together show both $e_1$ and $e_2$ go to 0 exponentially fast, we conclude from \eqref{diff} that $(\chi^*(\sigma) - \chi^{**}) /\sigma \rightarrow 0$. This enables us to proceed
\begin{eqnarray*} \label{keydiff}
&&\lim_{\sigma \rightarrow 0} \frac{|\chi^*(\sigma) - \chi^{**}|}{\phi(\mu/\sigma- \chi^{**})} =\lim_{\sigma \rightarrow 0}\frac{|\chi^*(\sigma) - \chi^{**}|}{\phi(\mu/\sigma- \chi^{*})}  \overset{(a)}{=} \lim_{\sigma \rightarrow 0} \frac{|e_2 - \chi^{**} e_1|}{T\phi(\mu/\sigma- \chi^{*})}  \nonumber \\
&\overset{(b)}{\leq}& \lim_{\sigma \rightarrow 0} \frac{2\epsilon (1+\chi^*(\sigma)\chi^{**})\phi(\mu/\sigma- \chi^*)}{T\phi(\mu/\sigma- \chi^{*})} = \frac{2\epsilon(1+(\chi^{**})^2)}{T}. 
\end{eqnarray*}
We have used \eqref{diff} to obtain (a). We derived (b) by the following steps:
\begin{enumerate}
\item According to \eqref{eq:e1firstlasso}, $|e_1| \leq 2\epsilon\chi^*\phi(\mu/\sigma-\chi^*)$. 
\item According to \eqref{eq:e2_1} and \eqref{eq:e2_2}, $|e_2| \leq 2\epsilon \phi (\mu/\sigma - \chi^*)$. 
\end{enumerate} 
This completes the proof of Lemma \ref{l1:optimal_rate}.
\end{proof}

The next step is to bound the convergence rate of $R(\chi^*(\sigma), \sigma)$ based on the convergence rate of $\chi^*(\sigma)$ we have derived in Lemma \ref{l1:optimal_rate}. 

\begin{lemma} \label{risk:lq}
Suppose $\mathbb{P}(|G|\geq \mu)=1$ with $\mu$ being a positive constant and $\mathbb{E}|G|^2<\infty$, then as $\sigma \rightarrow 0$
\[
|R(\chi^*(\sigma),\sigma) - M_1(\epsilon)| = O(\phi(\mu/\sigma- \chi^{**})),
\]
where $\phi(\cdot)$ is the density function of the standard normal.
\end{lemma}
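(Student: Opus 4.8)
The plan is to compare $R(\chi^*(\sigma),\sigma)$ against the limiting objective evaluated at the optimal threshold, exploiting the convergence rate of $\chi^*(\sigma)$ already established in Lemma \ref{l1:optimal_rate}. Write $h(\chi) \triangleq (1-\epsilon)\mathbb{E}\eta_1^2(Z;\chi) + \epsilon(1+\chi^2)$, so that $M_1(\epsilon) = h(\chi^{**})$ and $h$ is the strongly convex function minimized at $\chi^{**}$ with $h'(\chi^{**})=0$. Splitting $B$ into its atom at $0$ and its continuous part, I would write
\begin{equation*}
R(\chi,\sigma) = (1-\epsilon)\mathbb{E}\eta_1^2(Z;\chi) + \epsilon\,\mathbb{E}(\eta_1(G/\sigma+Z;\chi) - G/\sigma)^2,
\end{equation*}
so that all the $\sigma$-dependence sits in the second term and
\begin{equation*}
R(\chi,\sigma) - h(\chi) = \epsilon\big[\mathbb{E}(\eta_1(G/\sigma+Z;\chi) - G/\sigma)^2 - (1+\chi^2)\big].
\end{equation*}

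The central computation is to control this bracket for fixed $\chi$. Setting $U = G/\sigma + Z$ and using $\eta_1(U;\chi)-U = -\mathrm{sign}(U)\chi$ on $\{|U|>\chi\}$ and $=-U$ on $\{|U|\le\chi\}$, I would expand $\mathbb{E}(\eta_1(U;\chi)-G/\sigma)^2 = 1 + \mathbb{E}(\eta_1(U;\chi)-U)^2 + 2\mathbb{E}Z(\eta_1(U;\chi)-U)$, exactly as in the proof of Lemma \ref{lassotune}. The quadratic term equals $\chi^2 + \mathbb{E}[(U^2-\chi^2)\mathbf{1}(|U|\le\chi)]$, and for the cross term I would invoke Stein's identity conditionally on $G=g$ (so $U\sim N(g/\sigma,1)$ and $Z=U-g/\sigma$): since the derivative of $u\mapsto \eta_1(u;\chi)-u$ equals $-\mathbf{1}(|u|\le\chi)$, this yields $\mathbb{E}Z(\eta_1(U;\chi)-U) = -\mathbb{P}(|U|\le\chi)$. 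Both resulting corrections are nonpositive, and each is bounded in magnitude by a constant multiple of $\mathbb{P}(|U|\le\chi)$.

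It then remains to show $\mathbb{P}(|G/\sigma+Z|\le\chi) = O(\phi(\mu/\sigma-\chi))$. Conditioning on $G=g$ with $|g|\ge\mu$, the event $\{|g/\sigma+Z|\le\chi\}$ forces $Z$ into an interval of length $2\chi$ whose endpoint nearest the origin has absolute value $|g|/\sigma-\chi \ge \mu/\sigma-\chi$; bounding the Gaussian density over this interval by its value at that endpoint gives $\mathbb{P}(|g/\sigma+Z|\le\chi)\le 2\chi\,\phi(\mu/\sigma-\chi)$, and integrating over $G$ preserves the bound. Consequently $R(\chi,\sigma)-h(\chi) = O(\phi(\mu/\sigma-\chi))$ uniformly for $\chi$ near $\chi^{**}$.

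Finally I would assemble the pieces through the decomposition
\begin{equation*}
R(\chi^*(\sigma),\sigma) - M_1(\epsilon) = \big[R(\chi^*(\sigma),\sigma) - h(\chi^*(\sigma))\big] + \big[h(\chi^*(\sigma)) - h(\chi^{**})\big].
\end{equation*}
The first bracket is $O(\phi(\mu/\sigma-\chi^*(\sigma)))$ by the previous step; since Lemma \ref{l1:optimal_rate} gives $|\chi^*(\sigma)-\chi^{**}| = O(\phi(\mu/\sigma-\chi^{**}))$, which decays faster than any polynomial in $1/\sigma$, the ratio $\phi(\mu/\sigma-\chi^*(\sigma))/\phi(\mu/\sigma-\chi^{**})\to 1$, so this bracket is $O(\phi(\mu/\sigma-\chi^{**}))$. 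The second bracket, by a second-order Taylor expansion of the strongly convex $h$ about its minimizer (where the linear term vanishes), is $O((\chi^*(\sigma)-\chi^{**})^2) = O(\phi(\mu/\sigma-\chi^{**})^2)$, which is negligible. Adding the two gives the claimed $O(\phi(\mu/\sigma-\chi^{**}))$ bound. The main obstacle I anticipate is precisely the bookkeeping showing that $\phi(\mu/\sigma-\chi^*(\sigma))$ and $\phi(\mu/\sigma-\chi^{**})$ are of the same order despite $\chi^*(\sigma)$ being $\sigma$-dependent; this is the one place where the super-polynomial decay rate of $\chi^*(\sigma)\to\chi^{**}$ from Lemma \ref{l1:optimal_rate} is genuinely needed.
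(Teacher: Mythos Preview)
Your proof is correct and uses the same essential ingredients as the paper---Stein's lemma for the cross term, the bound $\mathbb{P}(|G/\sigma+Z|\le\chi)\le 2\chi\,\phi(\mu/\sigma-\chi)$, and the rate $|\chi^*(\sigma)-\chi^{**}|=O(\phi(\mu/\sigma-\chi^{**}))$ from Lemma~\ref{l1:optimal_rate}---but organizes them through a different decomposition. The paper compares $R(\chi^*,\sigma)$ and $M_1(\epsilon)$ term by term: it separately bounds $|\mathbb{E}\eta_1^2(Z;\chi^*)-\mathbb{E}\eta_1^2(Z;\chi^{**})|$, $|(\chi^{**})^2-\mathbb{E}(\eta_1(G/\sigma+Z;\chi^*)-G/\sigma-Z)^2|$, and the cross term, each at the $O(\phi(\mu/\sigma-\chi^{**}))$ level. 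Your route inserts the intermediate quantity $h(\chi^*(\sigma))$, cleanly separating the $\sigma$-dependence (the gap $R(\chi,\sigma)-h(\chi)$, handled uniformly in $\chi$ near $\chi^{**}$) from the $\chi$-dependence (the gap $h(\chi^*)-h(\chi^{**})$). The payoff of your decomposition is that the second bracket comes out as $O((\chi^*-\chi^{**})^2)=O(\phi(\mu/\sigma-\chi^{**})^2)$ thanks to $h'(\chi^{**})=0$, which is strictly sharper than the paper's linear $O(\chi^*-\chi^{**})$ control on the analogous piece, though of course this extra precision is not needed for the stated lemma. Both arguments rely on the observation that $\phi(\mu/\sigma-\chi^*)/\phi(\mu/\sigma-\chi^{**})\to 1$, which follows from $(\chi^*-\chi^{**})/\sigma\to 0$; the paper notes this explicitly as well.
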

\begin{proof}
We recall the two quantities:
\begin{eqnarray}
M_1(\epsilon)&=&(1-\epsilon)\mathbb{E} (\eta_1(Z; \chi^{**}))^2+ \epsilon(1+ (\chi^{**})^2), \label{risk:ellone_1}\\
R(\chi^*(\sigma),\sigma)&=&(1-\epsilon)\mathbb{E}(\eta_1(Z;\chi^*))^2+ \nonumber \\
&& \epsilon[1+ \mathbb{E} (\eta_1(G/\sigma +Z; \chi^*) -G/\sigma-Z)^2] \nonumber \\
&&+ 2\epsilon \mathbb{E} Z(\eta_1(G/\sigma +Z; \chi^*) -G/\sigma-Z). \label{eq:m1eps1proof}
\end{eqnarray}
We bound $|R(\chi^*(\sigma),\sigma) - M_1(\epsilon)|$ by bounding the difference between the corresponding terms in \eqref{eq:m1eps1proof} and \eqref{risk:ellone_1}. From the proof of Lemma \ref{l1:optimal_rate} we know $e_1<0$ and $e_2>0$. Hence \eqref{diff} implies $\chi^*(\sigma)>\chi^{**}$ for small enough $\sigma$. We start with
\begin{eqnarray}
|\mathbb{E} (\eta_1 (Z; \chi^*)) ^2- \mathbb{E} (\eta_1(Z; \chi^{**}))^2| \label{risk:ell1term1firstcase}\hspace{-8cm} \\
&=& |\mathbb{E} (\eta_1(Z; \chi^*) - \eta_1(Z; \chi^{**}))(\eta_1(Z; \chi^*) + \eta_1(Z; \chi^{**}))| \nonumber \\
&\leq& \mathbb{E} [|  \chi^*- \chi^{**}+ \chi^{*}\mathbb{I}(|Z| \in (\chi^{**}, \chi^*))|\cdot |\eta_1(Z; \chi^*) + \eta_1(Z; \chi^{**})| ] \nonumber \\
&\overset{(a)}{\leq}& 2(\chi^*- \chi^{**}) \cdot \mathbb{E}|Z|+2\chi^{*}\mathbb{E}[\mathbb{I}(|Z| \in (\chi^{**}, \chi^*))|Z|] \nonumber \\
&\leq& 2(\chi^*- \chi^{**}) \cdot \mathbb{E}|Z|+4\chi^{*}(\chi^*- \chi^{**})\tilde{\chi}\phi(\tilde{\chi})  =O(\phi(\mu/\sigma - \chi^{**})), \nonumber
\end{eqnarray}
where we have used the fact $|\eta_1(u;\chi)|\leq |u|$ to obtain $(a)$; $\tilde{\chi}$ is a number between $\chi^*(\sigma)$ and $\chi^{**}$; and the last equality is due to Lemma \ref{l1:optimal_rate}. We next bound the difference between $\mathbb{E} (\eta_1 (G/\sigma + Z ; \chi^*) - G/ \sigma-Z)^2$ and $(\chi^{**})^2$:
\begin{eqnarray}
 |(\chi^{**})^2- \mathbb{E} (\eta_1 (G/\sigma + Z ; \chi^*) - G/ \sigma-Z)^2| \label{eq:riskl1term2firstcase} \hspace{-8cm} \\
&\leq& |(\chi^{*})^2- \mathbb{E} (\eta_1 (G/\sigma + Z ; \chi^*) - G/ \sigma-Z)^2| + |(\chi^{**})^2- (\chi^{*})^2|.  \nonumber
\end{eqnarray}
To bound the two terms on the right hand side of \eqref{eq:riskl1term2firstcase}, first note that 
\begin{eqnarray}\label{eq:riskl1term2firstcase1}
0 &\leq& (\chi^{*})^2- \mathbb{E} (\eta_1 (G/\sigma + Z ; \chi^*) - G/ \sigma-Z)^2 \nonumber \\
& = &\mathbb{E} [\mathbb{I}(|G/\sigma+Z|\leq \chi^*)\cdot ((\chi^*)^2-(G/\sigma+Z)^2)] \nonumber \\
&\leq &(\chi^*)^2 \int_{0}^{\infty} \int_{-g/\sigma-\chi^{*}}^{-g/\sigma+\chi^{*}} \phi(z)dz dF(g) \nonumber \\
&\overset{(b)}{\leq}& (\chi^*)^2 \int_{-\mu/\sigma-\chi^{*}}^{-\mu/\sigma+\chi^{*}} \phi(z)dz\leq 2(\chi^*)^3\phi(\mu/\sigma-\chi^*) \nonumber \\
&=&O(\phi(\mu/\sigma - \chi^{**})),
\end{eqnarray}
where $(b)$ is due to the condition $\mathbb{P}(|G|\geq \mu)=1$, and the last equality holds since $(\chi^*-\chi^{**})/\sigma \rightarrow 0$ implied by Lemma \ref{l1:optimal_rate}. Furthermore, Lemma \ref{l1:optimal_rate} yields
\begin{eqnarray} \label{eq:riskl1term2firstcase2}
(\chi^*)^2-(\chi^{**})^2=O(\phi(\mu/\sigma - \chi^{**})).
\end{eqnarray}
Combining \eqref{eq:riskl1term2firstcase}, \eqref{eq:riskl1term2firstcase1}, and  \eqref{eq:riskl1term2firstcase2}, we obtain
\begin{equation} \label{eq:riskl1term2firstcasefinal}
|(\chi^{**})^2- \mathbb{E} (\eta_1 (G/\sigma + Z ; \chi^*) - G/ \sigma-Z)^2| = O(\phi(\mu/\sigma - \chi^{**})). 
\end{equation}

Regarding the remaining term in $R(\chi^*(\sigma),\sigma)$, we can derive
\begin{eqnarray}\label{eq:riskthirdtermell1firstcase}
0 &\leq & \mathbb{E} Z(G/\sigma+Z - \eta_1(G/\sigma +Z; \chi^*) )\nonumber \\
&\overset{(c)}{=}&\mathbb{E}(1-\partial_1\eta_1(G/\sigma+Z;\chi^*)) \nonumber \\
&=& \mathbb{P}(|G/\sigma+Z|\leq \chi^*)\overset{(d)}{=}O(\phi(\mu/\sigma - \chi^{**})).
\end{eqnarray}
We have employed Stein's lemma (see Lemma \ref{lem:steins} in the supplement) to obtain $(c)$. Equality $(d)$ holds due to \eqref{eq:e1firstlasso}.
Putting the results \eqref{risk:ell1term1firstcase}, \eqref{eq:riskl1term2firstcasefinal}, and \eqref{eq:riskthirdtermell1firstcase} together finishes the proof.
\end{proof}

\subsection{Deriving the expansion of AMSE$(\lambda_{*,1},1,\sigma_w)$}\label{sec:lassofixedpointproof}

In this section we utilize the convergence rate result of $R(\chi^*(\sigma), \sigma)$ from Section \ref{ssec:zeroconv2} to derive the expansion of AMSE$(\lambda_{*,1},1,\sigma_w)$ in \eqref{asymp:l1}, and thus finish the proof of Theorem \ref{asymp:sparsel1_1}. Towards that goal, we first prove a useful lemma.

\begin{lemma} \label{finallemma}
Let $\bar{\sigma}$ be the solution to the following equation:
\begin{equation}  \label{fixedpoint:eq}
\bar{\sigma}^2 = \sigma_{\omega}^2+\frac{1}{\delta} \min_{\chi \geq 0} \mathbb{E}_{B, Z} [(\eta_1(B +\bar{\sigma} Z; \chi) -B)^2].
\end{equation}
Suppose $\delta > M_1(\epsilon)$, then
\begin{eqnarray*}
\lim_{\sigma_w\rightarrow 0}\frac{\sigma_w^2}{\bar{\sigma}^2}=\frac{\delta-M_1(\epsilon)}{\delta}.
\end{eqnarray*}
\end{lemma}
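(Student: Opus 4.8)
The plan is to reduce the statement to the small-$\sigma$ behavior of $R(\chi^*(\sigma),\sigma)$ that is already controlled by Lemma~\ref{lassotune}. Using the scaling identity $\min_{\chi\ge 0}\mathbb{E}_{B,Z}[(\eta_1(B+\bar{\sigma} Z;\chi)-B)^2]=\bar{\sigma}^2 R(\chi^*(\bar{\sigma}),\bar{\sigma})$ recorded in the roadmap, the fixed-point equation \eqref{fixedpoint:eq} can be rewritten as
\begin{equation*}
\bar{\sigma}^2=\sigma_w^2+\tfrac{1}{\delta}\,\bar{\sigma}^2 R(\chi^*(\bar{\sigma}),\bar{\sigma}),
\end{equation*}
and, since $\bar{\sigma}>0$ whenever $\sigma_w>0$, dividing by $\bar{\sigma}^2$ gives $\frac{\sigma_w^2}{\bar{\sigma}^2}=1-\frac{1}{\delta}R(\chi^*(\bar{\sigma}),\bar{\sigma})$. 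By Lemma~\ref{lassotune} we have $R(\chi^*(\sigma),\sigma)\to M_1(\epsilon)$ as $\sigma\to 0$, so the whole claim follows once we establish the single fact that $\bar{\sigma}\to 0$ as $\sigma_w\to 0$; substituting $\sigma=\bar{\sigma}$ into Lemma~\ref{lassotune} then yields $\frac{\sigma_w^2}{\bar{\sigma}^2}\to 1-\frac{M_1(\epsilon)}{\delta}=\frac{\delta-M_1(\epsilon)}{\delta}$.

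The core of the argument, and the step where the hypothesis $\delta>M_1(\epsilon)$ enters, is therefore the sandwich bound $\sigma_w\le\bar{\sigma}\le\sigma_{\mathrm{up}}$ with $\sigma_{\mathrm{up}}\to 0$. The lower bound is immediate: the penalty term in \eqref{fixedpoint:eq} is nonnegative, so $\bar{\sigma}^2\ge\sigma_w^2$. For the upper bound, I would first use $\delta>M_1(\epsilon)$ together with Lemma~\ref{lassotune} to fix a constant $\rho\in(M_1(\epsilon)/\delta,\,1)$ and a threshold $\sigma_0>0$ such that $\frac{1}{\delta}R(\chi^*(\sigma),\sigma)\le\rho$ for all $\sigma\in(0,\sigma_0]$. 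Setting $\sigma_{\mathrm{up}}:=\sigma_w/\sqrt{1-\rho}$, a direct computation shows that once $\sigma_w$ is small enough that $\sigma_{\mathrm{up}}\le\sigma_0$, one has $\sigma_w^2+\frac{1}{\delta}\sigma_{\mathrm{up}}^2 R(\chi^*(\sigma_{\mathrm{up}}),\sigma_{\mathrm{up}})\le\sigma_w^2+\rho\,\sigma_{\mathrm{up}}^2=\sigma_{\mathrm{up}}^2$. Hence the continuous function $\Psi(\sigma):=\sigma^2-\sigma_w^2-\frac{1}{\delta}\min_{\chi\ge 0}\mathbb{E}_{B,Z}[(\eta_1(B+\sigma Z;\chi)-B)^2]$ satisfies $\Psi(\sigma_w)\le 0\le\Psi(\sigma_{\mathrm{up}})$, so by the intermediate value theorem it has a root in $[\sigma_w,\sigma_{\mathrm{up}}]$; by uniqueness of the solution of \eqref{fixedpoint:eq} this root is exactly $\bar{\sigma}$. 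Since $\sigma_{\mathrm{up}}\to 0$ as $\sigma_w\to 0$, we conclude $\bar{\sigma}\to 0$.

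I expect the main obstacle to be the rigorous justification of this upper bound rather than the algebra, which is routine. Two points need care: first, the continuity of $\sigma\mapsto\min_{\chi\ge 0}\mathbb{E}_{B,Z}[(\eta_1(B+\sigma Z;\chi)-B)^2]$ used in the intermediate value theorem, which I would obtain from the joint continuity of the integrand (via dominated convergence) together with the boundedness of the minimizer $\chi^*(\sigma)$ near $\sigma=0$ established inside the proof of Lemma~\ref{lassotune}, which lets the minimization be restricted to a compact set of $\chi$; and second, the uniqueness of the solution of \eqref{fixedpoint:eq}, which is what permits identifying the intermediate-value root with $\bar{\sigma}$. Conceptually, the hypothesis $\delta>M_1(\epsilon)$ is doing exactly the work of forcing the factor $1-\frac{1}{\delta}R(\chi^*(\sigma),\sigma)$ to stay bounded away from $0$ near the origin: this is precisely what rules out a spurious solution bounded away from $0$ (which would force $R(\chi^*(\sigma),\sigma)=\delta$) and guarantees that the unique $\bar{\sigma}$ collapses to $0$ together with $\sigma_w$.
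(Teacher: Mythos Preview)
Your proposal is correct, and the overall skeleton---rewrite \eqref{fixedpoint:eq} as $\sigma_w^2/\bar{\sigma}^2=1-\tfrac{1}{\delta}R(\chi^*(\bar{\sigma}),\bar{\sigma})$, show $\bar{\sigma}\to 0$, then invoke Lemma~\ref{lassotune}---matches the paper exactly. The difference lies in how you force $\bar{\sigma}\to 0$. You use Lemma~\ref{lassotune} to get $\tfrac{1}{\delta}R(\chi^*(\sigma),\sigma)\le\rho$ only on a small interval $(0,\sigma_0]$, and then need continuity of $\Psi$, the intermediate value theorem, and uniqueness of the fixed point to trap $\bar{\sigma}$ in $[\sigma_w,\sigma_{\mathrm{up}}]$. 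The paper instead proves a \emph{global} inequality $R(\chi^*(\sigma),\sigma)\le M_1(\epsilon)$ valid for every $\sigma>0$, by observing that $\alpha\mapsto\mathbb{E}(\eta_1(\alpha+Z;\chi)-\alpha)^2$ is nondecreasing on $[0,\infty)$ with limit $1+\chi^2$, so $R(\chi,\sigma)\le(1-\epsilon)\mathbb{E}\eta_1^2(Z;\chi)+\epsilon(1+\chi^2)$ and minimizing over $\chi$ gives $R(\chi^*(\sigma),\sigma)\le M_1(\epsilon)$. Plugging this directly into \eqref{fixedpoint:eq} yields $\bar{\sigma}^2\le\sigma_w^2/(1-M_1(\epsilon)/\delta)$, with no need for continuity, IVT, or uniqueness. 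This is shorter and also buys a bonus: since $R(\chi^*(\bar{\sigma}),\bar{\sigma})-M_1(\epsilon)\le 0$ for all $\sigma_w$, the paper can immediately conclude (in the subsequent remark) that the second dominant term of ${\rm AMSE}(\lambda_{*,1},1,\sigma_w)$ is negative. Your route is sound but leans on more machinery (uniqueness and continuity, both established elsewhere in the paper) and does not give that sign information for free.
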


\begin{proof}
We first claim that $\mathbb{E} (\eta_1(\alpha+Z; \chi)- \alpha)^2$ is an increasing function of $\alpha$, because
\[
\frac{d}{d\alpha} \mathbb{E} (\eta_1(\alpha+Z; \chi)- \alpha)^2 = 2 \mathbb{E}(\alpha \mathbb{I} (|\alpha+Z| \leq \chi) )\geq 0. 
\]
Hence we obtain
\begin{eqnarray}
\hspace{0.6cm} \mathbb{E} (\eta_1(\alpha+Z; \chi)- \alpha)^2 \leq \lim_{\alpha \rightarrow \infty} \mathbb{E} (\eta_1(\alpha+Z; \chi)- \alpha)^2 = 1+\chi^2.  \label{devamse:one}
\end{eqnarray}
Inequality \eqref{devamse:one} then yields
\begin{eqnarray*}
R(\chi,\bar{\sigma})&=&(1-\epsilon)\mathbb{E}(\eta_1(Z;\chi))^2+\epsilon \mathbb{E}(\eta_1(G/\bar{\sigma} + Z;\chi)-G/\bar{\sigma})^2 \nonumber \\
&\leq& (1-\epsilon)\mathbb{E}(\eta_1(Z;\chi))^2+\epsilon (1+\chi^2). 
\end{eqnarray*}
Taking minimum over $\chi$ on both sides above gives us
\begin{eqnarray}
R(\chi^*(\bar{\sigma}), \bar{\sigma})\leq M_1(\epsilon). \label{key:oneone}
\end{eqnarray}
Moreover, since $\bar{\sigma}$ is the solution of \eqref{fixedpoint:eq}, it satisfies
\begin{eqnarray}\label{eq:fixedpointlass1firstcase}
\bar{\sigma}^2=\sigma^2_w+\frac{\bar{\sigma}^2}{\delta}R(\chi^*(\bar{\sigma}),\bar{\sigma}).
\end{eqnarray}
Combining \eqref{key:oneone} and \eqref{eq:fixedpointlass1firstcase} with the condition $\delta > M_1(\epsilon)$, we have
\[
\bar{\sigma}^2\leq \frac{\sigma_w^2}{ 1- M_1(\epsilon)/\delta},
\]
which leads to $\bar{\sigma} \rightarrow 0$, as $\sigma_w \rightarrow 0$. Then applying Lemma \ref{lassotune} shows
\[
\lim_{\sigma_w \rightarrow 0}R(\chi^*(\bar{\sigma}),\bar{\sigma})=\lim_{\bar{\sigma} \rightarrow 0}R(\chi^*(\bar{\sigma}),\bar{\sigma})=M_1(\epsilon).
\]
Diving both sides of \eqref{eq:fixedpointlass1firstcase} by $\bar{\sigma}^2$ and letting $\sigma_w \rightarrow 0$ finishes the proof.
\end{proof}

To complete the proof of Theorem \ref{asymp:sparsel1_1}, first note that Corollary \ref{thm:mseoptimalasymptot} tells us
\begin{eqnarray*}
{\rm AMSE}(\lambda_{*,1},1,\sigma_w)=\bar{\sigma}^2R(\chi^*(\bar{\sigma}), \bar{\sigma}), \quad \sigma_w^2=\bar{\sigma}^2-\frac{\bar{\sigma}^2}{\delta}R(\chi^*(\bar{\sigma}), \bar{\sigma}).
\end{eqnarray*}
We then have
\begin{eqnarray}
&&{\rm AMSE}(\lambda_{*,1},1,\sigma_w)-\frac{\delta M_1(\epsilon)}{\delta-M_1(\epsilon)}\sigma_w^2 \label{term:one}\\
&=&\bar{\sigma}^2R(\chi^*(\bar{\sigma}),\bar{\sigma})- \frac{\delta M_1(\epsilon)}{\delta-M_1(\epsilon)} \cdot \bigg [ \bar{\sigma}^2-\frac{\bar{\sigma}^2}{\delta}R(\chi^*(\bar{\sigma}), \bar{\sigma})\bigg ] \nonumber \\
&=&\frac{\delta(R(\chi^*(\bar{\sigma}),\bar{\sigma})-M_1(\epsilon))}{\delta-M_1(\epsilon)}\bar{\sigma}^2\overset{(a)}{=} O(\bar{\sigma}^2\phi(\mu/\bar{\sigma}-\chi^{**})),  \nonumber 
\end{eqnarray}
where $(a)$ is due to Lemma \ref{risk:lq}. Finally, since $\lim_{\sigma_w\rightarrow 0}\frac{\sigma_w^2}{\bar{\sigma}^2}=\frac{\delta-M_1(\epsilon)}{\delta}$ according to Lemma \ref{finallemma}, it is not hard to see
\begin{eqnarray}\label{smallo:chain}
\hspace{0.4cm} O(\bar{\sigma}^2\phi(\mu/\bar{\sigma}-\chi^{**}))=o(\phi(\bar{\mu}/\bar{\sigma}))=o\Big(\phi \Big(\sqrt{\frac{\delta-M_1(\epsilon)}{\delta}}\frac{\tilde{\mu}}{\sigma_w}\Big)\Big),
\end{eqnarray}
where $\bar{\mu}$ and $\tilde{\mu}$ are any constants satisfying $0 \leq \tilde{\mu}<\bar{\mu}<\mu$. Results \eqref{term:one} and \eqref{smallo:chain} together close the proof of Theorem \ref{asymp:sparsel1_1}. 

\textbf{Remark}: \eqref{key:oneone} and \eqref{term:one} together imply that the second dominant term of AMSE$(\lambda_{*,1},1,\sigma_w)$ is in fact negative.


\section*{Acknowledgements}
This work is supported by NSF grant CCF-1420328.

\bibliographystyle{unsrt}
\bibliography{myrefs}

\begin{thebibliography}{10}

\bibitem{frank1993statistical}
L.~Frank and J.~Friedman.
\newblock A statistical view of some chemometrics regression tools.
\newblock {\em Technometrics}, 35(2):109--135, 1993.

\bibitem{fu1998penalized}
W.~Fu.
\newblock Penalized regressions: the bridge versus the lasso.
\newblock {\em J. Comp. Graph. Stat}, 7(3):397--416, 1998.

\bibitem{zheng2015does}
L.~Zheng, A.~Maleki, H.~Weng, X.~Wang, and T.~Long.
\newblock Does $ \ell_p$-minimization outperform $ \ell_1$-minimization?
\newblock {\em Preprint. arXiv:1501.03704v2}, 2016.

\bibitem{knight2000asymptotics}
K.~Knight and W.~Fu.
\newblock Asymptotics for lasso-type estimators.
\newblock {\em Annals of statistics}, pages 1356--1378, 2000.

\bibitem{donoho2005sparse}
D.~Donoho and J.~Tanner.
\newblock Sparse nonnegative solution of underdetermined linear equations by
  linear programming.
\newblock {\em Proc. Natl. Acad. Sci.}, 102(27):9446--9451, 2005.

\bibitem{donoho2009message}
D.~Donoho, A.~Maleki, and A.~Montanari.
\newblock Message-passing algorithms for compressed sensing.
\newblock {\em Proc. Natl Acad. Sci.}, 106(45):18914--18919, 2009.

\bibitem{amelunxen2014living}
D.~Amelunxen, M.~Lotz, M.~McCoy, and J.~Tropp.
\newblock Living on the edge: Phase transitions in convex programs with random
  data.
\newblock {\em Info. Infer.}, 2014.

\bibitem{el2013robust}
N.~El~Karoui, D.~Bean, P.~Bickel, C.~Lim, and B.~Yu.
\newblock On robust regression with high-dimensional predictors.
\newblock {\em Proc. Natl Acad. Sci.}, 110(36):14557--14562, 2013.

\bibitem{bradic2015robustness}
J.~Bradic and J.~Chen.
\newblock Robustness in sparse linear models: relative efficiency based on
  robust approximate message passing.
\newblock {\em arXiv preprint arXiv:1507.08726}, 2015.

\bibitem{stojnic2009various}
M.~Stojnic.
\newblock Various thresholds for $\ell_1$-optimization in compressed sensing.
\newblock {\em arXiv preprint arXiv:0907.3666}, 2009.

\bibitem{DoMaMoNSPT}
D.~L. Donoho, A.~Maleki, and A.~Montanari.
\newblock Noise sensitivity phase transition.
\newblock {\em IEEE Trans. Inform. Theory}, 57(10):6920--6941, Oct. 2011.

\bibitem{BaMo10}
M.~Bayati and A.~Montanri.
\newblock The dynamics of message passing on dense graphs, with applications to
  compressed sensing.
\newblock {\em IEEE Trans. Inform. Theory}, 57(2), Feb. 2011.

\bibitem{BaMo11}
M.~Bayati and A.~Montanari.
\newblock The {LASSO} risk for {G}aussian matrices.
\newblock {\em IEEE Trans Inform Theory}, 58(4):1997--2017, 2012.

\bibitem{2016arXiv160307377W}
H.~{Weng}, A.~{Maleki}, and L.~{Zheng}.
\newblock {Overcoming The Limitations of Phase Transition by Higher Order
  Analysis of Regularization Techniques}.
\newblock {\em ArXiv e-prints}, March 2016.

\bibitem{donoho2004most}
D.~Donoho.
\newblock For most underdetermined systems of linear equations, the minimal
  $\ell_1$-norm approximates the sparsest near-solution.
\newblock {\em Comm. Pure and Appl. Math}, 2004.

\bibitem{donoho2006high}
D.~Donoho.
\newblock High-dimensional centrally symmetric polytopes with neighborliness
  proportional to dimension.
\newblock {\em Discrete \& Computational Geometry}, 35(4):617--652, 2006.

\bibitem{donoho2005neighborliness}
D.~Donoho and J.~Tanner.
\newblock Neighborliness of randomly projected simplices in high dimensions.
\newblock {\em Proc. Natl Acad. Sci.}, 102(27):9452--9457, 2005.

\bibitem{guo2005randomly}
D.~Guo and S.~Verd{\'u}.
\newblock Randomly spread cdma: Asymptotics via statistical physics.
\newblock {\em IEEE Trans. Inform. Theory}, 51(6):1983--2010, 2005.

\bibitem{tanaka2002statistical}
T.~Tanaka.
\newblock A statistical-mechanics approach to large-system analysis of cdma
  multiuser detectors.
\newblock {\em IEEE Trans. Info. Theory}, 48(11):2888--2910, 2002.

\bibitem{coolen2005mathematical}
A.~Coolen.
\newblock {\em The Mathematical Theory of Minority Games: Statistical Mechanics
  of Interacting Agents (Oxford Finance Series)}.
\newblock Oxford University Press, Inc., 2005.

\bibitem{thrampoulidis2016precise}
C.~Thrampoulidis, E.~Abbasi, and B.~Hassibi.
\newblock Precise error analysis of regularized m-estimators in
  high-dimensions.
\newblock {\em arXiv preprint arXiv:1601.06233}, 2016.

\bibitem{karoui2013asymptotic}
N.~Karoui.
\newblock Asymptotic behavior of unregularized and ridge-regularized
  high-dimensional robust regression estimators: rigorous results.
\newblock {\em arXiv preprint arXiv:1311.2445}, 2013.

\bibitem{donoho2013high}
D.~Donoho and A.~Montanari.
\newblock High dimensional robust m-estimation: Asymptotic variance via
  approximate message passing.
\newblock {\em Prob. Theo. Rel. Fields}, pages 1--35, 2013.

\bibitem{donoho2013phase}
D.~Donoho, M.~Gavish, and A.~Montanari.
\newblock The phase transition of matrix recovery from gaussian measurements
  matches the minimax mse of matrix denoising.
\newblock {\em Proceedings of the National Academy of Sciences},
  110(21):8405--8410, 2013.

\bibitem{donoho2015variance}
D.~Donoho and A.~Montanari.
\newblock Variance breakdown of huber (m)-estimators: $n/p \rightarrow m$.
\newblock 2015.

\bibitem{donoho2011noise}
D.~Donoho, A.~Maleki, and A.~Montanari.
\newblock The noise-sensitivity phase transition in compressed sensing.
\newblock {\em IEEE Trans. Info. Theory}, 57(10), 2011.

\bibitem{rangan2009asymptotic}
S.~Rangan, V.~Goyal, and A.~Fletcher.
\newblock Asymptotic analysis of map estimation via the replica method and
  compressed sensing.
\newblock In {\em Advances in Neural Information Processing Systems}, pages
  1545--1553, 2009.

\bibitem{krzakala2012statistical}
F.~Krzakala, M.~M{\'e}zard, F.~Sausset, Y.~Sun, and L.~Zdeborov{\'a}.
\newblock Statistical-physics-based reconstruction in compressed sensing.
\newblock {\em Physical Review X}, 2(2):021005, 2012.

\bibitem{maleki2010approximate}
A.~Maleki.
\newblock {\em Approximate message passing algorithms for compressed sensing}.
\newblock Ph.D. thesis, Stanford University, 2010.

\bibitem{oymak2013squared}
Samet Oymak, Christos Thrampoulidis, and Babak Hassibi.
\newblock The squared-error of generalized lasso: A precise analysis.
\newblock In {\em Communication, Control, and Computing (Allerton), 2013 51st
  Annual Allerton Conference on}, pages 1002--1009. IEEE, 2013.

\bibitem{oymak2016sharp}
Samet Oymak and Babak Hassibi.
\newblock Sharp mse bounds for proximal denoising.
\newblock {\em Foundations of Computational Mathematics}, 16(4):965--1029,
  2016.

\bibitem{kabashima2009typical}
Y.~Kabashima, T.~Wadayama, and T.~Tanaka.
\newblock A typical reconstruction limit for compressed sensing based on
  lp-norm minimization.
\newblock {\em Journal of Statistical Mechanics: Theory and Experiment},
  2009(09):L09003, 2009.

\bibitem{hoerl1970ridge}
A.~Hoerl and R.~Kennard.
\newblock Ridge regression: Biased estimation for nonorthogonal problems.
\newblock {\em Technometrics}, 12(1):55--67, 1970.

\bibitem{tibshirani1996regression}
R.~Tibshirani.
\newblock Regression shrinkage and selection via the lasso.
\newblock {\em Journal of the Royal Statistical Society. Series B}, pages
  267--288, 1996.

\bibitem{zou2005regularization}
H.~Zou and T.~Hastie.
\newblock Regularization and variable selection via the elastic net.
\newblock {\em Journal of the Royal Statistical Society: Series B},
  67(2):301--320, 2005.

\bibitem{fan2001variable}
J.~Fan and R.~Li.
\newblock Variable selection via nonconcave penalized likelihood and its oracle
  properties.
\newblock {\em Journal of the American statistical Association},
  96(456):1348--1360, 2001.

\bibitem{bogdan2015slope}
M.~Bogdan, E.~van~den Berg, C.~Sabatti, W.~Su, and E.~Cand{\`e}s.
\newblock Slope:adaptive variable selection via convex optimization.
\newblock {\em Annals App. Stat.}, 9(3):1103, 2015.

\bibitem{huang2008asymptotic}
J.~Huang, J.~Horowitz, and S.~Ma.
\newblock Asymptotic properties of bridge estimators in sparse high-dimensional
  regression models.
\newblock {\em Annals Stat.}, pages 587--613, 2008.

\bibitem{su2015false}
W.~Su, M.~Bogdan, and E.~Candes.
\newblock False discoveries occur early on the lasso path.
\newblock {\em arXiv preprint arXiv:1511.01957}, 2015.

\bibitem{buhlmann2011statistics}
P.~B{\"u}hlmann and S.~Van De~Geer.
\newblock {\em Statistics for high-dimensional data: methods, theory and
  applications}.
\newblock Springer Science \& Business Media, 2011.

\bibitem{bickel2009simultaneous}
P.~Bickel, Y.~Ritov, and A.~Tsybakov.
\newblock Simultaneous analysis of lasso and dantzig selector.
\newblock {\em Annals Stat.}, pages 1705--1732, 2009.

\bibitem{raskutti2011minimax}
G.~Raskutti, M.~Wainwright, and B.~Yu.
\newblock Minimax rates of estimation for high-dimensional linear regression
  over-balls.
\newblock {\em IEEE Trans. Info. Theory}, 57(10), 2011.

\bibitem{koltchinskii2009sparsity}
V.~Koltchinskii.
\newblock Sparsity in penalized empirical risk minimization.
\newblock In {\em Annales de l'IHP Probabilit{\'e}s et statistiques},
  volume~45, pages 7--57, 2009.

\bibitem{candes2008restricted}
E.~Cand{\`e}s.
\newblock The restricted isometry property and its implications for compressed
  sensing.
\newblock {\em Comptes Rendus Mathematique}, 346(9):589--592, 2008.

\bibitem{bunea2007sparsity}
F.~Bunea, A.~Tsybakov, and M.~Wegkamp.
\newblock Sparsity oracle inequalities for the lasso.
\newblock {\em Electronic Journal of Statistics}, 1:169--194, 2007.

\bibitem{mousavi2015consistent}
Ali Mousavi, Arian Maleki, and Richard~G Baraniuk.
\newblock Consistent parameter estimation for lasso and approximate message
  passing.
\newblock {\em Annals of Statistics}.
\newblock To appear.

\bibitem{parikh2014proximal}
N.~Parikh and S.~Boyd.
\newblock Proximal algorithms.
\newblock {\em Foundations and Trends in optimization}, 1(3):127--239, 2014.

\bibitem{stein1981estimation}
C.~Stein.
\newblock Estimation of the mean of a multivariate normal distribution.
\newblock {\em The Annals of Statistics}, pages 1135--1151, 1981.

\bibitem{bai1993limit}
ZD~Bai and YQ~Yin.
\newblock Limit of the smallest eigenvalue of a large dimensional sample
  covariance matrix.
\newblock {\em The annals of Probability}, pages 1275--1294, 1993.

\bibitem{kashin1977diameters}
Boris~Sergeevich Kashin.
\newblock Diameters of some finite-dimensional sets and classes of smooth
  functions.
\newblock {\em Izvestiya Rossiiskoi Akademii Nauk. Seriya Matematicheskaya},
  41(2):334--351, 1977.

\end{thebibliography}

\newpage

\appendix
\begin{center}
\textbf{\large{Supplementary material}}
\end{center}

\section{Organization}

This supplement contains simulations and all the proof of lemmas and theorems that have not been covered in the main text. We outline the structure of the supplement to help readers find the materials they are interested in. The organization is as follows:

\begin{enumerate}

\item Appendix \ref{add:simulations} contains additional simulation results. These numerical experiments evaluate the accuracy of AMSE$(\lambda_{*,q},q,\sigma_w)$ as an asymptotic prediction for finite-sample mean square error (MSE). Along this line, the convergence rate of MSE is empirically verified.

\vspace{0.3cm}
\item Appendix \ref{ssec:etaq:summary} covers several important properties of the proximal operator function $\eta_q(u;\chi)$. These properties will later be used in our proofs. It includes Lemmas \ref{lem:toobasicpropprox}, \ref{prox:smooth}, \ref{lem:continuitysecondder}, \ref{lem:toobasicpropproxder} and their proofs. 
\vspace{0.3cm}
\item Appendix \ref{subsec:solutionfixedpoint} proves Lemma \ref{lem:eq7and8solutionexists} from the main text. This lemma is concerned with the uniquesness of the solution of \eqref{eq:fixedpoint11} and \eqref{eq:fixedpoint21}. Lemmas \ref{lem:toobasicpropprox}, \ref{prox:smooth}, \ref{lem:continuitysecondder}, \ref{lem:toobasicpropproxder} from Appendix \ref{ssec:etaq:summary} are extensively used here. 

\vspace{0.3cm}
\item Appendix \ref{sec:optimaltuning1} proves Corollary \ref{thm:mseoptimalasymptot} in the main text. This corollary characterizes AMSE when the regularization parameter $\lambda$ is optimally tuned. This appendix uses Lemma \ref{lem:posfirstlemma} from Appendix \ref{subsec:solutionfixedpoint}. 

\vspace{0.3cm}
\item Appendix \ref{sec:prooftheorem2full} includes the proof of  Theorem \ref{asymp:lqbelowpt}, one of the main results in this paper. The theorem derives the second order expansion of AMSE$(\lambda_{*,q},q,\sigma_w)$ for $q\in (1,2]$. We recommend interested readers to read the proof in Section \ref{sec:proofthm4full} before Appendix \ref{sec:prooftheorem2full}. 

\vspace{0.3cm}
\item Appendix \ref{sec:proofthm3full} contains the proof of Theorem \ref{asymp:lqabovept}. This theorem identifies the necessary condition for successful recovery with $q\in (1,2]$. Phase transition is implied by this theorem together with Theorem \ref{asymp:lqbelowpt}.

\vspace{0.3cm}
\item Appendix \ref{sec:prooflassogeralepsless1} proves Theorem \ref{asymp:sparsel1_2}. The proof of this theorem is along the same lines as the proof of Theorem \ref{asymp:sparsel1_1} presented in Section \ref{sec:proofthm4full} of the main text. We suggest the reader to study Section \ref{sec:proofthm4full} before studying this section. 

\vspace{0.3cm}
\item Appendix \ref{sec:proofthm6full} proves Theorem \ref{them:ellabovpt}. The proof is essentially the same as the proof of Theorem \ref{asymp:lqabovept}. Since we do not repeat the detailed arguments, readers may want to study Appendix \ref{sec:proofthm3full} first.

\vspace{0.3cm}
\item Appendix \ref{ssec:App:firstresult} includes the proof of Theorem \ref{thm:eqpseudolip}. Though Theorem \ref{thm:eqpseudolip} lays the first step towards analyzing the AMSE, we postpone the proof to the end of the supplement. That is because the focus of this paper is on the second order dominant term of AMSE and we do not want to move the proof of this theorem ahead to potentially digress from the highlights. Moreover, since the key ideas of this proof are similar to those of \cite{BaMo11}, we will not detail out the entire proof and rather describe several key steps. Note that in this proof we will use some results we have developed in Appendices \ref{ssec:etaq:summary} and \ref{subsec:solutionfixedpoint}.


\end{enumerate}

\section{Additional numerical results} \label{add:simulations}

This section contains additional numerical studies to examine the relation between AMSE and its finite-sample version. Throughout the simulations, we fix the sparsity level $\epsilon=0.4$ and the distribution of $G$ to $g(b)=0.5\delta_1(b)+0.5\delta_{-1}(b)$. We also let the elements of $X$ be i.i.d from $N(0, 1/n)$, and the noise $w_i \overset{i.i.d}{\sim} N(0, \sigma_w^2)$. The other parameters of our problem instances are set in the following way:
\begin{enumerate}
\item $\delta$ takes values in $\{1.5, 3, 5\}$
\item $p$ takes any value in $\{20, 50, 200, 400\}$
\item $q$ is chosen to be one of $\{1, 1.5, 1.8, 2\}$
\item $\sigma_w$ ranges over $[0, 0.15]$
\end{enumerate}
Given the values of $\epsilon, G, \delta, \sigma_w$ and $q$, we compute AMSE$(\lambda_{*,q},q,\sigma_w)$ according to the formulas in Corollary \ref{thm:mseoptimalasymptot} from the main text. We also generate samples from linear models by setting $n=\delta p$, $\{\beta_i\}_{i=1}^p \overset{i.i.d}{\sim} (1-\epsilon)\delta_0+\epsilon G$. We then calculate the finite-sample MSE (under optimal tuning): $\inf_{\lambda \geq 0}\|\hat{\beta}(\lambda,q)-\beta\|_2^2/p$. The same experiment is repeated $200$ times. For completeness, we have included the first-order and second-order expansions of AMSE. The results are shown in Figures \ref{addfig:delta5one}-\ref{addfig:delta15three}. We summarize our observations and findings below.
\begin{itemize}
\item Figures \ref{addfig:delta5one}-\ref{addfig:delta5three} are about the case $\delta=5$. Figure \ref{addfig:delta5one} shows that the averaged MSE is well matched with AMSE even when $p$ is around $200$ or $400$. This is true for different values of $q$ as $\sigma_w$ varies over $[0,0.15]$. Even for $p=20$, the averaged MSE is already quite close to AMSE when $\sigma_w<0.075$. As confirmed by the simulations of Section \ref{sec:simanddis} in the main text, we see again that the first-order approximation for LASSO is very accurate, and so is the second-order approximation for other values of $q$.
\vspace{0.2cm}
\item Figure \ref{addfig:delta5two} draws the standard deviation of MSE against $\sigma^2_w$ over the range $[0,0.15^2]$. The plots with different values of $q$ and $p$ all indicate that the standard deviation scales linearly with the noise variance $\sigma_w^2$. Intuitively speaking, due to the quadratic form of MSE, its linear scaling with $\sigma_w^2$ is somewhat expected.
\vspace{0.2cm}
\item In Figure \ref{addfig:delta5three}, we multiply the standard deviation (SD) by $\sqrt{p/20}$ and plot the adjusted SD against $\sigma_w^2$. Interestingly, all the adjusted SD's (with the same value of $q$) are aligned around the same straight line. It implies that the scaling of SD on the dimension $p$ is $1/\sqrt{p}$, and the convergence rate of MSE is $\sqrt{n}$ (recall $n=\delta p$). We leave a rigorous proof of this empirical result for future research. 
\vspace{0.2cm}
\item Figures \ref{addfig:delta3one}-\ref{addfig:delta3three} are concerned with the case $\delta=3$. Compared with Figures \ref{addfig:delta5one}-\ref{addfig:delta5three}, it is seen that the observations we have made for $\delta=5$ still hold in $\delta=3$. We hence do not repeat the details. 
\vspace{0.2cm}
\item Figures \ref{addfig:delta15one}-\ref{addfig:delta15three} are for the scenario $\delta=1.5$. As the value of $\delta$ is decreased to $1.5$, the main messages we reveal in the case $\delta=5$ carry over. But some details may change. For instance, only when $p=400$ the difference between the averaged MSE and AMSE is negligible throughout all the values of $q$ and $\sigma_w$. Also, the linear pattern between SD (or adjusted SD) and $\sigma_w^2$ is less significant than when $\delta=5$. Given that for a fixed $p$ smaller values of $\delta$ lead to smaller number of observations (and hence less number of random elements in the design matrix), these changes should not come as a surprise. In the language of statistical physics, since the the number of random elements in the system decreases self averaging has not happened yet. 
\end{itemize}

\begin{figure}[htb]
\centering
\setlength\tabcolsep{1.5pt}
\begin{tabular}{cc}
\includegraphics[width=6.2cm, height=5.8cm]{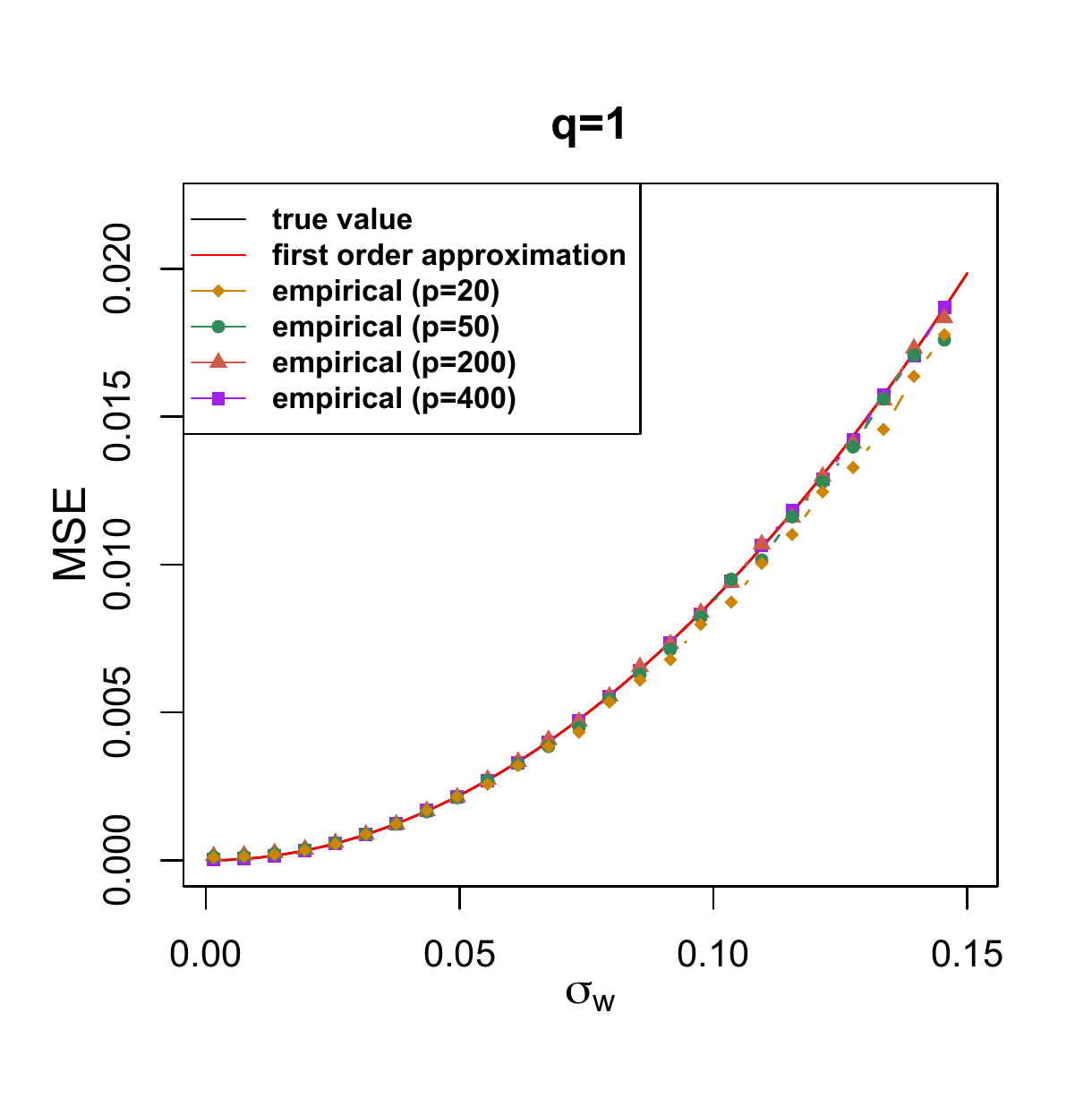} &
 \includegraphics[width=6.2cm, height=5.8cm]{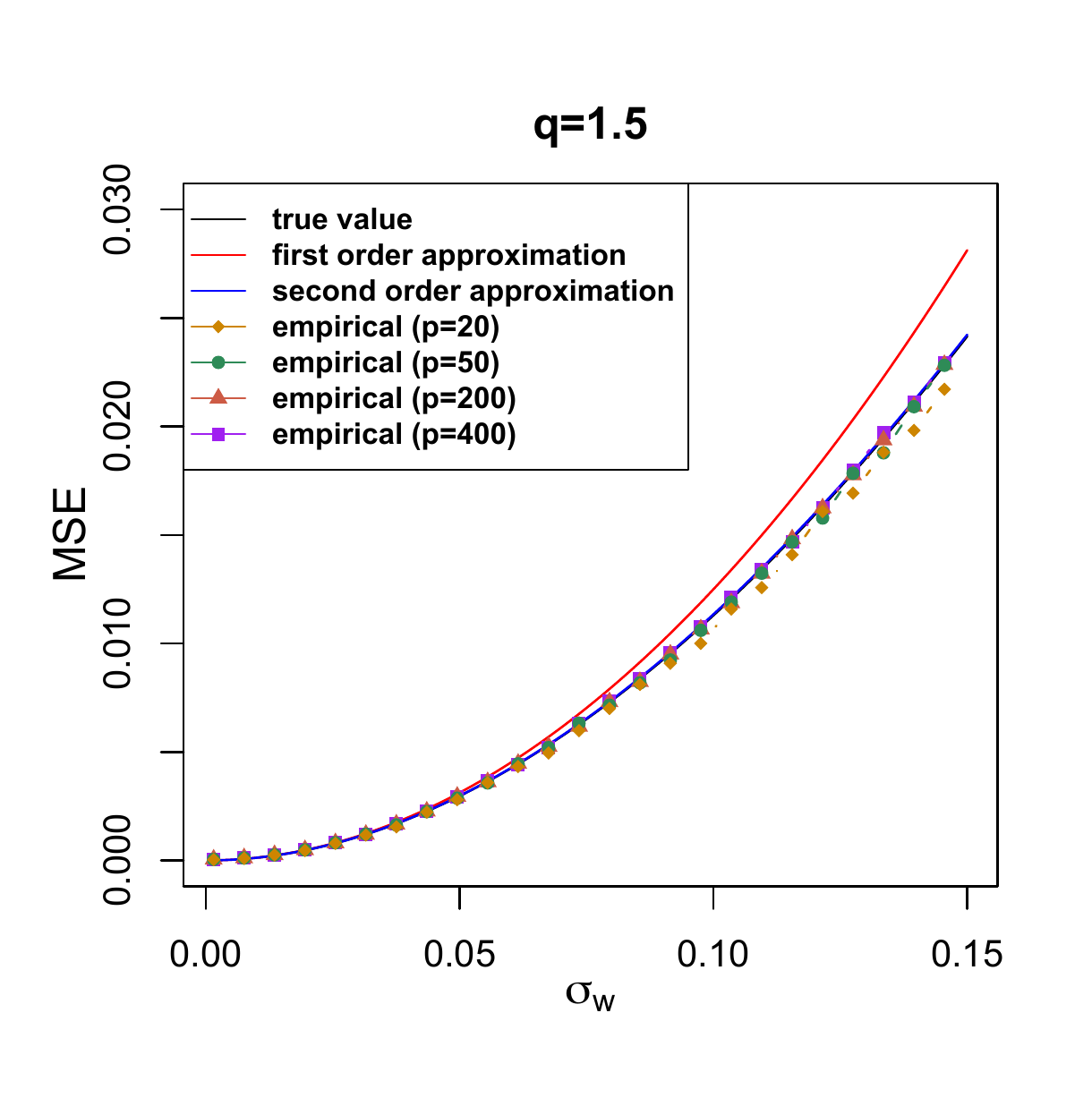} \\ 
\includegraphics[width=6.2cm, height=5.8cm]{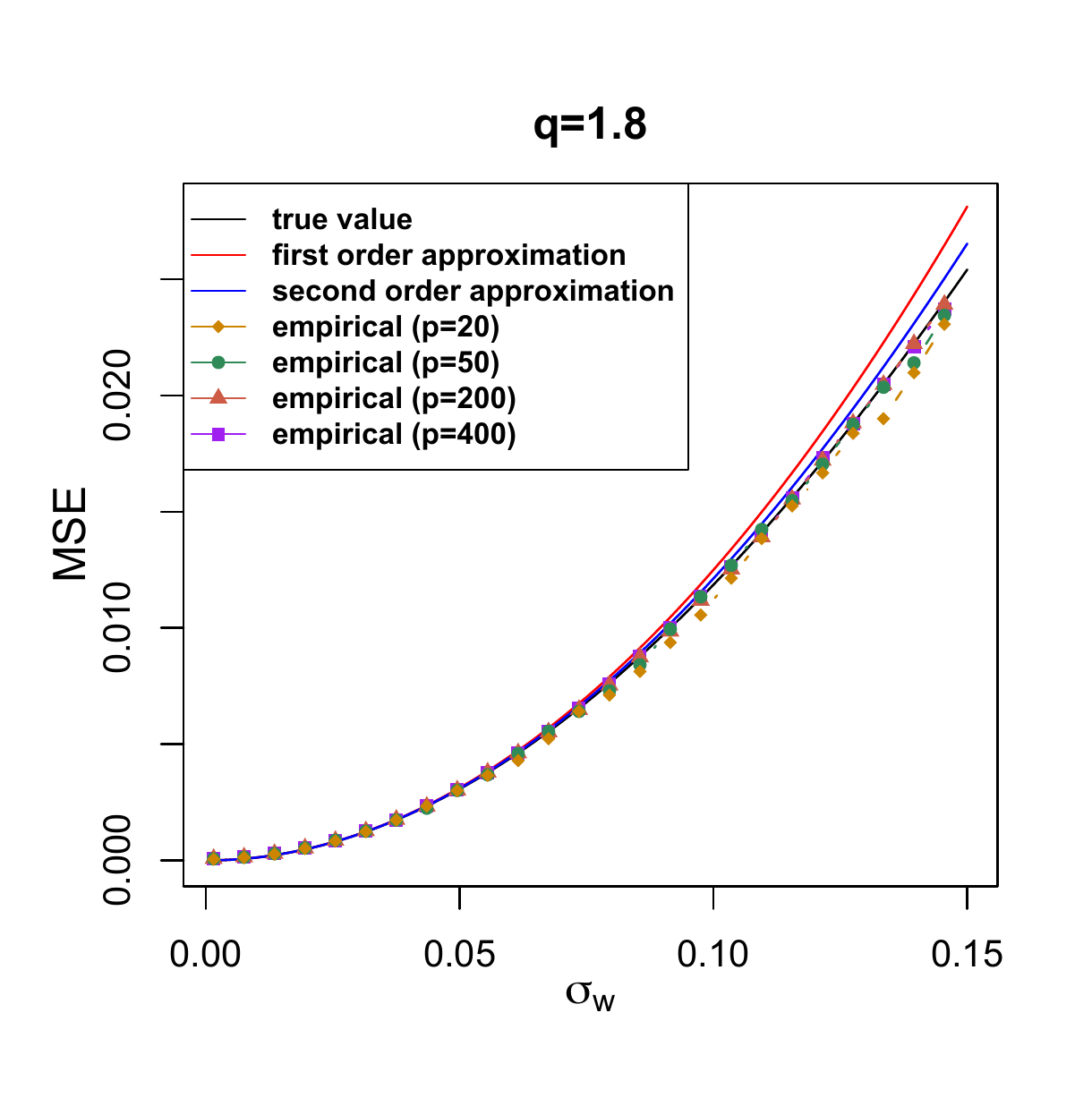} &
\includegraphics[width=6.2cm, height=5.8cm]{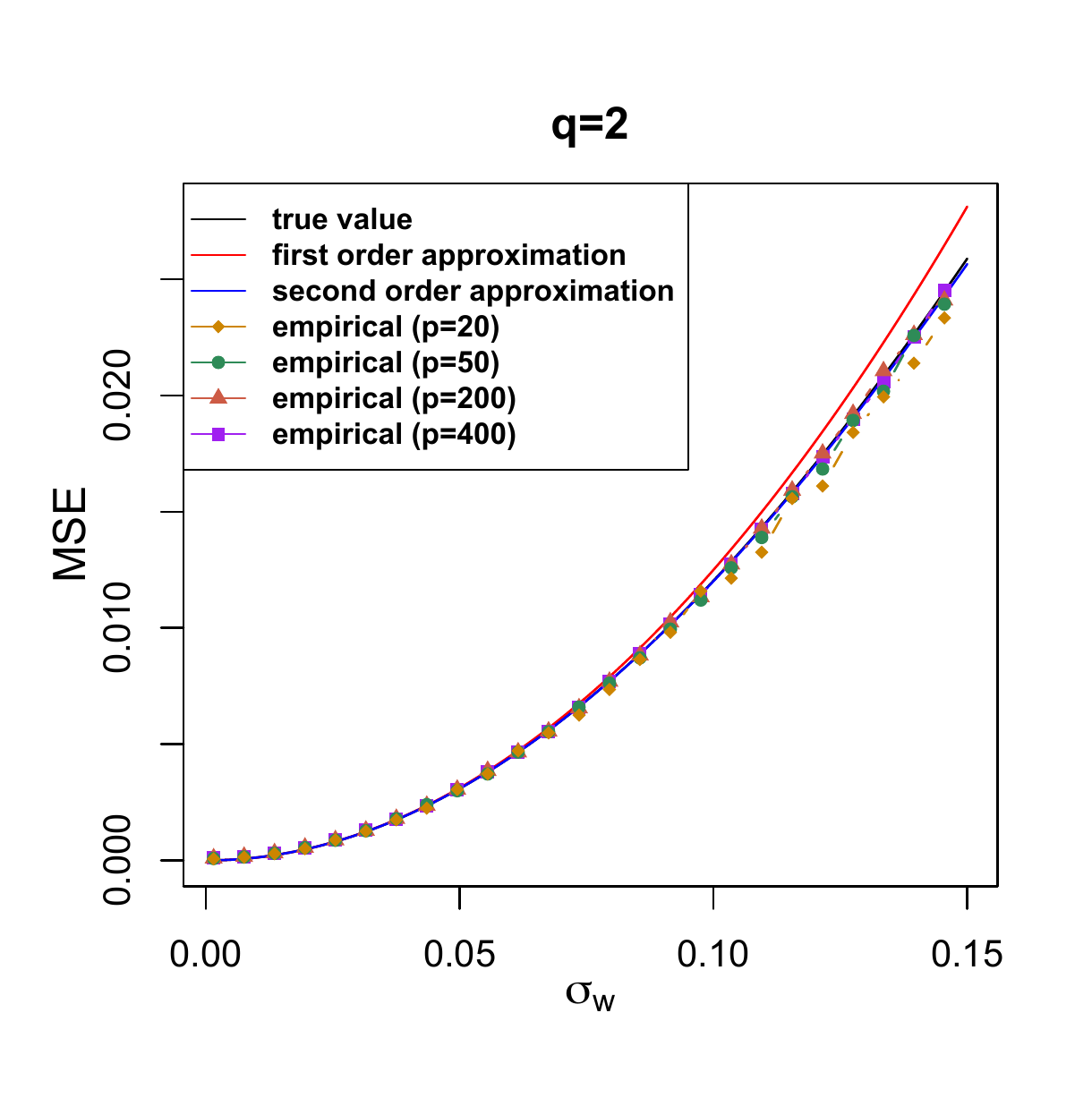} 
\end{tabular}
\caption{Plots of actual AMSE, its approximations, and finite-sample MSE. The MSE is averaged over 200 times. The setup is $\delta=5, \epsilon=0.4, g(b)=0.5\delta_1(b)+0.5\delta_{-1}(b)$.} \label{addfig:delta5one}
\end{figure}

\begin{figure}[htb]
\centering
\setlength\tabcolsep{1.5pt}
\begin{tabular}{cc}
\includegraphics[width=6.2cm, height=5.8cm]{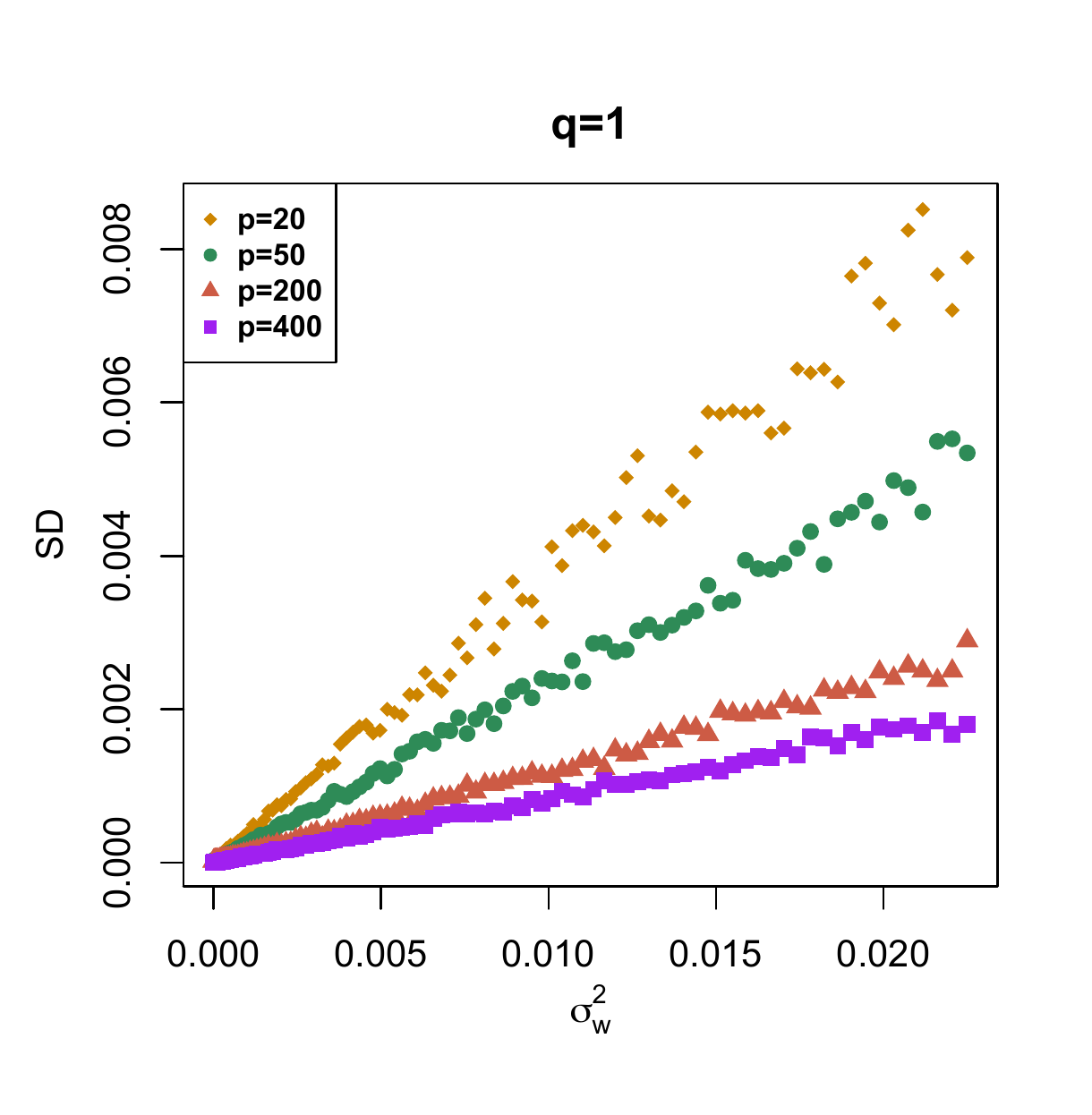} &
 \includegraphics[width=6.2cm, height=5.8cm]{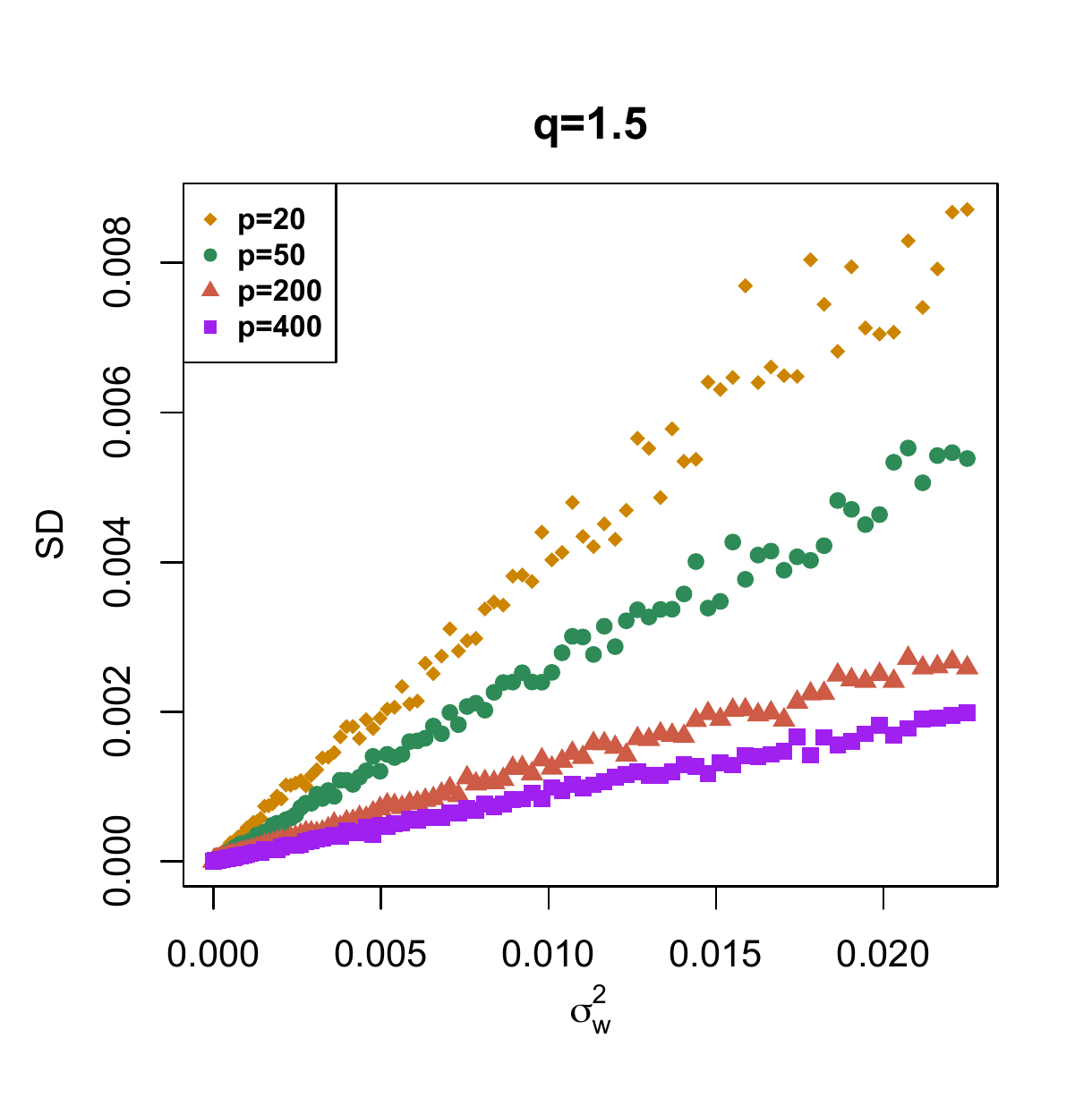} \\ 
\includegraphics[width=6.2cm, height=5.8cm]{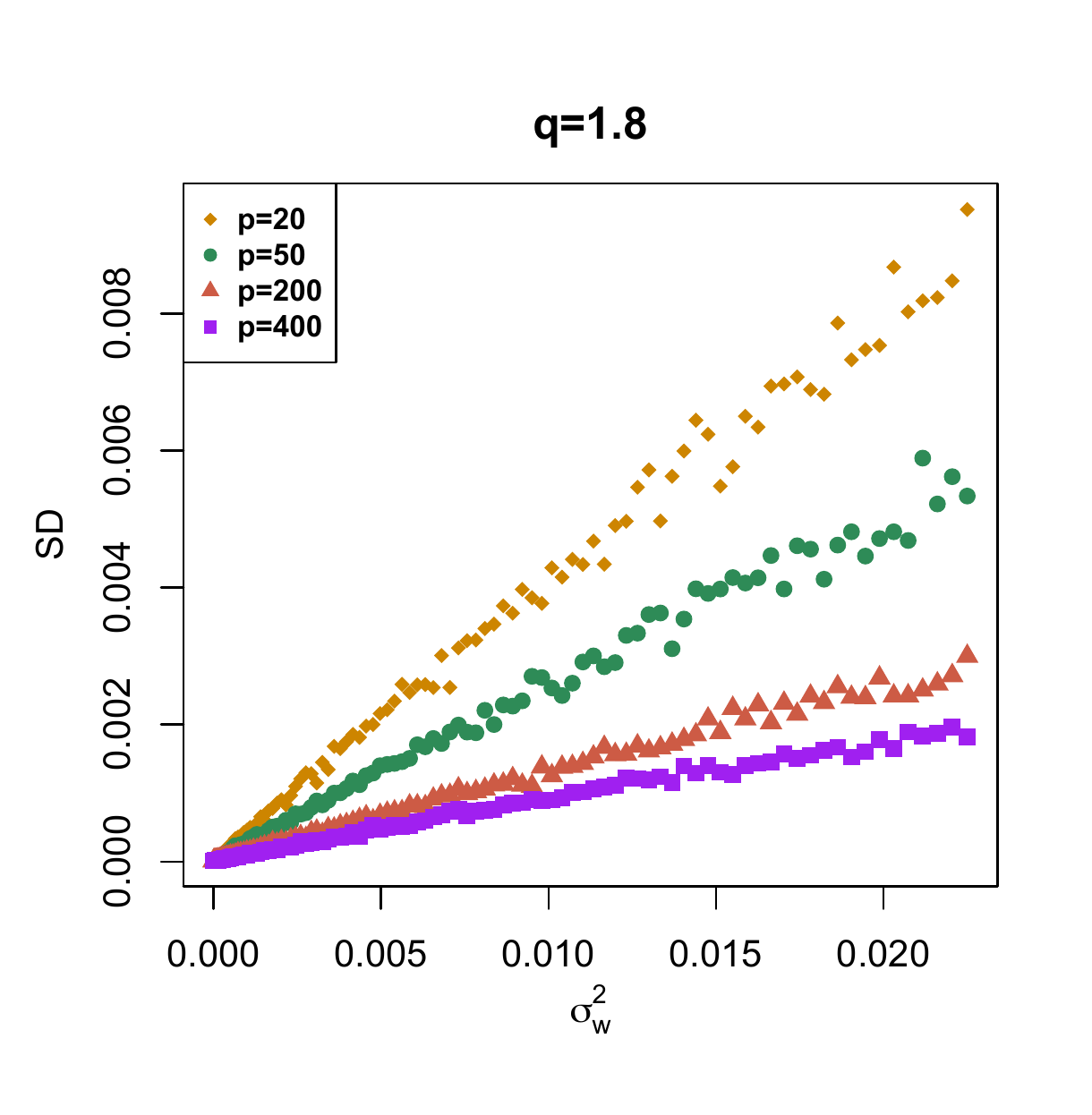} &
\includegraphics[width=6.2cm, height=5.8cm]{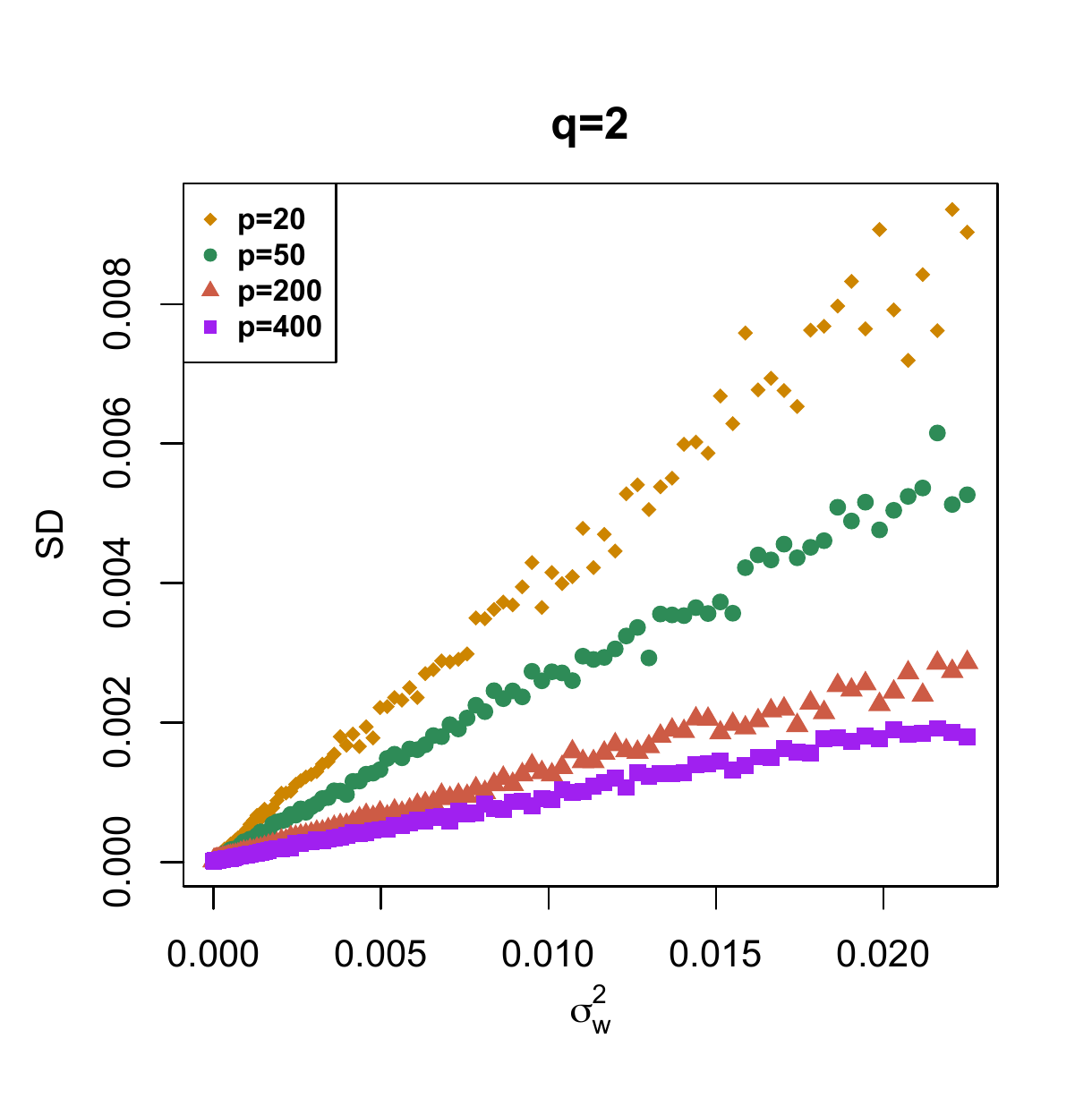} 
\end{tabular}
\caption{Plots of the standard deviation of finite-sample MSE. The same experiment is repeated 200 times. The setup is $\delta=5, \epsilon=0.4, g(b)=0.5\delta_1(b)+0.5\delta_{-1}(b)$.} \label{addfig:delta5two}
\end{figure}

\begin{figure}[htb]
\centering
\setlength\tabcolsep{1.5pt}
\begin{tabular}{cc}
\includegraphics[width=6.2cm, height=5.8cm]{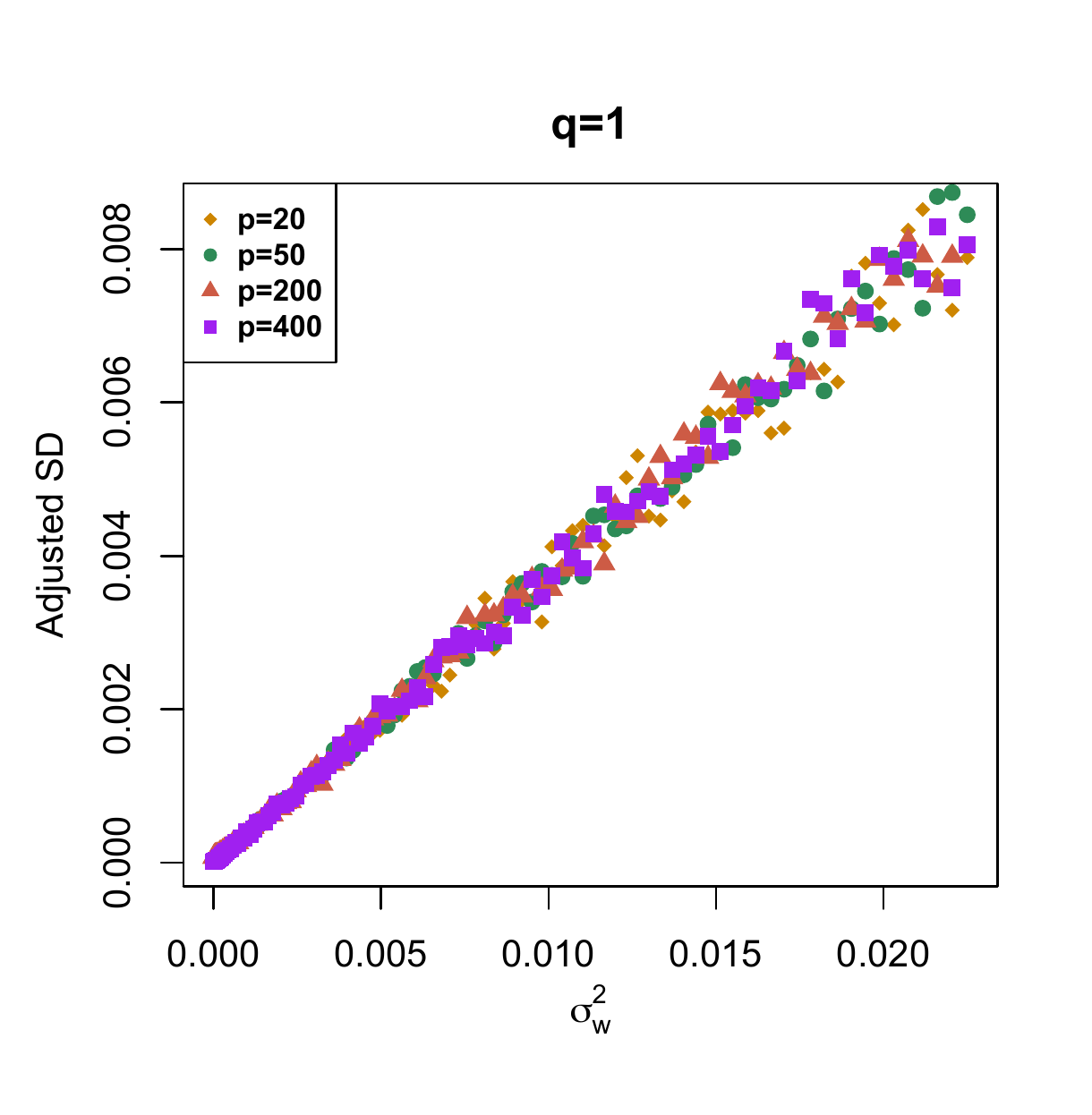} &
 \includegraphics[width=6.2cm, height=5.8cm]{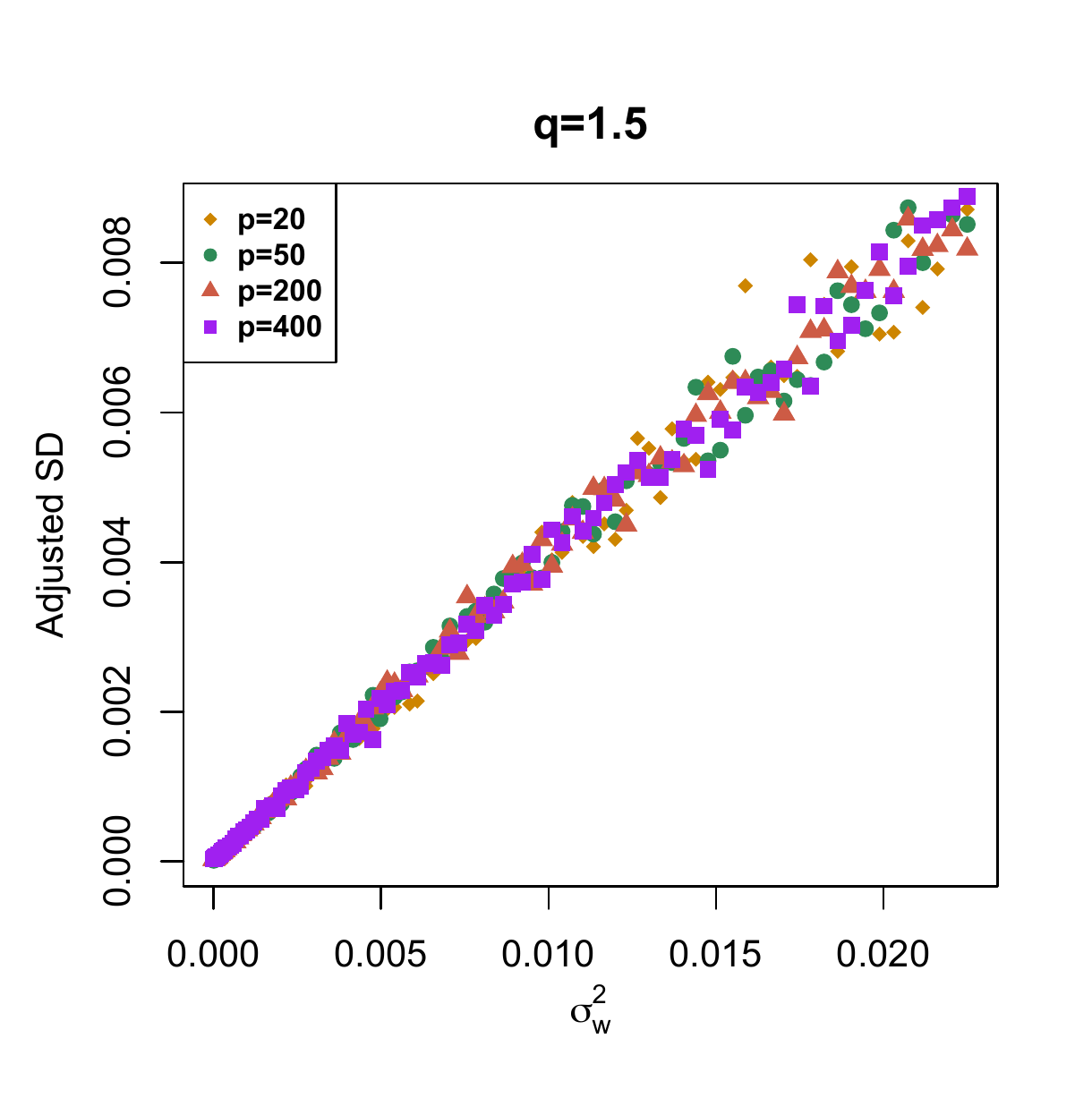} \\ 
\includegraphics[width=6.2cm, height=5.8cm]{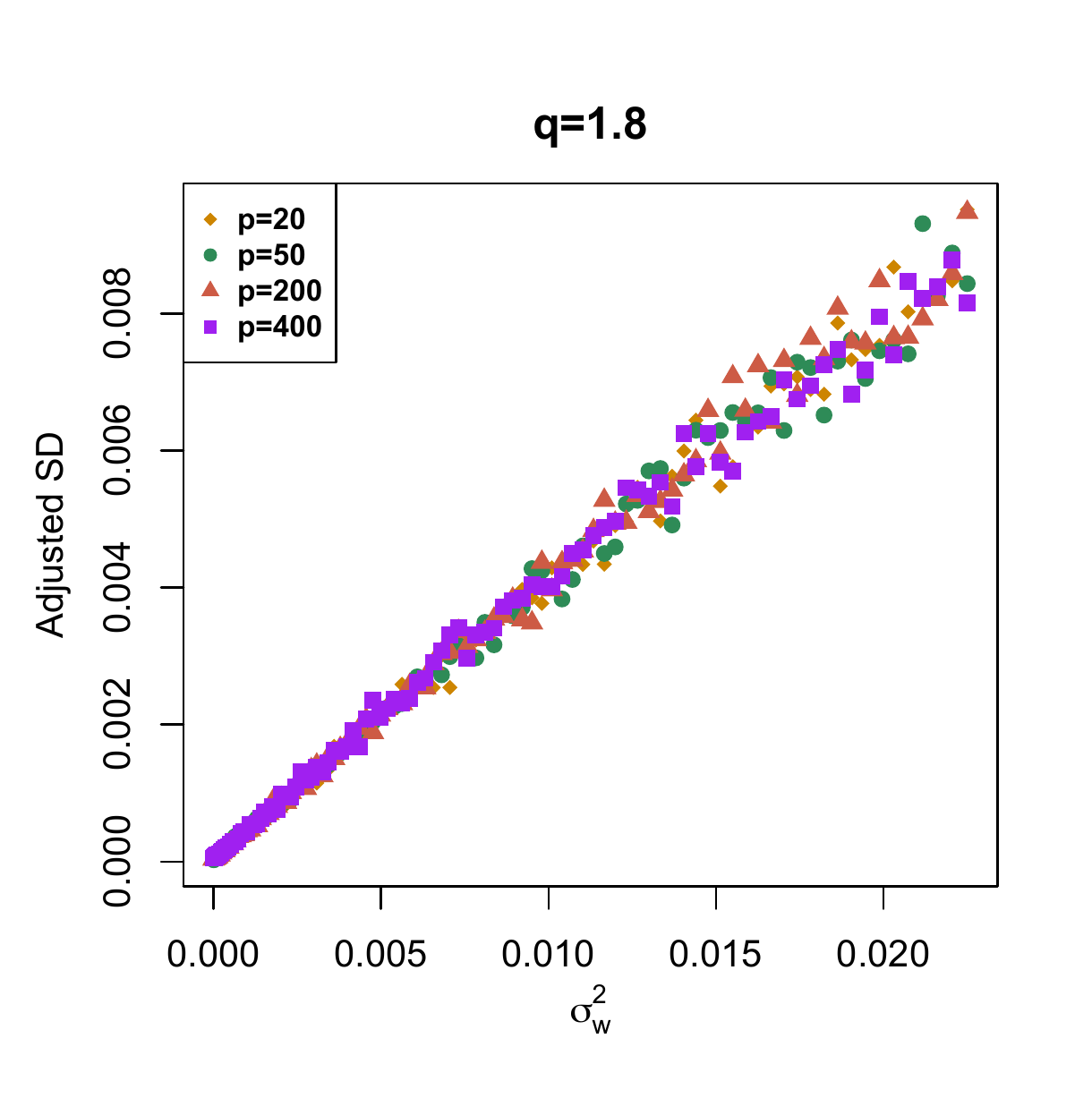} &
\includegraphics[width=6.2cm, height=5.8cm]{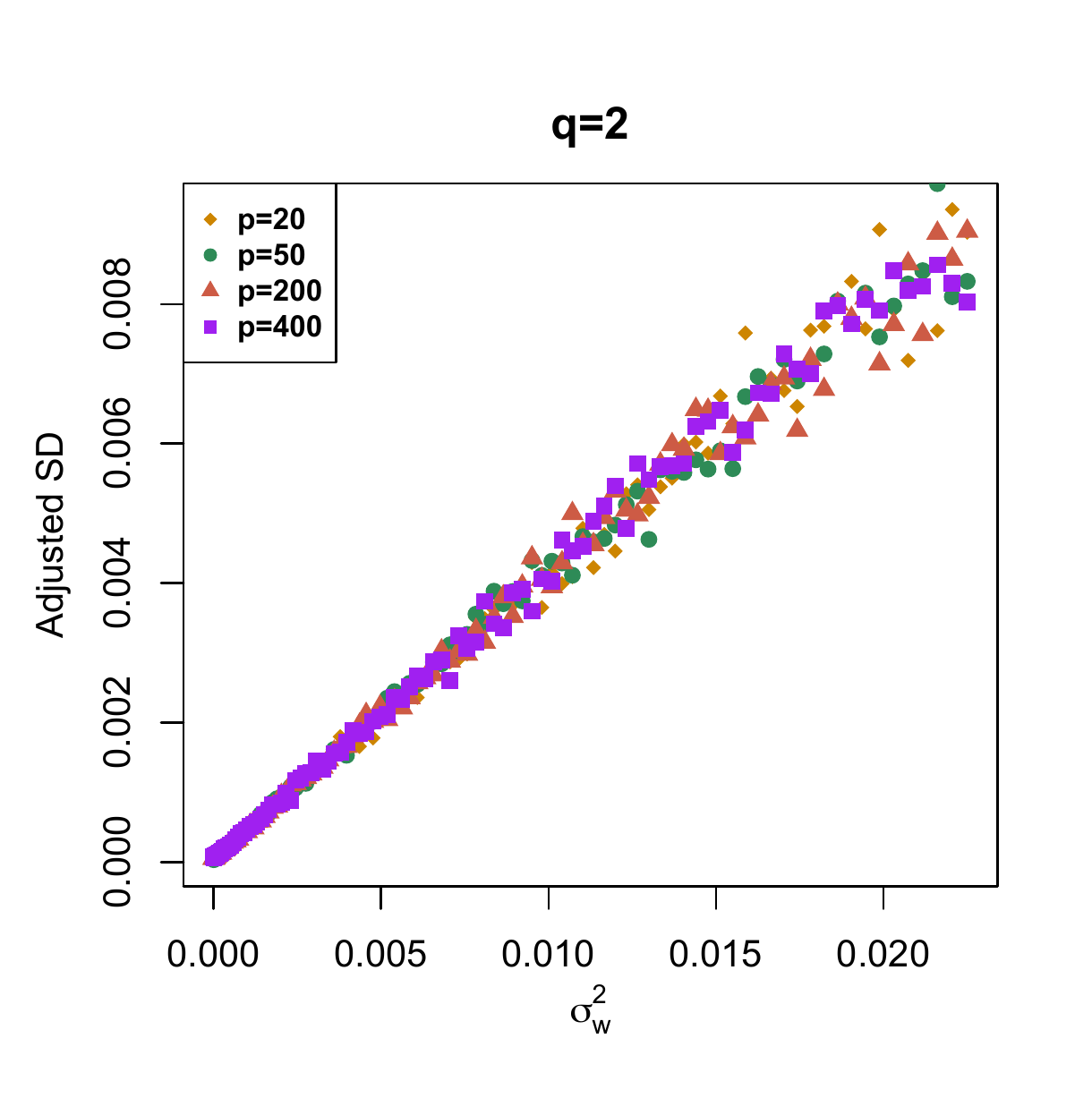} 
\end{tabular}
\caption{Plots of the adjusted standard deviation of finite-sample MSE. The same experiment is repeated 200 times. The setup is $\delta=5, \epsilon=0.4, g(b)=0.5\delta_1(b)+0.5\delta_{-1}(b)$.} \label{addfig:delta5three}
\end{figure}

\begin{figure}[htb]
\centering
\setlength\tabcolsep{1.5pt}
\begin{tabular}{cc}
\includegraphics[width=6.2cm, height=5.8cm]{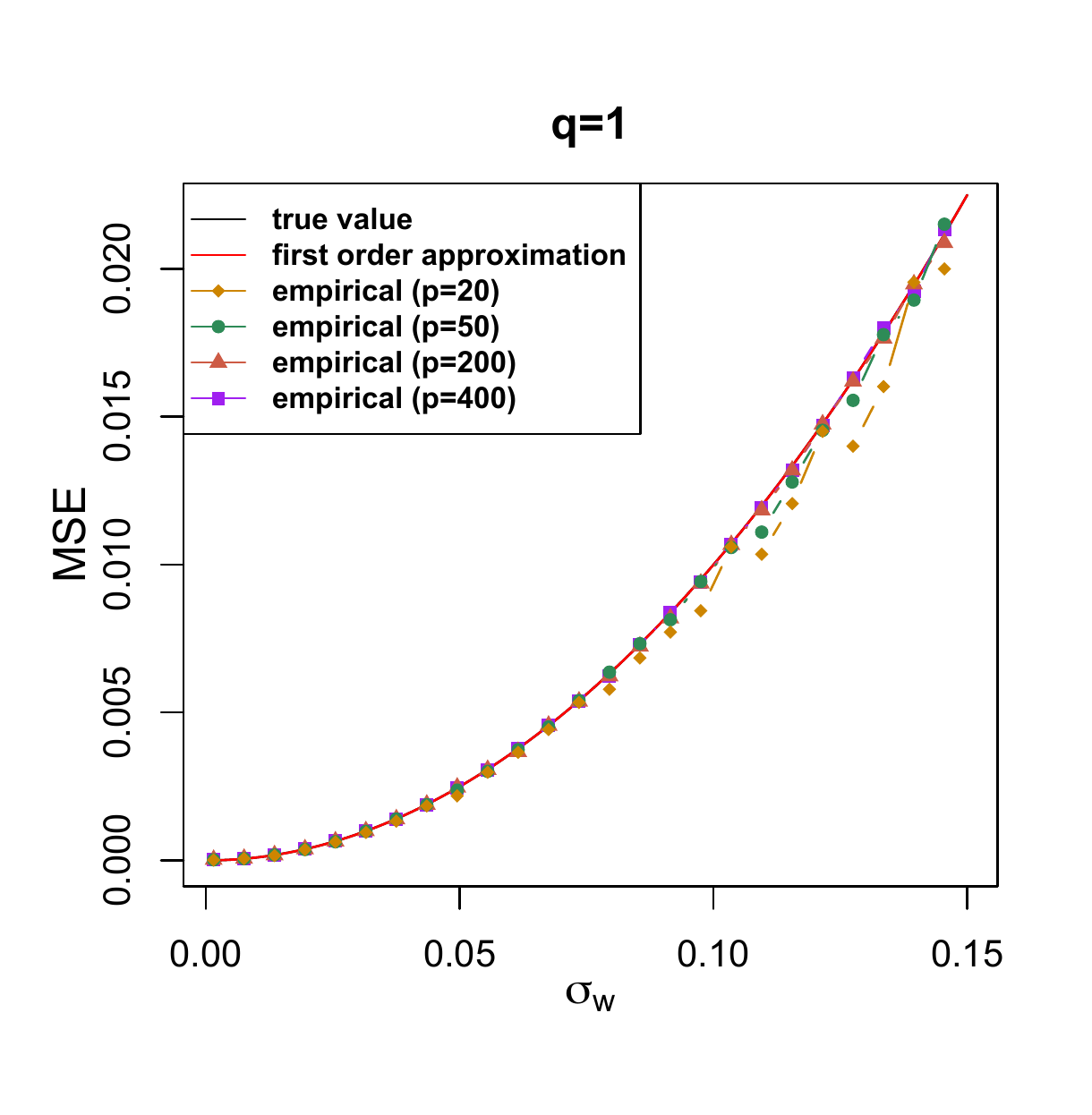} &
 \includegraphics[width=6.2cm, height=5.8cm]{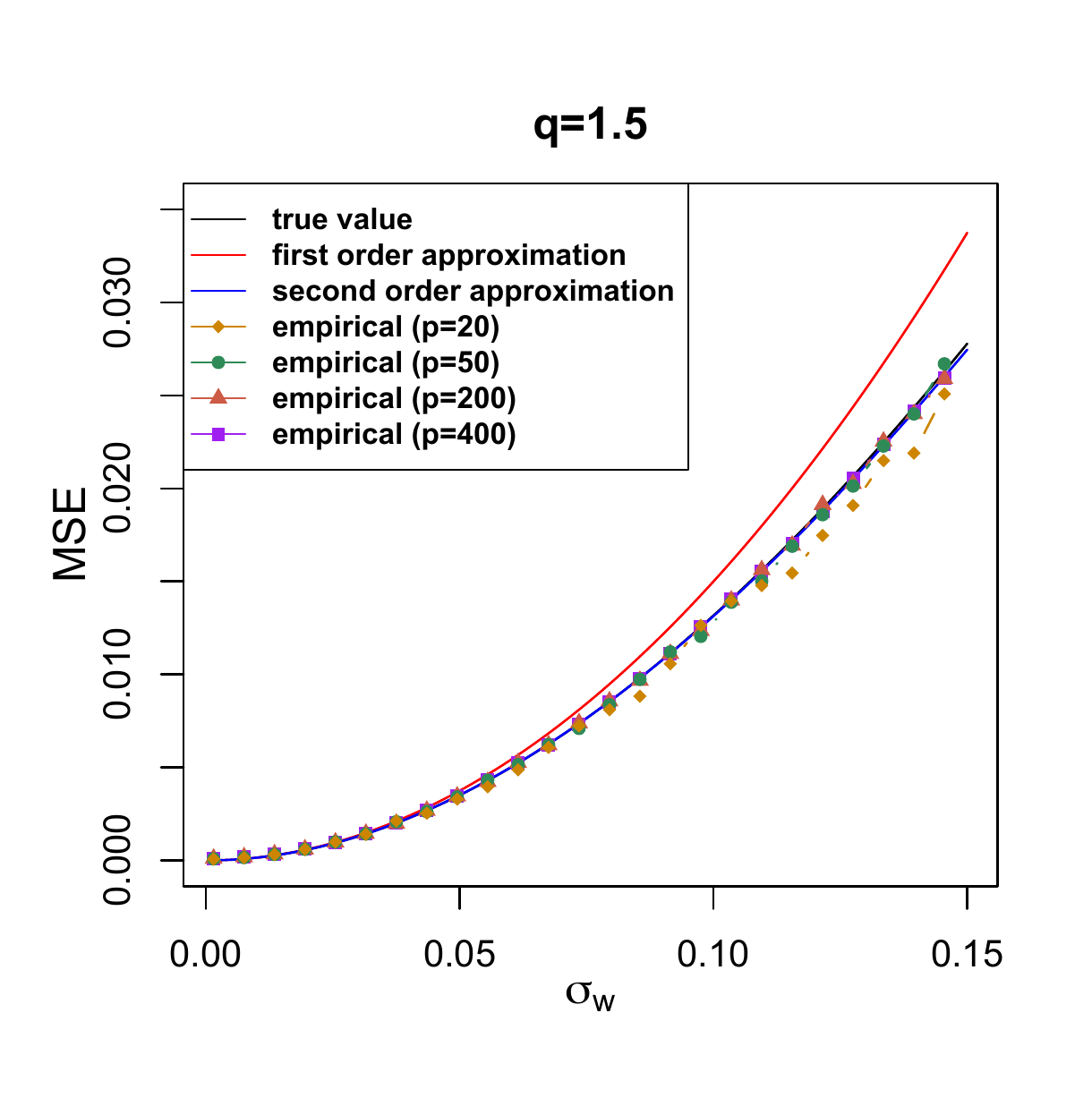} \\ 
\includegraphics[width=6.2cm, height=5.8cm]{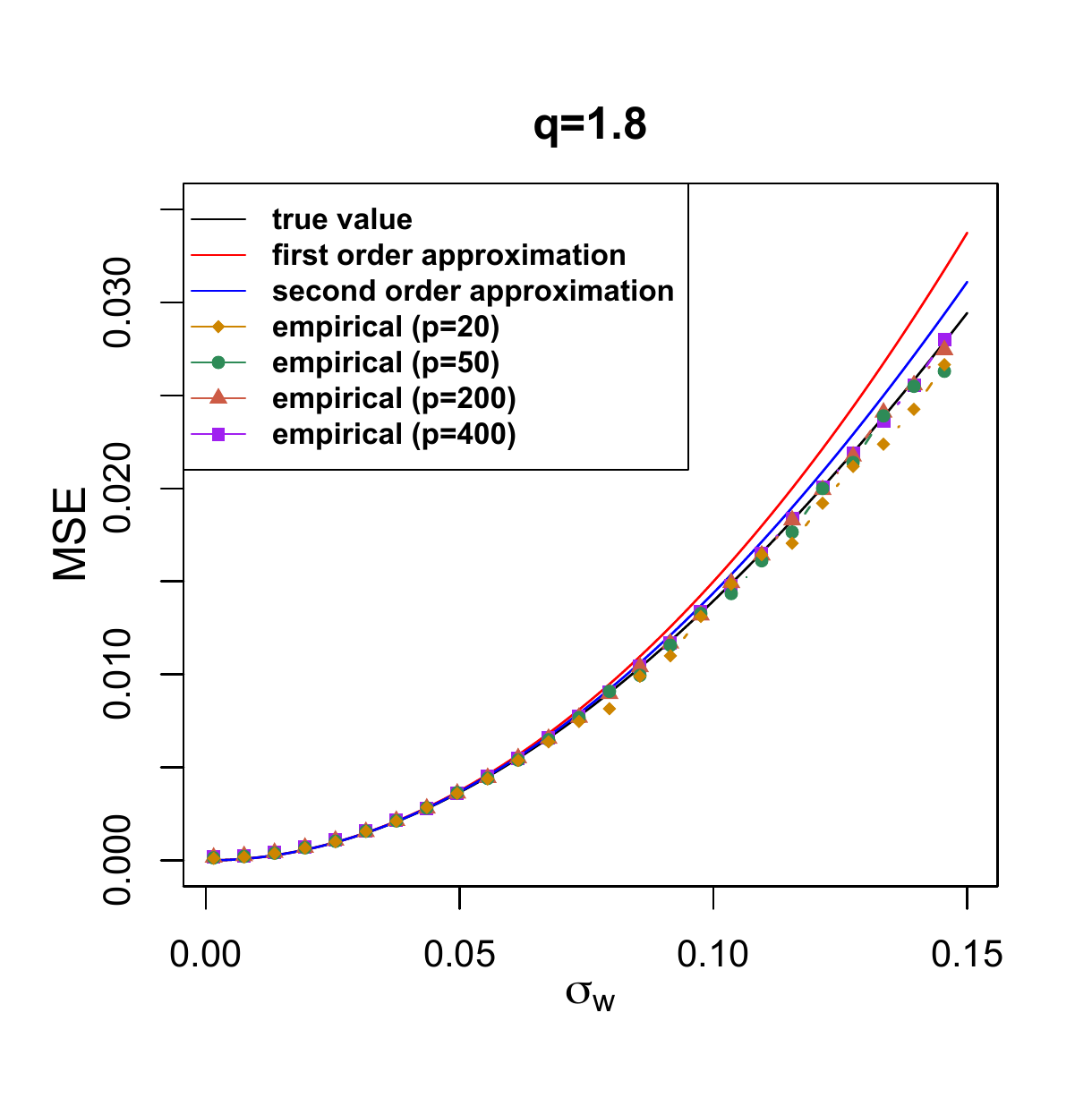} &
\includegraphics[width=6.2cm, height=5.8cm]{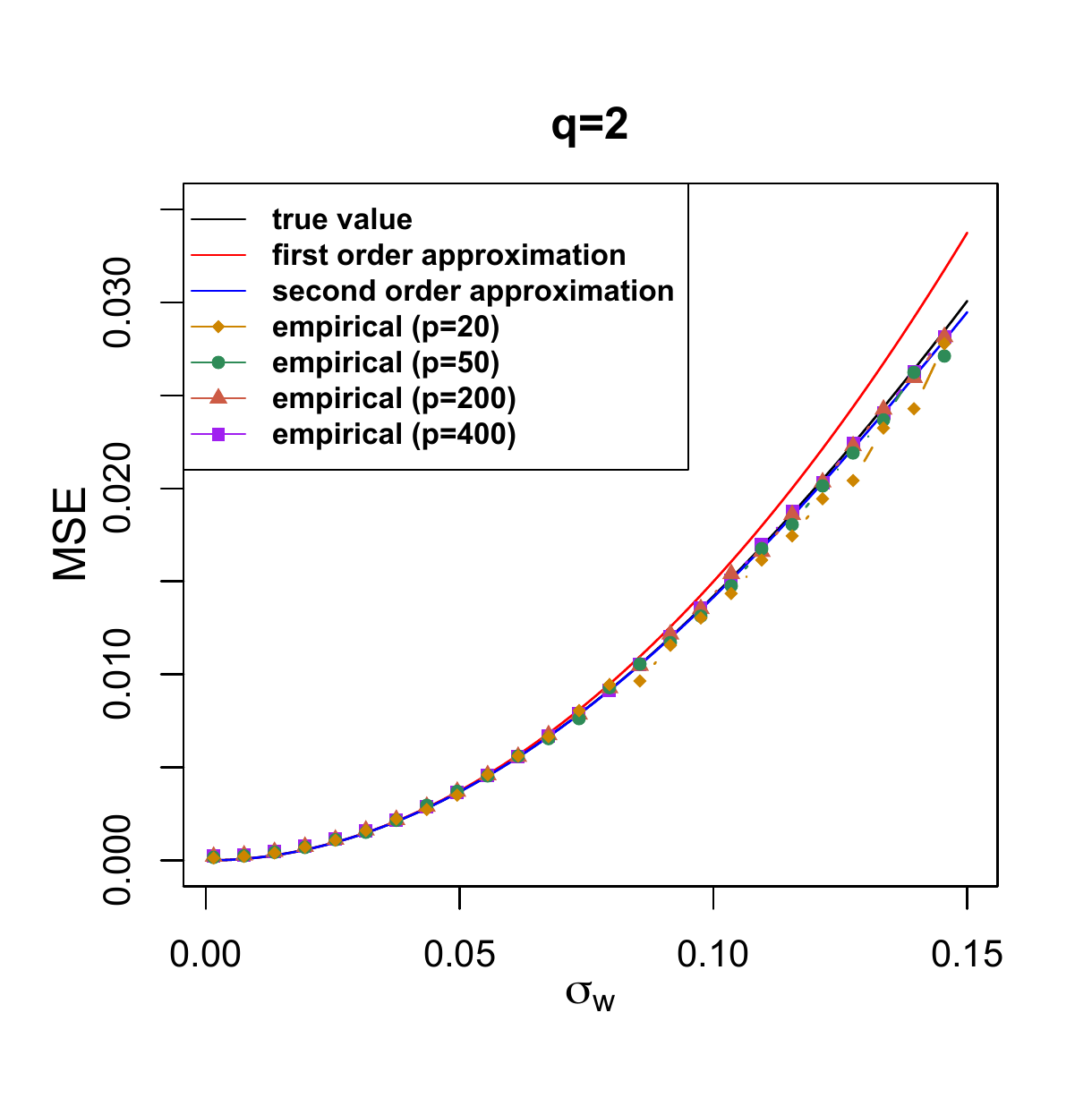} 
\end{tabular}
\caption{Plots of actual AMSE, its approximations, and finite-sample MSE. The MSE is averaged over 200 times. The setup is $\delta=3, \epsilon=0.4, g(b)=0.5\delta_1(b)+0.5\delta_{-1}(b)$.} \label{addfig:delta3one}
\end{figure}

\begin{figure}[htb]
\centering
\setlength\tabcolsep{1.5pt}
\begin{tabular}{cc}
\includegraphics[width=6.2cm, height=5.8cm]{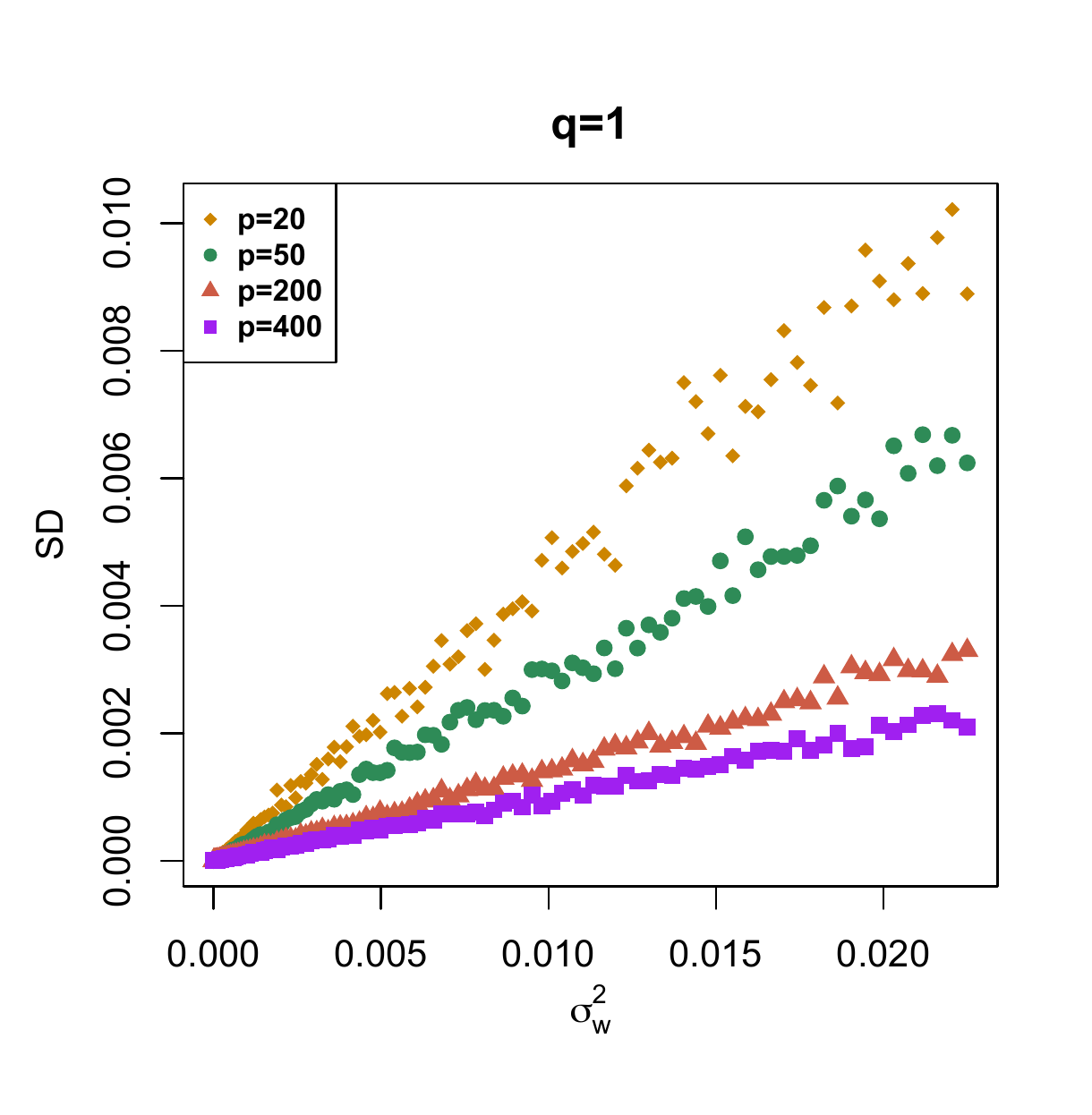} &
 \includegraphics[width=6.2cm, height=5.8cm]{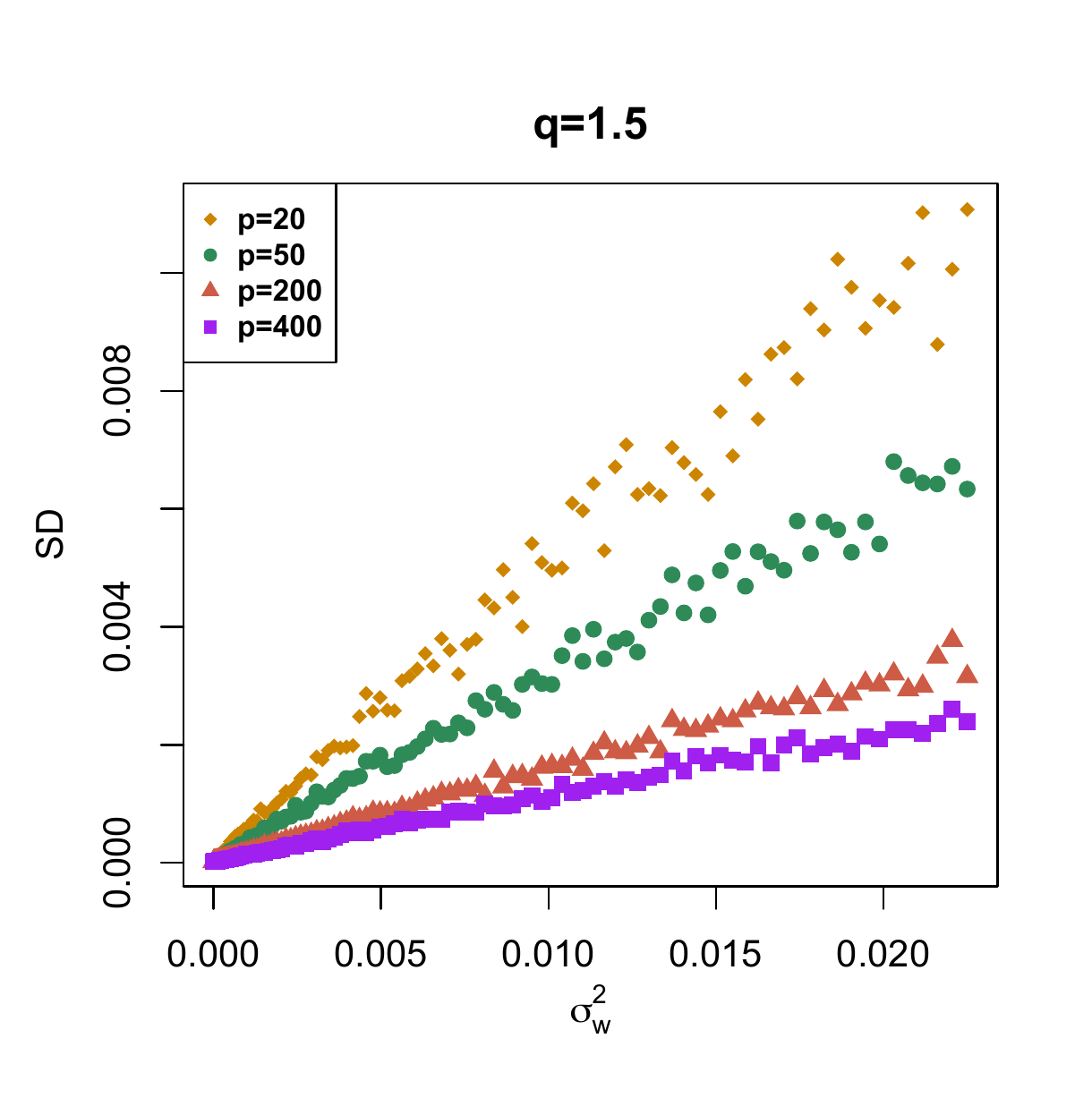} \\ 
\includegraphics[width=6.2cm, height=5.8cm]{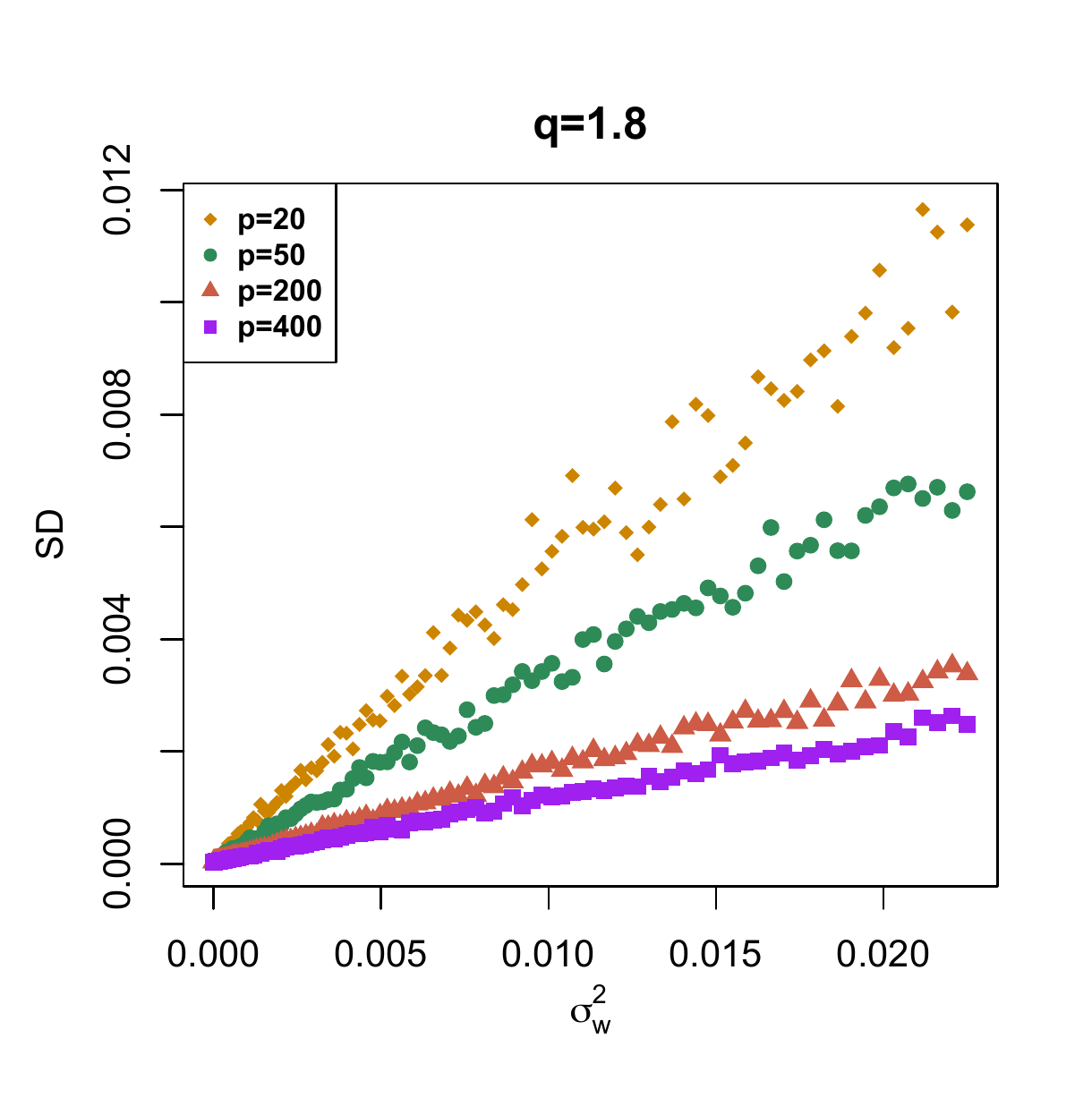} &
\includegraphics[width=6.2cm, height=5.8cm]{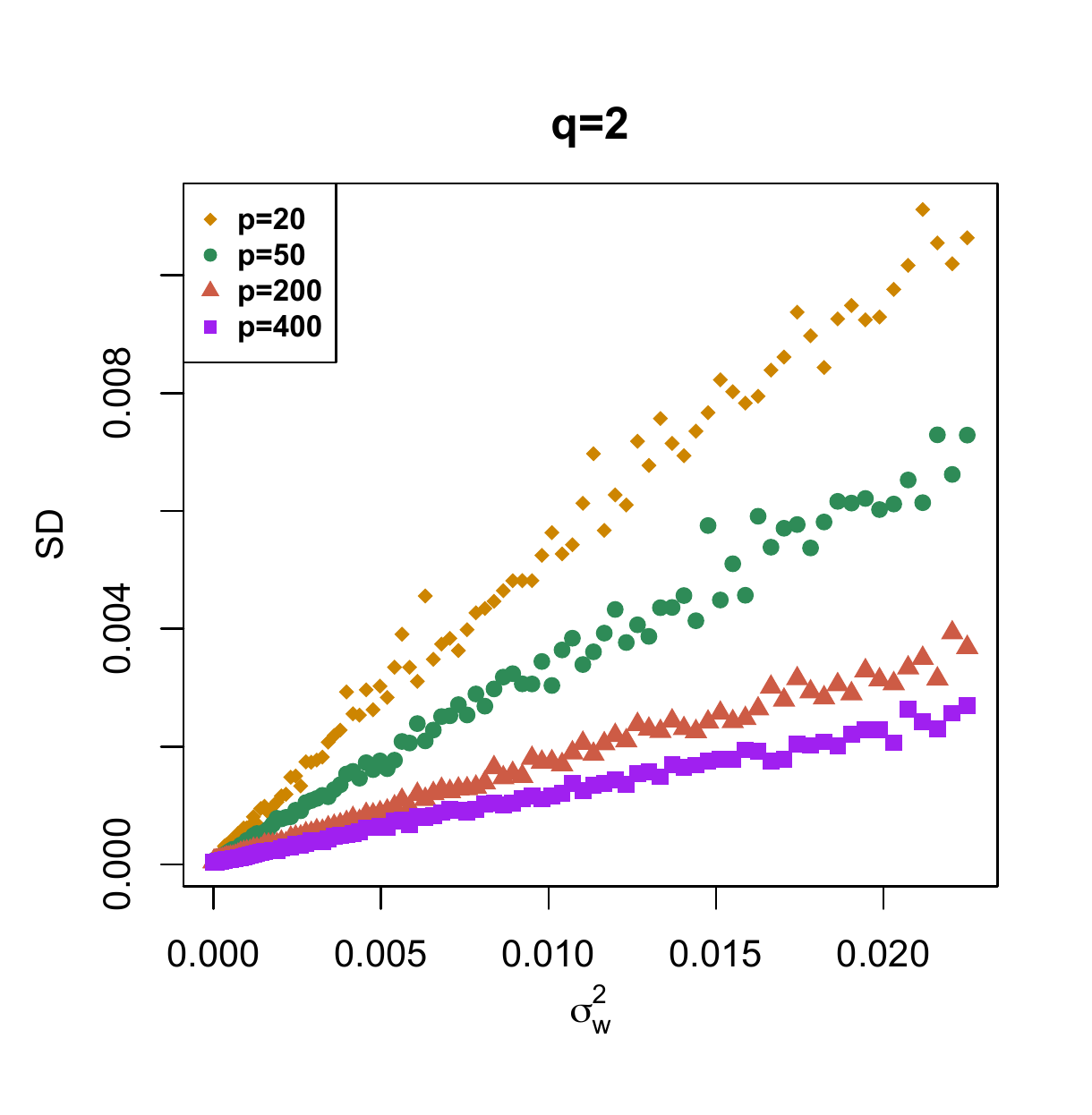} 
\end{tabular}
\caption{Plots of the standard deviation of finite-sample MSE. The same experiment is repeated 200 times. The setup is $\delta=3, \epsilon=0.4, g(b)=0.5\delta_1(b)+0.5\delta_{-1}(b)$.} \label{addfig:delta3two}
\end{figure}

\begin{figure}[htb]
\centering
\setlength\tabcolsep{1.5pt}
\begin{tabular}{cc}
\includegraphics[width=6.2cm, height=5.8cm]{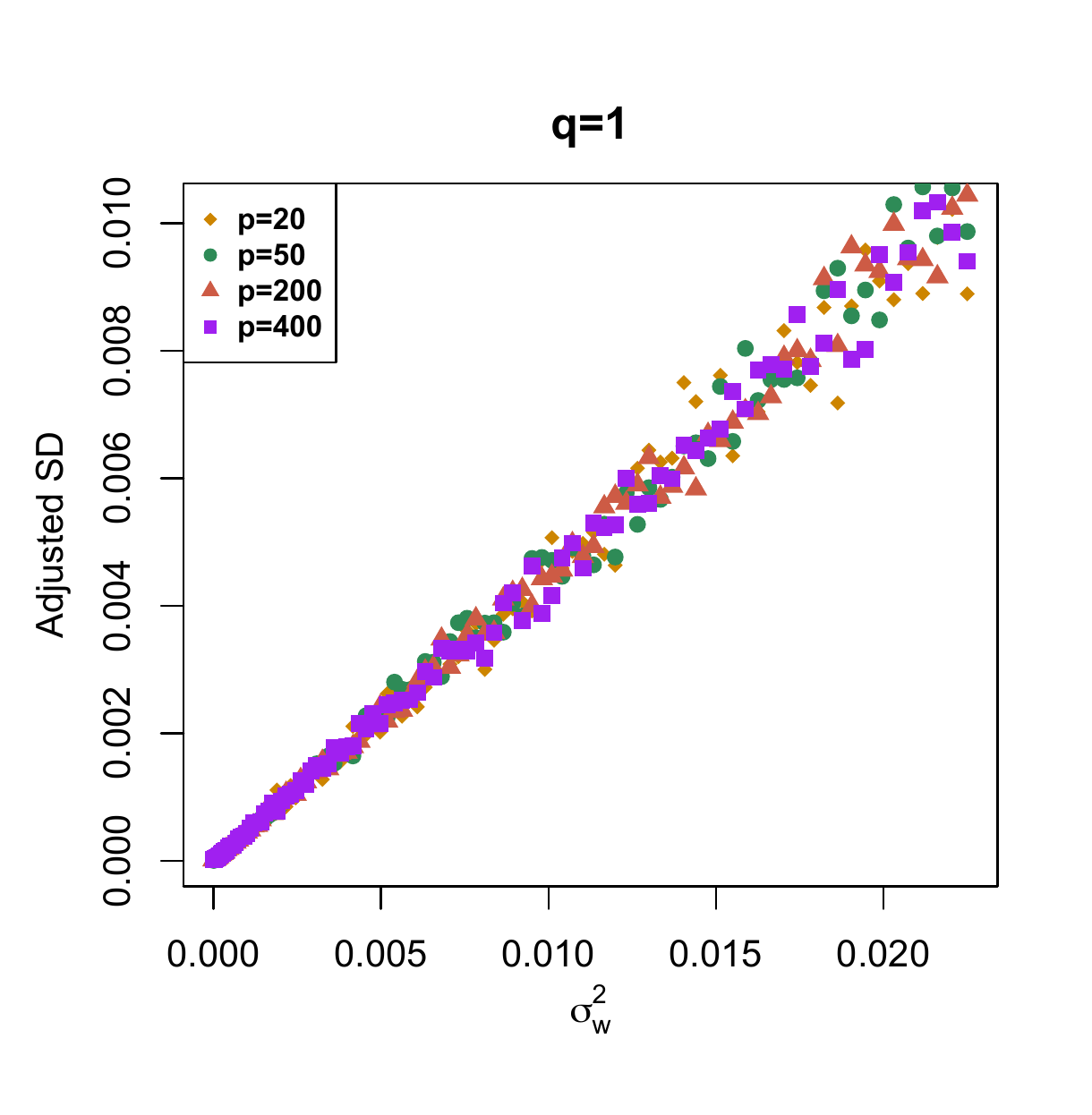} &
 \includegraphics[width=6.2cm, height=5.8cm]{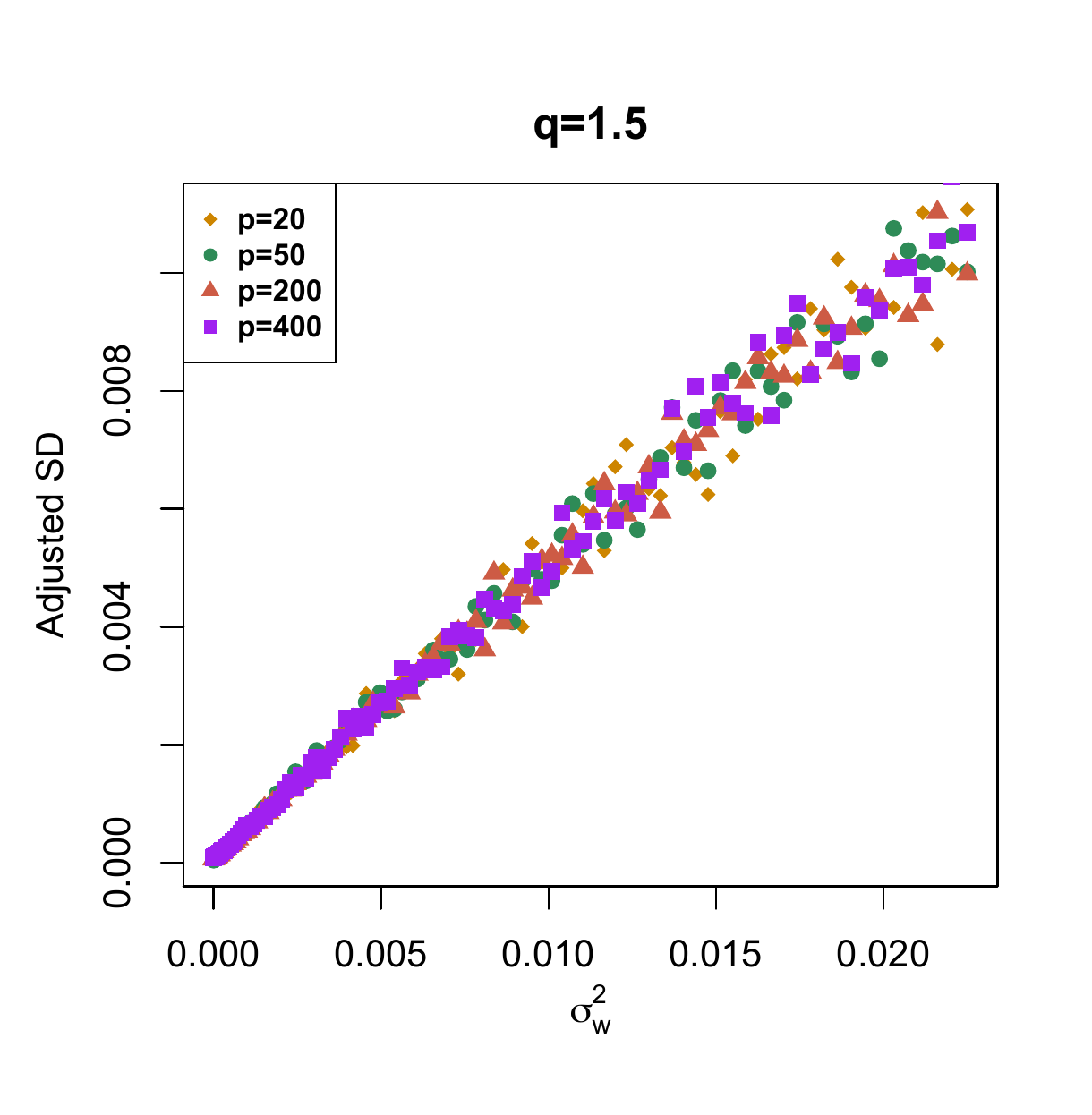} \\ 
\includegraphics[width=6.2cm, height=5.8cm]{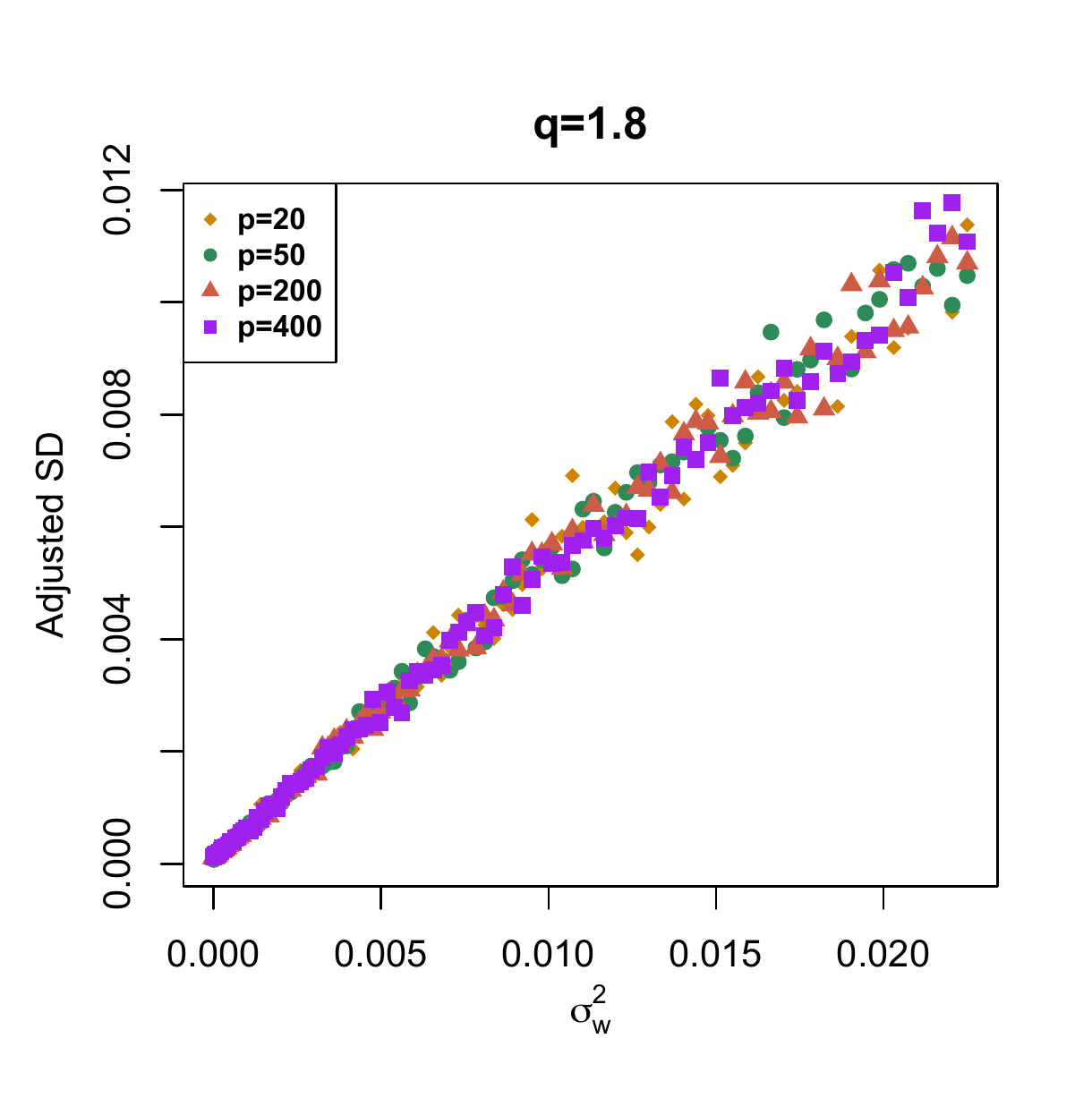} &
\includegraphics[width=6.2cm, height=5.8cm]{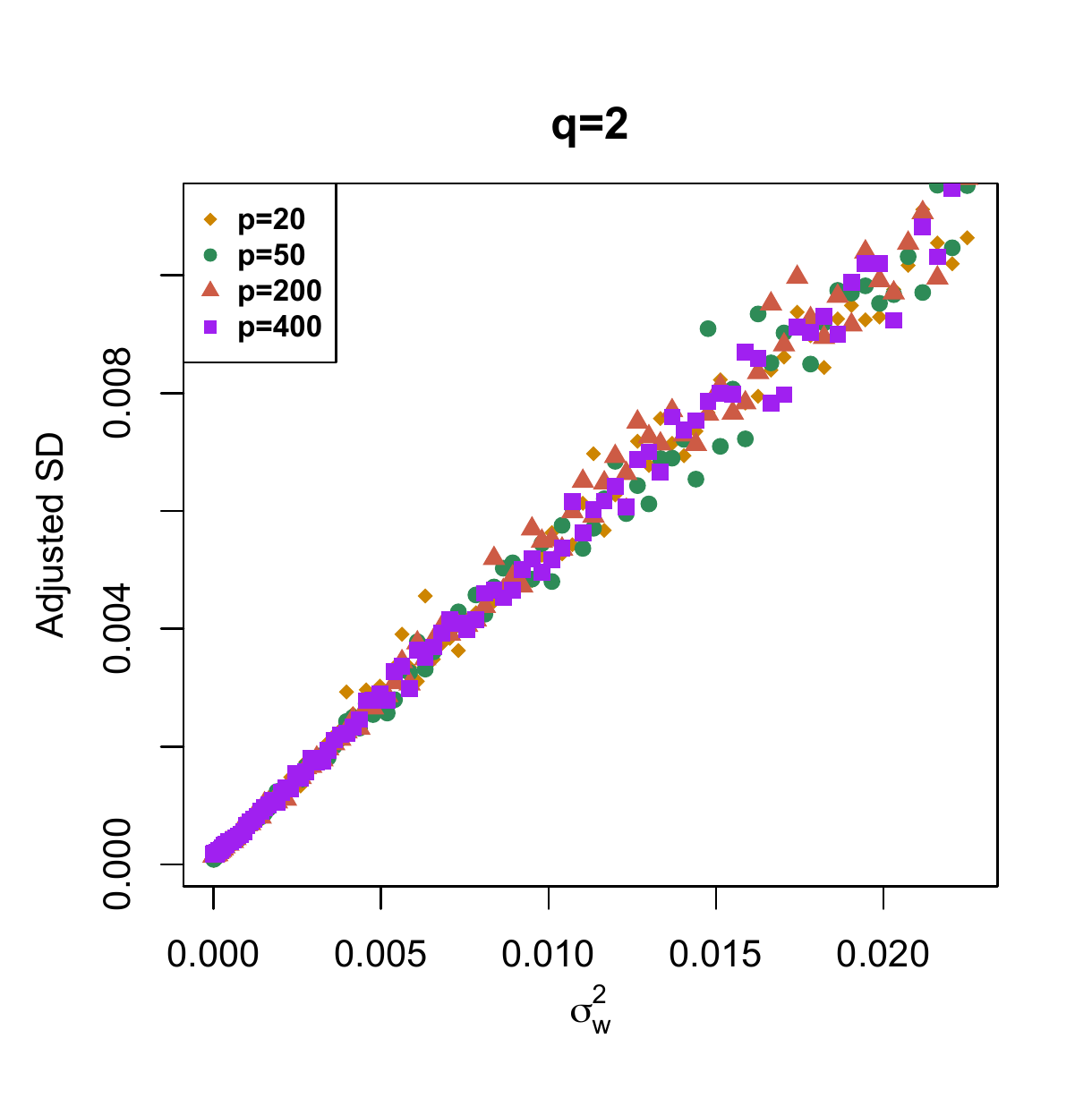} 
\end{tabular}
\caption{Plots of the adjusted standard deviation of finite-sample MSE. The same experiment is repeated 200 times. The setup is $\delta=3, \epsilon=0.4, g(b)=0.5\delta_1(b)+0.5\delta_{-1}(b)$.} \label{addfig:delta3three}
\end{figure}

\begin{figure}[htb]
\centering
\setlength\tabcolsep{1.5pt}
\begin{tabular}{cc}
\includegraphics[width=6.2cm, height=5.8cm]{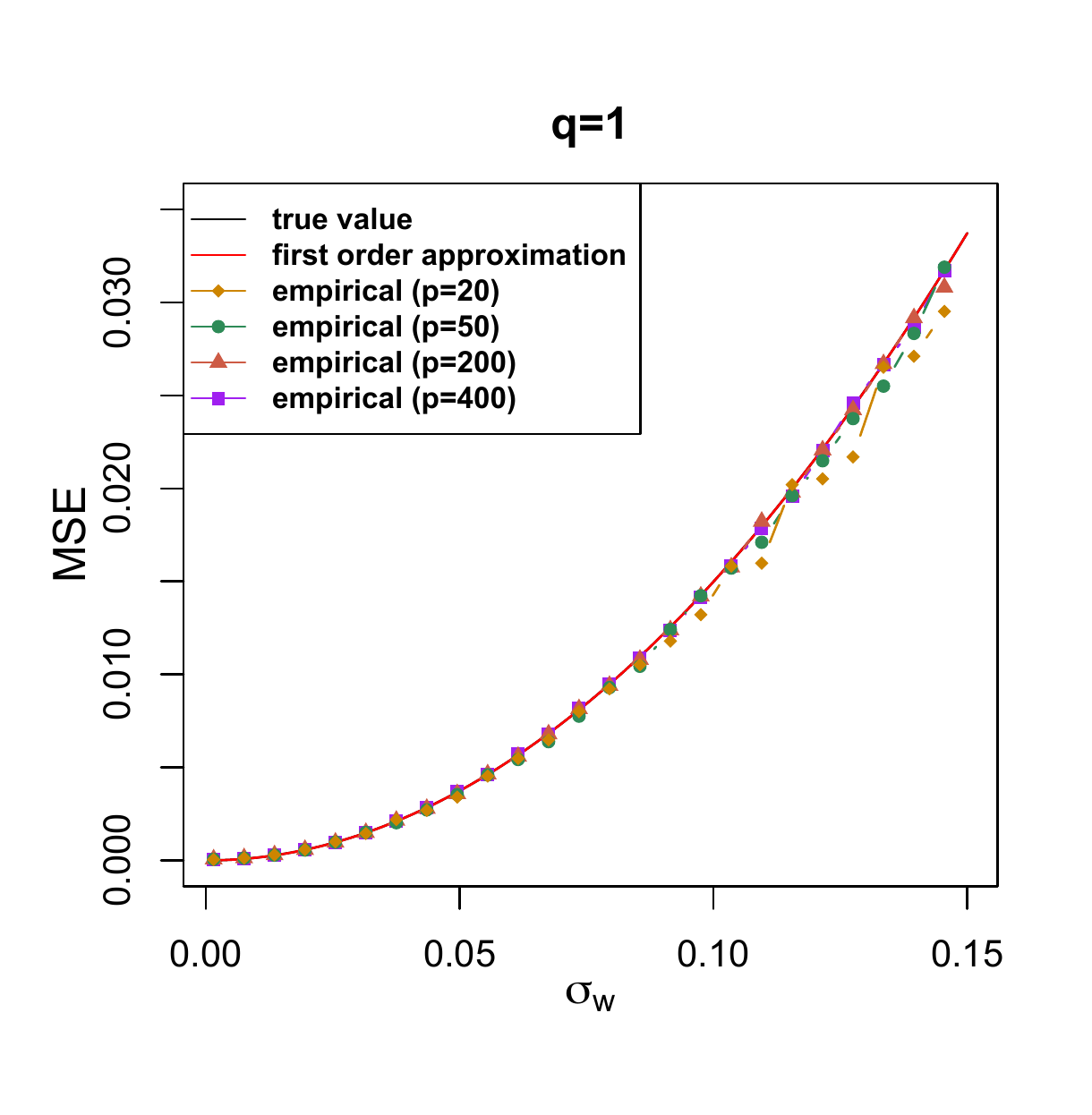} &
 \includegraphics[width=6.2cm, height=5.8cm]{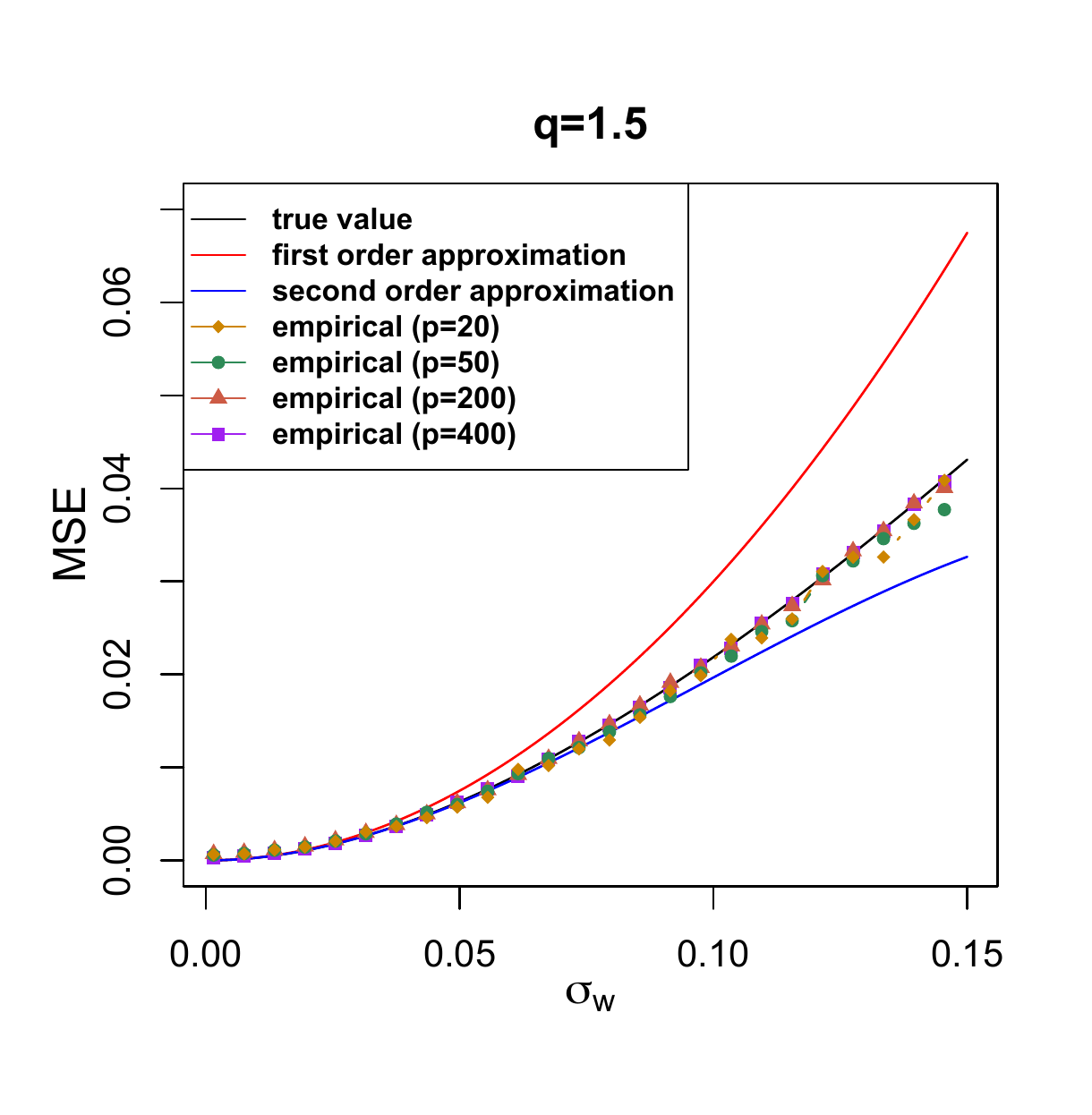} \\ 
\includegraphics[width=6.2cm, height=5.8cm]{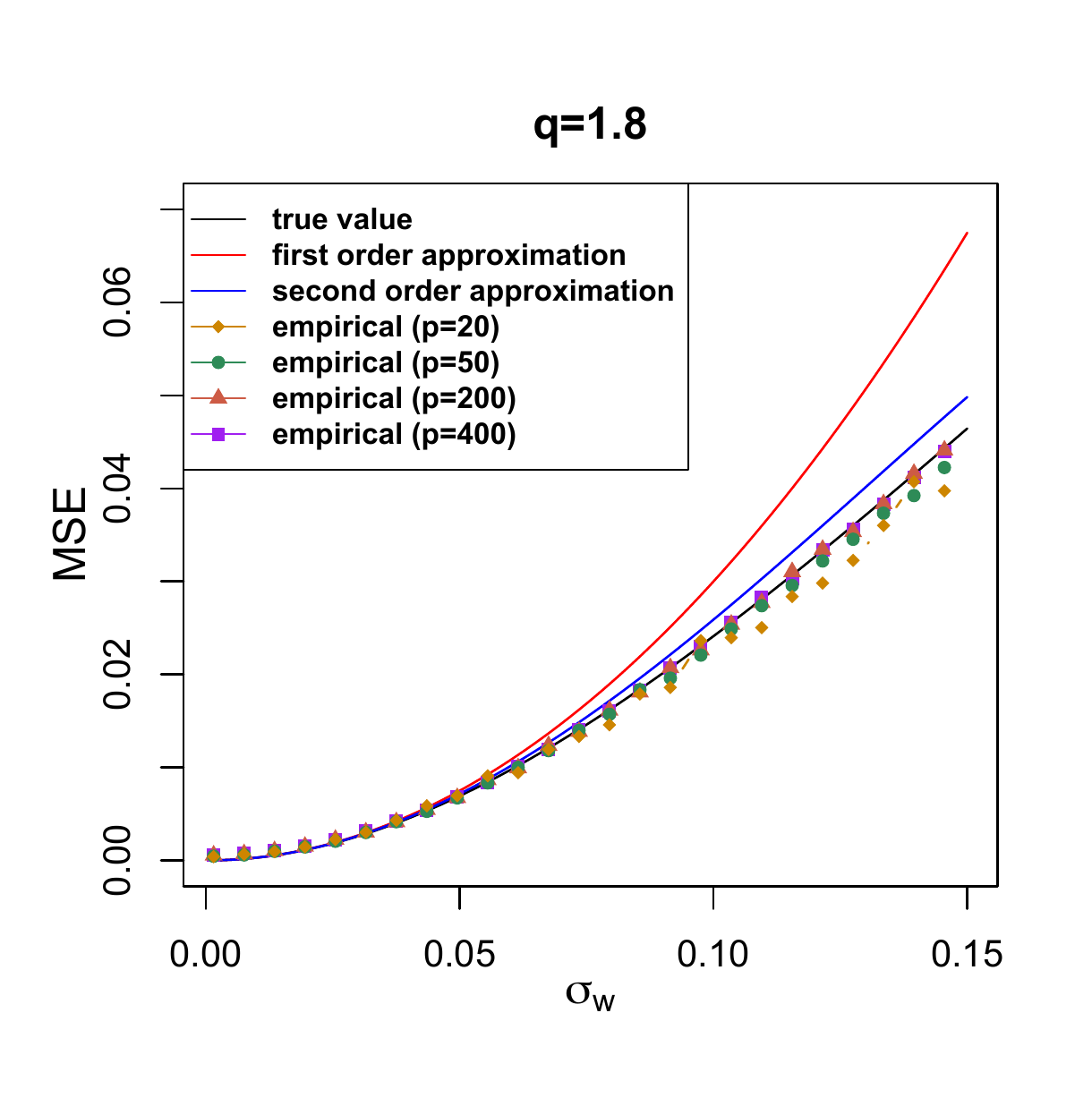} &
\includegraphics[width=6.2cm, height=5.8cm]{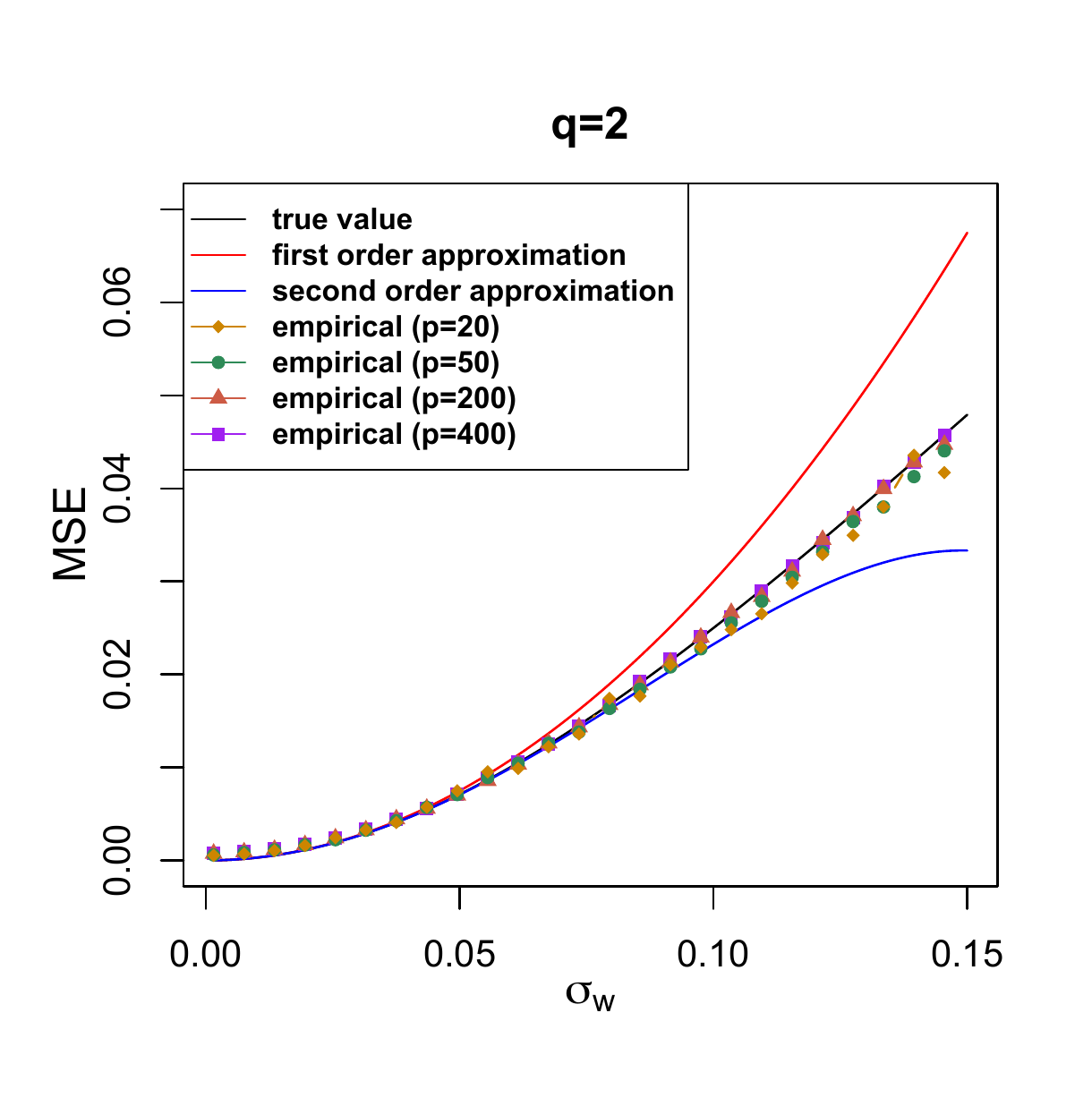} 
\end{tabular}
\caption{Plots of actual AMSE, its approximations, and finite-sample MSE. The MSE is averaged over 200 times. The setup is $\delta=1.5, \epsilon=0.4, g(b)=0.5\delta_1(b)+0.5\delta_{-1}(b)$.} \label{addfig:delta15one}
\end{figure}

\begin{figure}[htb]
\centering
\setlength\tabcolsep{1.5pt}
\begin{tabular}{cc}
\includegraphics[width=6.2cm, height=5.8cm]{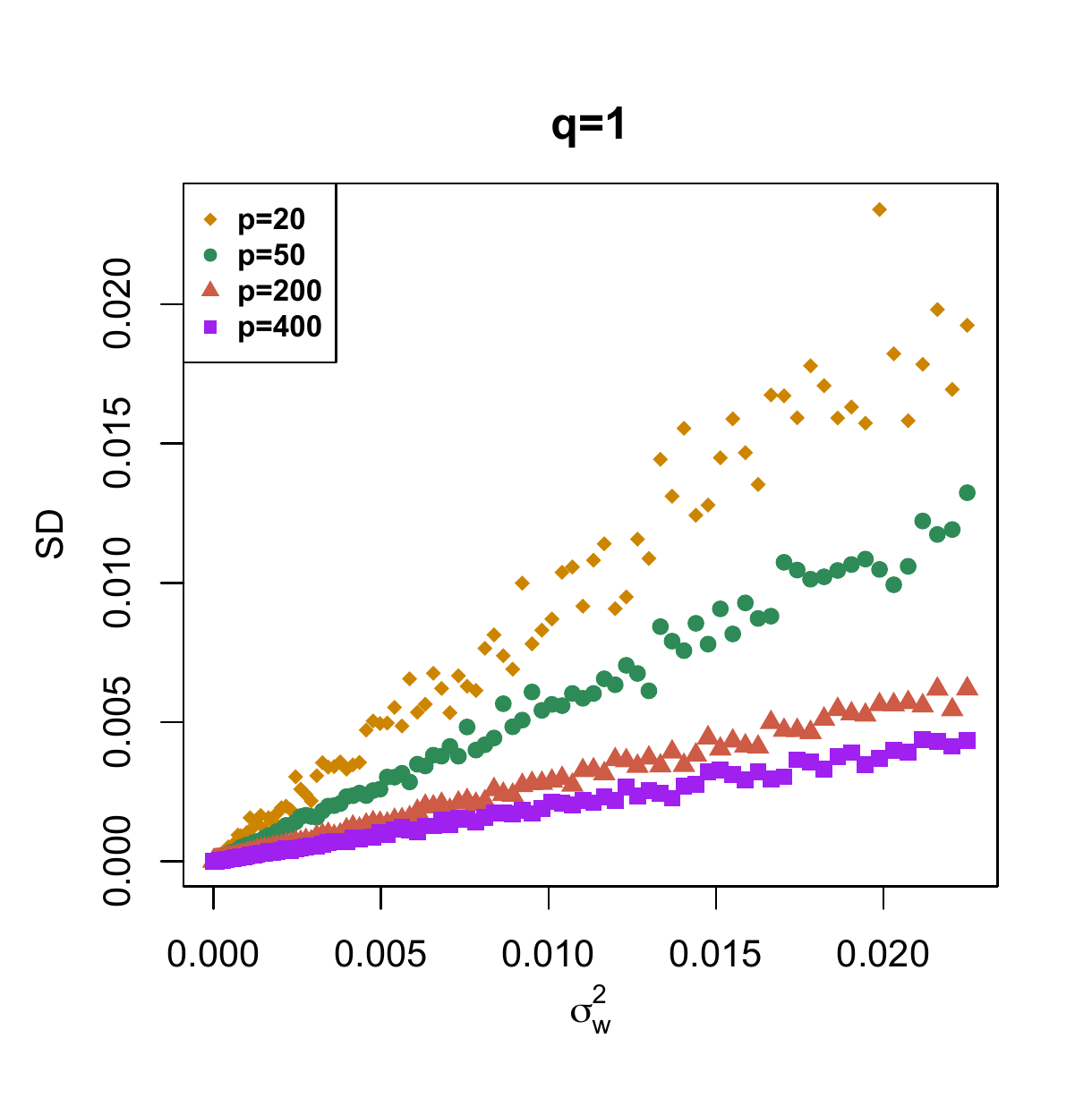} &
 \includegraphics[width=6.2cm, height=5.8cm]{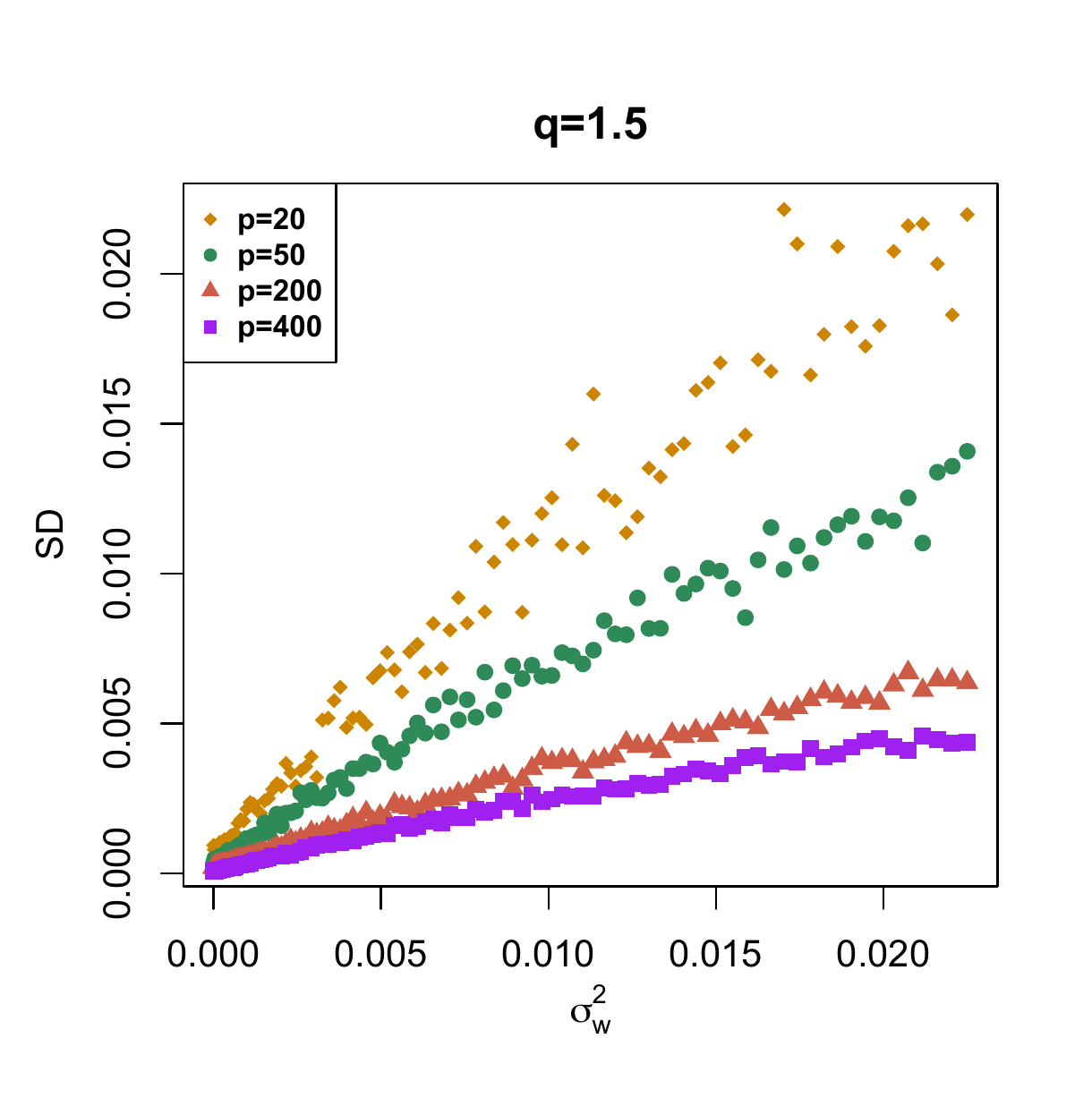} \\ 
\includegraphics[width=6.2cm, height=5.8cm]{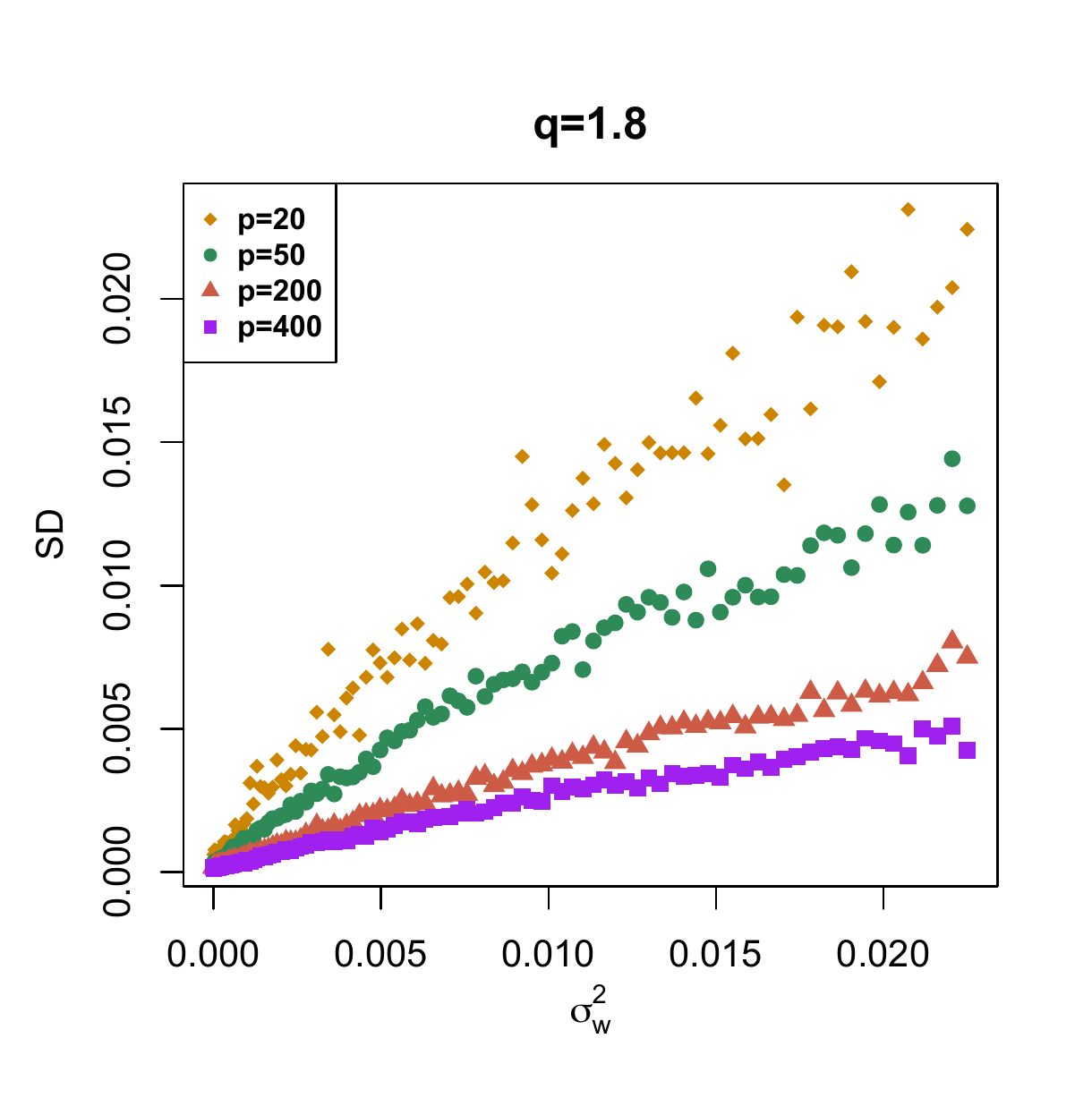} &
\includegraphics[width=6.2cm, height=5.8cm]{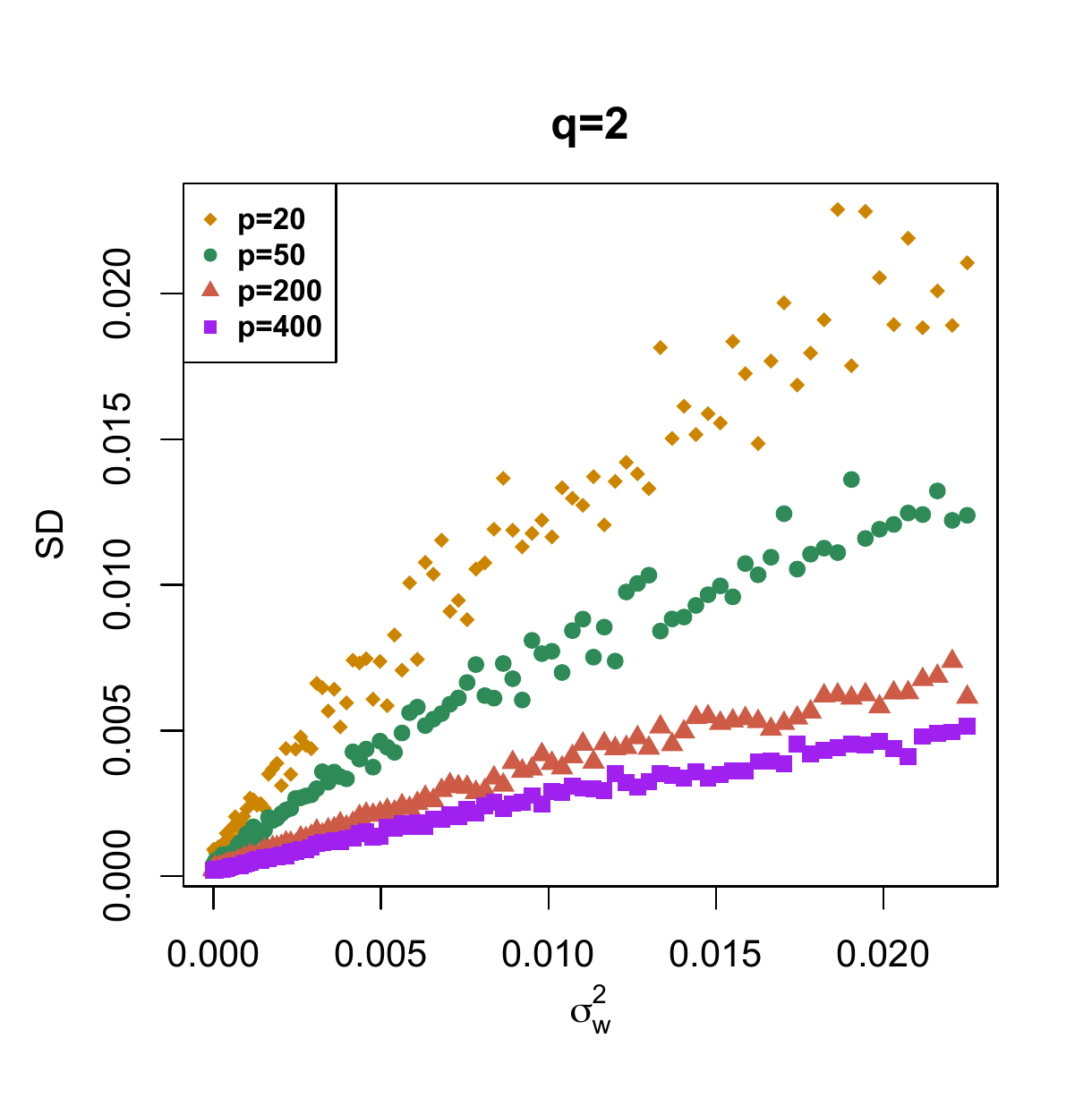} 
\end{tabular}
\caption{Plots of the standard deviation of finite-sample MSE. The same experiment is repeated 200 times. The setup is $\delta=1.5, \epsilon=0.4, g(b)=0.5\delta_1(b)+0.5\delta_{-1}(b)$.} \label{addfig:delta15two}
\end{figure}

\begin{figure}[htb]
\centering
\setlength\tabcolsep{1.5pt}
\begin{tabular}{cc}
\includegraphics[width=6.2cm, height=5.8cm]{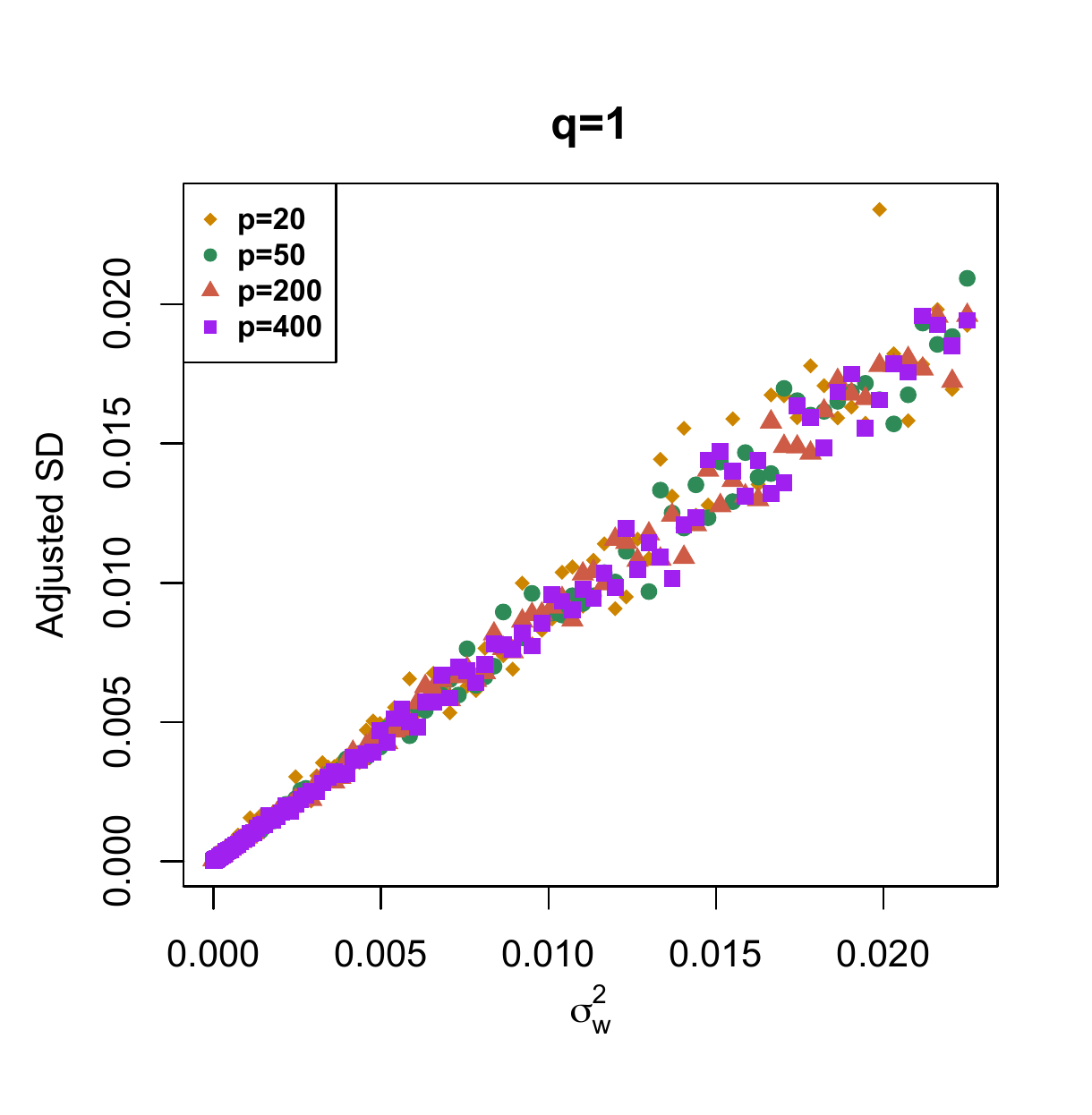} &
 \includegraphics[width=6.2cm, height=5.8cm]{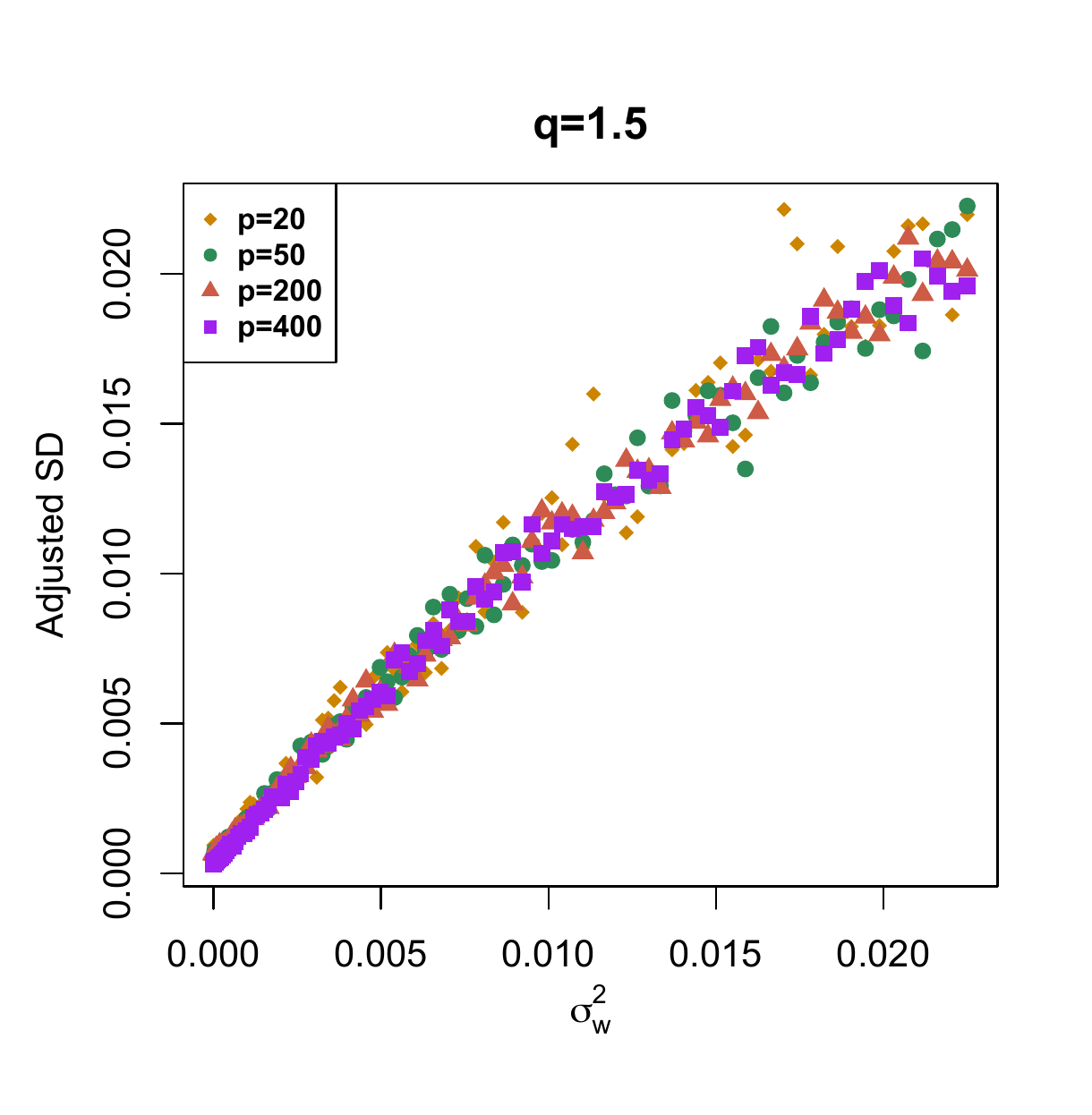} \\ 
\includegraphics[width=6.2cm, height=5.8cm]{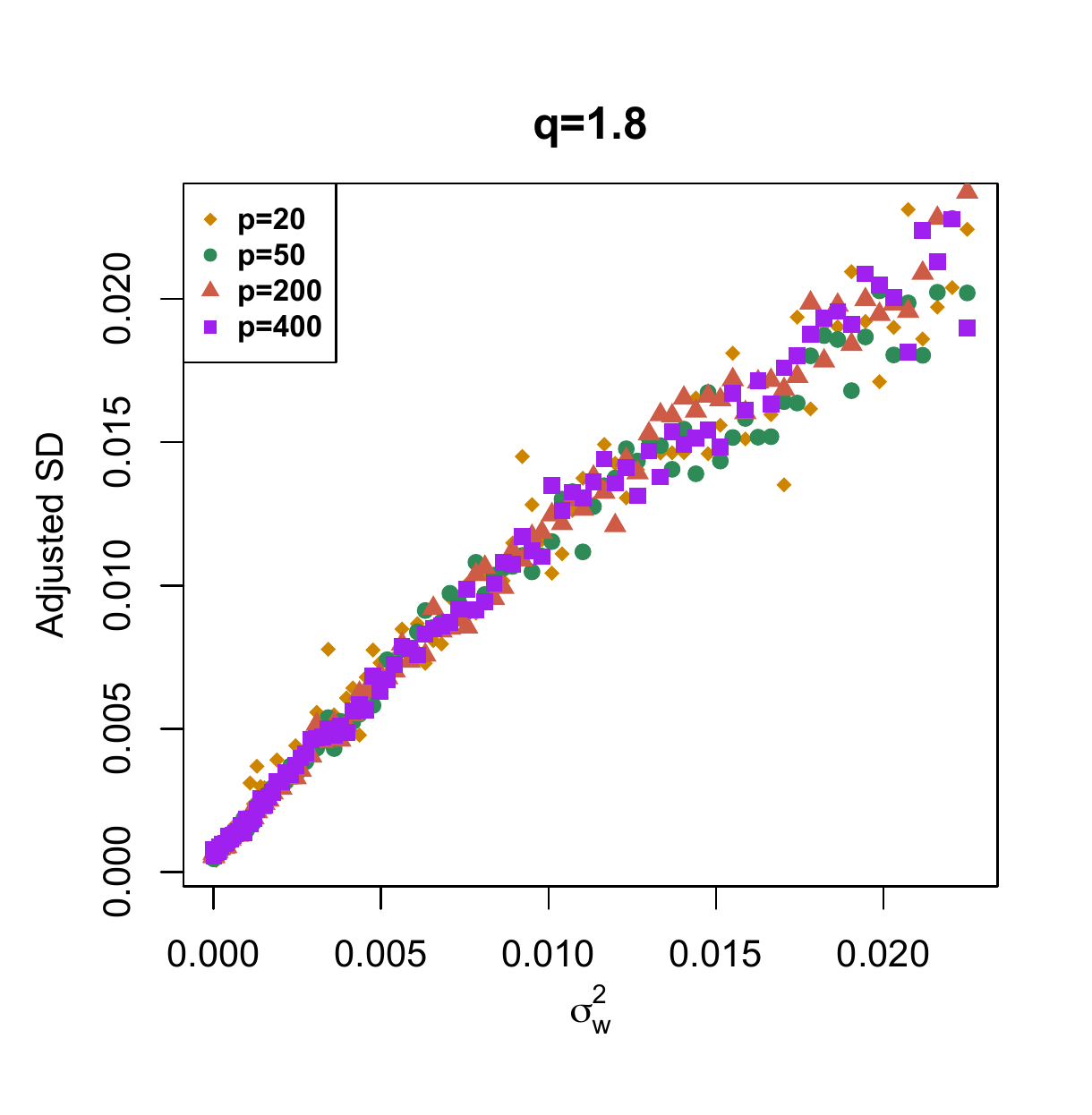} &
\includegraphics[width=6.2cm, height=5.8cm]{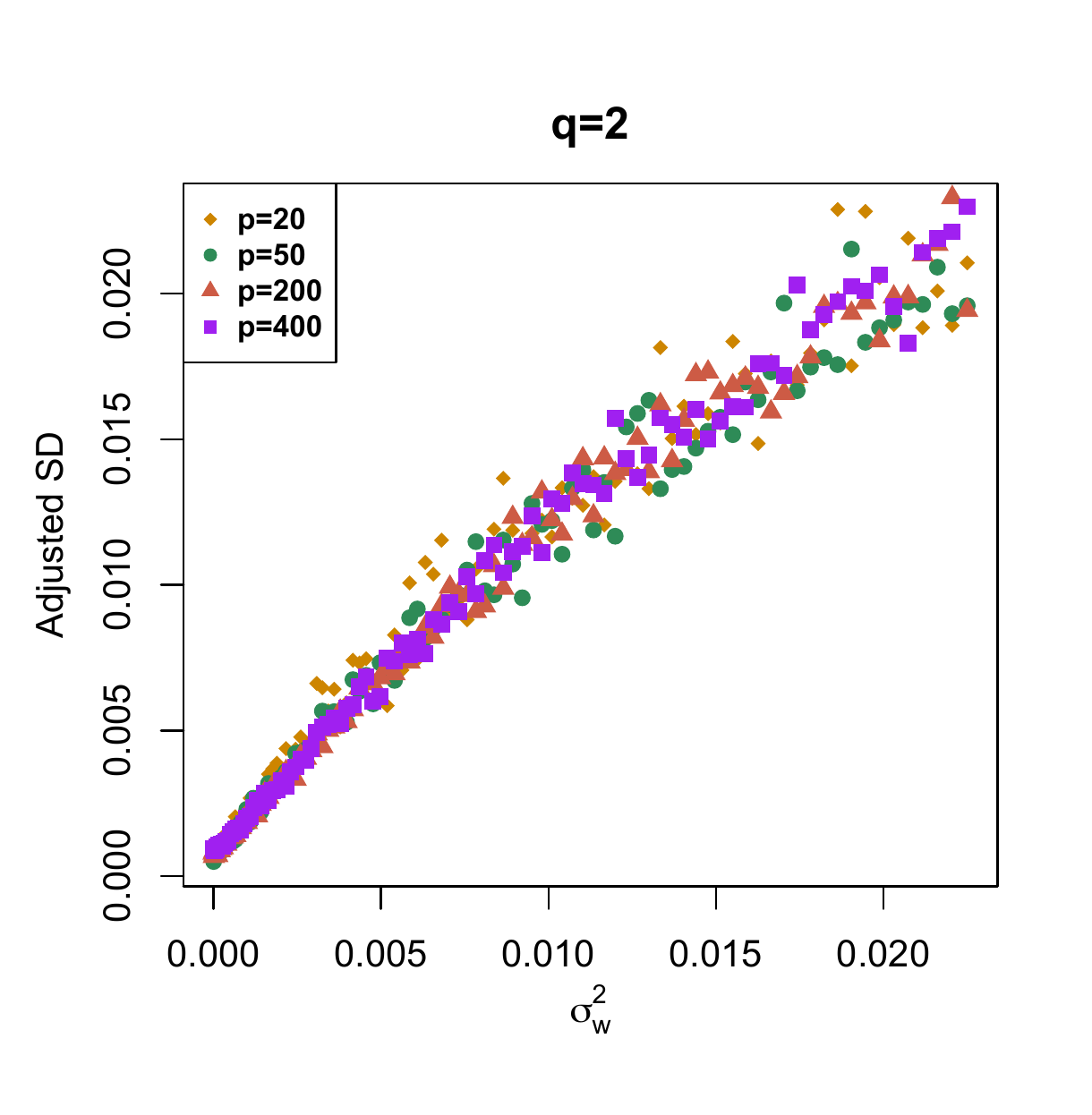} 
\end{tabular}
\caption{Plots of the adjusted standard deviation of finite-sample MSE. The same experiment is repeated 200 times. The setup is $\delta=1.5, \epsilon=0.4, g(b)=0.5\delta_1(b)+0.5\delta_{-1}(b)$.} \label{addfig:delta15three}
\end{figure}

\section{Preliminaries on ${\lowercase{\eta}}_{\lowercase{q}}\lowercase{(u;\chi)}$}\label{ssec:etaq:summary}

This section is devoted to the properties of $\eta_q(u;\chi)$ defined as 
\begin{equation*}\label{eq:defetaqproof}
\eta_q (u; \chi) \triangleq \argmin_z \frac{1}{2} (u-z)^2 + \chi |z|^q. 
\end{equation*}
We start with some basic properties of these functions. Since the explicit forms of $\eta_q(u;\chi)$ for $q=1$ and $2$ are known: $\eta_1(u; \chi) = (|u| -\chi) \rm{sign}(u) \mathbb{I} (|u|> \chi), \eta_2(u; \chi) = \frac{u}{1+ 2\chi}$, we focus our study on the case $1<q<2$.

\begin{lemma}\label{lem:toobasicpropprox}
$\eta_q(u; \chi)$ satisfies the following properties:
\begin{enumerate}
\item[(i)] $u- \eta_q(u; \chi) = \chi q {\rm sign}(u) |\eta_q(u; \chi)|^{q-1}$.
\item[(ii)] $|\eta_q(u; \chi)| \leq  |u|$. 
\item[(iii)] $\lim_{\chi \rightarrow 0} \eta_q (u; \chi) = u$ and $\lim_{\chi \rightarrow \infty}  \eta_q(u; \chi) = 0$. 
\item[(iv)] $\eta_q(-u; \chi) = - \eta_q(u;\chi)$.
\item[(v)] For $\alpha>0$, we have $\eta_q(\alpha u ; \alpha^{2-q} \chi) = \alpha \eta_q (u; \chi)$. 
\item[(vi)] $|\eta_q(u; \chi) - \eta_q(\tilde{u}, \chi)| \leq |u- \tilde{u}|$. 
\end{enumerate}
\end{lemma}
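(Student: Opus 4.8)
The plan is to base the entire lemma on the first-order optimality condition for the strictly convex, differentiable objective $f_u(z) = \frac{1}{2}(u-z)^2 + \chi|z|^q$. First I would record that for $1<q<2$ the map $z\mapsto |z|^q$ is continuously differentiable on all of $\mathbb{R}$, its derivative $q|z|^{q-1}{\rm sign}(z)$ being continuous and vanishing at $0$ because $q>1$; hence $f_u$ is strictly convex with a unique minimizer, and the minimizer $z^\star = \eta_q(u;\chi)$ is characterized by $f_u'(z^\star)=0$, i.e. $z^\star - u + \chi q |z^\star|^{q-1}{\rm sign}(z^\star) = 0$. The one genuinely new ingredient needed before (i) follows is the sign-matching fact ${\rm sign}(\eta_q(u;\chi)) = {\rm sign}(u)$: when $u>0$ one has $f_u'(0) = -u < 0$, forcing $z^\star>0$, and symmetrically for $u<0$, while $u=0$ gives $z^\star=0$. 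With this, $|z^\star|^{q-1}{\rm sign}(z^\star) = {\rm sign}(u)|z^\star|^{q-1}$, and rearranging the stationarity equation yields (i).

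Given (i), properties (ii) and (iii) drop out quickly. For (ii), sign-matching shows $u - z^\star = \chi q\,{\rm sign}(u)|z^\star|^{q-1}$ has the same sign as $u$, so for $u>0$ we get $0 < z^\star < u$ and symmetrically for $u<0$, hence $|\eta_q(u;\chi)|\le|u|$. For (iii), I would send $\chi\rightarrow 0$ in (i): since $|z^\star|\le|u|$ is bounded, the right-hand side tends to $0$ and $z^\star\rightarrow u$; for $\chi\rightarrow\infty$, comparing $f_u(z^\star)\le f_u(0)=\tfrac12 u^2$ gives $\chi|z^\star|^q\le \tfrac12 u^2$, so $|z^\star|\rightarrow 0$.

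The remaining properties are change-of-variable identities. For (iv) I substitute $z=-w$ in the objective defining $\eta_q(-u;\chi)$ and observe it equals $f_u(w)$, so the two minimizers are negatives of each other. For (v), substituting $z=\alpha w$ into the objective defining $\eta_q(\alpha u;\alpha^{2-q}\chi)$ and factoring out $\alpha^2$ shows it is $\alpha^2 f_u(w)$, so its minimizer is $\alpha\,\eta_q(u;\chi)$. Finally, for the nonexpansiveness (vi) I would rewrite stationarity as $u = z^\star + h(z^\star)$ with $h(z)=\chi q|z|^{q-1}{\rm sign}(z)$ nondecreasing, being the derivative of the convex map $\chi|z|^q$; then for inputs $u,\tilde u$ with minimizers $z^\star,\tilde z^\star$, monotonicity of $h$ gives $(u-\tilde u)(z^\star-\tilde z^\star) = (z^\star-\tilde z^\star)^2 + (z^\star-\tilde z^\star)(h(z^\star)-h(\tilde z^\star)) \ge (z^\star-\tilde z^\star)^2$, and Cauchy--Schwarz yields $|z^\star-\tilde z^\star|\le|u-\tilde u|$; alternatively one may simply invoke the firm nonexpansiveness of proximal operators of convex functions.

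The only delicate point, and the step I would treat most carefully, is the justification that stationarity is both necessary and sufficient and that the minimizer's sign equals that of $u$; everything else is routine algebra once (i) is in hand. I would also note explicitly that differentiability of $|z|^q$ at the origin, which fails at $q=1$, is precisely what permits the clean stationarity equation, consistent with the lemma's restriction to $1<q<2$.
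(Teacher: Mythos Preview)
Your proposal is correct and follows essentially the same route as the paper's own proof: derive (i) from the first-order optimality condition of the strictly convex objective, obtain (v) by the change of variables $z=\alpha w$ in the argmin, and invoke the standard nonexpansiveness of proximal operators for (vi). The paper dismisses (ii)--(iv) as ``straightforward,'' so your fuller treatment of those (including the explicit sign-matching argument needed to replace ${\rm sign}(z^\star)$ by ${\rm sign}(u)$ in (i)) only adds rigor rather than departing from the paper's approach.
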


\begin{proof}
To prove (i), we should take the derivative of $ \frac{1}{2} (u-z)^2 + \chi |z|^q$ and set it to zero. Proofs of parts (ii), (iii) and (iv) are straightforward and are hence skipped. To prove (v), note that
\begin{eqnarray}
\eta_q(\alpha u ; \alpha^{2-q} \chi) &=& \arg\min_z \frac{1}{2} (\alpha u - z)^2 + \chi \alpha^{2-q} | z|^q \nonumber \\
&=& \arg\min_{z} \frac{\alpha^2}{2} (u- z/\alpha)^2+ \chi \alpha^2 |z/\alpha|^q \nonumber \\
&=& \alpha \arg\min_{\tilde{z}} \frac{1}{2} (u- \tilde{z})^2+ \chi |\tilde{z}|^q = \alpha \eta_q( u ; \chi).
\end{eqnarray}
(vi) is a standard property of proximal operators of convex functions \cite{parikh2014proximal}.
\end{proof}

In many proofs, we will be dealing with derivatives of $\eta_q(u;\chi)$. For notational simplicity, we may use $\partial_1 \eta_q(u;\chi), \partial^2_1 \eta_q(u;\chi), \partial_2 \eta_q(u;\chi), \partial^2_2 \eta_q(u;\chi)$ to represent $\frac{\partial \eta_q(u,\chi)}{\partial u}, \frac{\partial^2 \eta_q(u,\chi)}{\partial u^2}, \frac{\partial \eta_q(u,\chi)}{\partial \chi}, \frac{\partial^2 \eta_q(u,\chi)}{\partial \chi^2}$, respectively. Our next two lemmas are concerned with differentiability of $\eta_q(u;\chi)$ and its derivatives.

\begin{lemma}\label{prox:smooth}
For every $ 1 < q < 2$, $\eta_q(u; \chi)$ is a differentiable function of $(u, \chi)$ for $u \in \mathbb{R}$ and $\chi>0$ with continuous partial derivatives. Moreover, $\partial_2 \eta_q(u;\chi)$ is differentiable with respect to $u$, for any given $\chi >0$.
\end{lemma}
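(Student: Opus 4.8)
The plan is to realize $\eta_q$ as the unique solution of an implicit equation and apply the implicit function theorem away from the singular point $u=0$, then treat $u=0$ separately by a direct asymptotic estimate. By Lemma \ref{lem:toobasicpropprox}(i) the value $w=\eta_q(u;\chi)$ is characterized by
\[
G(w,u,\chi) \triangleq w - u + \chi q\, {\rm sign}(w)\, |w|^{q-1} = 0,
\]
and strict convexity of $z\mapsto \tfrac12(u-z)^2+\chi|z|^q$ makes this solution unique; from $G=0$ one sees that $w$ has the same sign as $u$ and vanishes precisely when $u=0$, while Lemma \ref{lem:toobasicpropprox}(ii) gives $|w|\le|u|$.

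First I would dispose of the region $u\neq 0$, where $w\neq 0$. There $G$ is $C^\infty$ near the solution and, since $1<q<2$,
\[
\partial_w G = 1 + \chi q(q-1)|w|^{q-2} > 0 .
\]
The implicit function theorem then yields that $\eta_q$ is $C^\infty$ (a fortiori $C^1$) in $(u,\chi)$ on $\{u\neq 0\}$, with the explicit formulas
\[
\partial_1\eta_q = \frac{1}{1+\chi q(q-1)|w|^{q-2}}, \qquad
\partial_2\eta_q = -\,\frac{q\,{\rm sign}(w)\,|w|^{q-1}}{1+\chi q(q-1)|w|^{q-2}},
\]
and a further application of the chain rule shows $\partial_1\partial_2\eta_q$ exists and is continuous there. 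This settles every assertion of the lemma except along the line $u=0$.

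The crux, and the main obstacle, is the point $u=0$: here $\partial_w G$ blows up because $|w|^{q-2}\to\infty$, so the implicit function theorem is unavailable and I must extract the local rate of $\eta_q$ by hand. For $u\to 0^{+}$ one has $w\to 0^{+}$ and $w+\chi q\, w^{q-1}=u$; since $w\mapsto w+\chi q\,w^{q-1}$ is increasing, $\chi q\,w^{q-1}<u$ gives $w<(u/(q\chi))^{1/(q-1)}=:w_+$, and because $1/(q-1)>1$ we have $w_+=o(u)$, whence $\chi q\,w^{q-1}=u-w\ge u/2$ for small $u$ and thus $w\ge (u/(2q\chi))^{1/(q-1)}$. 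This sandwich makes rigorous the statement
\[
\eta_q(u;\chi) = \Big(\tfrac{|u|}{q\chi}\Big)^{1/(q-1)}(1+o(1)) \qquad (u\to 0),
\]
with the $u\to 0^{-}$ case following by oddness, Lemma \ref{lem:toobasicpropprox}(iv). Since $1/(q-1)>1$ this forces $\eta_q(u;\chi)/u\to 0$, so $\partial_1\eta_q(0;\chi)=0$; and as $(u,\chi)\to(0,\chi_0)$ we have $|w|\le|u|\to 0$, so feeding $|w|\to 0$ into the two displayed formulas gives $\partial_1\eta_q\to 0$ and $\partial_2\eta_q\to 0$, which match $\partial_1\eta_q(0;\chi_0)=\partial_2\eta_q(0;\chi_0)=0$ (the latter because $\eta_q(0;\chi)\equiv 0$). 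Hence both partials extend jointly continuously to all of $\mathbb{R}\times(0,\infty)$, proving the first claim.

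Finally, for differentiability of $\partial_2\eta_q$ in $u$ at $u=0$, I would form the difference quotient $\partial_2\eta_q(h;\chi)/h$ (using $\partial_2\eta_q(0;\chi)=0$). The second formula above simplifies to $\partial_2\eta_q \sim -\,w/(\chi(q-1))$ as $w\to 0$, and inserting the rate $w\sim {\rm sign}(h)(|h|/(q\chi))^{1/(q-1)}$ yields
\[
\frac{\partial_2\eta_q(h;\chi)}{h} = \Theta\big(|h|^{(2-q)/(q-1)}\big) \xrightarrow[h\to 0]{} 0,
\]
since $(2-q)/(q-1)>0$; thus $\partial_1\partial_2\eta_q(0;\chi)=0$ exists, completing the proof. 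The only work beyond identifying the dominant balance is turning the $o(1)$ and $\Theta(\cdot)$ statements into honest two-sided bounds, which the monotonicity sandwich above already provides.
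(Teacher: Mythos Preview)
Your proposal is correct and follows essentially the same route as the paper: implicit function theorem on the defining relation $w+\chi q\,{\rm sign}(w)|w|^{q-1}=u$ for $u\neq 0$, yielding the same explicit formulas for $\partial_1\eta_q$ and $\partial_2\eta_q$, followed by a direct asymptotic estimate at $u=0$ using the rate $|\eta_q(u;\chi)|\asymp (|u|/(q\chi))^{1/(q-1)}$ (the paper uses the same bound, just without spelling out the two-sided sandwich). Your simplification $\partial_2\eta_q\sim -w/(\chi(q-1))$ for the final difference quotient is a slightly cleaner presentation of the same computation the paper does via Lemma~\ref{lem:toobasicpropprox}(i).
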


\begin{proof}
We start with the case $u_0, \chi_0>0$. The goal is to prove that $\eta_q(u; \chi)$ is differentiable at $(u_0, \chi_0)$.  Since $u_0>0$, $\eta_q(u_0;\chi_0)$ will be positive. Then Lemma \ref{lem:toobasicpropprox} part (i) shows $\eta_q(u_0; \chi_0)$ must satisfy 
\begin{equation}\label{eq:fpproxop1}
\eta_q(u_0; \chi_0) + \chi_0 q \eta_q^{q-1}(u_0; \chi_0) = u_0. 
\end{equation}
Define the function $F(u,\chi, v) = u- v -\chi q v^{q-1}$. Equation \eqref{eq:fpproxop1} says $F(u, \chi, v)$ is equal to zero at $(u_0, \chi_0, \eta_q (u_0;\chi_0))$. It is straightforward to confirm that the derivative of $F(u,\chi,v)$ with respect to $v$ is nonzero at $(u_0, \chi_0, \eta_q (u_0;\chi_0))$. By implicit function theorem, we can conclude $\eta_q(u ; \chi)$ is differentiable at $(u_0, \chi_0)$. Lemma \ref{lem:toobasicpropprox} part (iv) implies that the same result holds when $u_0<0$. We now focus on the point $(0, \chi_0)$. Since $\eta_q(0, \chi_0) = 0$, we obtain
\[
\partial_1 \eta_q(0;\chi_0)  =\lim_{u \rightarrow 0} \frac{ |\eta_q(u; \chi_0)|}{|u|}  \leq \lim_{u \rightarrow 0} \frac{|u|^{1/(q-1)}}{(\chi_0 q)^{1/(q-1)} |u|} =0. 
\] 
where the last inequality comes from \eqref{eq:fpproxop1}. It is straightforward to see that the partial derivative of $\eta_q(u;\chi)$ with respect to $\chi$ at $(0,\chi_0)$ exists and is equal to zero as well. So far we have proved that $\eta_q(u,\chi)$ has partial derivatives with respect to both $u$ and $\chi$ for every $u\in \mathbb{R}, \chi>0$. We next show the partial derivatives are continuous. For $u\neq 0$, the result comes directly from the implicit function theorem, because $F(u,\chi,v)$ is a smooth function when $v\neq 0$. We now turn to the proof when $u=0$.
By taking derivative with respect to $u$ on both sides of  \eqref{eq:fpproxop1}, we obtain
\begin{eqnarray}
\partial_1 \eta_q(u; \chi)  + \chi q (q-1) \eta_q^{q-2}(u; \chi) \partial_1 \eta_q(u; \chi)  = 1,  \label{revisionadd:one}
\end{eqnarray}
for any $u, \chi>0$.
Moreover, it is clear from \eqref{eq:fpproxop1} that $\eta_q(u;\chi)\rightarrow 0,$ as $(u,\chi) \rightarrow (0^+,\chi_0)$. This fact combined with \eqref{revisionadd:one} yields
\[
\lim_{(u,\chi) \rightarrow (0^+,\chi_0)} \partial_1 \eta_q(u; \chi)  =\lim_{(u,\chi) \rightarrow (0^+,\chi_0)}\frac{1}{1+ \chi q (q-1) \eta_q^{q-2}(u; \chi)} =0.
\]
Since $\partial_1 \eta_q(u;\chi) =\partial_1 \eta_q(-u;\chi)$ implied by Lemma \ref{lem:toobasicpropprox} part (iv), we conclude
\[
\lim_{(u,\chi) \rightarrow (0,\chi_0)}  \partial_1 \eta_q(u; \chi)  =0. 
\] 
The same approach can prove that the partial derivative $\partial_2 \eta_q(u; \chi) $ is continuous at $(0, \chi_0)$. For simplicity we do not repeat the arguments.

We now prove the second part of the lemma. Because $F(u,\chi,v)$ is infinitely many times differentiable in any open set with $v\neq 0$, implicit function theorem further implies $\partial_2 \eta_q(u;\chi)$ is differentiable at any $u\neq 0$. The rest of the proof is to show its differentiability at $u=0$. This follows by noting $\partial_2 \eta_q(0;\chi)=0$, and
\begin{eqnarray*}
\lim_{u\rightarrow 0}\frac{\partial_2 \eta_q(u;\chi)}{u}  &\overset{(a)}{=}&\lim_{u\rightarrow 0} \frac{-q|\eta_q(u;\chi)|^{q-1}}{|u|(1+\chi q(q-1)|\eta_q(u;\chi)|^{q-2})}  \\
&=&\lim_{u\rightarrow 0} \frac{-(|u|-|\eta_q(u;\chi)|)}{\chi |u|(1+\chi q(q-1)|\eta_q(u;\chi)|^{q-2})}=0,
\end{eqnarray*} 
where $(a)$ is by taking derivative with respect to $\chi$ on both sides of \eqref{eq:fpproxop1}, and the last two equalities above are due to Lemma \ref{lem:toobasicpropprox} part (i) and (iii). 
\end{proof}

\begin{lemma}\label{lem:continuitysecondder}
Consider a given $\chi>0$, then for every $ 1 < q < 3/2$, $\partial_1 \eta_q(u; \chi)$ is a differentiable function of $u$ for $u \in \mathbb{R}$ with continuous derivative; for $q=3/2$, it is a weakly differentiable function of $u$; for $3/2<q<2$, $\partial_1 \eta_q(u; \chi)$ is differentiable at $u \neq 0$, but is not differentiable at zero.
\end{lemma}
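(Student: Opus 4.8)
The statement separates cleanly into the regime $u\neq 0$, which is painless, and the single point $u=0$, which carries all the content. For $u\neq 0$ the plan is to reuse the implicit-function argument from the proof of Lemma~\ref{prox:smooth}: the map $F(u,\chi,v)=u-v-\chi q v^{q-1}$ is $C^\infty$ on $\{v\neq 0\}$ with nonvanishing $\partial_v F$ there, so $\eta_q(\cdot;\chi)$ — and hence $\partial_1\eta_q(\cdot;\chi)$ — is smooth away from the origin for every $q\in(1,2)$. This disposes of the ``$u\neq 0$'' assertions in all three cases at once. Moreover, by Lemma~\ref{lem:toobasicpropprox}(iv) the function $\partial_1\eta_q(\cdot;\chi)$ is even, so it suffices to analyze the one-sided behaviour as $u\to 0^+$.

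The core of the proof is a sharp asymptotic for $\partial_1\eta_q(u;\chi)$ as $u\to0^+$. Writing $v=\eta_q(u;\chi)>0$ for $u>0$, the defining relation \eqref{eq:fpproxop1} reads $u=v+\chi q v^{q-1}$; since $q-1<1$, the term $\chi q v^{q-1}$ dominates $v$ as $v\to0$, so I would first establish $v\sim(u/(\chi q))^{1/(q-1)}$. Feeding this into the differentiated identity \eqref{revisionadd:one}, namely $\partial_1\eta_q(u;\chi)=\bigl(1+\chi q(q-1)v^{q-2}\bigr)^{-1}$, and using $v^{q-2}\to\infty$, I obtain $\partial_1\eta_q(u;\chi)\sim c_\chi\,u^{\alpha}$ with exponent $\alpha=(2-q)/(q-1)$ and a positive constant $c_\chi$. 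Since $\partial_1\eta_q(0;\chi)=0$ (shown in the proof of Lemma~\ref{prox:smooth}), the entire differentiability question at the origin reduces to whether an even function behaving like $c_\chi|u|^{\alpha}$ is differentiable, and the threshold is exactly $\alpha=1$, i.e.\ $q=3/2$.

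The classification then follows by comparing $\alpha$ with $1$. For $1<q<3/2$ we have $\alpha>1$, so the difference quotient $\partial_1\eta_q(u;\chi)/u\to0$, giving $\partial_1^2\eta_q(0;\chi)=0$; a second implicit differentiation of \eqref{revisionadd:one} yields $\partial_1^2\eta_q(u;\chi)\sim c'_\chi\,u^{(3-2q)/(q-1)}\to0$, which proves continuity of the derivative. For $3/2<q<2$ we have $\alpha<1$, so $\partial_1\eta_q(u;\chi)/u\to+\infty$ and the derivative at $0$ fails to exist (a cusp), while smoothness for $u\neq0$ is already in hand. The borderline $q=3/2$ gives $\alpha=1$ and $\partial_1\eta_q(u;\chi)\sim c_\chi|u|$: the one-sided derivatives at $0$ are $+c_\chi$ and $-c_\chi$ by evenness, which differ, ruling out classical differentiability; but the second-derivative asymptotic now has exponent $(3-2q)/(q-1)=0$, so $\partial_1^2\eta_q$ stays bounded near $0$, making $\partial_1\eta_q(\cdot;\chi)$ locally Lipschitz and hence weakly differentiable.

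The main obstacle is making the borderline case $q=3/2$ rigorous on both fronts simultaneously: I must upgrade the heuristic $\sim$ relations to controlled $\Theta$-bounds, with genuine error control on $v=(u/(\chi q))^{1/(q-1)}(1+o(1))$ and its propagation through the two implicit differentiations, so as to (i) pin down distinct finite one-sided derivatives and (ii) bound $\partial_1^2\eta_q$ uniformly near $0$ to secure the local Lipschitz property underlying weak differentiability. The remaining work is bookkeeping of the signs and of the blow-up/decay of the $v^{q-2}$ and $v^{q-3}$ factors, which is routine once the leading asymptotic for $v$ is locked in.
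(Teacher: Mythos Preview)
Your proposal is correct and follows essentially the same route as the paper: implicit function theorem for $u\neq 0$, the asymptotic $\eta_q(u;\chi)\sim(u/(\chi q))^{1/(q-1)}$ fed into $\partial_1\eta_q(u;\chi)=\bigl(1+\chi q(q-1)|\eta_q|^{q-2}\bigr)^{-1}$, and the resulting exponent $(2-q)/(q-1)$ that trichotomizes at $q=3/2$. One small simplification the paper makes at $q=3/2$: rather than upgrading the $\sim$ relations to $\Theta$-bounds, it writes down the explicit second-derivative formula
\[
|\partial_1^2\eta_q(u;\chi)|=\frac{\chi q(q-1)(2-q)\,|\eta_q(u;\chi)|^{q-3}}{\bigl(1+\chi q(q-1)|\eta_q(u;\chi)|^{q-2}\bigr)^{3}}
\]
and bounds it uniformly by $8/(9\chi^2)$ via elementary calculus in the auxiliary variable $|\eta_q|^{-1/2}$, then deduces Lipschitz continuity by the mean value theorem (handling sign changes with evenness). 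This sidesteps the ``controlled error'' bookkeeping you flagged as the main obstacle.
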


\begin{proof}

As is clear from the proof of Lemma \ref{prox:smooth},  the implicit function theorem guarantees that $\partial_1\eta_q(u;\chi)$ is differentiable at $u\neq 0$ with continuous derivative for $1<q<2$.  Hence we will be focused on $u=0$. In the proof of Lemma \ref{prox:smooth}, we have derived
\begin{equation}\label{eq:formfirstderarg1}
\partial_1 \eta_q(u;\chi) = \frac{1}{1+ \chi q (q-1) |\eta_q(u; \chi)|^{q-2}}, \quad \mbox{for~} u \neq 0,
\end{equation}
and $\partial_1 \eta_q(0;\chi)=0$. We thus know
\begin{eqnarray}
\partial_1^2\eta_q(0;\chi)=\lim_{u\rightarrow 0}\frac{1}{u+\chi q(q-1)u|\eta_q(u;\chi)|^{q-2}}.  \label{smooth:lq:one} \hspace{-0.cm}
\end{eqnarray}
Moreover, Lemma \ref{lem:toobasicpropprox} part (i) implies
\begin{eqnarray}
\lim_{u\rightarrow 0}\frac{u}{\chi q |\eta_q(u;\chi)|^{q-1}\mbox{sign}(u)}=1+ \lim_{u\rightarrow 0}\frac{|\eta_q(u;\chi)|^{2-q}}{\chi q}=1. \label{addrevision:two}
\end{eqnarray}
For $1<q<3/2$, \eqref{smooth:lq:one} and \eqref{addrevision:two} together give us
\begin{eqnarray*}
\partial_1^2\eta_q(0;\chi)=\lim_{u\rightarrow 0}\frac{1}{u+\chi q(q-1)u(|u|/(\chi q))^{\frac{q-2}{q-1}}}=0.
\end{eqnarray*}
We can also calculate the limit of $\partial_1^2 \eta_q(u;\chi)$ (this second derivative can be obtained from \eqref{eq:formfirstderarg1}) as follows.
\begin{eqnarray*}
\lim_{u\rightarrow 0}\partial_1^2 \eta_q(u;\chi)=\lim_{u \rightarrow 0}\frac{-\chi q(q-1)(q-2)|\eta_q(u;\chi)|^{q-3}\mbox{sign}(u)}{(1+\chi q(q-1)|\eta_q(u;\chi)|^{q-2})^3}=0.
\end{eqnarray*}
Therefore, $\partial_1 \eta_q(u; \chi)$ is continuously differentiable on $(-\infty,+\infty)$ for $1<q<3/2$. Regarding $3/2<q<2$, Similar calculations yield
\[
\lim_{u\rightarrow 0^+}\partial_1^2\eta_q(0;\chi)=+\infty, \quad \lim_{u\rightarrow 0^-}\partial_1^2\eta_q(0;\chi)=-\infty.
\] 

Finally to prove the weak differentiability for $q=3/2$, we show $\partial_1 \eta_{q}(u;\chi)$ is a Lipschitz continuous function on $(-\infty,+\infty)$. Note that for $u\neq 0$,
\begin{eqnarray*}
|\partial_1^2 \eta_q(u;\chi)|= \frac{\chi q(q-1)(2-q)|\eta_q(u;\chi)|^{q-3}}{(1+\chi q(q-1)|\eta_q(u;\chi)|^{q-2})^3}  \leq \frac{8}{9\chi^2},
\end{eqnarray*}
and $\partial^2_1\eta_q(0^+;\chi)=-\partial^2_1(0^-;\chi)=\frac{8}{9\chi^{2}}$. Mean value theorem leads to
\[
|\partial_1\eta_{q}(u;\chi)-\partial_1 \eta_{q}(\tilde{u};\chi)| \leq \frac{8}{9\chi^2}|u-\tilde{u}|, ~~~\mbox{for~}u\tilde{u}\geq 0. 
\]
When $u\tilde{u}<0$, we can have 
\begin{eqnarray*}
&&|\partial_1\eta_{q}(u;\chi)-\partial_1 \eta_{q}(\tilde{u};\chi)|=|\partial_1\eta_{q}(u;\chi)-\partial_1 \eta_{q}(-\tilde{u};\chi)|  \\
&\leq& \frac{8}{9\chi^2}|u+\tilde{u}| \leq \frac{8}{9\chi^2}|u-\tilde{u}|.
\end{eqnarray*}
This completes the proof of the lemma.
\end{proof}

The last lemma in this section presents some additional properties regarding the derivatives of $\eta_q(u;\chi)$.
\begin{lemma}\label{lem:toobasicpropproxder}
The derivatives of $\eta_q(u; \chi)$ satisfy the following properties:
\begin{enumerate}
\item[(i)] $\partial_1 \eta_q(u;\chi) = \frac{1}{1+ \chi q (q-1) |\eta_q(u; \chi)|^{q-2}}$.  
\item[(ii)] $\partial_2 \eta_q (u; \chi) = \frac{-q |\eta_q (u; \chi)|^{q-1}  {\rm sign} (u) }{ 1+ \chi q (q-1) |\eta_q (u ;\chi)|^{q-2}}$. 
\item[(iii)] $0\leq \partial_1 \eta_q(u;\chi) \leq 1$.
\item[(iv)] For $u>0$, $\partial_1^2\eta_q(u;\chi) >0$. 
\item[(v)] $|\eta_q(u; \chi)|$ is a decreasing function of $\chi$. 
\item [(vi)] $\lim_{\chi \rightarrow \infty} \partial_1 \eta_q(u;\chi) =0$. 
\end{enumerate}
\end{lemma}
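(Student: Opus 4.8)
The plan is to derive all six statements from the single implicit relation of Lemma~\ref{lem:toobasicpropprox}(i), which for $u>0$ (so that $\eta_q(u;\chi)>0$) reads
\begin{equation*}
\eta_q(u;\chi) + \chi q\, \eta_q^{q-1}(u;\chi) = u,
\end{equation*}
combined with the differentiability already secured in Lemma~\ref{prox:smooth}. In each part I would first treat $u>0$, then extend to $u<0$ via the oddness $\eta_q(-u;\chi)=-\eta_q(u;\chi)$ of Lemma~\ref{lem:toobasicpropprox}(iv), and finally dispose of the point $u=0$ using the already-established facts $\partial_1\eta_q(0;\chi)=\partial_2\eta_q(0;\chi)=0$ from the proof of Lemma~\ref{prox:smooth}.

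For part (i) I would differentiate the displayed relation in $u$, obtaining $\partial_1\eta_q + \chi q(q-1)\eta_q^{q-2}\partial_1\eta_q = 1$ and hence the claimed formula; this is in fact exactly \eqref{revisionadd:one}--\eqref{eq:formfirstderarg1} from the earlier proofs, so essentially no new work is needed, only the remark that writing $|\eta_q|^{q-2}$ absorbs both signs of $u$. For part (ii) I would differentiate the same relation in $\chi$, which gives $\partial_2\eta_q + q\eta_q^{q-1} + \chi q(q-1)\eta_q^{q-2}\partial_2\eta_q = 0$; solving for $\partial_2\eta_q$ and writing $\eta_q^{q-1}=|\eta_q|^{q-1}{\rm sign}(u)$ for $u>0$, then invoking oddness for $u<0$, produces the stated expression with the ${\rm sign}(u)$ factor.

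Parts (iii)--(vi) are then algebraic consequences of these two formulas. For (iii) I observe that, since $1<q<2$ and $\chi>0$, the denominator $1+\chi q(q-1)|\eta_q|^{q-2}$ exceeds $1$ whenever $\eta_q\neq 0$ and tends to $+\infty$ as $\eta_q\to 0$, so $\partial_1\eta_q\in(0,1)$ for $u\neq 0$ and equals $0$ at $u=0$. For (iv) I would differentiate the formula in (i) once more in $u$, getting $\partial_1^2\eta_q = -\big(1+\chi q(q-1)\eta_q^{q-2}\big)^{-2}\chi q(q-1)(q-2)\eta_q^{q-3}\partial_1\eta_q$; for $u>0$ the leading minus sign multiplies a product that is negative owing to the single factor $q-2<0$, so $\partial_1^2\eta_q>0$. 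For (v), the formula in (ii) shows $\partial_2\eta_q<0$ for $u>0$, whence $\eta_q$ decreases in $\chi$ there, and oddness upgrades this to $|\eta_q|$ being decreasing for every $u$. For (vi), Lemma~\ref{lem:toobasicpropprox}(iii) gives $\eta_q(u;\chi)\to 0$ as $\chi\to\infty$; the displayed relation yields $\chi\eta_q^{q-1}\to u/q$, so $\chi|\eta_q|^{q-2}=(\chi|\eta_q|^{q-1})/|\eta_q|\to\infty$ for $u\neq 0$, forcing the denominator in (i) to blow up and $\partial_1\eta_q\to 0$, with the $u=0$ case immediate.

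The only genuinely delicate point is the behavior at $u=0$, where the factor $|\eta_q|^{q-2}$ is singular and the implicit relation cannot be differentiated directly; there I would lean on the one-sided limits already computed in Lemma~\ref{prox:smooth} rather than on the formulas. Beyond this bookkeeping I expect no real obstacle, as everything reduces to routine differentiation and careful sign-tracking driven by the constraints $q-1>0$ and $q-2<0$.
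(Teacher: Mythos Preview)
Your proposal is correct and follows essentially the same approach as the paper: the paper's proof simply notes that parts (i) and (ii) were already obtained in the proof of Lemma~\ref{prox:smooth}, that (iii) follows from the formula in (i), that (iv) was computed in the proof of Lemma~\ref{lem:continuitysecondder}, and that (v) and (vi) follow from (ii) and from (i) together with Lemma~\ref{lem:toobasicpropprox}(iii), respectively. You have spelled out these steps in slightly more detail (notably the $\chi|\eta_q|^{q-2}\to\infty$ argument for (vi)), but the route is the same.
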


\begin{proof}
Parts (i) (ii) have been derived in the proof of Lemma \ref{prox:smooth}. Part (iii) is a simple conclusion of part (i). Part (iv) is clear from the proof of Lemma \ref{lem:continuitysecondder}. Part (v) is a simple application of part (ii). Finally, part (vi) is an application of part(i) of Lemma \ref{lem:toobasicpropproxder} and part (iii) of Lemma \ref{lem:toobasicpropprox}. 
\end{proof}

\section{Proof of lemma 2} \label{subsec:solutionfixedpoint}

\subsection{Roadmap}
The goal of this section is to show that Equations \eqref{eq:fixedpoint11} and \eqref{eq:fixedpoint21} (rewritten below) have a unique solution $(\bar{\sigma},\bar{\chi})$:
\begin{eqnarray*}
\bar{\sigma}^2 &=& \sigma_{\omega}^2+\frac{1}{\delta} \mathbb{E}_{B, Z} [(\eta_q(B +\bar{\sigma} Z; \bar{\chi} \bar{\sigma}^{2-q}) -B)^2],   \\
\lambda &=& \bar{\chi} \bar{\sigma}^{2-q} \left(1-\frac{1}{\delta} \mathbb{E}_{B,Z}[\eta_q'(B +\bar{\sigma} Z;  \bar{\chi} \bar{\sigma}^{2-q})] \right).
\end{eqnarray*}

Towards the goal we pursue the following two main steps:
\begin{enumerate}
\item We show the existence of the solution in Section \ref{revision:lemma2:unique}. In order to do that, we first study the solution of Equation \eqref{eq:fixedpoint11}, and demonstrate that for any $\chi \in (\chi_{\min}, \infty)$ ($\chi_{\min}$ is a constant we will clarify later), there exists a unique $\sigma_{\chi}$ such that $(\sigma_{\chi},\chi)$ satisfies  \eqref{eq:fixedpoint11}. The proof will be presented in Section \ref{ssec:uniquenessfirsteqfp}. We then show that by varying $\chi$ over $(\chi_{\min},\infty)$, the range of the value of the following term
\[
\chi{\sigma_{\chi}}^{2-q} \big(1-\frac{1}{\delta} \mathbb{E}_{B,Z}[\eta_q'(B +\sigma_{\chi} Z;  \chi \sigma_{\chi}^{2-q})] \big)
\]
covers the number $\lambda$ from Equation \eqref{eq:fixedpoint21}. That means Equations \eqref{eq:fixedpoint11} and \eqref{eq:fixedpoint21} share at least one common solution pair $(\sigma_{\chi}, \chi)$. We detail out the proof in Section \ref{revision:lemma2:unique2}.

\item We prove the uniqueness of the solution in Section \ref{revision:lemma2:three}. The key idea is to apply Theorem \ref{thm:eqpseudolip} to evaluate the asymptotic loss of the LQLS estimates under two different pseudo-Lipschitz functions. These two quantities determine the uniqueness of both $\sigma_{\chi}$ and $\chi$ in the common solution pair $(\sigma_{\chi}, \chi)$. Note that we have denoted this unique pair by $(\bar{\sigma}, \bar{\chi})$.

\end{enumerate}

Before we start the details of the proof, we present Stein's lemma \cite{stein1981estimation} that will be used several times in this paper.

\begin{lemma}\label{lem:steins}
Let $g: \mathbb{R} \rightarrow \mathbb{R}$ denote a weakly differentiable function. If $Z \sim N(0,1)$ and $\mathbb{E}|g'(Z)|<\infty$, we have
\[
\mathbb{E} (Z g(Z)) = \mathbb{E} (g'(Z)),
\]
where $g'$ denotes the weak-derivative of $g$. 
\end{lemma}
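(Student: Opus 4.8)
The plan is to establish Stein's identity by an integration-by-parts argument against the standard normal density $\phi$, exploiting the defining relation $\phi'(z) = -z\phi(z)$. Since $g$ is only assumed to be weakly differentiable, I would avoid a naive integration by parts (which would force me to control the boundary term $g(z)\phi(z)$ as $z \to \pm\infty$) and instead route the computation through Fubini's theorem, keeping every manipulation at the level of absolutely integrable integrands.

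First I would record the two elementary facts to be used repeatedly: $\mathbb{E}(Z) = \int_{-\infty}^\infty z\phi(z)\,dz = 0$, and, by integrating $\phi'(z) = -z\phi(z)$,
\[
\int_t^\infty z\phi(z)\,dz = \phi(t), \qquad \int_{-\infty}^t z\phi(z)\,dz = -\phi(t),
\]
for every $t \in \mathbb{R}$. Next, since $g$ is weakly differentiable with locally integrable weak derivative $g'$, it admits an absolutely continuous representative, so I may write $g(z) = g(0) + \int_0^z g'(t)\,dt$ for all $z$ (with the usual orientation convention for $z<0$). Substituting this into $\mathbb{E}(Zg(Z))$ and invoking $\mathbb{E}(Z)=0$ eliminates the constant term $g(0)$, leaving
\[
\mathbb{E}(Zg(Z)) = \int_{-\infty}^\infty z\phi(z)\left(\int_0^z g'(t)\,dt\right)dz.
\]

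I would then split this integral at the origin and apply Fubini's theorem on each half. On $z>0$ this yields $\int_0^\infty g'(t)\bigl(\int_t^\infty z\phi(z)\,dz\bigr)\,dt = \int_0^\infty g'(t)\phi(t)\,dt$; on $z<0$, after accounting for the orientation of the inner integral and using $\int_{-\infty}^t z\phi(z)\,dz = -\phi(t)$, it yields $\int_{-\infty}^0 g'(t)\phi(t)\,dt$. Summing the two pieces gives $\int_{-\infty}^\infty g'(t)\phi(t)\,dt = \mathbb{E}(g'(Z))$, which is the claim.

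The step that genuinely requires the hypothesis $\mathbb{E}|g'(Z)|<\infty$ is the justification of the Fubini interchange: the absolute integrand is controlled by $\int_0^\infty |g'(t)|\bigl(\int_t^\infty z\phi(z)\,dz\bigr)\,dt = \int_0^\infty |g'(t)|\phi(t)\,dt \le \mathbb{E}|g'(Z)| < \infty$, and symmetrically on the negative half, so the interchange is licensed. This, together with verifying that the absolutely continuous representative is the correct object to which the fundamental theorem of calculus applies, is the only delicate point; everything else reduces to the two density identities above. I expect the handling of weak (rather than classical) differentiability to be the main obstacle, which is precisely why the Fubini route is preferable to direct integration by parts.
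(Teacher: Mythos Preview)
Your argument is correct: the Fubini route via the representation $g(z)=g(0)+\int_0^z g'(t)\,dt$ together with the density identity $\int_t^\infty z\phi(z)\,dz=\phi(t)$ is precisely the standard derivation of Stein's identity, and your use of the hypothesis $\mathbb{E}|g'(Z)|<\infty$ to license the interchange is exactly the point that needs care in the weakly differentiable setting.

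There is nothing to compare against, however: the paper does not prove this lemma at all. It merely states it as Stein's lemma with a citation to \cite{stein1981estimation} and then invokes it as a tool in later proofs. So you have supplied a proof where the paper simply appeals to the literature; your write-up is self-contained and would slot in without issue.
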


\subsection{Proving the existence of the solution of \eqref{eq:fixedpoint11} and \eqref{eq:fixedpoint21}} \label{revision:lemma2:unique}

\subsubsection{Studying the solution of \eqref{eq:fixedpoint11}} \label{ssec:uniquenessfirsteqfp}

We first define a function that is closely related to Equation \eqref{eq:fixedpoint11}:
 \begin{equation}\label{keyquantity:def}
R_q(\chi, \sigma) \triangleq \mathbb{E}_{B,Z} [\eta_q(B/ \sigma +Z; \chi) - B/\sigma]^2.
\end{equation}
Note that we have used the same definition for LASSO in Section \ref{sec:proofthm4full}. Here we adopt a general notation $R_q(\chi, \sigma)$ to represent the function defined above for any $q\in [1,2]$.

\begin{lemma}\label{lem:posfirstlemma}
For $1\leq q \leq 2$, $R_q(\chi, \sigma)$ is a decreasing function of $\sigma>0$.
\end{lemma}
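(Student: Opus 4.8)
The plan is to reduce the claim to a one-dimensional monotonicity statement and then establish that statement via Stein's lemma together with a symmetrization (pairing) argument. Write the inner expectation as $h(\alpha) \triangleq \mathbb{E}_Z[(\eta_q(\alpha + Z; \chi) - \alpha)^2]$, so that $R_q(\chi, \sigma) = \mathbb{E}_B[h(B/\sigma)]$. Because $\eta_q(\cdot;\chi)$ is odd (Lemma \ref{lem:toobasicpropprox}(iv)), $h$ is an even function, hence $R_q(\chi,\sigma) = \mathbb{E}_B[h(|B|/\sigma)]$. Since $|B|/\sigma$ is a nonincreasing function of $\sigma$ for every fixed realization of $B$, it suffices to show that $h$ is nondecreasing on $[0,\infty)$; monotonicity is then preserved under composition and expectation, and there is no need to differentiate $R_q$ in $\sigma$ directly.

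To analyze $h$, I would first rewrite the integrand using the shrinkage identity. Setting $W = \alpha + Z$ and $\rho(W) \triangleq \eta_q(W;\chi) - W$, Lemma \ref{lem:toobasicpropprox}(i) gives $\rho(W) = -\chi q\,{\rm sign}(W)|\eta_q(W;\chi)|^{q-1}$, and $\eta_q(\alpha+Z;\chi) - \alpha = \rho(\alpha+Z) + Z$. Expanding the square and applying Stein's lemma (Lemma \ref{lem:steins}) to the cross term, $\mathbb{E}[Z\rho(\alpha+Z)] = \mathbb{E}[\rho'(\alpha+Z)]$ with $\rho'(W) = \partial_1\eta_q(W;\chi) - 1$, yields the clean representation $h(\alpha) = 1 + \mathbb{E}[g(\alpha+Z)]$, where $g(W) \triangleq \rho(W)^2 + 2\rho'(W)$ does not depend on $\alpha$. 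The required integrability and the legitimacy of Stein's lemma follow from $|\rho(W)| \le 2|W|$ (Lemma \ref{lem:toobasicpropprox}(ii)) and the boundedness $0 \le \partial_1\eta_q \le 1$ of Lemma \ref{lem:toobasicpropproxder}(iii); for $q=1$ the derivative $\rho'$ is read in the weak sense.

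The key structural step is to show that $g$ is even and nondecreasing in $|W|$. Evenness is immediate from oddness of $\eta_q$. For monotonicity I would treat each summand separately: since $\partial_1\eta_q \ge 0$, $\eta_q$ is nondecreasing, so $|\eta_q(W;\chi)|$ is nondecreasing in $|W|$; then $\rho(W)^2 = \chi^2 q^2 |\eta_q(W;\chi)|^{2(q-1)}$ is nondecreasing in $|W|$ because $2(q-1)\ge 0$, and, using the formula $\partial_1\eta_q = (1 + \chi q(q-1)|\eta_q|^{q-2})^{-1}$ from Lemma \ref{lem:toobasicpropproxder}(i), the exponent $q-2 \le 0$ makes $\partial_1\eta_q$ nondecreasing in $|\eta_q|$ and hence in $|W|$, so $\rho'$ is nondecreasing in $|W|$ as well. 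The boundary cases $q=1$ (where $\partial_1\eta_1 = \mathbb{I}(|W|>\chi)$ and $g$ jumps upward at $|W|=\chi$) and $q=2$ (where $\partial_1\eta_2$ is constant) are verified directly and fit the same pattern.

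Finally I would conclude by differentiating $h(\alpha) = 1 + \int g(w)\phi(w-\alpha)\,dw$ under the integral, moving the derivative onto the Gaussian density so that no derivative of the possibly nonsmooth $g$ is needed: using $\tfrac{\partial}{\partial\alpha}\phi(w-\alpha) = (w-\alpha)\phi(w-\alpha)$ and substituting $z = w-\alpha$ gives $h'(\alpha) = \mathbb{E}[Z\,g(\alpha+Z)] = \int_0^\infty z\,[g(\alpha+z) - g(\alpha-z)]\,\phi(z)\,dz$. For $\alpha \ge 0$ and $z \ge 0$ one has $|\alpha+z| \ge |\alpha-z|$, so each bracket is nonnegative because $g$ is nondecreasing in $|\cdot|$, whence $h'(\alpha) \ge 0$. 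This proves $h$ is nondecreasing on $[0,\infty)$ and closes the argument. I expect the main obstacle to be the monotonicity of $g$ in $|W|$ uniformly over $q \in [1,2]$: the combination $\rho^2 + 2\rho'$ mixes a growing increasing term with a bounded term, and the nonsmoothness of $\eta_q$ at the origin for $q \ge 3/2$ (Lemma \ref{lem:continuitysecondder}) forces this to be argued through the monotonicity of $|\eta_q|$ rather than through second derivatives.
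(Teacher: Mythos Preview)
Your argument is correct and is organized more cleanly than the paper's. The paper computes $\partial R_q/\partial\sigma$ directly, conditions on $B=b$, and then splits into four cases ($q=1$, $q=2$, $1<q\le 3/2$, $3/2<q<2$): for the middle range it applies Stein's lemma to $\partial_1\eta_q$ to bring in $\partial_1^2\eta_q$, while for $3/2<q<2$ the second derivative fails to exist at the origin and a separate symmetrization on $\partial_1\eta_q$ is used instead. Your reduction $R_q(\chi,\sigma)=\mathbb{E}_B[h(|B|/\sigma)]$ followed by the representation $h(\alpha)=1+\mathbb{E}[g(\alpha+Z)]$ replaces all of this by a single step: Stein's lemma is applied once to the Lipschitz function $\rho=\eta_q-\mathrm{id}$ (never to $\partial_1\eta_q$), and the monotonicity of $g=\rho^2+2\rho'$ in $|W|$ follows from the monotonicity of $|\eta_q|$ and of $\partial_1\eta_q$ in $|W|$, which holds uniformly for $q\in[1,2]$ without touching second derivatives. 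The final pairing $\int_0^\infty z[g(\alpha+z)-g(\alpha-z)]\phi(z)\,dz\ge 0$ is exactly the symmetrization the paper uses for $3/2<q<2$, now promoted to cover all $q$. What the paper's case-by-case approach buys is explicit derivative formulas for $q=1,2$ that are used elsewhere; what your route buys is a unified argument with no smoothness case split. One small remark: as stated you obtain $h'\ge 0$ and hence $R_q$ nonincreasing; the paper records (and later uses) strict monotonicity, which your argument also yields once you note that $g$ is strictly increasing on some interval and that $|\alpha+z|>|\alpha-z|$ strictly for $\alpha,z>0$, provided $\mathbb{P}(B\neq 0)>0$.
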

 \begin{proof}
We consider four different cases: (i) $q=1$, (ii) $q=2$, (iii) $1<q \leq 3/2$, (iv) $3/2<q<2$. 
\begin{itemize}
\item[(i)] $q=1$: Since $R_1(\chi, \sigma)$ is a differentiable function of $\sigma$, we will prove this case by showing that $\frac{\partial R_1(\chi, \sigma)}{\partial  \sigma} <0$. We have
\begin{eqnarray}
\frac{\partial R_1(\chi,\sigma)}{\partial  \sigma} &=& -\frac{2}{\sigma^2} \mathbb{E} \left[ B(\mathbb{I} (|B/ \sigma +Z|> \chi)-1)(\eta_1(B/ \sigma +Z; \chi) - B/\sigma)  \right] \nonumber \\ &=& - \frac{2}{\sigma^2} \mathbb{E} \left[ \mathbb{I} (|B/ \sigma +Z|\leq  \chi)(B^2/\sigma)  \right]<0. \nonumber
\end{eqnarray}
The first equality above is due to Dominated Convergence Theorem.

\item[(ii)] $q=2$: Since $\eta_2(u; \chi) = \frac{u}{1+2\chi}$, we have
\begin{eqnarray}
\frac{\partial R_2(\chi, \sigma) }{\partial \sigma} =  - \frac{8\chi^2 \mathbb{E}|B|^2}{(1+2\chi)^2 \sigma^3}<0. \nonumber 
\end{eqnarray}

\item[(iii)] $1<q \leq 3/2$: The strategy for this case is similar to that of the last two cases. We show that the derivative $\frac{\partial R_q(\chi,\sigma)}{\partial \sigma}<0$.
\begin{eqnarray}
 \hspace{-0.4cm}\frac{ \partial R_q(\chi, \sigma)}{\partial \sigma} &\overset{(a)}{=}& 2\mathbb{E} \left[ (\eta_q(B/ \sigma +Z; \chi) - B/\sigma )(\partial_1\eta_q(B/ \sigma +Z; \chi) - 1 ) (-B/\sigma^2) \right] \nonumber \\
&&\hspace{-1.7cm} =  2\mathbb{E} \left[ (\eta_q(B/ \sigma +Z; \chi) - B/\sigma-Z )(\partial_1\eta_q(B/ \sigma +Z; \chi) - 1 ) (-B/\sigma^2) \right] \nonumber \\
&&    + 2 \mathbb{E} \left[ Z(\partial_1\eta_q(B/ \sigma +Z; \chi) - 1 ) (-B/\sigma^2) \right].  \label{eq:uniquefp1} 
\end{eqnarray}
 To obtain Equality (a), we have used Dominated Convergence Theorem (DCT); We employed Lemma \ref{lem:toobasicpropprox} part (vi) to confirm the conditions of DCT. Our goal is to show that the two terms in \eqref{eq:uniquefp1} are both negative. Regarding the first term,  we first evaluate it by conditioning on $B=b$ for a given constant $b>0$ (note that $B$ and $Z$ are independent):
 \begin{eqnarray}
 \lefteqn{ \mathbb{E}_Z \left[ (\eta_q(b/ \sigma +Z; \chi) - b/\sigma-Z )(\partial_1\eta_q(b/ \sigma +Z; \chi) - 1 )  \right]} \nonumber \\
  & \overset{(b)}{=}& \int_{-\infty}^{+\infty} (\eta_q(z; \chi) -z ) (\partial_1 \eta_q (z;\chi)-1) \phi(z -b/\sigma)dz  \nonumber \\
  &=&  \int_0^\infty (\eta_q(z; \chi) -z ) (\partial_1 \eta_q (z;\chi)-1) \phi(z -b/\sigma)dz +\nonumber \\
 &&  \int_{-\infty}^0 (\eta_q(z; \chi) -z ) (\partial_1 \eta_q (z;\chi)-1) \phi(z -b/\sigma)dz \nonumber \\
  &\overset{(c)}{=} &\hspace{-0.2cm} \int_0^\infty (\eta_q(z; \chi) -z ) (\partial_1 \eta_q (z;\chi)-1) (\phi(z -b/\sigma)- \phi (z + b/ \sigma))dz\overset{(d)}{>}0, \nonumber 
 \end{eqnarray}
where $\phi(\cdot)$ is the density function of standard normal; $(b)$ is obtained by a change of variables; $(c)$ is due to the fact $\partial_1 \eta_q(-z; \chi) = \partial_1 \eta_q(z; \chi)$ implied by Lemma \ref{lem:toobasicpropprox} part (iv); $(d)$ is based on the following arguments: According to Lemmas \ref{lem:toobasicpropprox} part (ii) and Lemma \ref{lem:toobasicpropproxder} part (iii), $\eta_q(z; \chi)<z$ and $\partial_1\eta_q(z; \chi) <1$ for $z>0$. Moreover, $\phi(z -b/\sigma)- \phi (z + b/ \sigma)>0$ for $z, b/\sigma>0$. Hence we have
\[
 \mathbb{E}_Z \left[ (\eta_q(b/ \sigma +Z; \chi) - b/\sigma-Z )(\partial_1\eta_q(b/ \sigma +Z; \chi) - 1 )(-b/\sigma^2)  \right] <0.
 \]
 Similarly we can show the above inequality holds for $b<0$. It is clear that the term on the left hand side equals zero when $b=0$. Thus we have proved the first term in \eqref{eq:uniquefp1} is negative. Now we should discuss the second term. Again we condition on $B=b$ for a given $b>0$:
  \begin{eqnarray}\label{eq:lasttermpsotivity}
  \mathbb{E}_Z \left[ Z(\partial_1\eta_q(b/ \sigma +Z; \chi) - 1 ) \right] \overset{(e)}{=} \mathbb{E} (\partial_1^2\eta_q(b/ \sigma +Z; \chi) ) \nonumber \\
 = \int_0^\infty  [\partial_1^2\eta_q(z; \chi)  (\phi(z- b/\sigma) - \phi (z+ b/\sigma))]dz>0.
  \end{eqnarray}
  Equality (e) is the result of Stein's lemma, i.e. Lemma \ref{lem:steins}. Note that the weak differentiability condition required in Stein's lemma is guaranteed by Lemma \ref{lem:continuitysecondder}. To obtain the last inequality, we have used Lemma \ref{lem:toobasicpropproxder} part (iv) and the fact that $\phi(z- b/\sigma) - \phi (z+ b/\sigma)>0$ for $z, b/\sigma>0$. Hence we obtain that
  \[
 \mathbb{E}_Z \left[ Z(\partial_1\eta_q(b/ \sigma +Z; \tau) - 1 )(-b/\sigma^2)  \right] <0.
   \]
The same approach would work for $b<0$, and clearly the left hand side term of the above inequality equals zero for $b=0$. We can therefore conclude the second term in \eqref{eq:uniquefp1} is negative as well.
   
\item[(iv)] $3/2 <q< 2$: The proof of this case is similar to the last one. The only difference is that the proof steps we presented in \eqref{eq:lasttermpsotivity} may not work, due to the non-differentiability of $\partial_1\eta_q(u; \chi)$ for $q>3/2$ as shown in Lemma \ref{lem:continuitysecondder}. Our goal here is to use an alternative approach to prove: $\mathbb{E}_Z\left[ Z(\partial_1\eta_q(b/ \sigma +Z; \chi) - 1 ) \right] >0$ for $b>0$. We have
   \begin{eqnarray}
&&  \mathbb{E}_Z \left[ Z(\partial_1\eta_q(b/ \sigma +Z; \chi) - 1 ) \right]  = \int_{-\infty}^{\infty} z (\partial_1 \eta_q(b/ \sigma +z; \chi) - 1 ) \phi(z) dz \nonumber \\
  &&\hspace{0.5cm}=  \int_{0}^{\infty} z( \partial_1 \eta_q(b/ \sigma +z; \chi) -  \partial_1 \eta_q(b/ \sigma -z; \chi)) \phi(z) dz \nonumber \\
  &&\hspace{0.5cm}=  \int_{0}^{\infty} z( \partial_1 \eta_q(|b/ \sigma +z|; \chi) -  \partial_1 \eta_q(|b/ \sigma -z|; \chi)) \phi(z) dz,  \label{newrevision:mono1}
   \end{eqnarray}
 where the last equality is due to the fact $\partial_1 \eta_q(u;\chi)=\partial_1 \eta_q(|u|;\chi)$ for any $u\in\mathbb{R}$. Since $|b/\sigma -z|< |b/\sigma+z|$ for $z, b/\sigma>0$ and according to Lemma \ref{lem:toobasicpropproxder} part (iv),  we obtain 
 \begin{eqnarray}
 \partial_1 \eta_q(|b/ \sigma +z|; \chi) -  \partial_1 \eta_q(|b/ \sigma -z|; \chi)>0.  \label{newrevision:mono2}
 \end{eqnarray}
 Combining \eqref{newrevision:mono1} and \eqref{newrevision:mono2} completes the proof.
 \end{itemize}
 \end{proof}
  
  Lemma \ref{lem:posfirstlemma} paves our way in the study of the solution of \eqref{eq:fixedpoint11}. Define
  \begin{eqnarray} \label{def:chimin}
  \chi_{\min}= \inf \big \{\chi \geq 0: \frac{1}{\delta}\mathbb{E}(\eta^2_q(Z;\chi)) \leq 1  \big \},
  \end{eqnarray}
 where $Z \sim N(0,1)$. The following corollary is a conclusion from Lemma \ref{lem:posfirstlemma}. 
  
  \begin{corollary}\label{cor:fixedpointeqanalysis}
  For a given $1\leq q \leq 2$, Equation \eqref{eq:fixedpoint11}:
  \[
  \sigma^2 = \sigma_{\omega}^2+\frac{1}{\delta} \mathbb{E}_{B, Z} [(\eta_q(B +\sigma Z; \chi \sigma^{2-q}) -B)^2], \quad \sigma_w>0
 \]
  has a unique solution $\sigma=\sigma_{\chi}$ for any $\chi \in (\chi_{\min},\infty)$, and does not have any solution if $\chi \in (0, \chi_{\min})$.
  \end{corollary}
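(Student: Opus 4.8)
The plan is to reduce the fixed-point equation to a monotone scalar equation by exploiting the scaling identity in Lemma \ref{lem:toobasicpropprox}(v). Since $\eta_q(B + \sigma Z;\chi\sigma^{2-q}) = \sigma\,\eta_q(B/\sigma + Z;\chi)$, the right-hand side of \eqref{eq:fixedpoint11} equals $\sigma_w^2 + \frac{\sigma^2}{\delta}R_q(\chi,\sigma)$ with $R_q$ as in \eqref{keyquantity:def}, so after dividing by $\sigma^2$ the equation becomes
\[
\Psi(\sigma) := \frac{\sigma_w^2}{\sigma^2} + \frac{1}{\delta}R_q(\chi,\sigma) = 1.
\]
First I would observe that $\Psi$ is strictly decreasing on $(0,\infty)$: the term $\sigma_w^2/\sigma^2$ is strictly decreasing because $\sigma_w>0$, and $\frac{1}{\delta}R_q(\chi,\sigma)$ is (non-strictly) decreasing by Lemma \ref{lem:posfirstlemma}. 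Strict monotonicity of $\Psi$ immediately forces uniqueness of any solution, so the whole question reduces to an existence-of-a-root calculation governed by the two endpoint limits of $\Psi$.

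Next I would compute those limits. As $\sigma\to 0^+$ we have $\sigma_w^2/\sigma^2\to+\infty$ while $R_q\ge 0$, hence $\Psi(\sigma)\to+\infty$; no delicate analysis of $R_q$ near zero is needed. As $\sigma\to\infty$, $B/\sigma\to 0$ almost surely and continuity of $\eta_q$ gives the pointwise limit $\eta_q(B/\sigma+Z;\chi)-B/\sigma\to\eta_q(Z;\chi)$; I would justify passing to the limit inside the expectation by dominated convergence, using the $1$-Lipschitz bound of Lemma \ref{lem:toobasicpropprox}(vi) to dominate the integrand by $(2|B|+|Z|)^2$ for $\sigma\ge 1$, which is integrable since $\mathbb{E}|B|^2<\infty$. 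This yields $\lim_{\sigma\to\infty}\Psi(\sigma)=\frac{1}{\delta}\mathbb{E}(\eta_q^2(Z;\chi))$. By continuity and strict monotonicity of $\Psi$, the equation $\Psi(\sigma)=1$ has a unique solution $\sigma_\chi$ precisely when $\frac{1}{\delta}\mathbb{E}(\eta_q^2(Z;\chi))<1$, and no solution when $\frac{1}{\delta}\mathbb{E}(\eta_q^2(Z;\chi))>1$.

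Finally I would translate this threshold into the stated condition on $\chi$. Since $|\eta_q(Z;\chi)|$ is decreasing in $\chi$ by Lemma \ref{lem:toobasicpropproxder}(v)---and strictly so, because $Z$ has full support while $\partial_2\eta_q(u;\chi)\neq 0$ for $u\neq 0$ by Lemma \ref{lem:toobasicpropproxder}(ii)---the map $\chi\mapsto\frac{1}{\delta}\mathbb{E}(\eta_q^2(Z;\chi))$ is continuous and strictly decreasing from $1/\delta$ at $\chi=0$ down to $0$ as $\chi\to\infty$. By the definition \eqref{def:chimin} of $\chi_{\min}$ this gives $\frac{1}{\delta}\mathbb{E}(\eta_q^2(Z;\chi))<1$ for every $\chi>\chi_{\min}$ and $\frac{1}{\delta}\mathbb{E}(\eta_q^2(Z;\chi))>1$ for $\chi\in(0,\chi_{\min})$, which combined with the previous paragraph is exactly the claim. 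The only step requiring genuine care is the dominated-convergence justification of the $\sigma\to\infty$ limit; everything else is a direct consequence of the monotonicity already supplied by Lemma \ref{lem:posfirstlemma}. I would note that the reformulation $\Psi(\sigma)=1$, rather than working with the difference $\sigma^2-\frac{1}{\delta}\mathbb{E}[(\eta_q(B+\sigma Z;\chi\sigma^{2-q})-B)^2]$ directly, is what makes the monotonicity transparent, since that difference need not be monotone in $\sigma$.
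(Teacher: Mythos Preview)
Your proposal is correct and follows essentially the same route as the paper: rewrite the fixed-point equation as $\frac{\sigma_w^2}{\sigma^2}+\frac{1}{\delta}R_q(\chi,\sigma)=1$, use Lemma~\ref{lem:posfirstlemma} for monotonicity, compute the two endpoint limits, and then translate the resulting threshold into the condition on $\chi$ via the strict monotonicity of $\chi\mapsto\mathbb{E}\eta_q^2(Z;\chi)$. Your treatment is in fact slightly more explicit than the paper's in justifying the $\sigma\to\infty$ limit by dominated convergence and in arguing strict monotonicity in $\chi$.
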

  \begin{proof}
  First note that since $\sigma_w>0$, $\sigma=0$ is not a solution of \eqref{eq:fixedpoint11}. Hence we can equivalently write Equation \eqref{eq:fixedpoint11} in the following form: 
\begin{eqnarray} \label{eq:fix:point:equation}
1= \frac{\sigma_w^2}{ \sigma^2} + \frac{1}{\delta}R_q(\chi, \sigma) \triangleq F(\sigma, \chi).
\end{eqnarray}
According to Lemma \ref{lem:posfirstlemma}, $F(\sigma,\chi)$ is a strictly decreasing function of $\sigma$ over $(0,\infty)$. We also know that $F(\sigma, \chi)$ is a continuous function of $\sigma$ from the proof of Lemma \ref{lem:posfirstlemma}. Moreover, it is straightforward to confirm that
   \begin{eqnarray}\label{eq:cond0andinf}
\lim_{\sigma \rightarrow 0} F(\sigma, \chi)= \infty,  \quad \lim_{\sigma \rightarrow \infty} F(\sigma,\chi)= \frac{1}{\delta} \mathbb{E} (\eta^2_q(Z; \chi)).
   \end{eqnarray}
Thus Equation \eqref{eq:fixedpoint11} has a solution (the uniqueness is automatically guaranteed by the monotonicity of $F(\sigma,\chi)$) if and only if $ \frac{1}{\delta} \mathbb{E} (\eta^2_q(Z; \chi))<1$. Recall the definition of $\chi_{\min}$ given in \eqref{def:chimin}. Since $ \mathbb{E} (\eta^2_q(Z; \chi))$ is a strictly decreasing and continuous function of $\chi$, $\frac{1}{\delta} \mathbb{E} (\eta^2_q(Z; \chi))<1$ holds if $\chi \in (\chi_{\min},\infty)$ and fails when $\chi \in (0,\chi_{\min})$.
\end{proof}

\subsubsection{Studying the common solution of \eqref{eq:fixedpoint11} and \eqref{eq:fixedpoint21}} \label{revision:lemma2:unique2}

Corollary \ref{cor:fixedpointeqanalysis} from Section \ref{ssec:uniquenessfirsteqfp} characterizes the existence and uniqueness of solution for Equation \eqref{eq:fixedpoint11}. Our next goal is to prove that \eqref{eq:fixedpoint11} and \eqref{eq:fixedpoint21} share at least one common solution. Our strategy is: among all the pairs $(\sigma_{\chi}, \chi)$ that satisfy \eqref{eq:fixedpoint11}, we show that at least one of them satisfies \eqref{eq:fixedpoint21}. We do this in the next few lemmas.

\begin{lemma}\label{lem:limitoflambdas}
Let $\delta<1$. For each value of $\chi \in (\chi_{min}, \infty)$, define $\sigma_\chi$ as the value of $\sigma$ that satisfies \eqref{eq:fixedpoint11}. Then,
\begin{eqnarray}
\lim_{\chi \rightarrow \infty} \chi \sigma_\chi^{2-q} \left(1- \frac{1}{\delta} \mathbb{E}[\partial_1 \eta_q (B +  \sigma_\chi Z; \chi\sigma_\chi^{2-q})] \right)  &=& \infty, \label{delta:smaller1:one} \\
\lim_{\chi \rightarrow \chi^+_{\min} } \chi \sigma_\chi^{2-q} \left(1- \frac{1}{\delta} \mathbb{E}[\partial_1 \eta_q (B +  \sigma_\chi Z; \chi\sigma_\chi^{2-q})] \right)  &=& -\infty.  \nonumber 
\end{eqnarray}
\end{lemma}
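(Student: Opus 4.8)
The quantity in question is precisely the right-hand side of the tuning equation \eqref{eq:fixedpoint21} evaluated along the curve $(\sigma_\chi,\chi)$; call it $\Lambda(\chi)$. So the plan is to track $\Lambda$ as $\chi$ sweeps across $(\chi_{\min},\infty)$, and the whole argument reduces to understanding the endpoint behavior of $\sigma_\chi$. A preliminary simplification: by the scaling identity of Lemma \ref{lem:toobasicpropprox}(v), differentiating $\eta_q(\sigma v;\sigma^{2-q}\chi)=\sigma\eta_q(v;\chi)$ in $v$ gives $\partial_1\eta_q(B+\sigma_\chi Z;\chi\sigma_\chi^{2-q})=\partial_1\eta_q(B/\sigma_\chi+Z;\chi)$, so that
\[
\Lambda(\chi)=\chi\,\sigma_\chi^{2-q}\Big(1-\tfrac1\delta\,\mathbb{E}[\partial_1\eta_q(B/\sigma_\chi+Z;\chi)]\Big).
\]
First I would establish the two limiting behaviors of $\sigma_\chi$: (a) $\sigma_\chi\to\sqrt{\sigma_w^2+\mathbb{E}[B^2]/\delta}$ as $\chi\to\infty$, and (b) $\sigma_\chi\to\infty$ as $\chi\to\chi_{\min}^+$.

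For (a), I would pass to the limit in the fixed-point relation \eqref{eq:fix:point:equation}, $F(\sigma_\chi,\chi)=1$. Since $\eta_q(u;\chi)\to 0$ as $\chi\to\infty$ (Lemma \ref{lem:toobasicpropprox}(iii)) and $\sigma_\chi$ stays in a compact subset of $(0,\infty)$ (the lower bound $\sigma_\chi\ge\sigma_w$ is immediate from \eqref{eq:fixedpoint11}, and boundedness above follows routinely from the monotonicity in Lemma \ref{lem:posfirstlemma} and the relation \eqref{eq:fix:point:equation}), the integrand $(\eta_q(B+\sigma_\chi Z;\cdot)-B)^2\to B^2$, forcing $\sigma_\chi^2\to\sigma_w^2+\mathbb{E}[B^2]/\delta$. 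Then in $\Lambda(\chi)$ the threshold drives $\mathbb{E}[\partial_1\eta_q]\to 0$ by dominated convergence (Lemma \ref{lem:toobasicpropproxder}(iii),(vi)), so the bracket tends to $1$, while $\sigma_\chi^{2-q}$ tends to a positive constant; hence $\Lambda(\chi)\to+\infty$, which is the first claim.

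For (b), the hypothesis $\delta<1$ enters decisively: at $\chi=0$ we have $\tfrac1\delta\mathbb{E}[\eta_q^2(Z;0)]=1/\delta>1$, so $\chi_{\min}>0$ and, by continuity and strict monotonicity of $\chi\mapsto\mathbb{E}[\eta_q^2(Z;\chi)]$, $\mathbb{E}[\eta_q^2(Z;\chi_{\min})]=\delta$. Using \eqref{eq:cond0andinf}, $\lim_{\sigma\to\infty}F(\sigma,\chi_{\min})=1$ with $F$ strictly decreasing in $\sigma$, so $F(M,\chi_{\min})>1$ for every finite $M$; continuity of $F$ in $\chi$ then yields $F(M,\chi)>1$, hence $\sigma_\chi>M$, for $\chi$ near $\chi_{\min}$, proving $\sigma_\chi\to\infty$. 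Consequently $B/\sigma_\chi\to 0$, so by dominated convergence $\mathbb{E}[\partial_1\eta_q(B/\sigma_\chi+Z;\chi)]\to\mathbb{E}[\partial_1\eta_q(Z;\chi_{\min})]$. The crux is to show this exceeds $\delta$: by Stein's lemma (Lemma \ref{lem:steins}) together with the identity $Z-\eta_q(Z;\chi)=\chi q\,\mathrm{sign}(Z)|\eta_q(Z;\chi)|^{q-1}$ from Lemma \ref{lem:toobasicpropprox}(i),
\[
\mathbb{E}[\partial_1\eta_q(Z;\chi)]=\mathbb{E}[Z\eta_q(Z;\chi)]=\mathbb{E}[\eta_q^2(Z;\chi)]+\chi q\,\mathbb{E}|\eta_q(Z;\chi)|^q,
\]
so at $\chi_{\min}>0$ the right-hand side equals $\delta+\chi_{\min}q\,\mathbb{E}|\eta_q(Z;\chi_{\min})|^q>\delta$. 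Thus the bracket $1-\tfrac1\delta\mathbb{E}[\partial_1\eta_q(Z;\chi_{\min})]$ is a strictly negative constant, while $\chi\sigma_\chi^{2-q}\to+\infty$ (here $q<2$ is used so that $\sigma_\chi^{2-q}\to\infty$); hence $\Lambda(\chi)\to-\infty$.

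The main obstacle is the rigorous control of $\sigma_\chi$ at the endpoints, and above all the blow-up $\sigma_\chi\to\infty$ as $\chi\to\chi_{\min}^+$, which is exactly where $\delta<1$ is indispensable (for $\delta\ge1$ one has $\chi_{\min}=0$ and the analysis differs), together with the dominated-convergence interchanges; by contrast the sign-determining inequality $\mathbb{E}[\partial_1\eta_q(Z;\chi_{\min})]>\delta$, though central, falls out cleanly from Stein's lemma and the fixed-point identity for $\eta_q$. For $q=2$ the factor $\sigma_\chi^{2-q}$ is constant, so the $\chi\to\chi_{\min}^+$ limit becomes a finite negative number; this degenerate case is handled directly from the closed form $\eta_2(u;\chi)=u/(1+2\chi)$.
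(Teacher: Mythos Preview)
Your proposal is correct and follows essentially the same route as the paper's proof: establish the endpoint behavior of $\sigma_\chi$ (bounded and convergent as $\chi\to\infty$; diverging as $\chi\to\chi_{\min}^+$ via the monotonicity of $F$ in $\sigma$ and the identity $\tfrac1\delta\mathbb{E}\eta_q^2(Z;\chi_{\min})=1$), then pass to the limit in the bracket using dominated convergence and compute $\mathbb{E}[\partial_1\eta_q(Z;\chi_{\min})]=\delta+\chi_{\min}q\,\mathbb{E}|\eta_q(Z;\chi_{\min})|^q$ via Stein's lemma and Lemma~\ref{lem:toobasicpropprox}(i). The only cosmetic difference is that you apply the scaling identity to $\partial_1\eta_q$ directly, whereas the paper first invokes Stein to rewrite $\mathbb{E}\partial_1\eta_q$ as $\sigma_\chi^{-1}\mathbb{E}[Z\eta_q(\cdot)]$ and then rescales; your observation about the degenerate $q=2$ case (finite negative limit rather than $-\infty$) is a point the paper's write-up glosses over but which, as you note, does not affect the downstream intermediate-value argument.
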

\begin{proof}
We first show that 
\begin{eqnarray}\label{common:one}
\lim_{\chi\rightarrow \infty} \sigma^2_{\chi}= \sigma_w^2+\frac{\mathbb{E}|B|^2}{\delta}, \quad \lim_{\chi \rightarrow \chi^+_{\min}} \sigma_{\chi}= \infty.
\end{eqnarray}

For the first part, we only need to show $\sigma^2_{\chi_k}\rightarrow \sigma_w^2+\frac{\mathbb{E}|B|^2}{\delta}$ for any sequence $\chi_k \rightarrow \infty$. For that purpose, we first prove $\sigma_{\chi}=O(1)$. Otherwise, there exists a sequence $\chi_n \rightarrow \infty$ such that $\sigma_{\chi_n}\rightarrow \infty$. Because 
\[
(\eta_q(B/\sigma_{\chi_n}+Z;\chi_n)-B/\sigma_{\chi_n})^2 \leq 2(B/\sigma_{\chi_n}+Z)^2+2B^2/\sigma^2_{\chi_n}\leq 6B^2+4Z^2
\]
 for large enough $n$, we can apply Dominated Convergence Theorem (DCT) to conclude 
 \[
 \lim_{n\rightarrow \infty} R_q(\chi_n, \sigma_{\chi_n})=\mathbb{E}\lim_{n\rightarrow \infty} [\eta_q(B/ \sigma_{\chi_{n}} +Z; \chi_n) - B/\sigma_{\chi_n}]^2=0.
 \]
 On the other hand, since the pair $(\sigma_{\chi_{n}},\chi_n)$ satisfies \eqref{eq:fixedpoint11} we obtain
 \[
 \lim_{n\rightarrow \infty} R_q(\chi_n, \sigma_{\chi_n})= \lim_{n\rightarrow \infty} \delta\big (1-\frac{\sigma_w^2}{\sigma^2_{\chi_n}}\big)=\delta.
 \]
This is a contradiction. We next consider any convergent subsequence $\{\sigma_{\chi_{k_n}}\}$ of $\{\sigma_{\chi_k}\}$. The facts $\sigma_{x_k}\geq \sigma_w$ and $\sigma_{\chi_k}=O(1)$ imply $\sigma_{\chi_{k_n}} \rightarrow \sigma^* \in (0,\infty)$. Moreover, since
\[
(\eta_q(B+\sigma_{\chi_{k_n}} Z; \chi_{k_n} \sigma^{2-q}_{\chi_{k_n}})-B)^2  \leq  6B^2+5(\sigma^*)^2Z^2,
\]
when $n$ is large enough. We can apply DCT to obtain, 
\[
(\sigma^*)^2 =\lim_{n\rightarrow \infty} \sigma^2_{\chi_{k_n}}=\sigma_w^2 +\frac{1}{\delta}\mathbb{E}\lim_{n\rightarrow \infty} (\eta_q(B+\sigma_{\chi_{k_n}} Z; \chi_{k_n}\sigma_{\chi_{k_n}}^{2-q})-B)^2=\sigma_w^2+\frac{\mathbb{E}B^2}{\delta}.
\]
Thus, we have showed any convergent subsequence of $\{\sigma^2_{\chi_k}\}$ converges to the same limit $\sigma_w^2+\frac{\mathbb{E}B^2}{\delta}$. Hence the sequence converges to that limit as well. 

Regarding the second part in \eqref{common:one}, if it is not the case, then there exists a sequence $\chi_{n}\rightarrow \chi^+_{\min}$ such that $\sigma_{\chi_n}=O(1)$. Equation \eqref{eq:fix:point:equation} shows,
\[
1= \frac{\sigma_w^2}{ \sigma^2_{\chi_n}} + \frac{1}{\delta}R_q(\chi_n, \sigma_{\chi_n}) \overset{(a)}{\geq} \frac{\sigma_w^2}{ \sigma^2_{\chi_n}}+ \frac{1}{\delta}R_q(\chi_n, \infty)= \frac{\sigma_w^2}{ \sigma^2_{\chi_n}}+\frac{1}{\delta}\mathbb{E}(\eta_q^2(Z;\chi_n)),
\] 
where $(a)$ is due to Lemma \ref{lem:posfirstlemma}. From the definition of $\chi_{\min}$ in \eqref{def:chimin}, it is clear that $\frac{1}{\delta}\mathbb{E}(\eta_q^2(Z;\chi_{\min}))=1$ when $\delta <1$. Hence letting $n \rightarrow \infty$ on the both sides of the above inequaitiy leads to $1\geq \Omega(1)+1$, which is a contradiction. 

We are in position to derive the two limiting results in \eqref{delta:smaller1:one}. To obtain the first one, note that $\sigma^2_{\chi}\rightarrow \sigma_w^2+\frac{\mathbb{E}|B|^2}{\delta}$, as $\chi \rightarrow \infty$. Therefore, Lemma \ref{lem:toobasicpropproxder} part (vi) combined with DCT gives us 
\[
\lim_{\chi \rightarrow \infty} \mathbb{E}\partial_1 \eta_q (B +  \sigma_{\chi} Z; \chi\sigma_{\chi}^{2-q}) = 0.
\]
The first result of \eqref{delta:smaller1:one} can then be trivially derived. Regarding the second result, we have showed that as $\chi \rightarrow \chi^+_{\min}$, $\sigma_\chi \rightarrow \infty$. We also have
\begin{eqnarray*}
&&\mathbb{E}\partial_1 \eta_q (B +  \sigma_\chi Z; \chi\sigma_\chi^{2-q}) \overset{(b)}{=} \frac{1}{\sigma_\chi} \mathbb{E}  (Z \eta_q (B +  \sigma_\chi Z; \chi\sigma_\chi^{2-q}))\\
&& \hspace{0.8cm} \overset{(c)}{=} \mathbb{E}  (Z \eta_q (B/ \sigma_\chi +  Z; \chi)),
\end{eqnarray*}
where $(b)$ holds by Lemma \ref{lem:steins} and $(c)$ is due to Lemma \ref{lem:toobasicpropprox} part (v). Hence
\begin{eqnarray}\label{eq:lambdaspan1}
&& \lim_{\chi \rightarrow \chi^+_{\min} } \mathbb{E}\partial_1 \eta_q (B +  \sigma_\chi Z; \chi\sigma_\chi^{2-q}) \nonumber \\
&=& \lim_{\chi \rightarrow \chi^+_{\min} } \mathbb{E}  (Z \eta_q (B/ \sigma_\chi +  Z; \chi)) = \mathbb{E}  (Z \eta_q ( Z; \chi_{\min}))\nonumber \\ 
&\overset{(d)}{=}& \mathbb{E}  ( \eta_q^2 ( Z; \chi_{\min})) + \chi_{\min}q \mathbb{E}  ( |\eta_q ( Z; \chi_{\min})|^q) \nonumber \\
&=& \delta + \chi_{\min}q \mathbb{E}  ( |\eta_q ( Z; \chi_{\min})|^q), \nonumber
\end{eqnarray}
where $(d)$ is the result of Lemma \ref{lem:toobasicpropprox} part (i). We thus obtain
 \begin{equation}\label{eq:lambdaspan3}
 \hspace{-0.5cm} \lim_{\chi \rightarrow \chi^+_{min} }  \left(1- \frac{1}{\delta} \mathbb{E}\partial_1 \eta_q (B +  \sigma_\chi Z; \chi\sigma_\chi^{2-q}) \right) = -\frac{1}{\delta}\chi_{\min} q \mathbb{E}   |\eta_q (Z; \chi_{\min})|^q. 
 \end{equation}
Combining \eqref{eq:lambdaspan3} and the fact that $\chi_{\min}>0, \sigma_\chi \rightarrow \infty$ finishes the proof. 
\end{proof}

\begin{lemma} \label{lem:limitoflambdascopy}
Let $\delta \geq 1$. For each value of $\chi \in (\chi_{\min}, \infty)$, define $\sigma_\chi$ as the value of $\sigma$ that satisfies \eqref{eq:fixedpoint11}. Then,
\begin{eqnarray}
\lim_{\chi \rightarrow \infty} \chi \sigma_\chi^{2-q} \left(1- \frac{1}{\delta} \mathbb{E}[\partial_1 \eta_q (B +  \sigma_\chi Z; \chi\sigma_\chi^{2-q})] \right)  &=& \infty, \label{delta:large1:one} \\
\lim_{\chi \rightarrow \chi^+_{\min} } \chi \sigma_\chi^{2-q} \left(1- \frac{1}{\delta} \mathbb{E}[\partial_1 \eta_q (B +  \sigma_\chi Z; \chi\sigma_\chi^{2-q})] \right)  &=& 0.  \nonumber 
\end{eqnarray}
\end{lemma}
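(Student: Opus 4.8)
The plan is to reuse the skeleton of the proof of Lemma \ref{lem:limitoflambdas}, the only genuinely new feature being that the lower endpoint changes qualitatively once $\delta \geq 1$. The first observation is that $\chi_{\min}=0$ in this regime: since $\mathbb{E}\eta_q^2(Z;0)=\mathbb{E}Z^2=1\leq\delta$, the value $\chi=0$ already lies in the set defining $\chi_{\min}$ in \eqref{def:chimin}, so $\chi\to\chi_{\min}^+$ means $\chi\to 0^+$. I would first record the endpoint behavior of $\sigma_\chi$. As $\chi\to\infty$, the argument behind \eqref{common:one} carries over verbatim (it never used $\delta<1$), giving $\sigma_\chi^2\to\sigma_w^2+\mathbb{E}|B|^2/\delta$. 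As $\chi\to 0^+$ the situation bifurcates. For $\delta=1$ one has $\frac{1}{\delta}\mathbb{E}\eta_q^2(Z;\chi_{\min})=\mathbb{E}Z^2=1$, which is exactly the identity that drove $\sigma_\chi\to\infty$ in Lemma \ref{lem:limitoflambdas}; the same contradiction argument (via the monotonicity of $R_q$ in $\sigma$ from Lemma \ref{lem:posfirstlemma}) then yields $\sigma_\chi\to\infty$. For $\delta>1$, by contrast, $\frac{1}{\delta}\mathbb{E}\eta_q^2(Z;0)=1/\delta<1$ strictly, and I would instead show $\sigma_\chi$ stays bounded: if $\sigma_{\chi_n}\to\infty$ then dominated convergence forces $R_q(\chi_n,\sigma_{\chi_n})\to 1$, and \eqref{eq:fix:point:equation} collapses to $1=1/\delta$, a contradiction; a convergent-subsequence argument then pins down $\sigma_\chi\to\sqrt{\delta/(\delta-1)}\,\sigma_w$.

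The upper limit \eqref{delta:large1:one} is then immediate and identical to Lemma \ref{lem:limitoflambdas}: since $\sigma_\chi^2$ tends to a finite positive constant and $\partial_1\eta_q\to 0$ as its threshold grows (Lemma \ref{lem:toobasicpropproxder} part (vi)), dominated convergence gives $\mathbb{E}\partial_1\eta_q(B+\sigma_\chi Z;\chi\sigma_\chi^{2-q})\to 0$, so the bracket tends to $1$ while $\chi\sigma_\chi^{2-q}\to\infty$.

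For the lower limit the key algebraic step is to rewrite the target in a form that isolates the vanishing factor. Using Stein's lemma (Lemma \ref{lem:steins}), the scaling identity Lemma \ref{lem:toobasicpropprox} part (v), and the stationarity relation Lemma \ref{lem:toobasicpropprox} part (i), I would reduce $\mathbb{E}\partial_1\eta_q(B+\sigma_\chi Z;\chi\sigma_\chi^{2-q})$ to $1-\chi q\,A(\chi)$, where $A(\chi)=\mathbb{E}[Z\,\mathrm{sign}(U)|\eta_q(U;\chi)|^{q-1}]$ and $U=B/\sigma_\chi+Z$. This turns the quantity of interest into
\[
\frac{\delta-1}{\delta}\,\chi\sigma_\chi^{2-q}+\frac{q}{\delta}\,\chi^2\sigma_\chi^{2-q}A(\chi).
\]
For $\delta>1$ both summands vanish as $\chi\to 0^+$, since $\sigma_\chi$ (hence $\sigma_\chi^{2-q}$) is bounded and $A(\chi)$ is controlled by $\mathbb{E}[|Z||U|^{q-1}]$. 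For $\delta=1$ the first summand is identically zero, and the surviving term is where the real difficulty lies.

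The main obstacle is this $\delta=1$ term, because there $\sigma_\chi\to\infty$ and I must balance the growth of $\sigma_\chi^{2-q}$ against the $\chi^2$ prefactor. The resolution is to feed the expansion $1-R_q(\chi,\sigma_\chi)=2\chi q A(\chi)-\chi^2 q^2\mathbb{E}|\eta_q(U;\chi)|^{2q-2}$ back into the fixed point equation $\sigma_w^2/\sigma_\chi^2=1-R_q(\chi,\sigma_\chi)$. Since $\sigma_\chi\to\infty$ forces $U\to Z$ almost surely and hence $A(\chi)\to\mathbb{E}|Z|^q>0$ by dominated convergence, while the $\chi^2$ term is negligible, one gets $1-R_q\gtrsim\chi$ and therefore $\sigma_\chi^2=O(1/\chi)$. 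Consequently $\chi^2\sigma_\chi^{2-q}A(\chi)=O(\chi^{(2+q)/2})\to 0$, which establishes the second limit in \eqref{delta:large1:one}. The boundary cases $q=1$ and $q=2$ can be handled either by the same scheme or, more simply, directly from the explicit forms of $\eta_1$ and $\eta_2$.
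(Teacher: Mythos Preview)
Your proposal is correct, and for the upper limit and the behavior of $\sigma_\chi$ at both endpoints it matches the paper exactly. The difference is in how the lower limit is handled. The paper never computes $\mathbb{E}\partial_1\eta_q$: it simply notes that $|\partial_1\eta_q|\leq 1$ makes the bracket bounded, so the whole task reduces to showing $\chi\sigma_\chi^{2-q}\to 0$; for $\delta>1$ this is immediate from $\sigma_\chi=O(1)$, and for $\delta=1$ the paper derives $\chi\sigma_\chi^2=O(1)$ from the fixed-point equation (using Lemma~\ref{lem:toobasicpropprox}(i), Lemma~\ref{lem:toobasicpropproxder}(i), and Stein) and combines it with $\sigma_\chi\to\infty$. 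You instead invoke Stein earlier to get the exact identity $\mathbb{E}\partial_1\eta_q=1-\chi q A(\chi)$ and split the target into $(\delta-1)\delta^{-1}\chi\sigma_\chi^{2-q}+q\delta^{-1}\chi^2\sigma_\chi^{2-q}A(\chi)$. This is a bit more work but is equally valid; the crucial $\delta=1$ step---extracting $\sigma_\chi^2=O(1/\chi)$ from the fixed-point relation via the expansion $1-R_q=2\chi q A(\chi)-\chi^2 q^2\mathbb{E}|\eta_q|^{2q-2}$---is exactly the paper's estimate $\chi\sigma_\chi^2=O(1)$ in different clothing. Your route buys the explicit rate $O(\chi^{(2+q)/2})$, which the paper's cruder bound does not give, but that precision is not needed here.
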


\begin{proof}
The exactly same arguments presented in the proof of Lemma \ref{lem:limitoflambdas} can be applied to prove the first result in \eqref{delta:large1:one}. We now focus on the proof of the second one. Since $\mathbb{E}|\partial_1 \eta_q (B +  \sigma_\chi Z; \chi\sigma_\chi^{2-q})|\leq 1$, our goal will be to show $\chi \sigma^{2-q}_{\chi}=o(1)$, as $\chi \rightarrow 0^+$ (note that $\chi_{\min}=0$ when $\delta \geq 1$). 

We first consider the case $\delta >1$. To prove $\chi \sigma^{2-q}_{\chi}=o(1)$, it is sufficient to show $\sigma_{\chi}=O(1)$. Suppose this is not true, then there exists a sequence $\chi_n \rightarrow 0$ such that $\sigma_{\chi_n} \rightarrow \infty$. Recall that $(\chi_{n},\sigma_{\chi_n})$ satisfies \eqref{eq:fixedpoint11}:
\begin{eqnarray}\label{thekey:eq}
\sigma^2_{\chi_n}=\sigma_w^2+ \frac{1}{\delta}\mathbb{E}(\eta_q(B+\sigma_{\chi_n}Z;\chi_n\sigma^{2-q}_{\chi_n})-B)^2.
\end{eqnarray}
Dividing both sides of the above equation by $\sigma^2_{\chi_n}$ and letting $n\rightarrow \infty$ yields $1=\frac{1}{\delta}<1$, which is a contradiction.  

Regarding the case $\delta=1$, we first claim that $\sigma_{\chi} \rightarrow \infty$, as $\chi \rightarrow 0$. Otherwise, there exists a sequence $\chi_n\rightarrow 0$ such that $\sigma_{\chi_n} \rightarrow \sigma^*\in (0, \infty)$. However, taking the limit $n\rightarrow \infty$ on both sides of \eqref{thekey:eq} gives us $(\sigma^*)^2=\sigma_w^2+(\sigma^*)^2$ where contradiction arises. Hence, if we can show $\chi \sigma^2_{\chi} =O(1)$, then $\chi \sigma^{2-q}_{\chi} =o(1)$ will be proved. Starting from \eqref{thekey:eq} (replacing $\chi_n$ by $\chi$) with $\delta=1$, we can have for $q\in (1,2]$
\begin{eqnarray*}
&&0=\sigma^2_w+\sigma^2_{\chi} \mathbb{E}(\eta_q(B/\sigma_{\chi}+Z;\chi)-B/\sigma_{\chi}-Z)^2+  \\
&&\hspace{1.5cm} 2\sigma^2_{\chi}\mathbb{E}Z(\eta_q(B/\sigma_{\chi}+Z;\chi)-B/\sigma_{\chi}-Z) \\
&&\overset{(a)}{=}\sigma^2_w +\chi \sigma^2_{\chi} \cdot \underbrace{\mathbb{E} (\chi q^2|\eta_q(B/\sigma_{\chi}+Z;\chi)|^{2q-2})}_{A}+ \\
&&\hspace{0.4cm} \chi \sigma^2_{\chi}\cdot \underbrace{\mathbb{E} \frac{-2q(q-1)|\eta_q(B/\sigma_{\chi}+Z;\chi)|^{q-2}}{1+\chi q(q-1)|\eta_q(B/\sigma_{\chi}+Z;\chi)|^{q-2}}}_{B},
\end{eqnarray*}
where to obtain $(a)$ we have used Lemma \ref{lem:toobasicpropprox} part (i), Lemma \ref{lem:toobasicpropproxder} part (i) and Lemma \ref{lem:steins}. Therefore we obtain
\begin{eqnarray}\label{finalevaluate}
\chi \sigma^2_{\chi}=-\sigma^2_w\cdot (A+B)^{-1}.
\end{eqnarray}
Because $\sigma_{\chi} \rightarrow \infty$ as $\chi \rightarrow 0$, it is easily seen that
\begin{eqnarray}\label{easilyseen}
\lim_{\chi \rightarrow 0^+}A = 0, \quad  \liminf_{\chi \rightarrow 0^+}|B|\geq 2q(q-1) \mathbb{E}|Z|^{q-2}. 
\end{eqnarray}
Combining results \eqref{finalevaluate} and \eqref{easilyseen} we can conclude that $\chi\sigma^2_{\chi}=O(1)$. Finally for the case $q=1$, we do similar calculations and have
\begin{eqnarray*}
&&|A|= \frac{1}{\chi}\mathbb{E}(\eta_q(B/\sigma_{\chi}+Z;\chi)-B/\sigma_{\chi}-Z)^2\leq \chi \rightarrow 0,\\
&&|B|=\frac{2}{\chi}P(|B/\sigma_{\chi}+Z| \leq \chi)  \rightarrow \frac{4}{\sqrt{2\pi}}.
\end{eqnarray*}
This completes the proof.
\end{proof}

According to the results presented in Lemmas \ref{lem:limitoflambdas} and \ref{lem:limitoflambdascopy}, if the function $\chi \sigma_\chi^{2-q} \big (1- \frac{1}{\delta} \mathbb{E}[\partial_1 \eta_q (B +  \sigma_\chi Z; \chi\sigma_\chi^{2-q})] \big)$, is continuous with respect to $\chi \in (\chi_{\min},\infty)$, then we can conclude that Equations \eqref{eq:fixedpoint11} and \eqref{eq:fixedpoint21} share at least one common solution pair. To confirm the continuity, it is straightforward to employ implicit function theorem to show $\sigma_{\chi}$ is continuous about $\chi$. Moreover, According to Lemma \ref{prox:smooth}, $\partial_1 \eta_q(u;\chi)$ is also a continuous function of its arguments.

\subsection{Proving the uniqueness of the solution of \eqref{eq:fixedpoint11} and \eqref{eq:fixedpoint21}} \label{revision:lemma2:three}

The proof of uniqueness is motivated by the idea presented in \cite{BaMo11}. Suppose there are two different solutions denoted by $(\sigma_{\chi_1},\chi_1)$ and $(\sigma_{\chi_2},\chi_2)$, respectively. By applying Theorem \ref{thm:eqpseudolip} with $\psi(a, b)=(a-b)^2$, we have
\begin{eqnarray*}
{\rm AMSE}(\lambda,q, \sigma_w)&=&\mathbb{E}[\eta_q(B+\sigma_{\chi_1}Z;\chi_1 \sigma^{2-q}_{\chi_1})-B]^2 \\
&\overset{(a)}{=}& \delta(\sigma^2_{\chi_1}-\sigma^2_w),
\end{eqnarray*}
where $(a)$ is due to \eqref{eq:fixedpoint11}. The same equations hold for the other solution pair $(\sigma_{\chi_2},\chi_2)$. Since they have the same AMSE, it follows that $\sigma_{\chi_1}=\sigma_{\chi_2}$. Next we choose a different pseudo-Lipschitz function $\psi(a,b)=|a|$ in Theorem \ref{thm:eqpseudolip} to obtain
\begin{eqnarray*}
&&\lim_{p\rightarrow \infty}\frac{1}{p}\sum_{i=1}^p | \hat{\beta}_i(\lambda,q, p)|=\mathbb{E} |\eta_q(B+\sigma_{\chi_1}Z;\chi_1\sigma^{2-q}_{\chi_1})| \\
&=&\mathbb{E} |\eta_q(B+\sigma_{\chi_2}Z;\chi_2 \sigma^{2-q}_{\chi_2})|=\mathbb{E} |\eta_q(B+\sigma_{\chi_1}Z;\chi_2 \sigma^{2-q}_{\chi_1})|
\end{eqnarray*}
Since $\mathbb{E}|\eta_q(B+\sigma_{\chi_1}Z; \chi)|$, as a function of $\chi \in (0, \infty)$, is strictly decreasing based on Lemma \ref{lem:toobasicpropproxder} part (v), we conclude $\chi_1=\chi_2$.

\section{Proof of Corollary 1}  \label{sec:optimaltuning1}

According to Theorem \ref{thm:eqpseudolip}, the key of proving Corollary \ref{thm:mseoptimalasymptot} is to analyze the following equations:

\begin{eqnarray}
\sigma^2 &=& \sigma_{\omega}^2+\frac{1}{\delta} \mathbb{E} [(\eta_q(B +\sigma Z; \chi \sigma^{2-q}) -B)^2], \label{optimaleq:fixedpoint11}  \\
\lambda_{*,q} &=& \chi \sigma^{2-q} \big (1-\frac{1}{\delta} \mathbb{E}[\eta_q'(B +\sigma Z;  \chi \sigma^{2-q})] \big), \label{optimaleq:fixedpoint21} 
\end{eqnarray}
where $\lambda_{*,q}=\argmin_{\lambda \geq 0} {\rm AMSE}(\lambda,q,\sigma_w)$. We present the main result in the following lemma.



 \begin{lemma}\label{lem:chi*meanslambda*}
For every $1 \leq q \leq 2$ and a given optimal tuning $\lambda_{*,q}$,  there exists a unique solution pair $(\bar{\sigma},\bar{\chi})$ that satisfies \eqref{optimaleq:fixedpoint11} and \eqref{optimaleq:fixedpoint21}. Furthermore, $\bar{\sigma}$ is the unique solution of 
\begin{equation}\label{uniquesolution:optimal}
\sigma^2=  \sigma_w^2 + \frac{1}{\delta} \min_{\chi \geq 0} \mathbb{E} (\eta_q(B+ \sigma Z; \chi) -B)^2,
\end{equation}
and 
 \begin{equation}\label{eq:hsigfixed1chi}
 \bar{\chi}  \in \argmin_{\chi\geq 0} \mathbb{E} [(\eta_q(B+ \bar{\sigma} Z; \chi \bar{\sigma}^{2- q}) -B)^2].
 \end{equation} 
 \end{lemma}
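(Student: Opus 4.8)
The plan is to collapse the joint optimization over $\lambda$ into a one-parameter minimization over the threshold $\chi$, and then to show that the optimal threshold is exactly the one minimizing the risk term at the fixed point. The key opening observation is that \eqref{optimaleq:fixedpoint11} rewrites the asymptotic risk purely in terms of $\sigma$: for any admissible $\lambda$ with solution pair $(\bar\sigma,\bar\chi)$,
\[
{\rm AMSE}(\lambda,q,\sigma_w)=\mathbb{E}[(\eta_q(B+\bar\sigma Z;\bar\chi\bar\sigma^{2-q})-B)^2]=\delta(\bar\sigma^2-\sigma_w^2).
\]
Thus minimizing AMSE over $\lambda$ is equivalent to minimizing $\bar\sigma^2$, while the existence and uniqueness of the pair $(\bar\sigma,\bar\chi)$ attached to the given $\lambda_{*,q}$ is exactly Lemma \ref{lem:eq7and8solutionexists}. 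The remaining task is to identify the smallest attainable $\bar\sigma$.

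First I would reparametrize by $\chi$. By Corollary \ref{cor:fixedpointeqanalysis}, for each $\chi\in(\chi_{\min},\infty)$ there is a unique $\sigma_\chi$ solving \eqref{eq:fixedpoint11}, so the admissible solutions trace the curve $\chi\mapsto(\sigma_\chi,\chi)$ and minimizing $\bar\sigma^2$ reduces to minimizing $\sigma_\chi^2$. I then introduce $\sigma^\sharp$, the solution of \eqref{uniquesolution:optimal}, i.e. the root of $G(\sigma):=\sigma_w^2/\sigma^2+\frac1\delta\min_{\chi\geq0}R_q(\chi,\sigma)=1$; since $\sigma_w^2/\sigma^2$ is strictly decreasing and $\min_\chi R_q(\chi,\cdot)$ is non-increasing (each $R_q(\chi,\cdot)$ is decreasing by Lemma \ref{lem:posfirstlemma}), $G$ is strictly decreasing with limits $\infty$ and $0$, so $\sigma^\sharp$ exists and is unique. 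The heart of the argument is a sandwich: for every $\chi$ the bound $R_q(\chi,\sigma^\sharp)\geq\min_{\chi'}R_q(\chi',\sigma^\sharp)$ gives $F(\sigma^\sharp,\chi)\geq G(\sigma^\sharp)=1=F(\sigma_\chi,\chi)$, and strict monotonicity of $F(\cdot,\chi)$ from \eqref{eq:fix:point:equation} forces $\sigma^\sharp\leq\sigma_\chi$; conversely, choosing $\hat\chi\in\argmin_\chi R_q(\chi,\sigma^\sharp)$ makes $F(\sigma^\sharp,\hat\chi)=1$, whence $\sigma_{\hat\chi}=\sigma^\sharp$. Therefore $\min_\chi\sigma_\chi^2=(\sigma^\sharp)^2$ is attained at $\hat\chi$, which delivers both \eqref{uniquesolution:optimal} and \eqref{eq:hsigfixed1chi} after recalling $\mathbb{E}[(\eta_q(B+\sigma Z;\chi\sigma^{2-q})-B)^2]=\sigma^2R_q(\chi,\sigma)$ from \eqref{keyquantity:def}.

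The delicate point, and the step I expect to be the main obstacle, is verifying that the optimal threshold $\hat\chi$ corresponds to a feasible tuning, i.e. that $\lambda(\hat\chi)=\hat\chi(\sigma^\sharp)^{2-q}\bigl(1-\frac1\delta\mathbb{E}[\partial_1\eta_q(B+\sigma^\sharp Z;\hat\chi(\sigma^\sharp)^{2-q})]\bigr)\geq0$, so that $\lambda_{*,q}=\lambda(\hat\chi)$ and the minimum is genuinely achieved over $\lambda\geq0$. For $\delta\geq1$ this is automatic: writing $U=B/\sigma^\sharp+Z$ and applying Stein's lemma (Lemma \ref{lem:steins}) yields the SURE identity $2\mathbb{E}[\partial_1\eta_q(U;\hat\chi)]=R_q(\hat\chi,\sigma^\sharp)+1-\mathbb{E}[(\eta_q(U;\hat\chi)-U)^2]\leq R_q(\hat\chi,\sigma^\sharp)+1<\delta+1\leq2\delta$, where I used $R_q(\hat\chi,\sigma^\sharp)=\delta(1-\sigma_w^2/(\sigma^\sharp)^2)<\delta$ from \eqref{uniquesolution:optimal}; hence $\mathbb{E}[\partial_1\eta_q]<\delta$ and $\lambda(\hat\chi)>0$. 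For $\delta<1$ this bound is too weak, because the feasible thresholds are bounded away from $\chi_{\min}$ (by Lemma \ref{lem:limitoflambdas}, $\lambda\to-\infty$ as $\chi\to\chi_{\min}^+$). Here I would instead control the sign of $d\sigma_\chi^2/d\chi$: implicit differentiation of the fixed point shows this sign equals that of $\partial_\chi R_q$ up to a positive factor, so $\sigma_\chi^2$ is strictly decreasing on the infeasible interval $(\chi_{\min},\chi_0)$ on which $\lambda<0$, which pushes the global minimizer $\hat\chi$ into the feasible region $[\chi_0,\infty)$ and restores $\lambda(\hat\chi)\geq0$.

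Finally, the uniqueness of $(\bar\sigma,\bar\chi)$ for the given $\lambda_{*,q}$ is inherited verbatim from Lemma \ref{lem:eq7and8solutionexists}, and the membership \eqref{eq:hsigfixed1chi} is just the defining property of $\hat\chi$ as a minimizer of $R_q(\cdot,\sigma^\sharp)$, equivalently of $\mathbb{E}[(\eta_q(B+\bar\sigma Z;\chi\bar\sigma^{2-q})-B)^2]$.
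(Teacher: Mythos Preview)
Your core sandwich argument is exactly the paper's: both establish $\bar\sigma=\sigma^\sharp$ by (i) showing $G_q(\sigma)=\sigma_w^2/\sigma^2+\tfrac{1}{\delta}\min_\chi R_q(\chi,\sigma)$ is strictly decreasing with a unique root $\sigma^\sharp$, (ii) observing that the pair $(\sigma^\sharp,\hat\chi)$ with $\hat\chi\in\arg\min_\chi R_q(\chi,\sigma^\sharp)$ solves \eqref{eq:fixedpoint11}--\eqref{eq:fixedpoint21} for some $\lambda^*$, giving ${\rm AMSE}(\lambda^*)=\delta((\sigma^\sharp)^2-\sigma_w^2)$ and hence $\bar\sigma\le\sigma^\sharp$ by optimality of $\lambda_{*,q}$, and (iii) getting $\bar\sigma\ge\sigma^\sharp$ from $G_q(\bar\sigma)\le1$ and monotonicity. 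The uniqueness of $(\bar\sigma,\bar\chi)$ and the membership \eqref{eq:hsigfixed1chi} are concluded identically.

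Where you go beyond the paper is in worrying whether $\lambda^*=\lambda(\hat\chi)\ge0$, so that step (ii) invokes Theorem~\ref{thm:eqpseudolip} legitimately. The paper simply writes down $\lambda^*$ and applies Theorem~\ref{thm:eqpseudolip} without checking its sign, so you are being more careful than the paper here. Your SURE--Stein argument for $\delta\ge1$ is clean and correct. Your $\delta<1$ argument, however, has a gap: you correctly derive that ${\rm sign}(d\sigma_\chi^2/d\chi)={\rm sign}(\partial_\chi R_q(\chi,\sigma_\chi))$, but the subsequent assertion that $\sigma_\chi^2$ is strictly decreasing on the ``infeasible interval $(\chi_{\min},\chi_0)$'' presupposes both that $\{\chi:\lambda(\chi)<0\}$ is a single interval abutting $\chi_{\min}$ \emph{and} that $\partial_\chi R_q<0$ there---neither is justified, and no direct link between the sign of $\lambda(\chi)$ and the sign of $\partial_\chi R_q$ has been established. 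Since the paper itself does not address this point, your proposal already matches (and locally exceeds) what the paper proves; but if you want a fully rigorous $\delta<1$ feasibility check you would need a different argument than the one sketched.
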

 
\begin{proof}
The first part of this lemma directly comes from Lemma \ref{lem:eq7and8solutionexists}. We focus on the proof of the second part. Recall the definition of $R_q(\chi, \sigma)$ in \eqref{keyquantity:def}. Define 
\[
G_q(\sigma)\triangleq \frac{\sigma_w^2}{\sigma^2}+\frac{1}{\delta} \min_{\chi \geq 0} R_q(\chi, \sigma).
\]
Then \eqref{uniquesolution:optimal} is equivalent to $G_q(\sigma)=1$. We first show that $G_q(\sigma)$ is a strictly decreasing function of $\sigma$ over $(0, \infty)$. For any given $\sigma_1>\sigma_2>0$, we can choose $\chi_1, \chi_2$ such that 
\begin{eqnarray*}
\chi_1 &=& \arg \min_{\chi\geq 0} \mathbb{E} (\eta_q(B/\sigma_1+ Z; \chi) -B/\sigma_1)^2,  \\
\chi_2 &=& \arg \min_{\chi \geq 0} \mathbb{E} (\eta_q(B/ \sigma_{2} +Z; \chi ) -B/\sigma_2)^2.
\end{eqnarray*}
Applying Lemma \ref{lem:posfirstlemma} we have
\begin{eqnarray*}
R_q(\chi_1, \sigma_1)=\min_{\chi \geq 0} R_q(\chi, \sigma_1) \leq R_q(\chi_2, \sigma_1)\leq R_q(\chi_2, \sigma_2)=\min_{\chi \geq 0}R_q(\chi,\sigma_2).
\end{eqnarray*}
Hence we obtain that $G_q(\sigma_1) < G_q(\sigma_2)$. We next show 
\begin{eqnarray}
\lim_{\sigma \rightarrow 0} G_q(\sigma)=\infty, \quad \lim_{\sigma \rightarrow \infty} G_q(\sigma)<1. \label{fix:point:unique}
\end{eqnarray}
The first result in \eqref{fix:point:unique} is obvious. To prove the second one, we know
\begin{eqnarray*}
 \lim_{\sigma \rightarrow \infty} G_q(\sigma) \leq \lim_{\sigma \rightarrow \infty}  \frac{\sigma^2_w}{\sigma^2}+\frac{1}{\delta}R_q(\chi, \sigma)=\frac{1}{\delta}\mathbb{E}\eta^2_q(Z;\chi),
\end{eqnarray*}
for any given $\chi \geq 0$. Choosing a sufficiently large $\chi$ completes the proof for the second inequality in \eqref{fix:point:unique}. Based on \eqref{fix:point:unique} and the fact that $G_q(\sigma)$ is a strictly decreasing and continuous function of $\sigma$ over $(0,\infty)$, we can conclude \eqref{uniquesolution:optimal} has a unique solution. Call it $\sigma^*$, and denote
\[
\chi^*  \in \argmin_{\chi\geq 0} \mathbb{E} [(\eta_q(B+ \sigma^* Z; \chi (\sigma^*)^{2- q}) -B)^2].
\]
Note that $\chi^*$ may not be unique. Further define 
\[
\lambda^* =  \chi^* (\sigma^*)^{2-q} \Big(1- \frac{1}{\delta} \mathbb{E}[\partial_1 \eta_q (B +  \sigma^* Z; \chi^* (\sigma^*)^{2-q})] \Big). 
\]
It is straightforward to see that the pair $(\sigma^*,\chi^*)$ satisfies \eqref{eq:fixedpoint11} and \eqref{eq:fixedpoint21} with $\lambda=\lambda^*$. According to Theorem \ref{thm:eqpseudolip}, we obtain 
\begin{eqnarray}
{\rm AMSE}(\lambda^*, q, \sigma_w)= \delta ((\sigma^*)^2- \sigma_w^2). 
\end{eqnarray}
Also we already know for the optimal tuning $\lambda_{*,q}$
\begin{eqnarray*}
{\rm AMSE}(\lambda_{*, q}, q, \sigma_w)= \delta (\bar{\sigma}^2- \sigma_w^2)  \leq {\rm AMSE}(\lambda^*, q, \sigma_w).
\end{eqnarray*}
Therefore $\bar{\sigma} \leq \sigma^*$. On the other hand, 
\begin{eqnarray*}
\bar{\sigma}^2 &=& \sigma_w^2 + \frac{1}{\delta} \mathbb{E} (\eta_q(B+ \bar{\sigma} Z ; \bar{\chi} \bar{\sigma}^{2-q}) - B)^2 \\
&\geq&  \sigma_w^2 +  \frac{1}{\delta} \min_{\chi \geq 0} \mathbb{E} (\eta_q(B+ \bar{\sigma} Z ; \chi ) - B)^2= \bar{\sigma}^2 \cdot G_q(\bar{\sigma}).
\end{eqnarray*}
Thus $G_q(\bar{\sigma})\leq 1 =G_q(\sigma^*)$. Since $G_q(\sigma)$ is a strictly decreasing function, we then obtain $\bar{\sigma}\geq \sigma^*$. Consequently, we conclude $\bar{\sigma}=\sigma^*$. Finally we claim \eqref{eq:hsigfixed1chi} has to hold. Otherwise,
\begin{eqnarray*}
&&{\rm AMSE}(\lambda_{*, q}, q, \sigma_w)=\mathbb{E}(\eta_q(B+\bar{\sigma}Z;\bar{\chi}\bar{\sigma}^{2-q})-B)^2 \\
&>& \min_{\chi\geq 0}\mathbb{E}(\eta_q(B+\bar{\sigma}Z;\chi)-B)^2=\mathbb{E}(\eta_q(B+\sigma^*Z;\chi^*)-B)^2 \\
&=&{\rm AMSE}(\lambda^*, q, \sigma_w),
\end{eqnarray*}
contradicts the fact that $\lambda_{*,q}$ is the optimal tuning. 
\end{proof}

\textbf{Remark:} Lemma \ref{lem:chi*meanslambda*} leads directly to the result of Corollary \ref{thm:mseoptimalasymptot}. Furthermore, from the proof of Lemma \ref{lem:chi*meanslambda*}, we see that if $\mathbb{E} [(\eta_q(B+ \sigma Z; \chi \sigma^{2-q}) -B)^2]$, as a function of $\chi$, has a unique minimizer for any given $\sigma>0$, then $\bar{\chi}=\chi^*$. That means the optimal tuning value $\lambda_{*,q}$ is unique. \cite{mousavi2015consistent} has proved that $\mathbb{E} [(\eta_q(B+ \sigma Z; \chi \sigma^{2-q}) -B)^2]$ is quasi-convex and has a unique minimizer for $q=1$. We conjecture it is true for $q\in (1,2]$ as well and leave it for future research.


\section{Proof of Theorem 3.1} \label{sec:prooftheorem2full}

\subsection{Roadmap of the proof}\label{sec:roadmapqless2}

Different from the result of Theorem \ref{asymp:sparsel1_1} that bounds the second order term in AMSE$(\lambda_{*,1},1,\sigma_w)$, Theorem \ref{asymp:lqbelowpt} characterizes the precise analytical expression of the second dominant term for AMSE$(\lambda_{*,q},q,\sigma_w)$ with $q\in (1,2]$. However, the idea of this proof is similar to the one for Theorem \ref{asymp:sparsel1_1} presented in Section \ref{sec:proofthm4full} of the main text, though the detailed proof steps are more involved here. We suggest interested readers first going over the proof of Theorem \ref{asymp:sparsel1_1} and then this section so that both the proof idea and technical details are smoothly understood. Recall the definition we introduced in \eqref{keyquantity:def}:
\[
R_q(\chi,\sigma)=\mathbb{E}(\eta_q(B/\sigma +Z;\chi)-B/\sigma)^2,
\]
where $Z \sim N(0,1)$ and $B$ with the distribution $p_{\beta}(b)=(1-\epsilon)\delta_0(b)+\epsilon g(b)$ are independent. Define
\begin{eqnarray} \label{optimal:tuningforq}
\chi^*_q(\sigma)=\argmin_{\chi \geq 0} R_q(\chi, \sigma).
\end{eqnarray}
Based on Lemma \ref{prox:smooth} of Appendix \ref{ssec:etaq:summary}, it is straightforward to show $R_q(\chi,\sigma)$ is a differentiable function of $\chi$. It is also easily seen that 
\[
\lim_{\chi\rightarrow \infty}R_q(\chi,\sigma)=\frac{\mathbb{E}|B|^2}{\sigma^2}, \quad \lim_{\chi \rightarrow 0} R_q(\chi,\sigma)=1.
\]
Therefore, the minimizer $\chi^*_q(\sigma)$ exists at least for sufficiently small $\sigma$. If it is not unique, we will consider the one having smallest value itself.  As like the proof of Theorem \ref{asymp:sparsel1_1}, the key is to characterize the convergence rate for $R_q(\chi^*_q(\sigma),\sigma)$ as $\sigma \rightarrow 0$. After having that convergence rate result, we can then obtain the convergence rate for $\bar{\sigma}$ from Equation \eqref{eq:fixedpointoptimalnew} and finally derive the expansion of AMSE$(\lambda_{*,q},q,\sigma_w)$ based on Corollary \ref{thm:mseoptimalasymptot}. We organize our proof steps as follows:
\begin{enumerate}
\item We first characterize the convergence rate of $\chi_q^*(\sigma)$ in Section \ref{sec:optiamltau:zero}.
\item We then obtain the convergence rate of $R_q(\chi_q^*(\sigma),\sigma)$ in Section \ref{sec:generaldistfixqbigger1epsless1}.
\item We finally derive the second order expansion for AMSE$(\lambda_{*,q},q,\sigma_w)$ in Section \ref{sec:AMSEellqgeneraldistepsless1}.
\end{enumerate}

As a final remark, once we show the proof for $q\in (1,2)$, since $\eta_2(u;\chi)=\frac{u}{1+2\chi}$ has a nice explicit form, the proof for $q=2$ can be easily derived. We hence skip it for simplicity.

\subsection{Characterizing the convergence rate of $\chi^*_q(\sigma)$} \label{sec:optiamltau:zero}

The goal of this section is to derive the convergence rate of $\chi^*_q(\sigma)$ as $\sigma \rightarrow 0$. We will make use of the fact that $\chi^*_q(\sigma)$ is the minimizer of $R_q(\chi,\sigma)$, to first show $\chi^*_q(\sigma)\rightarrow 0$ and then obtain the rate $\chi^*_q(\sigma) \propto \sigma^{2q-2}$. This is done in the following three lemmas.

\begin{lemma}\label{eq:infinity}
Let $\chi^*_q(\sigma)$ denote the minimizer of $R_q(\chi,\sigma)$ as defined in \eqref{optimal:tuningforq}. Then for every $b \neq 0$ and $z \in \mathbb{R}$,
\[
|\eta_q(b/\sigma +z ; \chi^*_q(\sigma)) | \rightarrow \infty, \quad \mbox{as~} \sigma \rightarrow 0.
\] 
\end{lemma}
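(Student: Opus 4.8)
The plan is to argue by contradiction, reducing the pointwise claim to the boundedness of the optimal threshold $\chi^*_q(\sigma)$ as $\sigma\to0$. The single free piece of information is that $\chi=0$ is always admissible in the minimization defining $\chi^*_q(\sigma)$: since $\eta_q(u;0)=u$ directly from its definition, we have $R_q(0,\sigma)=\mathbb{E}(B/\sigma+Z-B/\sigma)^2=\mathbb{E}Z^2=1$, and hence $R_q(\chi^*_q(\sigma),\sigma)=\min_{\chi\ge0}R_q(\chi,\sigma)\le 1$ for every (small) $\sigma$. This uniform bound on the optimal value is what I will play against throughout.

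The analytic engine is the magnitude identity extracted from part (i) of Lemma \ref{lem:toobasicpropprox}: writing $v=\eta_q(u;\chi)$, one has $|u|=|v|+q\chi|v|^{q-1}$, equivalently $|u-\eta_q(u;\chi)|=|u|-|v|=q\chi|v|^{q-1}$, where parts (ii) and (iv) guarantee $|v|\le|u|$ and that $v$ shares the sign of $u$. From this I first record a helper fact: if $u_n\to\pm\infty$ and $\liminf_n\chi_n>0$, then $|\eta_q(u_n;\chi_n)-u_n|\to\infty$. Indeed, setting $v_n=|\eta_q(u_n;\chi_n)|$, either $v_n\to\infty$ along a subsequence, in which case $|u_n-\eta_q(u_n;\chi_n)|=q\chi_n v_n^{q-1}\to\infty$ because $q>1$, or $v_n$ stays bounded, in which case $|u_n-\eta_q(u_n;\chi_n)|=|u_n|-v_n\to\infty$ directly; passing to subsequences covers all cases.

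Next I will show $\chi^*_q(\sigma)$ stays bounded as $\sigma\to0$. If not, choose $\sigma_n\to0$ with $\chi_n:=\chi^*_q(\sigma_n)\to\infty$. For each fixed $g\ne0$ and $z$, $u_n=g/\sigma_n+z\to\pm\infty$, so the helper fact gives $|\eta_q(g/\sigma_n+z;\chi_n)-g/\sigma_n|\ge|\eta_q(u_n;\chi_n)-u_n|-|z|\to\infty$. Hence $(\eta_q(G/\sigma_n+Z;\chi_n)-G/\sigma_n)^2\to\infty$ almost surely on the event $\{G\ne0\}$, which has probability $\epsilon>0$. Fatou's lemma then forces $\liminf_n R_q(\chi_n,\sigma_n)\ge\epsilon\,\mathbb{E}[\liminf_n(\cdot)^2\,\mathbb{I}(G\ne0)]=\infty$, contradicting $R_q(\chi^*_q(\sigma_n),\sigma_n)\le1$.

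Finally, with $\chi^*_q(\sigma)\le M$ for small $\sigma$, the lemma follows. Fix $b\ne0$, $z$ and set $u_\sigma=b/\sigma+z$, so $|u_\sigma|\to\infty$. If $|\eta_q(u_\sigma;\chi^*_q(\sigma))|$ failed to diverge, there would exist $\sigma_n\to0$ with $|\eta_q(u_{\sigma_n};\chi^*_q(\sigma_n))|\le C$; but then the magnitude identity gives $|u_{\sigma_n}|=|v_{\sigma_n}|+q\chi^*_q(\sigma_n)|v_{\sigma_n}|^{q-1}\le C+qM\,C^{q-1}$, contradicting $|u_{\sigma_n}|\to\infty$. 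The main obstacle is that the clean $q=1$ soft-thresholding computation used in Lemma \ref{lassotune} is unavailable here; the identity in part (i) of Lemma \ref{lem:toobasicpropprox}, together with the two-regime analysis of whether $\eta_q$ tracks its (diverging) argument or collapses toward zero, is precisely the device that replaces it.
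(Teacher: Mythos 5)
Your proof is correct, and it rests on the same three ingredients as the paper's—the identity $|u|=|\eta_q(u;\chi)|+q\chi|\eta_q(u;\chi)|^{q-1}$ from Lemma \ref{lem:toobasicpropprox} part (i), the optimality bound $R_q(\chi^*_q(\sigma),\sigma)\le R_q(0,\sigma)=1$, and Fatou's lemma—but it organizes them differently. The paper assumes the conclusion fails at a single point $(b,z)$, deduces from the identity that $\chi^*_q(\sigma_k)=\Omega(1/\sigma_k)$, then shows this growth forces $|\eta_q(\tilde b/\sigma_k+\tilde z;\chi^*_q(\sigma_k))|$ to stay bounded at \emph{every} point (a rate comparison: otherwise the right-hand side of the identity would have order larger than $1/\sigma_k$ while the left-hand side is $O(1/\sigma_k)$), and only then invokes Fatou to contradict $R_q\le 1$. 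You instead prove a standalone intermediate claim that the paper never states in this lemma—that $\chi^*_q(\sigma)$ remains bounded as $\sigma\to0$—by showing that $\chi^*_q(\sigma_n)\to\infty$ already yields the Fatou contradiction; your dichotomy (either $|\eta_q|$ diverges, so the shrinkage term $q\chi_n|\eta_q|^{q-1}$ diverges since $q>1$, or $|\eta_q|$ stays bounded, so $|u_n|-|\eta_q|$ diverges) makes the deviation $|\eta_q(G/\sigma_n+Z;\chi_n)-G/\sigma_n|$ blow up in either case, with no rate-tracking needed. Once boundedness is in hand, the lemma is a two-line consequence of the identity. What each route buys: yours avoids the paper's propagation-to-all-points step and produces an explicit bound on the optimal threshold, a fact the paper only obtains later and in stronger form ($\chi^*_q(\sigma)\to0$, Lemma \ref{lem:optimaltaugoestozero}), and since that later lemma is proved \emph{using} the present one, your reorganization is consistent with the paper's pipeline and arguably cleaner; the paper's route extracts quantitative information ($\chi^*=\Omega(1/\sigma)$) directly under the contradiction hypothesis without introducing an auxiliary claim. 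One cosmetic slip: since $G\sim g$ has no point mass at zero, $\mathbb{P}(G\ne0)=1$ rather than $\epsilon$; the factor $\epsilon$ enters through $R_q(\chi,\sigma)\ge\epsilon\,\mathbb{E}(\eta_q(G/\sigma+Z;\chi)-G/\sigma)^2$, which is exactly how you use it, so nothing breaks.
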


\begin{proof}
Suppose this is not the case. Then there exist a value of $b \neq 0, z \in \mathbb{R}$ and a sequence $\sigma_k \rightarrow 0$, such that $|\eta_q(b/\sigma_k+z; \chi_q^*(\sigma_k))|$ is bounded.  Combined with Lemma \ref{lem:toobasicpropprox} part (i) we obtain 
\begin{eqnarray}\label{contradiction:eqone}
\chi^*_q(\sigma_k)=\frac{|b/\sigma_k+z|-|\eta_q(b/\sigma_k+z; \chi^*_q(\sigma_k))|}{q|\eta_q(b/\sigma_k+z; \chi^*_q(\sigma_k))|^{q-1}}=\Omega \Big(\frac{1}{\sigma_k} \Big).
\end{eqnarray}
We next show that the result \eqref{contradiction:eqone} implies for any other $\tilde{b}\neq 0$ and $\tilde{z}\in \mathbb{R}$, $ |\eta_q(\tilde{b}/\sigma_k+\tilde{z}; \chi^*_q(\sigma_k))|$ is bounded as well. From Lemma \ref{lem:toobasicpropprox} part (i) we know
\begin{eqnarray} \label{eq:equalityproxtildeb}
|\tilde{b}/\sigma_k+\tilde{z}| =|\eta_q(\tilde{b}/\sigma_k+\tilde{z}; \chi^*_q(\sigma_k))| + \chi^*_q(\sigma_k) q |\eta_q( \tilde{b}/\sigma_k+\tilde{z} ; \chi^*_q(\sigma_k)) |^{q-1}.  \hspace{-2cm}
\end{eqnarray}
If $ |\eta_q(\tilde{b}/\sigma_k+\tilde{z}; \chi^*_q(\sigma_k))|$ is unbounded, then the right hand side of equation \eqref{eq:equalityproxtildeb} (take a subsequence if necessary) has the order larger than $1/\sigma_k$. Hence \eqref{eq:equalityproxtildeb} can not hold for all the values of $k$. We thus have reached the conclusion that $|\eta_q(b/\sigma_k+ z; \chi_q^*(\sigma_k))|$ is bounded for every $b\neq 0$ and $z\in \mathbb{R}$. Therefore, 
\[
|\eta_q(G/\sigma_k+ Z; \chi^*_q(\sigma_k)) - G/ \sigma_k| \rightarrow \infty ~a.s.,  \mbox{~~as~} k \rightarrow \infty,
\]
where $G$ has the distribution $g(\cdot)$. We then use Fatou's lemma to obtain 
\[
R_q(\chi^*_q(\sigma_k),\sigma_k) \geq \epsilon \mathbb{E} |\eta_q(G/\sigma_k+ Z; \chi^*_q(\sigma_k)) - G/ \sigma_k|^2 \rightarrow \infty.
\]
On the other hand, since $\chi^*_q(\sigma_k)$ minimizes $R_q(\chi, \sigma_k)$,
\[
R_q(\chi^*_q(\sigma_k),\sigma_k) \leq R_q(0, \sigma_k)=1.
\]
Such contradiction completes the proof.
\end{proof}

Lemma \ref{eq:infinity} enables us to derive $\chi^*_q(\sigma)\rightarrow 0$ as $\sigma \rightarrow 0$. We present it in the next lemma.

\begin{lemma}\label{lem:optimaltaugoestozero}
Let $\chi^*_q(\sigma)$ denote the minimizer of $R_q(\chi,\sigma)$ as defined in \eqref{optimal:tuningforq}. Then $\chi^*_q(\sigma) \rightarrow 0$ as $\sigma \rightarrow 0$. 
\end{lemma}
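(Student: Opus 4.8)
The plan is to argue by contradiction, using the blow-up statement of Lemma \ref{eq:infinity} together with the optimality of $\chi_q^*(\sigma)$. Suppose $\chi_q^*(\sigma)\not\to 0$ as $\sigma\to 0$. Then there exist a constant $c>0$ and a sequence $\sigma_k\to 0$ with $\chi_q^*(\sigma_k)\geq c$ for every $k$. I will show that along this sequence $R_q(\chi_q^*(\sigma_k),\sigma_k)$ must diverge, which is impossible, since the optimality of $\chi_q^*$ forces
\[
R_q(\chi_q^*(\sigma_k),\sigma_k)\leq R_q(0,\sigma_k)=\mathbb{E}_{B,Z}(B/\sigma_k+Z-B/\sigma_k)^2=\mathbb{E} Z^2=1 .
\]

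The core computation is as follows. Decomposing $R_q$ over $\{B=0\}$ and $\{B\neq 0\}$, it suffices to examine the latter event, which has probability $\epsilon$ and on which $B$ is distributed as $G\sim g$ with $G\neq 0$ almost surely. By Lemma \ref{eq:infinity}, $|\eta_q(G/\sigma_k+Z;\chi_q^*(\sigma_k))|\to\infty$. Feeding $u=G/\sigma_k+Z$ and $\chi=\chi_q^*(\sigma_k)$ into the fixed-point identity of Lemma \ref{lem:toobasicpropprox} part (i) gives
\[
\big|(G/\sigma_k+Z)-\eta_q(G/\sigma_k+Z;\chi_q^*(\sigma_k))\big|=\chi_q^*(\sigma_k)\,q\,\big|\eta_q(G/\sigma_k+Z;\chi_q^*(\sigma_k))\big|^{q-1}.
\]
Because $q>1$ and $\chi_q^*(\sigma_k)\geq c$, the right-hand side tends to $+\infty$. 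Subtracting $Z$, which is finite almost surely, then yields $|\eta_q(G/\sigma_k+Z;\chi_q^*(\sigma_k))-G/\sigma_k|\to\infty$ almost surely. Fatou's lemma finally produces
\[
\liminf_{k\to\infty}R_q(\chi_q^*(\sigma_k),\sigma_k)\geq\epsilon\,\mathbb{E}\Big[\liminf_{k\to\infty}\big(\eta_q(G/\sigma_k+Z;\chi_q^*(\sigma_k))-G/\sigma_k\big)^2\Big]=\infty,
\]
the desired contradiction.

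The only bookkeeping obstacle is that Lemma \ref{eq:infinity} is phrased for each \emph{fixed} $b\neq 0$ and $z$, whereas here the blow-up must hold almost surely when $(b,z)$ is replaced by the random pair $(G,Z)$; this is harmless, since a statement valid for all $(b,z)$ with $b\neq 0$ holds in particular for almost every realization of $(G,Z)$ with $G\neq 0$. The single genuine point to keep in mind is that the growth of the right-hand side of the fixed-point identity is governed by $|\eta_q|^{q-1}$ with exponent $q-1>0$, so the strict positivity $\chi_q^*(\sigma_k)\geq c$ is exactly what makes the deviation term dominate $Z$ and survive the passage through Fatou. It is worth noting that this single lower bound $\chi_q^*(\sigma_k)\geq c$ simultaneously rules out both a finite positive limit and divergence of $\chi_q^*(\sigma_k)$, so no separate case analysis is needed.
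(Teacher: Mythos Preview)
Your proof is correct and follows essentially the same contradiction strategy as the paper: assume $\chi_q^*(\sigma)\not\to 0$, invoke Lemma~\ref{eq:infinity} to force $|\eta_q(G/\sigma+Z;\chi_q^*)|\to\infty$, deduce that $R_q(\chi_q^*(\sigma),\sigma)\to\infty$, and contradict the bound $R_q(\chi_q^*(\sigma),\sigma)\leq R_q(0,\sigma)=1$. The only difference is cosmetic: the paper routes the blow-up through the expansion \eqref{eq:riskexpansion1} (isolating the term $\epsilon\chi^2 q^2\mathbb{E}|\eta_q|^{2q-2}$ and noting the remaining terms stay bounded), whereas you apply Fatou directly to the integrand $(\eta_q-G/\sigma)^2$ via the identity of Lemma~\ref{lem:toobasicpropprox}(i), which is slightly more direct.
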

\begin{proof}
First note that 
\begin{eqnarray}\label{eq:riskexpansion1}
R_q(\chi, \sigma)&=&(1-\epsilon)\mathbb{E}(\eta_q(Z;\chi))^2 + \epsilon \mathbb{E}(\eta_q(G/\sigma + Z;\chi)-G/\sigma-Z)^2 + \nonumber \\
&&2\epsilon \mathbb{E}Z(\eta_q(G/\sigma + Z;\chi)-G/\sigma-Z) +\epsilon \nonumber \\
&\overset{(a)}{=}&(1-\epsilon)\mathbb{E}(\eta_q(Z;\chi))^2+\epsilon \chi^2 q^2\mathbb{E}|\eta_q(G/\sigma+Z;\chi)|^{2q-2} + \nonumber\\
&&2\epsilon \mathbb{E}(\partial_1 \eta_q(G/\sigma+Z;\chi)-1) +\epsilon \nonumber \\
&\overset{(b)}{=}&(1-\epsilon)\mathbb{E}(\eta_q(Z;\chi))^2+\epsilon \chi^2 q^2\mathbb{E}|\eta_q(G/\sigma+Z;\chi)|^{2q-2} + \nonumber\\
&&2\epsilon \mathbb{E}\left(\frac{1}{1+\chi q(q-1)|\eta_q(G/\sigma+Z;\chi)|^{q-2}}-1\right) +\epsilon. \label{uu1}
\end{eqnarray}
We have employed Lemma \ref{lem:toobasicpropprox} part (i) and Lemma \ref{prox:smooth} to obtain $(a)$; $(b)$ is due to Lemma \ref{lem:toobasicpropproxder} part (i). According to Lemma \ref{eq:infinity}, $|\eta_q(G/\sigma+Z;\chi^*_q(\sigma))| \rightarrow \infty ~a.s.,$ as $\sigma \rightarrow 0$. Hence, if $\chi^*_q(\sigma) \nrightarrow 0$, the second term in \eqref{eq:riskexpansion1} (with $\chi=\chi_q^*(\sigma)$) goes off to infinity, while the other terms remain finite, and consequently $R_q(\chi^*_q(\sigma) , \sigma) \rightarrow \infty$. This is a contradiction with the fact $R_q(\chi^*_q(\sigma) , \sigma)\leq R_q(0,\sigma)=1$.
\end{proof}

So far we have shown $\chi^*_q(\sigma) \rightarrow 0$ as $\sigma \rightarrow 0$. Our next lemma further characterizes the convergence rate of $\chi^*_q(\sigma)$.

\begin{lemma}\label{thm:riskell_qgeneralqless2}
Suppose $\mathbb{P}(|G|\leq t)=O(t)$ (as $t\rightarrow 0$) and $\mathbb{E}|G|^2<\infty$. Then for $q\in (1,2)$ we have
\begin{eqnarray*}
\lim_{\sigma \rightarrow 0} \frac{\chi^*_q(\sigma)}{\sigma^{2q-2}} = \frac{(1- \epsilon) \mathbb{E} |Z|^q}{\epsilon q \mathbb{E} |G|^{2q-2}}.
\end{eqnarray*}
\end{lemma}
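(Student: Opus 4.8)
The plan is to extract the rate of $\chi^*_q(\sigma)$ from the first-order optimality condition. Since $\chi^*_q(\sigma)$ is the (smallest) minimizer of $\chi\mapsto R_q(\chi,\sigma)$, since $R_q$ is differentiable in $\chi$ (via Lemma~\ref{prox:smooth}), and since Lemma~\ref{lem:optimaltaugoestozero} gives $\chi^*_q(\sigma)\to0$ with the minimizer interior for small $\sigma$, it satisfies $\partial_\chi R_q(\chi^*_q(\sigma),\sigma)=0$. Writing $U=G/\sigma+Z$ and differentiating $R_q(\chi,\sigma)=(1-\epsilon)\mathbb{E}\,\eta_q(Z;\chi)^2+\epsilon\,\mathbb{E}(\eta_q(U;\chi)-G/\sigma)^2$ under the expectation, the condition reads (after dividing by $2$)
\begin{equation*}
(1-\epsilon)\mathbb{E}\big[\eta_q(Z;\chi^*_q)\,\partial_2\eta_q(Z;\chi^*_q)\big]+\epsilon\,\mathbb{E}\big[(\eta_q(U;\chi^*_q)-G/\sigma)\,\partial_2\eta_q(U;\chi^*_q)\big]=0.
\end{equation*}
For the first (null) term, $\eta_q(Z;\chi)\to Z$ and $\partial_2\eta_q(Z;\chi)\to -q|Z|^{q-1}\mathrm{sign}(Z)$ as $\chi\to0$ by Lemma~\ref{lem:toobasicpropprox}(iii) and Lemma~\ref{lem:toobasicpropproxder}(ii); dominated convergence (the integrand is bounded by $q|Z|^q$) gives that this term tends to the strictly negative constant $-q(1-\epsilon)\mathbb{E}|Z|^q$. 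The real work is to show the second (signal) term is, to leading order, a positive multiple of $\chi^*_q(\sigma)\sigma^{2-2q}$, so the balance pins down $\chi^*_q(\sigma)/\sigma^{2q-2}$.

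Using the exact identity $\eta_q(U;\chi)-U=-\chi q\,\mathrm{sign}(U)|\eta_q(U;\chi)|^{q-1}$ from Lemma~\ref{lem:toobasicpropprox}(i), the closed form of $\partial_2\eta_q$ from Lemma~\ref{lem:toobasicpropproxder}(ii), and $\eta_q(U;\chi)-G/\sigma=(\eta_q(U;\chi)-U)+Z$, the signal term splits as $\mathrm{A}+\mathrm{B}$ with
\begin{align*}
\mathrm{A}&=\epsilon\,\mathbb{E}\frac{\chi q^2\,|\eta_q(U;\chi)|^{2q-2}}{1+\chi q(q-1)|\eta_q(U;\chi)|^{q-2}}, & \mathrm{B}&=\epsilon\,\mathbb{E}\big[Z\,\partial_2\eta_q(U;\chi)\big],
\end{align*}
evaluated at $\chi=\chi^*_q(\sigma)$. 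For $\mathrm{A}$ I would factor out $\chi\sigma^{2-2q}$ and invoke the scaling identity $\sigma\,\eta_q(G/\sigma+Z;\chi)=\eta_q(G+\sigma Z;\sigma^{2-q}\chi)$ of Lemma~\ref{lem:toobasicpropprox}(v). Since $\sigma^{2-q}\chi^*_q\to0$ and $\eta_q(b;\chi)\to b$ as $\chi\to0$, dominated convergence with the integrable envelope $(|G|+|Z|)^{2q-2}$ (integrable because $\mathbb{E}|G|^2<\infty$ and $2q-2<2$) yields $\sigma^{2q-2}\mathbb{E}[|\eta_q(U;\chi^*_q)|^{2q-2}/(1+\cdots)]\to\mathbb{E}|G|^{2q-2}$, so that $\mathrm{A}=\epsilon q^2\,\mathbb{E}|G|^{2q-2}\,(1+o(1))\,\chi^*_q(\sigma)\sigma^{2-2q}$.

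The term $\mathrm{B}$ is the crux and the main obstacle: a crude bound on it diverges like $\sigma^{1-q}$, so the mean-zero cancellation of $Z$ must be used. I would apply Stein's lemma (Lemma~\ref{lem:steins}) conditionally on $G$—legitimate because $\partial_2\eta_q(\cdot;\chi)$ is weakly differentiable in its first argument by the second part of Lemma~\ref{prox:smooth}—to rewrite $\mathrm{B}=\epsilon\,\mathbb{E}[\partial_1\partial_2\eta_q(U;\chi^*_q)]$, whose leading behaviour is $-\epsilon q(q-1)\mathbb{E}|U|^{q-2}$. An estimate of $\partial_1\partial_2\eta_q$ via its explicit formula then reduces $\mathrm{B}$ to controlling the negative moment $\mathbb{E}|U|^{q-2}=\mathbb{E}|G/\sigma+Z|^{q-2}$. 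Splitting on $\{|G|>\sqrt\sigma\}$ versus $\{|G|\le\sqrt\sigma\}$ and using $|G/\sigma+Z|^{q-2}=\sigma^{2-q}|G+\sigma Z|^{q-2}$, the first region contributes $O(\sigma^{2-q})$ provided $\mathbb{E}|G|^{q-2}<\infty$, while the second contributes $O(\sqrt\sigma)$ since $\mathbb{P}(|G|\le\sqrt\sigma)=O(\sqrt\sigma)$ and $\sup_m\mathbb{E}_Z|m+Z|^{q-2}<\infty$; both vanish, so $\mathrm{B}=o(1)$. This is exactly where the hypothesis $\mathbb{P}(|G|\le t)=O(t)$ enters: together with $q>1$ (hence $2-q<1$) it guarantees $\mathbb{E}|G|^{q-2}=\mathbb{E}|G|^{-(2-q)}<\infty$.

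Finally, solving the optimality relation for $c(\sigma):=\chi^*_q(\sigma)/\sigma^{2q-2}$ gives
\begin{equation*}
c(\sigma)=\frac{-(1-\epsilon)\mathbb{E}[\eta_q(Z;\chi^*_q)\partial_2\eta_q(Z;\chi^*_q)]-\mathrm{B}}{\epsilon q^2\,\sigma^{2q-2}\,\mathbb{E}[|\eta_q(U;\chi^*_q)|^{2q-2}/(1+\cdots)]}\longrightarrow\frac{q(1-\epsilon)\mathbb{E}|Z|^q}{\epsilon q^2\,\mathbb{E}|G|^{2q-2}}=\frac{(1-\epsilon)\mathbb{E}|Z|^q}{\epsilon q\,\mathbb{E}|G|^{2q-2}},
\end{equation*}
which is the assertion. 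The numerator and denominator limits established above rely only on $\chi^*_q(\sigma)\to0$ (Lemma~\ref{lem:optimaltaugoestozero}) and the hypotheses on $G$, and not on any a priori boundedness of $c(\sigma)$, so no circularity arises. The case $q=2$ follows from the explicit form $\eta_2(u;\chi)=u/(1+2\chi)$ and is not treated separately.
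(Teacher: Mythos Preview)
Your argument is correct and uses the same scaffold as the paper's proof: the first-order condition $\partial_\chi R_q(\chi^*_q,\sigma)=0$ is split into the null piece, the $\chi$-linear signal piece (your $A$, the paper's $\epsilon\chi^* H_3$), and the Stein cross piece (your $B$, the paper's $\epsilon H_2$), with essentially identical treatment of the first two. The difference is in showing $B\to 0$. The paper first secures an a priori lower bound $\chi^*_q(\sigma)=\Omega(\sigma^{2q-2})$ by a forward appeal to Lemma~\ref{general:lemma}, then bounds $J_1,J_2$ by $\mathbb{E}\big[(|\eta_q(U;\chi^*)|^{2-q}+\chi^*q(q-1))^{-1}\big]$ and controls this via a three-shell decomposition in which the innermost shell is handled through the regularizing summand $\chi^*q(q-1)$ and the lower bound on $\chi^*$. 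Your route bypasses this entirely: you (implicitly) use a \emph{uniform} pointwise bound $|\partial_1\partial_2\eta_q(u;\chi)|\le C_q|u|^{q-2}$ and reduce directly to $\mathbb{E}|G/\sigma+Z|^{q-2}\to 0$, handled by an elementary two-region split. This is cleaner, avoiding both the forward reference and the a priori bound on $\chi^*_q$. Two small points to fill in: the pointwise inequality is not obvious and deserves a line (with $s=\chi q|\eta_q(u;\chi)|^{q-2}$ one has $|u|/|\eta_q|=1+s$, $D=1+(q-1)s$, hence $|\eta_q|^{q-2}/D^3=|u|^{q-2}(1+s)^{2-q}/(1+(q-1)s)^3$, and the last ratio is bounded on $s\ge 0$; the $J_2$ term is similar); and interiority $\chi^*_q(\sigma)>0$ is not asserted in Lemma~\ref{lem:optimaltaugoestozero} --- the paper obtains it from Lemma~\ref{general:lemma}, though a direct check that $\partial_\chi R_q(0,\sigma)<0$ would also suffice.
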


\begin{proof}
We first claim that $\chi_q^*(\sigma)=\Omega(\sigma^{2q-2})$. Otherwise there exists a sequence $\sigma_k \rightarrow 0$ such that $\chi^*_q(\sigma_k)=o(\sigma^{2q-2}_k)$. According to Lemma \ref{general:lemma} (we postpone Lemma \ref{general:lemma} since it deals with $R_q(\chi,\sigma)$), 
\[
\lim_{k\rightarrow \infty} \frac{R_q(\chi^*_q(\sigma_k),\sigma_k)-1}{\sigma_k^{2q-2}}=0. 
\]
On the other hand, by choosing $\chi(\sigma_k)=C\sigma_k^{2q-2}$ with $C=\frac{(1- \epsilon) \mathbb{E} |Z|^q}{\epsilon q \mathbb{E} |G|^{2q-2}}$, Lemma \ref{general:lemma} implies that 
\[
\lim_{k\rightarrow \infty} \frac{R_q(\chi(\sigma_k),\sigma_k)-1}{\sigma_k^{2q-2}}<0,
\]
which contradicts with the fact that $\chi_q^*(\sigma_k)$ is the minimizer of $R_q(\chi,\sigma_k)$. Moreover, this choice of $C$ shows that for sufficiently small $\sigma$ there exists $\chi(\sigma)$ such that
\[
R_q(\chi^*_q(\sigma),\sigma)\leq R_q(\chi(\sigma),\sigma)<R_q(0,\sigma)=1.
\]
That means $\chi^*_q(\sigma)$ is a non-zero finite value. Hence it satisfies $\frac{\partial R_q(\chi^*_q(\sigma), \sigma)}{\partial \chi}=0$. From now on we use $\chi^*$ to denote $\chi^*_q(\sigma)$ for notational simplicity. That equation can be detailed out as follows:
\begin{eqnarray}
&& \hspace{-0.8cm} 0=(1-\epsilon)\mathbb{E}\eta_q(Z;\chi^{\ast})\partial_2 \eta_q(Z;\chi^{\ast})+\epsilon \mathbb{E}(\eta_q(G/\sigma+Z;\chi^{\ast})-G/\sigma)\partial_2 \eta_q(G/\sigma+Z;\chi^{\ast})   \nonumber \\
&&\hspace{-0.6cm} \overset{(a)}{=}(1-\epsilon) \underbrace{\mathbb{E}\frac{-q|\eta_q(Z;\chi^{\ast})|^q}{1+\chi^{\ast} q(q-1)|\eta_q(Z;\chi^{\ast})|^{q-2}}}_{H_1} +  \epsilon \underbrace{\mathbb{E} (Z \partial_2 \eta_q (G/\sigma + Z; \chi^*)) }_{H_2}\nonumber  \\
&& \hspace{1.5cm} + \epsilon\chi^{\ast} \underbrace{\mathbb{E}\frac{ q^2|\eta_q(G/\sigma+Z;\chi^{\ast})|^{2q-2}}{1+\chi^{\ast} q(q-1)|\eta_q(G/\sigma+Z;\chi^{\ast})|^{q-2}}}_{H_3}, \label{fromthebeginning} 
\end{eqnarray}
where we have used Lemma \ref{lem:toobasicpropprox} part (i) and Lemma \ref{lem:toobasicpropproxder} part (ii) to obtain $(a)$. We now analyze the three terms $H_1,H_2$ and $H_3$, respectively. According to Lemma \ref{lem:optimaltaugoestozero}, we have that $\eta_q(Z;\chi^{\ast})\rightarrow Z$ as $\sigma \rightarrow 0$. Lemma \ref{lem:toobasicpropprox} part (ii) enables us to bound the expression inside the expectation of $H_1$ by $q|Z|^q$. Hence we can employ Dominated Convergence Theorem (DCT) to obtain
\begin{eqnarray}
\lim_{\sigma \rightarrow 0}H_1 =-q\mathbb{E}|Z|^q  \label{H1}.
\end{eqnarray}
For the term $H_3$, we first note that 
\begin{eqnarray*}
&&\lim_{\sigma \rightarrow 0} \eta_q(G+\sigma Z;\sigma^{2-q}\chi^{\ast})= G, \\
&& |\eta_q(G+\sigma Z;\sigma^{2-q}\chi^{\ast})|\leq |B|+\sigma| Z|.
\end{eqnarray*}
We also know that $|\eta_q(G/\sigma+Z;\chi^{\ast})|\rightarrow \infty, a.s.$ by Lemma \ref{eq:infinity}. We therefore can apply DCT to conclude 
\begin{eqnarray}\label{H2}
\lim_{\sigma \rightarrow 0} \frac{H_3}{\sigma^{2-2q}}=\mathbb{E} \lim_{\sigma \rightarrow 0} \frac{ q^2|\eta_q(G+\sigma Z;\sigma^{2-q}\chi^{\ast})|^{2q-2}}{1+\chi^{\ast} q(q-1)|\eta_q(G/\sigma+Z;\chi^{\ast})|^{q-2}}=q^2\mathbb{E}|G|^{2q-2} . \hspace{-2cm}
\end{eqnarray}
We now study the remaining term $H_2$. According to Lemma \ref{prox:smooth}, $\partial_2 \eta_q (G/\sigma+Z; \chi^*)$ is differentiable with respect to its first argument. So we can apply Lemma \ref{lem:steins} to get
\begin{eqnarray*}
H_2 &=& q(1-q) \underbrace{\mathbb{E} \frac{  |\eta_q (G/\sigma+Z; \chi^*)| ^{q-2}}{ (1+ \chi^* q(q-1) |\eta_q (G/\sigma+Z; \chi^*)|^{q-2})^3}}_{J_1} \nonumber \\
&&+ q^2(1-q)\underbrace{\mathbb{E} \frac{\chi^*  |\eta_q (G/\sigma+Z; \chi^*)|^{2q-4}}{ (1+ \chi^* q(q-1) |\eta_q (G/\sigma+Z; \chi^*)|^{q-2})^3}}_{J_2}. \nonumber 
\end{eqnarray*}
It is straightforward to see that 
\begin{eqnarray*}
J_1 &\leq& \mathbb{E} \frac{1}{ |\eta_q (G/\sigma+Z; \chi^*)|^{2-q}+ \chi^* q(q-1) }, \nonumber \\
 J_2 &\leq&  \mathbb{E} \frac{1/(q(q-1))}{ |\eta_q (G/\sigma+Z; \chi^*)|^{2-q}+ \chi^* q(q-1) }.
\end{eqnarray*}
We would like to prove $H_2 \rightarrow 0$ by showing
\begin{eqnarray}\label{eq:etaqpless1laststep}
\lim_{\sigma \rightarrow 0}\mathbb{E} \frac{1}{ |\eta_q (G/\sigma+Z; \chi^*)|^{2-q}+ \chi^* q(q-1) }=0.
\end{eqnarray}
Note that DCT may not be directly applied here, because the function inside the expectation cannot be easily bounded. Alternatively, we prove \eqref{eq:etaqpless1laststep} by breaking the expectation into different parts and showing each part converges to zero. Let $\alpha_1$ be a number that satisfies $\eta_q(\alpha_1; \chi^*) = (\chi^*)^{\frac{1}{2-q}}, \alpha_2= (\chi^*)^{\frac{1}{2}},$ and $\alpha_3$ a fixed positive constant that does not depend on $\sigma$. Denote the distribution of $|G|$ by $F(g)$. Note the following simple fact about $\alpha_1$ according to Lemma \ref{lem:toobasicpropprox} part (i):
\begin{equation*}\label{eq:alpha1first}
\alpha_1=\eta_q(\alpha_1; \chi^*) + \chi^* q \eta_q^{q-1}(\alpha_1; \chi^*)  = (q+1) (\chi^*)^{\frac{1}{2-q}}. 
\end{equation*}
So $\alpha_1<\alpha_2<\alpha_3$ when $\sigma$ is small. Define the following three nested intervals: 
\[
\mathcal{I}_{i} (x) \triangleq [-x-\alpha_i, -x+\alpha_i] ,  \quad i=1,2,3. 
\]
 With these definitions, we start the proof of \eqref{eq:etaqpless1laststep}. We have
\begin{eqnarray*}
\mathbb{E} \frac{1}{ |\eta_q (G/ \sigma +Z ; \chi^*)|^{2-q} + \chi^* q (q-1) }  \label{general:fzero}  \hspace{-8cm}\\
&&  =\int_{0}^\infty \int_{z \in \mathcal{I}_1(g/ \sigma)}  \frac{1}{ |\eta_q (g/ \sigma +z ; \chi^*)|^{2-q} + \chi^* q (q-1) } \phi(z) dz dF(g) + \nonumber \\
&&  \int_{0}^\infty \int_{z \in \mathcal{I}_2(g/ \sigma) \backslash  \mathcal{I}_1(g/ \sigma)}  \frac{1}{ |\eta_q (g/ \sigma +z ; \chi^*)|^{2-q} + \chi^* q (q-1) } \phi(z) dz dF(g)+ \nonumber \\
&&  \int_{0}^\infty \int_{z \in \mathcal{I}_3(b/ \sigma) \backslash  \mathcal{I}_2(g/ \sigma)}  \frac{1}{ |\eta_q (g/ \sigma +z ; \chi^*)|^{2-q} + \chi^* q (q-1) } \phi(z) dz dF(g)+ \nonumber \\
&&   \int_{0}^\infty \int_{\mathbb{R} \backslash \mathcal{I}_3(b/ \sigma)} \frac{1}{ |\eta_q (g/ \sigma +z ; \chi^*)|^{2-q} + \chi^* q (q-1) } \phi(z) dz dF(g) \nonumber \\ 
&\triangleq& G_1+G_2+G_3+G_4, \nonumber
\end{eqnarray*}
where $\phi(\cdot)$ is the density function of standard normal. We now bound each of the four integrals in \eqref{general:fzero} respectively.
\begin{eqnarray*}
G_1 &\leq& \int_{0}^\infty \int_{z \in \mathcal{I}_1(g/ \sigma)}  \frac{1}{ \chi^* q (q-1) } \phi(z) dz dF(g) \nonumber \\
&\leq& \frac{2 \alpha_1 \phi(0)}{\chi^* q (q-1)} \leq \frac{2(q+1) (\chi^*)^{\frac{1}{2-q}}\phi(0)}{\chi^* q (q-1)} \rightarrow 0,  \mbox{~as~} \sigma \rightarrow 0. \label{general:fone}
\end{eqnarray*}
The last step is due to the fact that $\chi^* \rightarrow 0$ by Lemma \ref{lem:optimaltaugoestozero}. For $G_2$ we have
\allowdisplaybreaks[1]
\begin{eqnarray*}
G_2&\leq & \int_{0}^\infty \int_{z \in \mathcal{I}_2(g/ \sigma) \backslash  \mathcal{I}_1(g/ \sigma)}  \frac{1}{ |\eta_q (g/ \sigma +z ; \chi^*)|^{2-q} } \phi(z) dz dF(g) \nonumber \\
&\overset{(b)}{\leq}& \int_{0}^\infty \int_{z \in \mathcal{I}_2(g/ \sigma) \backslash  \mathcal{I}_1(g/ \sigma)}  \frac{1}{ \chi^* } \phi(z) dz dF(g) \nonumber \\
&\leq& \int_{0}^{ \sigma \log(1/\sigma) } \int_{z \in \mathcal{I}_2(g/ \sigma) \backslash  \mathcal{I}_1(g/ \sigma)}  \frac{1}{ \chi^* } \phi(z) dz dF(g) + \nonumber \\
&&\int_{ \sigma \log(1/\sigma)}^{\infty } \int_{z \in \mathcal{I}_2(g/ \sigma) \backslash  \mathcal{I}_1(g/ \sigma)}  \frac{1}{ \chi^* } \phi(z) dz dF(g) \nonumber \\
&\leq& \mathbb{P} (|G| \leq \sigma \log(1/\sigma)) \frac{2\phi(0) \alpha_2}{\chi^*} + \frac{2\phi( \log(1/\sigma) - \alpha_2) \alpha_2}{\chi^*} \nonumber \\
 &=&\mathbb{P} (|G| \leq \sigma \log(1/\sigma)) \frac{2\phi(0)}{(\chi^*)^{1/2}} + \frac{2\phi( \log(1/\sigma) - \alpha_2) }{(\chi^*)^{1/2}} \nonumber \\
& \overset{(c)}{\leq}& O(1)\cdot  \sigma^{2-q}\log(1/\sigma) + O(1)\cdot \frac{\phi( (\log(1/\sigma))/2 )}{\sigma^{q-1}}  \rightarrow 0, {\rm ~as~}\sigma \rightarrow 0.  \label{general:ftwo} 
\end{eqnarray*}
In the above derivations, $(b)$ is because  
\[
|\eta_q (g/ \sigma +z ; \chi^*)| \geq \eta_q(\alpha_1; \chi^*) = (\chi^*)^{1/(2-q)}  \mbox{~~for~} z \notin \mathcal{I}_1(g/ \sigma).
\]
To obtain $(c)$, we have used the condition $\mathbb{P} (|G| \leq \sigma \log(1/\sigma))=O(\sigma\log(1/\sigma))$ and the result $\chi^*=\Omega(\sigma^{2q-2})$ we proved at the beginning. Regarding $G_3$,
\begin{eqnarray*}
G_3&\leq& \int_{0}^\infty \int_{z \in \mathcal{I}_3(g/ \sigma) \backslash  \mathcal{I}_2(g/ \sigma)}  \frac{1}{ |\eta_q (\alpha_2 ; \chi^*)|^{2-q} } \phi(z) dz dF(g) \nonumber \\
&\leq& \int_{0}^{\sigma \log 1/\sigma} \int_{z \in \mathcal{I}_3(g/ \sigma) \backslash  \mathcal{I}_2(g/ \sigma)}  \frac{1}{ |\eta_q (\alpha_2 ; \chi^*)|^{2-q} } \phi(z) dz dF(g) \nonumber \\
&&+  \int_{\sigma \log 1/\sigma}^{\infty} \int_{z \in \mathcal{I}_3(g/ \sigma) \backslash  \mathcal{I}_2(g/ \sigma)}  \frac{1}{ |\eta_q (\alpha_2 ; \chi^*)|^{2-q} } \phi(z) dz dF(g) \nonumber \\
&\leq &\mathbb{P} (|G| \leq \sigma \log(1/\sigma)) \frac{2\phi(0) \alpha_3}{|\eta_q (\alpha_2 ; \chi^*)|^{2-q}} + \frac{2\phi( \log(1/\sigma) - \alpha_3) \alpha_3}{|\eta_q (\alpha_2 ; \chi^*)|^{2-q}} \nonumber \\
&\overset{(d)}{\leq}& O(1)\cdot \sigma^{q^2-3q+3}\log(1/\sigma)+O(1)\cdot \frac{\phi((\log(1/\sigma))/2)}{\sigma^{(q-1)(2-q)}}\rightarrow 0,  {\rm ~as~}\sigma \rightarrow 0.  \label{general:fthree} \hspace{-0.8cm}
\end{eqnarray*}
The calculations above are similar to those for $G_2$. In $(d)$ we have used the following result:
\[
\lim_{\sigma \rightarrow 0} \frac{\eta_q (\alpha_2; \chi^*)}{(\chi^*)^{1/2}}= \lim_{\sigma \rightarrow 0} \frac{\eta_q ((\chi^*)^{1/2}; \chi^*)}{(\chi^*)^{1/2}} = \lim_{\sigma \rightarrow 0} \eta_q (1; (\chi^*)^{q/2})=1.
\]
Finally we can apply DCT to obtain
\begin{eqnarray*}
\lim_{\sigma \rightarrow 0}G_4=\mathbb{E} \lim_{\sigma \rightarrow 0}\frac{\mathbbm{I}(|G/\sigma+Z|>\alpha_3)}{ |\eta_q (G/ \sigma +Z ; \chi^*)|^{2-q} + \chi^* q (q-1) }=0.    \label{general:ffour}
\end{eqnarray*}
We have finished the proof of $\lim_{\sigma \rightarrow 0} H_2 =0$. This fact together with \eqref{fromthebeginning}, \eqref{H1} and \eqref{H2} gives us
\begin{eqnarray*}
\lim_{\sigma \rightarrow 0}\frac{\chi^*}{\sigma^{2q-2}}=\frac{-(1-\epsilon)H_1-\epsilon H_2}{\epsilon H_3/\sigma^{2-2q}}=\frac{(1-\epsilon)\mathbb{E}|Z|^q}{\epsilon q\mathbb{E}|G|^{2q-2}}.
\end{eqnarray*}
\end{proof}


\subsection{Characterizing the convergence rate of $R_q(\chi^*_q(\sigma), \sigma)$} \label{sec:generaldistfixqbigger1epsless1}
Having derived the convergence rate of $\chi^*_q(\sigma)$ in Section \ref{sec:optiamltau:zero}, we aim to obtain the convergence rate for $R_q(\chi^*_q(\sigma), \sigma)$ in this section. Towards that goal, we first present a useful lemma. 

\begin{lemma}\label{general:lemma}
Suppose $\mathbb{P}(|G|\leq t)=O(t)$ (as $t \rightarrow 0$) and $\mathbb{E}|G|^2<\infty$. If $\chi(\sigma) = C{\sigma^{2q-2}}$ with a fixed number $C>0$, then for $1<q<2$ we have 
\begin{equation}\label{eq:riskatrightorder}
\lim_{\sigma \rightarrow 0}\frac{R_q(\chi(\sigma), \sigma)-1}{ \sigma^{2q-2}} = -2C(1-\epsilon)q\mathbb{E} |Z|^q+ \epsilon C^2 q^2 \mathbb{E} |G|^{2q-2}. 
\end{equation}
Moreover, if $\chi(\sigma)=o(\sigma^{2q-2})$ then
\begin{equation}\label{eq:wrongorder}
\lim_{\sigma \rightarrow 0}\frac{R_q(\chi(\sigma), \sigma)-1}{ \sigma^{2q-2}} =0.
\end{equation}
\end{lemma}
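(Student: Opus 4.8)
The plan is to expand $R_q(\chi,\sigma)-1$ into three pieces whose orders in $\sigma$ I can track separately. Splitting $B$ into its atom at $0$ (mass $1-\epsilon$) and its continuous part $G$ (mass $\epsilon$), using $u-\eta_q(u;\chi)=\chi q\,\mathrm{sign}(u)|\eta_q(u;\chi)|^{q-1}$ from Lemma \ref{lem:toobasicpropprox}(i), and applying Stein's lemma (Lemma \ref{lem:steins}) in the variable $Z$, I arrive (after subtracting $1$) at the decomposition already recorded in \eqref{eq:riskexpansion1}:
\begin{eqnarray*}
R_q(\chi,\sigma)-1 &=& \underbrace{(1-\epsilon)\big(\mathbb{E}\,\eta_q^2(Z;\chi)-1\big)}_{T_1}+\underbrace{\epsilon\chi^2q^2\,\mathbb{E}|\eta_q(G/\sigma+Z;\chi)|^{2q-2}}_{T_2}\\
&& {}+\underbrace{2\epsilon\,\mathbb{E}\big[\partial_1\eta_q(G/\sigma+Z;\chi)-1\big]}_{T_3}.
\end{eqnarray*}
For $\chi=C\sigma^{2q-2}$ I will show $T_1/\sigma^{2q-2}\to-2C(1-\epsilon)q\mathbb{E}|Z|^q$, $T_2/\sigma^{2q-2}\to\epsilon C^2q^2\mathbb{E}|G|^{2q-2}$, and $T_3=o(\sigma^{2q-2})$, which gives \eqref{eq:riskatrightorder}.

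The term $T_1$ is the easy one: by Lemma \ref{lem:toobasicpropprox}(i), $\eta_q^2(Z;\chi)-Z^2=-\chi q\,\mathrm{sign}(Z)|\eta_q(Z;\chi)|^{q-1}(\eta_q(Z;\chi)+Z)$, and since $\eta_q(Z;\chi)\to Z$ as $\chi\to0$ (Lemma \ref{lem:toobasicpropprox}(iii)) the quotient by $\chi$ tends pointwise to $-2q|Z|^q$ and is dominated by the integrable $2q|Z|^q$ using $|\eta_q|\le|Z|$ (Lemma \ref{lem:toobasicpropprox}(ii)); dominated convergence then yields $(\mathbb{E}\eta_q^2(Z;\chi)-1)/\chi\to-2q\mathbb{E}|Z|^q$. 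For $T_2$ I would use the scaling identity $\eta_q(G/\sigma+Z;\chi)=\sigma^{-1}\eta_q(G+\sigma Z;\chi\sigma^{2-q})$ from Lemma \ref{lem:toobasicpropprox}(v); with $\chi=C\sigma^{2q-2}$ the inner threshold $C\sigma^{q}\to0$, so $\eta_q(G+\sigma Z;C\sigma^q)\to G$ and, because $0<2q-2<2$, the power is dominated by $(|G|+|Z|)^{2q-2}$, giving $\mathbb{E}|\eta_q(G/\sigma+Z;\chi)|^{2q-2}=\sigma^{-(2q-2)}(\mathbb{E}|G|^{2q-2}+o(1))$ and hence the stated limit of $T_2/\sigma^{2q-2}$.

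The main obstacle is $T_3$. Writing $1-\partial_1\eta_q(u;\chi)=\chi q(q-1)\big(|\eta_q(u;\chi)|^{2-q}+\chi q(q-1)\big)^{-1}$ from Lemma \ref{lem:toobasicpropproxder}(i), one has $T_3=-2\epsilon\,\chi q(q-1)\,\mathbb{E}\big[|\eta_q(G/\sigma+Z;\chi)|^{2-q}+\chi q(q-1)\big]^{-1}$, so it suffices to prove $\mathbb{E}\big[|\eta_q(G/\sigma+Z;\chi)|^{2-q}+\chi q(q-1)\big]^{-1}\to0$. Since $2-q>0$, the integrand blows up exactly where $G/\sigma+Z$ is near $0$, so dominated convergence is not directly available; this is precisely where the hypothesis $\mathbb{P}(|G|\le t)=O(t)$ must be used. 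I would establish this limit by the same four–region integral splitting used in the proof of Lemma \ref{thm:riskell_qgeneralqless2} for \eqref{eq:etaqpless1laststep} (partitioning the $(g,z)$ plane along the nested intervals built from $\alpha_1,\alpha_2,\alpha_3$, controlling the near–origin pieces through $\mathbb{P}(|G|\le\sigma\log(1/\sigma))=O(\sigma\log(1/\sigma))$ and the Gaussian tail, and the bulk by dominated convergence); that argument only uses $\chi\asymp\sigma^{2q-2}$ and so applies verbatim to $\chi=C\sigma^{2q-2}$ without circularity. Granting it, $T_3=C\sigma^{2q-2}\cdot o(1)=o(\sigma^{2q-2})$, and the first assertion follows.

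For the second assertion, with $\chi=o(\sigma^{2q-2})$, the bounds obtained above give $T_1=O(\chi)=o(\sigma^{2q-2})$ and, by the same scaling, $T_2=O(\chi^2\sigma^{-(2q-2)})=o(\sigma^{2q-2})$. For $T_3$ I would avoid re-running the splitting by a monotonicity argument: since $|\eta_q(u;\chi)|$ is decreasing in $\chi$ (Lemma \ref{lem:toobasicpropproxder}(v)), the quantity $1-\partial_1\eta_q(u;\chi)=\big(1+|\eta_q(u;\chi)|^{2-q}/(\chi q(q-1))\big)^{-1}$ is nondecreasing in $\chi$; hence for $\chi\le\sigma^{2q-2}$ (which holds for small $\sigma$) we get $0\le-T_3\le -T_3\big|_{\chi=\sigma^{2q-2}}=o(\sigma^{2q-2})$ from the case already treated. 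Therefore $(R_q(\chi,\sigma)-1)/\sigma^{2q-2}\to0$, establishing \eqref{eq:wrongorder}.
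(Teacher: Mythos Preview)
Your proposal is correct and follows the paper's decomposition $R_q-1=R_1+R_2+R_3$ (your $T_1,T_2,T_3$) with the same treatment of $T_1$ and $T_2$. For $T_3$ the two proofs diverge slightly. In the first part you invoke the four--region splitting from the proof of Lemma~\ref{thm:riskell_qgeneralqless2} for \eqref{eq:etaqpless1laststep}; that computation is indeed self--contained once $\chi=\Omega(\sigma^{2q-2})$ and $\chi\to0$ are given, so feeding in $\chi=C\sigma^{2q-2}$ avoids circularity, as you note. The paper instead builds a \emph{different} three--region split (with $\alpha_1=1$ for $1<q<3/2$ and $\alpha_1=\sigma^{2q-3+c}$ for $3/2\le q<2$, fixed $\alpha_2>1$) and proves the stronger statement $\frac{\chi}{\sigma^{2q-2}}\mathbb{E}[\cdots]\to0$ for general $\chi=O(\sigma^{2q-2})$; this generality is what the paper then reuses for the $o(\sigma^{2q-2})$ case. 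Your monotonicity shortcut for that second case---observing that $\chi\mapsto 1-\partial_1\eta_q(u;\chi)=\big(1+|\eta_q(u;\chi)|^{2-q}/(\chi q(q-1))\big)^{-1}$ is nondecreasing via Lemma~\ref{lem:toobasicpropproxder}(v), so $-T_3(\chi)\le -T_3(\sigma^{2q-2})=o(\sigma^{2q-2})$---is a clean simplification over the paper's route and sidesteps the need to carry the general $O(\sigma^{2q-2})$ hypothesis through the splitting.
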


\begin{proof}
We first focus on the case $\chi(\sigma) = C \sigma^{2q-2}$. According to \eqref{eq:riskexpansion1},
\begin{eqnarray}\label{eq:risktotaleq1}
R_q(\chi, \sigma) -1 &=& \underbrace{(1- \epsilon) \mathbb{E} (\eta_q^2 (Z; \chi) -Z^2)}_{R_1} + \underbrace{\epsilon \chi^2 q^2 \mathbb{E} |\eta_q(G / \sigma +Z ; \chi)  |^{2q-2}}_{R_2}  \nonumber \\
&&\underbrace{- 2 \epsilon \chi q(q-1) \mathbb{E} \frac{|\eta_q (G/ \sigma + Z; \chi)|^{q-2}}{ 1+ \chi q (q-1) |\eta_q (G/ \sigma +Z ; \chi)|^{q-2}}}_{R_3}. \nonumber 
\end{eqnarray}
Now we calculate the limit of each of the terms individually. We have
\begin{eqnarray}
R_1 &=& (1- \epsilon) \mathbb{E} (\eta_q (Z ; \chi)+Z) (\eta_q (Z; \chi) - Z) \nonumber \\ 
&\overset{(a)}{=}& - (1- \epsilon) \mathbb{E} (\eta_q (Z ; \chi)+Z) ( \chi q |\eta_q (Z; \chi)|^{q-1}{\rm sign} (Z)) \nonumber \\
&=& -(1- \epsilon) \chi q (\mathbb{E} |\eta_q (Z; \chi)|^{q} + \mathbb{E} |Z||\eta_q (Z; \chi)|^{q-1}), \nonumber
\end{eqnarray}
where $(a)$ is due to Lemma \ref{lem:toobasicpropprox} part (i). Hence we obtain
\begin{eqnarray}\label{eq:riskR1}
\lim_{\sigma \rightarrow 0} \frac{R_1}{\sigma^{2q-2}} &=& - C(1- \epsilon) q \lim_{\sigma \rightarrow 0}  (\mathbb{E} |\eta_q (Z; \chi)|^{q}+ \mathbb{E} |Z||\eta_q (Z; \chi)|^{q-1} ) \hspace{-1.5cm} \\
&=& -2C(1- \epsilon ) q \mathbb{E} |Z|^q. \nonumber 
\end{eqnarray}
The last equality is by Dominated Convergence Theorem (DCT). For $R_2$,
\begin{eqnarray}\label{eq:riskR2}
\lim_{\sigma \rightarrow 0}  \frac{R_2}{\sigma^{2q-2}} &=& \lim_{\sigma \rightarrow 0} \frac{\epsilon \chi^2 q^2 \mathbb{E} |\eta_q (G/\sigma + Z; \chi)|^{2q-2} }{\sigma^{2q-2}} \\
&=& \epsilon C^2 q^2 \lim_{\sigma \rightarrow 0} \mathbb{E} |\eta_q (G +\sigma Z; \chi \sigma^{2-q})|^{2q-2} \nonumber \\
&=& \epsilon C^2 q^2  \mathbb{E} |G|^{2q-2}.  \nonumber
\end{eqnarray}
Regarding the term $R_3$, we would like to show that if $\chi(\sigma)=O(\sigma^{2q-2})$, then
\begin{eqnarray} \label{general:r3}
\lim_{\sigma \rightarrow 0} \frac{\chi}{\sigma^{2q-2}}\mathbb{E} \frac{1}{|\eta_q(G/\sigma +Z; \chi)|^{2-q} + \chi q (q-1)}=0. 
\end{eqnarray}
Define $\alpha_1=1$ if $1<q<3/2$ and $\alpha_1=\sigma^{2q-3+c}$ if $3/2 \leq q <2$, where $c>0$ is a sufficiently small constant that we will specify later. Let $F(g)$ be the distribution function of $|G|$ and $\alpha_2>1$ a fixed constant. So $\alpha_1<\alpha_2$ for small $\sigma$. Define the following two nested intervals
\[
\mathcal{I}_{i} (x) \triangleq [-x-\alpha_i, -x+\alpha_i], \quad  i=1, 2.
\]

Then the expression in \eqref{general:r3} can be written as
\begin{eqnarray*}
 \frac{\chi}{\sigma^{2q-2}}\mathbb{E} \frac{1}{|\eta_q(G/\sigma +Z; \chi)|^{2-q} + \chi q (q-1)}\label{eq:entiretermtozero} \hspace{-10cm}\\
&=& \frac{\chi}{\sigma^{2q-2}} \int_{0}^\infty \int_{z \in \mathcal{I}_1(g/ \sigma)}  \frac{1}{ |\eta_q (g/ \sigma +z ; \chi)|^{2-q} + \chi q (q-1) } \phi(z) dz dF(g)+  \nonumber \\
&&\frac{\chi}{\sigma^{2q-2}}  \int_{0}^\infty \int_{z \in \mathcal{I}_2(g/ \sigma) \backslash  \mathcal{I}_1(g/ \sigma)}  \frac{1}{ |\eta_q (g/ \sigma +z ; \chi)|^{2-q} + \chi q (q-1) } \phi(z) dz dF(g)+ \nonumber \\
&&  \frac{\chi}{\sigma^{2q-2}}  \int_{0}^\infty \int_{\mathbb{R} \backslash \mathcal{I}_2(g/ \sigma)} \frac{1}{ |\eta_q (g/ \sigma +z ; \chi)|^{2-q} + \chi q (q-1) } \phi(z) dz dF(g) \nonumber \\
&\triangleq& G_1+G_2+G_3. \nonumber
\end{eqnarray*}
We will bound each of the three integrals above. The idea is similar as the one presented in the proof of Lemma \ref{thm:riskell_qgeneralqless2}. For the first integral,
\begin{eqnarray*}\label{eq:G1checkequality}
G_1&\leq &\frac{\chi}{\sigma^{2q-2}} \int_{0}^\infty  \int_{-g/\sigma - \alpha_1}^{-g/\sigma+ \alpha_1} \frac{1}{ \chi q (q-1)} \phi(z)dz dF(g)  \nonumber \\
&\leq& \frac{\chi}{\sigma^{2q-2}} \int_{0}^{\sigma \log1/\sigma}  \int_{-g/\sigma - \alpha_1}^{-g/\sigma+ \alpha_1} \frac{1}{ \chi q (q-1)} \phi(z)dz dF(g) \nonumber \\
&&+ \frac{\chi}{\sigma^{2q-2}} \int_{\sigma \log1/\sigma}^{\infty}  \int_{-g/\sigma - \alpha_1}^{-g/\sigma+ \alpha_1} \frac{1}{ \chi q (q-1)} \phi(z)dz dF(g) \nonumber \\
&\overset{(b)}{\leq}&  \mathbb{P}(|G| \leq  \sigma \log 1/\sigma) \frac{2\alpha_1 \phi(0)}{q(q-1)\sigma^{2q-2}}+ \frac{2\alpha_1 \phi( \log 1/\sigma- \alpha_1)}{q(q-1)\sigma^{2q-2}} \nonumber \\
&\leq& O(1)\cdot \sigma^c\log(1/\sigma) \frac{\alpha_1}{\sigma^{2q-3+c}}+ O(1)\cdot \frac{\alpha_1 \phi( (\log 1/\sigma)/2)}{\sigma^{2q-2}} \nonumber \\
& \overset{(c)}{\rightarrow}& 0, {\rm ~as~} \sigma \rightarrow 0.   \label{general:g1}
\end{eqnarray*}
To obtain $(b)$, we have used the following inequalities when $\sigma$ is small:
\begin{eqnarray*}
\int_{-g/\sigma - \alpha_1}^{-g/\sigma+ \alpha_1}  \phi(z)dz &\leq& 2\phi(0) \alpha_1,  \ \ \ {\rm for \ } g \leq \sigma \log(1/\sigma),   \nonumber \\
\int_{-g/\sigma - \alpha_1}^{-g/\sigma+ \alpha_1}  \phi(z)dz &\leq& 2\alpha_1 \phi(\log(1/\sigma)- \alpha_1),  \ \ \ {\rm for \ } g> \sigma \log(1/\sigma).
\end{eqnarray*}
The limit $(c)$ holds due to the choice of $\alpha_1$. Regarding the second term $G_2$,
\begin{eqnarray*}
G_2&\leq& \frac{\chi}{\sigma^{2q-2}} \int_{0}^\infty  \int_{z \in \mathcal{I}_2(g/ \sigma) \backslash  \mathcal{I}_1(g/ \sigma)} \frac{1}{|\eta_q(g/\sigma +z; \chi)|^{2-q} } \phi(z)dz dF(g)  \nonumber \\
&=&  \frac{\chi}{\sigma^{2q-2}} \int_{0}^{ \sigma \log 1/\sigma}  \int_{z \in \mathcal{I}_2(g/ \sigma) \backslash  \mathcal{I}_1(g/ \sigma)} \frac{1}{|\eta_q(g/\sigma +z; \chi)|^{2-q} } \phi(z)dz dF(g) \nonumber \\
&&+ \frac{\chi}{\sigma^{2q-2}} \int_{ \sigma \log 1/\sigma}^\infty  \int_{z \in \mathcal{I}_2(g/ \sigma) \backslash  \mathcal{I}_1(g/ \sigma)} \frac{1}{|\eta_q(g/\sigma +z; \chi)|^{2-q} } \phi(z)dz dF(g) \nonumber \\
&\overset{(d)}{\leq}&  \frac{\chi}{\sigma^{2q-2}} \int_{0}^{ \sigma \log 1/\sigma}  \int_{z \in \mathcal{I}_2(g/ \sigma) \backslash  \mathcal{I}_1(g/ \sigma)} \frac{1}{|\eta_q(\alpha_1; \chi)|^{2-q} } \phi(z)dz dF(g) \nonumber \\
&&+ \frac{\chi}{\sigma^{2q-2}} \int_{ \sigma \log 1/\sigma}^\infty  \int_{z \in \mathcal{I}_2(g/ \sigma) \backslash  \mathcal{I}_1(g/ \sigma)} \frac{1}{|\eta_q(\alpha_1; \chi)|^{2-q} } \phi(z)dz dF(g) \nonumber \\
&\overset{(e)}{\leq}& \mathbb{P}(|G| \leq \sigma \log 1/\sigma) \frac{2\alpha_2\phi(0)\chi }{\sigma^{2q-2} |\eta_q(\alpha_1; \chi)|^{2-q} } + \frac{2\alpha_2\phi(\log 1/\sigma- \alpha_2 )  \chi }{\sigma^{2q-2}|\eta_q(\alpha_1; \chi)|^{2-q} }  \nonumber \\
& \overset{(f)}{\leq} & O(1)\cdot \sigma^c\log(1/\sigma)\frac{1 }{ \alpha_1^{2-q} \sigma^{c-1}}+O(1)\cdot \frac{\phi((\log1/\sigma)/2)}{\alpha_1^{2-q}}\overset{(g)}{\rightarrow} 0, {\rm ~as~} \sigma \rightarrow 0,  \label{general:g2} \hspace{-1.8cm}
\end{eqnarray*}
where $(d)$ is due to the fact $|\eta_q(g/\sigma+z;\chi)| \geq \eta_q(\alpha_1;\chi)$ for $z \notin \mathcal{I}_1(g/\sigma)$; The argument for $(e)$ is similar to that for $(b)$; $(f)$ holds based on two facts:
\begin{enumerate}
\item $\lim_{\sigma\rightarrow 0}\frac{\eta_q(\alpha_1;\chi)}{\alpha_1}=\lim_{\sigma \rightarrow 0}\eta_q(1;\alpha_1^{q-2}\chi)=1$, since $\alpha_1^{q-2}\chi\rightarrow 0$. This is obvious for the case $\alpha_1=1$. When $\alpha_1=\sigma^{2q-3+c}$, we have $\alpha_1^{q-2}\chi=O(1) \cdot \sigma^{2q^2+(c-5)q+4-2c}$ and $2q^2+(c-5)q+4-2c>0$ if $c$ is chosen small enough. 
\item $\mathbb{P}(|G| \leq \sigma \log 1/\sigma) = O(\sigma \log 1/\sigma)$. This is one of the conditions.
\end{enumerate}
And finally $(g)$ works as follows: it is clear that $\sigma^c\log(1/\sigma)\frac{1 }{ \alpha_1^{2-q} \sigma^{c-1}}$ goes to zero when $\alpha_1=1$; when $\alpha_1=\sigma^{2q-3+c}$, we can sufficiently small $c$ such that $\alpha_1^{q-2}\sigma^{1-c}=\sigma^{2q^2+(c-7)q+7-3c} =o(1)$. For the third integral $G_3$, we are able to invoke DCT to obtain
\begin{eqnarray*}\label{eq:lasttermqless2rightorder}
\lim_{\sigma \rightarrow 0}G_3=O(1) \cdot \lim_{\sigma \rightarrow 0}\mathbb{E}\frac{\mathbbm{I}(|G/\sigma+Z|>\alpha_2)}{ |\eta_q (G/ \sigma +Z ; \chi)|^{2-q} + \chi q (q-1) }=0.  \label{general:g3} \hspace{-1.5cm}
\end{eqnarray*}
Note that DCT works because for small $\sigma$
\begin{eqnarray*}
&&\frac{\mathbbm{I}(|G/\sigma+Z|>\alpha_2)}{ |\eta_q (G/ \sigma +Z ; \chi)|^{2-q} + \chi q (q-1) } \\
&\leq& \frac{\mathbbm{I}(|G/\sigma+Z|>\alpha_2)}{ |\eta_q (G/ \sigma +Z ; \chi)|^{2-q} } \leq   \frac{1}{ |\eta_q (\alpha_2 ; \chi)|^{2-q} } \leq  \frac{1}{ |\eta_q (\alpha_2 ; 1)|^{2-q} }.
\end{eqnarray*}
We hence have showned that
\begin{equation}\label{eq:R3calcugeneraldistqless2}
\lim_{\sigma\rightarrow 0}\frac{R_3}{\sigma^{2q-2}}=0.
\end{equation}
Combining the results \eqref{eq:riskR1}, \eqref{eq:riskR2}, and \eqref{eq:R3calcugeneraldistqless2} establishes \eqref{eq:riskatrightorder}. 

To prove \eqref{eq:wrongorder}, first note that \eqref{eq:R3calcugeneraldistqless2} has been derived in the general setting $\chi(\sigma) = O(\sigma^{2q-2})$. Moreover, we can use similar arguments to show for $\chi(\sigma) = o(\sigma^{2q-2})$,
\begin{eqnarray}
\lim_{\sigma \rightarrow 0} \frac{R_1}{\sigma^{2q-2}} = \lim_{\sigma \rightarrow 0} \frac{R_2}{\sigma^{2q-2}} = 0. \nonumber
\end{eqnarray}
This completes the proof.
\end{proof}

We are in position to derive the convergence rate of $R_q(\chi^*_q(\sigma),\sigma)$.

\begin{lemma} \label{cor:upperbountau*}
Suppose $\mathbb{P}(|G|\leq t)=O(t)$ (as $t\rightarrow 0$) and $\mathbb{E}|G|^2<\infty$. Then for $q \in (1,2)$ we have
\[
\lim_{\sigma \rightarrow 0} \frac{R_q(\chi^*_q(\sigma), \sigma) -1}{\sigma^{2q-2}} = - \frac{(1- \epsilon)^2(\mathbb{E} |Z|^q)^2}{\epsilon  \mathbb{E} |G|^{2q-2}}. 
\]
\end{lemma}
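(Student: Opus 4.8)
The plan is to reduce the whole statement to two facts already established: the sharp rate $\chi_q^*(\sigma)/\sigma^{2q-2}\to C^\ast:=\frac{(1-\epsilon)\mathbb{E}|Z|^q}{\epsilon q\,\mathbb{E}|G|^{2q-2}}$ from Lemma~\ref{thm:riskell_qgeneralqless2}, together with the term-by-term asymptotics worked out in the proof of Lemma~\ref{general:lemma}. Introduce the quadratic
\[
f(C):=-2C(1-\epsilon)q\,\mathbb{E}|Z|^q+\epsilon C^2q^2\,\mathbb{E}|G|^{2q-2},
\]
which is exactly the limit $\lim_{\sigma\to0}(R_q(\chi,\sigma)-1)/\sigma^{2q-2}$ produced by Lemma~\ref{general:lemma} when $\chi(\sigma)=C\sigma^{2q-2}$. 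A one-line differentiation shows $f$ is minimized precisely at $C=C^\ast$, with minimum value $f(C^\ast)=-\frac{(1-\epsilon)^2(\mathbb{E}|Z|^q)^2}{\epsilon\,\mathbb{E}|G|^{2q-2}}$, the claimed constant. Thus the entire content of the lemma is that the optimally tuned threshold $\chi_q^*(\sigma)$, being asymptotic to $C^\ast\sigma^{2q-2}$, realizes this minimum.

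The method I would use is to reprove the decomposition $R_q(\chi_q^*(\sigma),\sigma)-1=R_1+R_2+R_3$ of \eqref{eq:riskexpansion1} directly for the sequence $\chi=\chi_q^*(\sigma)$, and compute the three limits separately. As a quick consistency check one already gets the upper bound for free: by minimality of $\chi_q^*(\sigma)$ one has $R_q(\chi_q^*(\sigma),\sigma)\le R_q(C^\ast\sigma^{2q-2},\sigma)$, so Lemma~\ref{general:lemma} with $C=C^\ast$ yields $\limsup_{\sigma\to0}(R_q(\chi_q^*(\sigma),\sigma)-1)/\sigma^{2q-2}\le f(C^\ast)$. For the exact limit I would verify that $R_1/\sigma^{2q-2}\to -2C^\ast(1-\epsilon)q\,\mathbb{E}|Z|^q$, that $R_2/\sigma^{2q-2}\to \epsilon (C^\ast)^2q^2\,\mathbb{E}|G|^{2q-2}$ (after applying the scaling identity of Lemma~\ref{lem:toobasicpropprox}(v) to write $\eta_q(G/\sigma+Z;\chi)=\sigma^{-1}\eta_q(G+\sigma Z;\sigma^{2-q}\chi)$), and that $R_3/\sigma^{2q-2}\to 0$. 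Each of these passes through the same dominated-convergence estimates as in Lemma~\ref{general:lemma}, because the only inputs they use are $\chi/\sigma^{2q-2}\to C^\ast$, $\chi\to0$ (Lemma~\ref{lem:optimaltaugoestozero}), $\sigma^{2-q}\chi\to0$, and $\chi=O(\sigma^{2q-2})$ — all valid for $\chi_q^*(\sigma)$ by Lemmas~\ref{lem:optimaltaugoestozero} and~\ref{thm:riskell_qgeneralqless2}. Summing the three limits gives exactly $f(C^\ast)$, simultaneously as $\limsup$ and $\liminf$, and hence the stated equality.

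The hard part will be this transfer step, since Lemma~\ref{general:lemma} is stated only for $\chi(\sigma)$ exactly proportional to $\sigma^{2q-2}$, whereas $\chi_q^*(\sigma)$ is merely asymptotic to $C^\ast\sigma^{2q-2}$; it cannot be quoted as a black box and the estimates must be re-run with the order bound $\chi_q^*(\sigma)=\Theta(\sigma^{2q-2})$ in place of equality. The delicate bookkeeping sits in $R_3$, where the regime $3/2\le q<2$ partitions the integral over the nested shells $\mathcal{I}_1,\mathcal{I}_2$ with $\alpha_1=\sigma^{2q-3+c}$ and relies on $\eta_q(\alpha_1;\chi)/\alpha_1\to1$, i.e. on $\alpha_1^{q-2}\chi\to0$; I must confirm this still holds with $\chi=\chi_q^*(\sigma)$, which it does because only the order $\chi=O(\sigma^{2q-2})$ enters the choice of the exponent $c$. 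Once that estimate is in place, the remainder is the purely algebraic evaluation of $f(C^\ast)$, in which the cross term and the square term combine as $-2\,\frac{(1-\epsilon)^2(\mathbb{E}|Z|^q)^2}{\epsilon\,\mathbb{E}|G|^{2q-2}}+\frac{(1-\epsilon)^2(\mathbb{E}|Z|^q)^2}{\epsilon\,\mathbb{E}|G|^{2q-2}}$ to produce the advertised constant.
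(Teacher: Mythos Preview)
Your proposal is correct and follows essentially the same route as the paper: invoke Lemma~\ref{thm:riskell_qgeneralqless2} to get $\chi_q^*(\sigma)/\sigma^{2q-2}\to C^\ast$ and then run the $R_1+R_2+R_3$ analysis of Lemma~\ref{general:lemma} at this $C^\ast$, after which the algebraic evaluation $f(C^\ast)=-\frac{(1-\epsilon)^2(\mathbb{E}|Z|^q)^2}{\epsilon\,\mathbb{E}|G|^{2q-2}}$ is immediate. You are right to flag that Lemma~\ref{general:lemma} is \emph{stated} only for $\chi(\sigma)=C\sigma^{2q-2}$, but as you observe, its proof uses only $\chi/\sigma^{2q-2}\to C$ for $R_1,R_2$ and $\chi=O(\sigma^{2q-2})$ for $R_3$ (the latter is even noted explicitly there), so the transfer to $\chi_q^*(\sigma)$ is exactly the step the paper's one-line proof is silently relying on.
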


\begin{proof}
According to Lemma \ref{thm:riskell_qgeneralqless2}, choosing $C = \frac{(1-\epsilon) \mathbb{E} |Z|^q}{\epsilon q \mathbb{E} |G|^{2q-2}}$ in Lemma \ref{general:lemma} finishes the proof. 
\end{proof}

\subsection{Deriving the expansion of AMSE$(\lambda_{*,q},q,\sigma_w)$}\label{sec:AMSEellqgeneraldistepsless1}

According to Corollary \ref{thm:mseoptimalasymptot} we know
\begin{eqnarray}\label{amse:revisit}
{\rm AMSE}(\lambda_{*,q},q,\sigma_w)=\bar{\sigma}^2 \cdot R_q(\chi^*_q(\bar{\sigma}),\bar{\sigma}),
\end{eqnarray}
where $\bar{\sigma}$ satisfies the following equation:
\begin{eqnarray}\label{fixedpoint:revisit}
\bar{\sigma}^2=\sigma^2_w+\frac{\bar{\sigma}^2}{\delta}R_q(\chi^*_q(\bar{\sigma}),\bar{\sigma}).
\end{eqnarray}
Since $\chi^*_q(\bar{\sigma})$ minimizes $R_q(\chi,\bar{\sigma})$ we obtain
\[
R_q(\chi^*_q(\bar{\sigma}), \bar{\sigma}) \leq R_q(0, \bar{\sigma}) =1. 
\]
Therefore, under the condition $\delta >1$,
\begin{equation}
\bar{\sigma}^2 \leq \sigma_w^2 + \frac{1}{\delta} \bar{\sigma}^2 \Rightarrow \bar{\sigma}^2 \leq \frac{\delta}{\delta-1} \sigma_w^2,
\end{equation}
which implies that $\bar{\sigma} \rightarrow 0$ as $\sigma_w \rightarrow 0$. Accordingly, we combine Equation \eqref{fixedpoint:revisit} with the fact $R_q(\chi^*_q(\bar{\sigma}),\bar{\sigma}) \rightarrow 1$ as $\bar{\sigma}\rightarrow 0$ from Lemma \ref{cor:upperbountau*} to conclude 
\begin{eqnarray}\label{finalration}
\lim_{\sigma_w\rightarrow 0}\frac{\sigma_w^2}{\bar{\sigma}^2}=\lim_{\bar{\sigma}\rightarrow 0}\frac{\sigma_w^2}{\bar{\sigma}^2}=\lim_{\bar{\sigma}\rightarrow 0}\Big(1-\frac{1}{\delta}R_q(\chi^*_q(\bar{\sigma}),\bar{\sigma})\Big)= \frac{\delta-1}{\delta}.
\end{eqnarray}
We now derive the expansion of ${\rm AMSE}(\lambda_{*,q},q,\sigma_w)$ presented in \eqref{mse:lp}. From \eqref{amse:revisit} and \eqref{fixedpoint:revisit} we can compute that
\begin{eqnarray*}
&&\hspace{-0.8cm} \frac{{\rm AMSE}(\lambda_{*,q},q,\sigma_w)-\frac{\sigma_w^2}{1-1/\delta}}{\sigma_w^{2q}}=\frac{\bar{\sigma}^2R_q(\chi^*_q(\bar{\sigma}),\bar{\sigma})-\frac{1}{1-1/\delta}\cdot (\bar{\sigma}^2-\frac{\bar{\sigma}^2}{\delta}R_q(\chi^*_q(\bar{\sigma}),\bar{\sigma}))}{\sigma_w^{2q}} \\
&=&\frac{\bar{\sigma}^2\delta(R_q(\chi^*_q(\bar{\sigma}),\bar{\sigma})-1)}{\sigma_w^{2q}(\delta-1)}=\frac{\bar{\sigma}^{2q}}{\sigma_w^{2q}}\cdot \frac{\delta}{\delta-1}\cdot \frac{R_q(\chi^*_q(\bar{\sigma}),\bar{\sigma})-1}{\bar{\sigma}^{2q-2}}.
\end{eqnarray*}
Letting $\sigma_w\rightarrow 0$ on both sides of the above equation and using the results from \eqref{finalration} and Lemma \ref{cor:upperbountau*} completes the proof.


\section{Proof of Theorem 3.2}\label{sec:proofthm3full}

From \eqref{amse:revisit} and \eqref{fixedpoint:revisit} we see that 
\[
{\rm AMSE}(\lambda_{*,q},q,\sigma_w)=\delta(\bar{\sigma}^2-\sigma_w^2).
\]
Hence Theorem \ref{asymp:lqabovept} can be proved by showing 
\[
\lim_{\sigma_w\rightarrow 0}\bar{\sigma}>0.
\]
For that purpose we first prove the following under the condition $\mathbb{E}|G|^2<\infty$.
\begin{eqnarray}\label{exist:limit:one}
\lim_{\sigma \rightarrow 0}R_q(\chi_q^*(\sigma),\sigma)=1.
\end{eqnarray}
When $q=2$, $R_q(\chi^*_q(\sigma);\sigma)$ admits a nice explicit expression and can be easily shown to converge to $1$. For $1<q<2$, since $\chi_q^*(\sigma)$ is the minimizer of $R_q(\chi,\sigma)$ we know
\[
R_q(\chi^*_q(\sigma),\sigma)\leq R_q(0,\sigma)=1,
\]
hence $\limsup_{\sigma \rightarrow 0}R_q(\chi^*_q(\sigma),\sigma)\leq 1$. On the other hand, \eqref{uu1} gives us
\begin{eqnarray*}
R_q(\chi^*_q(\sigma),\sigma)&\geq& (1-\epsilon)\mathbb{E}(\eta_q(Z;\chi^*))^2-\epsilon \\
&&\hspace{-0.4cm} +2\epsilon \mathbb{E}\left(\frac{1}{1+\chi^* q(q-1)|\eta_q(G/\sigma+Z;\chi^*)|^{q-2}}\right)
\end{eqnarray*}
where we have used $\chi^*$ to denote $\chi^*_q(\sigma)$ for simplicity. Based on Lemmas \ref{eq:infinity} and \ref{lem:optimaltaugoestozero}, we can apply Fatou's lemma to the above inequality to obtain $\liminf_{\sigma\rightarrow 0}R_q(\chi^*_q(\sigma),\sigma) \geq 1$. 

Next it is clear that 
\begin{eqnarray} \label{exist:limit:two}
\lim_{\sigma \rightarrow \infty} R_q(\chi^*_q(\sigma),\sigma)\leq \lim_{\sigma \rightarrow \infty} \lim_{\chi\rightarrow \infty} R_q(\chi,\sigma)=0.
\end{eqnarray}
We now consider an arbitrary convergent sequence $\bar{\sigma}_n \rightarrow \sigma^*$. We claim $\sigma^* \neq 0$. Otherwise Equation \eqref{fixedpoint:revisit} tells us
\[
R_q(\chi^*_q(\bar{\sigma}_n),\bar{\sigma}_n) <\delta <1,
\]
and letting $n\rightarrow \infty$ above contradicts \eqref{exist:limit:one}. Now that $\sigma^*>0$ we can take $n\rightarrow \infty$ in \eqref{fixedpoint:revisit} to obtain
\[
R_q(\chi^*_q(\sigma^*),\sigma^*)=\delta <1.
\]
According to Lemma \ref{lem:posfirstlemma}, it is not hard to confirm $R_q(\chi_q^*(\sigma),\sigma)$ is a strictly decreasing and continuous function of $\sigma$. Results \eqref{exist:limit:one} and \eqref{exist:limit:two} then imply that $\sigma^*$ is the unique solution to $R_q(\chi^*_q(\sigma),\sigma)=\delta$. Since this is true for any sequence, we have proved $\lim_{\sigma_w \rightarrow 0}\bar{\sigma}$ exists and larger than zero.


\section{Proof of Theorem 3.4}\label{sec:prooflassogeralepsless1}

\subsection{Roadmap}

Theorem \ref{asymp:sparsel1_2} differs from Theorem \ref{asymp:sparsel1_1} in that the order of the second dominant term of AMSE$(\lambda_{*,1},1,\sigma_w)$ becomes polynomial (ignore the logarithm term) when the distribution of $G$ has mass around zero. However, the proof outline remains the same. We hence stick to the same notations used in the proof of Theorem \ref{asymp:sparsel1_1}. In particular, $R(\chi, \sigma), \chi^*(\sigma)$ represent $R_q(\chi,\sigma),\chi^*_q(\sigma)$ with $q=1$, respectively. We characterize the convergence rate of $\chi^*(\sigma)$ in Section \ref{splitnew:section1}, and bound the convergence rate of $R(\chi^*(\sigma),\sigma)$ in Section \ref{splitnew:section2}. After we characterize $R(\chi^*(\sigma),\sigma)$, the rest of the proof is similar to that in Section \ref{sec:lassofixedpointproof} from the main text. We therefore do not repeat it here.

\subsection{Bounding the convergence rate of $\chi^*(\sigma)$} \label{splitnew:section1}

\begin{lemma}\label{l1:optimal_rate_general}
Suppose $\mathbb{P}(|G| \leq t) = \Theta(t^{\ell})$ with $\ell >0$ (as $t\rightarrow 0$) and $\mathbb{E}|G|^2<\infty$, then for sufficiently small $\sigma$
\[
\alpha_m \sigma^{\ell}  \leq \chi^*(\sigma) - \chi^{**} \leq \beta_m  \sigma^{\ell} \cdot \left( \sqrt{\underbrace{\log \log \ldots \log}_{m\  \rm times} \left(\frac{1}{\sigma}\right)} \right)^{\ell},
\]
where $m> 0$ is an arbitrary integer number, $\alpha_m, \beta_m>0$ are two constants depending on $m$, and $\chi^{**}$ is the unique minimizer of $(1-\epsilon)\mathbb{E} (\eta_1(Z; \chi))^2+ \epsilon(1+ \chi^2)$ over $[0, \infty)$.
\end{lemma}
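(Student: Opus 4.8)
The plan is to mirror the proof of Lemma~\ref{l1:optimal_rate}, replacing the exponentially small tail estimates used there (which were available because all the mass of $G$ sat beyond $\mu$) by the polynomial \emph{small-ball} estimates dictated by $\mathbb{P}(|G|\le t)=\Theta(t^{\ell})$. First I would use the stationarity condition $\partial R(\chi^*(\sigma),\sigma)/\partial\chi=0$ to express $\chi^*(\sigma)$ as the explicit ratio derived in the proof of Lemma~\ref{l1:optimal_rate}, subtract its $\sigma\to0$ limit $\chi^{**}$ (which exists and is attained by Lemma~\ref{lassotune}), and thereby reduce everything to the size of the two perturbation terms $e_1$ and $e_2$, which collect the contribution of $G/\sigma$ to the denominator and numerator. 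Concretely, $-e_1/\epsilon=\int_0^\infty \mathbb{P}_Z(|g/\sigma+Z|\le\chi^*)\,dF(g)$ and $e_2/\epsilon=\int_0^\infty[\phi(\chi^*-g/\sigma)+\phi(\chi^*+g/\sigma)]\,dF(g)$, and the identity~\eqref{diff}, whose bracketed prefactor tends to $0$ and whose denominator tends to $T>0$, gives $\chi^*(\sigma)-\chi^{**}=\frac{e_2-\chi^{**}e_1}{T}(1+o(1))$. Since $e_2\ge0$ and $-e_1\ge0$, there is no cancellation, so the order of $\chi^*(\sigma)-\chi^{**}$ is exactly the common order of these integrals.

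For the lower bound I would restrict each integral to the event $\{|G|\le\sigma\}$, on which $g/\sigma\in[0,1]$ and hence (using that $\mathbb{P}_Z(|a+Z|\le\chi^*)$ is decreasing in $a\ge0$ and $\chi^{**}>0$) the integrand is bounded below by a positive constant. This yields $-e_1\ge c\,\epsilon\,\mathbb{P}(|G|\le\sigma)=\Omega(\sigma^{\ell})$ by the $\Theta(t^\ell)$ lower bound, whence $e_2-\chi^{**}e_1\ge\chi^{**}(-e_1)=\Omega(\sigma^\ell)$ and therefore $\chi^*(\sigma)-\chi^{**}\ge\alpha_m\sigma^{\ell}$.

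The upper bound is the crux, and is where the iterated logarithm enters. I would split each integral at $|G|=\sigma K$ for a threshold $K=K(\sigma)\to\infty$: on $\{|G|\le\sigma K\}$ the integrand is $O(1)$ and the mass is $\le C(\sigma K)^\ell=C\sigma^\ell K^\ell$, while on $\{|G|>\sigma K\}$ the Gaussian factors are controlled by $\exp(-K^2/2)$-type bounds (the Gaussian weight $\phi$ supplying the decay). The naive balance $K\asymp\sqrt{\log(1/\sigma)}$ already gives $\chi^*(\sigma)-\chi^{**}\lesssim\sigma^\ell(\log(1/\sigma))^{\ell/2}$, i.e.\ the claim for $m=1$. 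To sharpen the single logarithm into an $m$-fold iterated logarithm I would bootstrap: the bound obtained at stage $j$ certifies that the tail contribution is already smaller than the target, which permits a strictly smaller admissible truncation level at stage $j+1$, turning $\log$ into $\log\log$; repeating the step $m$ times produces the factor $\bigl(\sqrt{\log\log\cdots\log(1/\sigma)}\bigr)^{\ell}$ with $m$ logarithms and an $m$-dependent constant $\beta_m$.

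The hard part will be the Gaussian-tail control in the upper bound together with the bookkeeping of the bootstrap: one must check that at each iteration the tail piece is genuinely dominated by the small-ball piece at the reduced threshold, and that $\chi^*$—which still sits inside the integrands—may be replaced by $\chi^{**}$ up to harmless factors, which is justified by the established convergence $\chi^*(\sigma)\to\chi^{**}$ and by the fact that $(\chi^*-\chi^{**})/\sigma\to0$. A secondary technical point is to verify that the prefactor in~\eqref{diff} stays bounded away from $0$ uniformly in $\sigma$, so that it can be divided out; this follows, exactly as in Lemma~\ref{l1:optimal_rate}, from $\tilde\chi,\tilde{\tilde\chi}\to\chi^{**}$ and $T>0$.
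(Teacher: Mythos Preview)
Your proposal is correct and follows essentially the same route as the paper: both reduce via \eqref{diff} to bounding $e_1$ and $e_2$, obtain the lower bound by restricting to $\{|G|\le C\sigma\}$, and obtain the upper bound by splitting at logarithmic thresholds. What you describe as an iterative ``bootstrap'' the paper implements in a single pass, splitting the integral $\mathbb{E}\phi(\chi^*-|G|/\sigma)$ at the nested cut-offs $c\sigma(\log_j(1/\sigma))^{1/2}$, $j=1,\dots,m$; on each intermediate shell the Gaussian factor $\phi\bigl(c(\log_{j+1}(1/\sigma))^{1/2}-\chi^*\bigr)\le C'(\log_j(1/\sigma))^{-c^2/4}$ dominates the mass $\Theta\bigl(\sigma^\ell(\log_j(1/\sigma))^{\ell/2}\bigr)$ once $c$ is chosen large enough, leaving only the innermost piece of size $\Theta\bigl(\sigma^\ell(\log_m(1/\sigma))^{\ell/2}\bigr)$. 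Your recursive refinement would unfold exactly this decomposition, so the two presentations are equivalent.

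One caveat: your side remark that $(\chi^*-\chi^{**})/\sigma\to 0$ is neither needed here nor true when $0<\ell<1$ (the lower bound you just proved gives $\chi^*-\chi^{**}\ge\alpha_m\sigma^\ell$). Only the boundedness of $\chi^*$, which follows from $\chi^*\to\chi^{**}$ (Lemma~\ref{lassotune}), is required to stabilize the prefactor in \eqref{diff} and to absorb $\chi^*$ into constants inside the Gaussian arguments.
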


\begin{proof}
According to Lemma \ref{lassotune}, $\chi^*(\sigma) \rightarrow \chi^{**}$ as $\sigma \rightarrow 0$. To characterize the convergence rate, we follow the same line of proof that we presented for Lemma \ref{l1:optimal_rate} and adopt the same notations. For simplicity we do not detail out the entire proof and instead highlight the differences. The key difference is that neither $e_1$ or $e_2$ are exponentially small in the current setting. We now start by bounding $e_2$. Let $F(g)$ be the distribution function of $|G|$ and define
\[
\log_m(a)\triangleq \underbrace{\log \log \ldots \log}_{m\  \rm times}(a).
\]
 Given an integer $m>0$ and a constant $c>0$, we then have
\begin{eqnarray}
\lefteqn{\mathbb{E}\phi(\chi^*-|G|/\sigma)} \nonumber \\
 &=& \sum_{i=1}^{m-1}  \int_{c\sigma (\log_{m-i+1} (1/\sigma))^{1/2}}^{c\sigma (\log_{m-i} (1/\sigma))^{1/2}} \phi(\chi^*-g/\sigma) dF(g)+   \nonumber \\
&&\int_0^{c \sigma (\log_m(1/\sigma))^{1/2}} \phi(\chi^*-g/\sigma) dF(g)  +  \int_{c\sigma(\log(1/\sigma))^{1/2}}^{\infty} \phi(\chi^*-g/\sigma) dF(g)  \nonumber \\
&\leq& \sum_{i=1}^{m-1} \phi(c(\log_{m-i+1}(1/\sigma))^{1/2} - \chi^*) \cdot \mathbb{P} (|G| \leq c\sigma (\log_{m-i} (1/\sigma))^{1/2})+   \nonumber \\
&& \hspace{1cm} \phi(0) \cdot \mathbb{P} (|G| \leq c\sigma (\log_m (1/\sigma))^{1/2})  + \phi(c(\log(1/\sigma))^{1/2}- \chi^*).   \label{eq:taufirsttermupper1} 
\end{eqnarray}
The condition $\mathbb{P}(|G| \leq t) = \Theta(t^{\ell})$ leads to
\begin{eqnarray}\label{eq:taufirsttermupper2}
\lefteqn{ \sum_{i=1}^{m-1} \phi(c(\log_{m-i+1}(1/\sigma))^{1/2} - \chi^*) \cdot  \mathbb{P} (|G| \leq c\sigma (\log_{m-i} (1/\sigma))^{1/2}) } \nonumber \\
&\leq& \frac{e^{(\chi^*)^2/2}}{\sqrt{2\pi}} \sum_{i=1}^{m-1} {\rm e}^{-(c^2\log_{m-i+1}(1/\sigma))/4 } \cdot \Theta(\sigma^{\ell}(\log_{m-i} (1/\sigma))^{\ell/2}) \nonumber \\
&=&  O(1) \cdot   \sum_{i=1}^{m-1} \sigma^{\ell}(\log_{m-i}(1/\sigma))^{\ell/2-c^2/4}, \nonumber
\end{eqnarray}
where we have used the simple inequality $e^{-(a-b)^2/2} \leq e^{-b^2/4}\cdot e^{a^2/2}$. It is also clear that 
\[
\phi(c(\log(1/\sigma))^{1/2}- \chi^*)\leq \frac{1}{\sqrt{2\pi}}e^{(\chi^*)^2/2}\cdot \sigma^{c^2/4}.
\]
Therefore, by choosing a sufficiently large $c$ we can conclude that the dominant term in \eqref{eq:taufirsttermupper1} is $\phi(0) \mathbb{P} (|G| \leq c\sigma (\log_m (1/\sigma))^{1/2})=\Theta(\sigma^{\ell}(\log_m(1/\sigma))^{\ell/2})$.  Furthermore, choosing a fixed constant $C>0$ we have the following lower bound 
\[
\mathbb{E}\phi(\chi^*+|G|/\sigma) \geq \int_{0}^{C \sigma} \phi(\chi^*+g/\sigma) dF(g) \geq \phi(C+ \chi^*) \cdot \mathbb{P}(|G|\leq C\sigma)=\Theta(\sigma^{\ell}).
\]
Because
\[
\mathbb{E}\phi(\chi^*+|G|/\sigma) \leq  \mathbb{E}\phi(\chi^* \pm G/\sigma) \leq \mathbb{E}\phi(\chi^*-|G|/\sigma),
\]
We are able to derive 
\[
 \Theta(\sigma^{\ell}) \leq \mathbb{E}\phi(\chi^*\pm G/\sigma) \leq \Theta(\sigma^{\ell}(\log_m(1/\sigma))^{\ell/2}). 
 \]
As a result we obtain the bound for $e_2$:
\begin{eqnarray}\label{e_2:bound}
\Theta(\sigma^{\ell}) \leq e_2\leq  \Theta(\sigma^{\ell}(\log_m(1/\sigma))^{\ell/2})
\end{eqnarray}
To bound $e_1$, recall that 
\begin{eqnarray*}
e_1=-\epsilon \mathbb{E}\int_{-G/\sigma-\chi^*}^{-G/\sigma+\chi^*}\phi(z)dz=-2\epsilon \chi^*\mathbb{E}\phi(a\chi^*-G/\sigma),
\end{eqnarray*}
where $|a|\leq 1$ depends on $G$. We can find two positive constants $C_1, C_2>0$ such that for small $\sigma$
\begin{eqnarray*}
C_1\mathbb{E}\phi(\chi^*+|G|/\sigma)\leq \mathbb{E}\phi(a\chi^*-G/\sigma) \leq C_2 \mathbb{E} \phi(\chi^* -|G|/\sigma).
\end{eqnarray*}
Hence we have
\begin{eqnarray}\label{e_1:bound}
\Theta(\sigma^{\ell}) \leq -e_1\leq  \Theta(\sigma^{\ell}(\log_m(1/\sigma))^{\ell/2}).
\end{eqnarray}
Based on the results from \eqref{e_2:bound} and \eqref{e_1:bound} and Equation \eqref{diff}, we can use similar arguments as in the proof of Lemma \ref{l1:optimal_rate} to conclude
\begin{eqnarray*}
\Theta(\sigma^{\ell})\leq  \chi^*(\sigma)-\chi^{**} \leq  \Theta(\sigma^{\ell}(\log_m(1/\sigma))^{\ell/2}).
\end{eqnarray*}
\end{proof}

\subsection{Bounding the convergence rate of $R(\chi^*(\sigma),\sigma)$}\label{splitnew:section2}

\begin{lemma} \label{risk:lq:general}
Suppose $\mathbb{P}(|G| \leq t) = \Theta(t^{\ell})$ with $\ell >0$ (as $t\rightarrow 0$) and $\mathbb{E}|G|^2<\infty$, then for sufficiently small $\sigma$
\[
-\beta_m  \sigma^{\ell} \cdot \left( \sqrt{\underbrace{\log \log \ldots \log}_{m\  \rm times} \left(\frac{1}{\sigma}\right)} \right)^{\ell}
 \leq R(\chi^*(\sigma),\sigma) - M_1(\epsilon) \leq  -\alpha_m \sigma^{\ell},
\]
where $m> 0$ is an arbitrary integer number and $\alpha_m, \beta_m>0$ are two constants depending on $m$.
\end{lemma}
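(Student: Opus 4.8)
The plan is to mirror the term-by-term argument used for the bounded-away case in Lemma \ref{risk:lq}, but now to track the exact polynomial order $\sigma^\ell$ (rather than the exponential smallness of that lemma) that arises because $G$ places mass $\Theta(t^\ell)$ near the origin. Write $F(\chi) \triangleq (1-\epsilon)\mathbb{E}(\eta_1(Z;\chi))^2 + \epsilon(1+\chi^2)$, so that $M_1(\epsilon)=F(\chi^{**})$ and $R(\chi,\sigma)\to F(\chi)$ as $\sigma\to 0$. I would then split
\[
R(\chi^*(\sigma),\sigma) - M_1(\epsilon) = \big[R(\chi^*(\sigma),\sigma) - F(\chi^*(\sigma))\big] + \big[F(\chi^*(\sigma)) - F(\chi^{**})\big],
\]
so that the first bracket isolates the finite-$\sigma$ correction coming only from the nonzero part of $B$, while the second measures the suboptimality of $\chi^*(\sigma)$ for the limiting objective $F$. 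Because the $(1-\epsilon)$ contributions cancel, the first bracket is exactly $\epsilon\big[\mathbb{E}(\eta_1(G/\sigma+Z;\chi^*)-G/\sigma)^2-(1+(\chi^*)^2)\big]$.

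For that first bracket I would use $\mathbb{E}(\eta_1 - G/\sigma)^2 - (1+(\chi^*)^2) = [\mathbb{E}(\eta_1 - G/\sigma - Z)^2 - (\chi^*)^2] + 2\mathbb{E}Z(\eta_1 - G/\sigma - Z)$, writing $\eta_1=\eta_1(G/\sigma+Z;\chi^*)$. Since $(\eta_1(u;\chi^*)-u)^2=\min(|u|,\chi^*)^2$, the first piece equals $-\mathbb{E}[\mathbb{I}(|G/\sigma+Z|\le\chi^*)((\chi^*)^2-(G/\sigma+Z)^2)]\le 0$; by Stein's lemma (Lemma \ref{lem:steins}) the second piece equals $-2\mathbb{P}(|G/\sigma+Z|\le\chi^*)\le 0$. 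Thus the whole bracket is $\epsilon$ times a strictly negative quantity, and the task reduces to sandwiching $\mathbb{P}(|G/\sigma+Z|\le\chi^*)$ and the truncated second moment from both sides. For the lower bound on these (which yields the clean upper estimate $-\alpha_m\sigma^\ell$), I would restrict to $\{|G|\le C\sigma\}$, where $G/\sigma$ is bounded and $\chi^*(\sigma)\to\chi^{**}>0$ by Lemma \ref{lassotune}, so both quantities are bounded below by a constant multiple of $\mathbb{P}(|G|\le C\sigma)=\Theta(\sigma^\ell)$, with no logarithmic loss. For the upper bound on them (yielding the lower estimate $-\beta_m\sigma^\ell(\cdots)^{\ell/2}$), I would reuse verbatim the nested-logarithm splitting of the tail integrals $\mathbb{E}\phi(\chi^*\pm G/\sigma)$ already performed for $e_1,e_2$ in the proof of Lemma \ref{l1:optimal_rate_general}, giving $\mathbb{P}(|G/\sigma+Z|\le\chi^*)=O(\sigma^\ell(\log_m(1/\sigma))^{\ell/2})$, and bound the truncated moment by $(\chi^*)^2$ times the same probability.

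The second bracket is then controlled by strong convexity: since $\chi^{**}$ minimizes the smooth strongly convex $F$, a second-order expansion gives $F(\chi^*)-F(\chi^{**})=\Theta((\chi^*-\chi^{**})^2)$, and the rate $\chi^*(\sigma)-\chi^{**}=\Theta(\sigma^\ell)$ (up to iterated logs) from Lemma \ref{l1:optimal_rate_general} shows this is $O(\sigma^{2\ell}(\log_m(1/\sigma))^\ell)$, negligible against the $\Theta(\sigma^\ell)$ first bracket. Combining the three estimates delivers the two-sided bound, and the negativity of the dominant first bracket is precisely what makes the second-order term negative. The main obstacle, exactly as in Lemma \ref{l1:optimal_rate_general}, is the sharp upper bound on $\mathbb{P}(|G/\sigma+Z|\le\chi^*)$: a crude tail bound overshoots, and only the delicate partition into the shrinking scales $c\sigma(\log_{m-i}(1/\sigma))^{1/2}$ recovers the correct power $\sigma^\ell$ together with the unavoidable iterated-logarithm factor.
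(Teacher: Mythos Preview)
Your proposal is correct and complete, but it takes a somewhat different route from the paper.

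The paper uses two separate arguments. For the upper bound $R(\chi^*(\sigma),\sigma)-M_1(\epsilon)\le -\alpha_m\sigma^\ell$, it exploits the optimality of $\chi^*(\sigma)$ directly: since $R(\chi^*(\sigma),\sigma)\le R(\chi^{**},\sigma)$, it suffices to compute $R(\chi^{**},\sigma)-M_1(\epsilon)$ at the \emph{fixed} threshold $\chi^{**}$, which collapses to $\epsilon[\mathbb{E}(\eta_1(G/\sigma+Z;\chi^{**})-G/\sigma-Z)^2-(\chi^{**})^2]-2\epsilon\mathbb{P}(|G/\sigma+Z|\le\chi^{**})\le -2\epsilon\mathbb{P}(|G/\sigma+Z|\le\chi^{**})=-\Theta(\sigma^\ell)$. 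This avoids any appeal to Lemma~\ref{l1:optimal_rate_general} for the upper bound. For the lower bound, the paper simply re-runs the term-by-term estimates of Lemma~\ref{risk:lq}, feeding in the rate $|\chi^*(\sigma)-\chi^{**}|\le\Theta(\sigma^\ell(\log_m(1/\sigma))^{\ell/2})$ from Lemma~\ref{l1:optimal_rate_general}.

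Your decomposition $[R(\chi^*,\sigma)-F(\chi^*)]+[F(\chi^*)-F(\chi^{**})]$ is more structural: it cleanly separates the finite-$\sigma$ correction (first bracket, strictly negative, $\Theta(\sigma^\ell)$ up to the iterated log) from the suboptimality of $\chi^*(\sigma)$ for the limit objective (second bracket, nonnegative, $O(\sigma^{2\ell})$ by strong convexity and Lemma~\ref{l1:optimal_rate_general}). This makes it transparent why the first bracket dominates and why the answer is negative. The cost is that your upper bound needs both Lemma~\ref{lassotune} (to know $\chi^*(\sigma)\to\chi^{**}>0$ so that $\mathbb{P}(|G/\sigma+Z|\le\chi^*)\ge\Theta(\sigma^\ell)$) and Lemma~\ref{l1:optimal_rate_general} (to kill the second bracket), whereas the paper's optimality trick gets the upper bound in one line with no rate information on $\chi^*(\sigma)$ at all. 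For the lower bound the two arguments are essentially the same.
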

\begin{proof}
We recall the two quantities:
\begin{eqnarray*}
M_1(\epsilon)&=&(1-\epsilon)\mathbb{E} (\eta_1(Z; \chi^{**}))^2+ \epsilon(1+ (\chi^{**})^2) \label{risk:one}\\
R(\chi^*(\sigma),\sigma)&=&(1-\epsilon)\mathbb{E}(\eta_1(Z;\chi^*))^2+\epsilon(1+ \mathbb{E} (\eta_1(G/\sigma +Z; \chi^*) -G/\sigma-Z)^2) \nonumber \\
&&+ 2\epsilon \mathbb{E} Z(\eta_1(G/\sigma +Z; \chi^*) -G/\sigma-Z) \label{risk:two}.
\end{eqnarray*}
Since $\chi^*(\sigma)$ is the minimizer of $R(\chi, \sigma)$,
\begin{eqnarray}
&&R(\chi^*(\sigma),\sigma)-M_1(\epsilon) \leq R(\chi^{**},\sigma)-M_1(\epsilon) \nonumber \\
&=&\epsilon[ \mathbb{E} (\eta_1(G/\sigma +Z; \chi^{**}) -G/\sigma-Z)^2-(\chi^{**})^2] + \nonumber \\
&&2\epsilon \mathbb{E} Z(\eta_1(G/\sigma +Z; \chi^{**}) -G/\sigma-Z)  \nonumber \\
&\overset{(a)}{\leq}& -2\epsilon \mathbb{E}\mathbbm{I}(|G/\sigma+Z|\leq \chi^{**}) \overset{(b)}{\leq} -\Theta(\sigma^{\ell}). \label{risk:general:1}
\end{eqnarray}
To obtain $(a)$, we have used Lemma \ref{lem:steins} and the fact $|\eta_1(u;\chi)-u|\leq \chi$. $(b)$ is due to the similar arguments for bounding $e_1$ in Lemma \ref{l1:optimal_rate_general}. To derive the lower bound for $R(\chi^*(\sigma),\sigma)-M_1(\epsilon)$, we can follow the same reasoning steps as in the proof of Lemma \ref{risk:lq} and utilize the bound we derived for $|\chi^*(\sigma)-\chi^{**}|$ in Lemma \ref{l1:optimal_rate_general}. We will obtain
\begin{eqnarray}
|R(\chi^*(\sigma),\sigma)-M_1(\epsilon)|\leq \Theta(\sigma^{\ell}(\log_m(1/\sigma))^{\ell/2}). \label{risk:general:2}  
\end{eqnarray}
Putting \eqref{risk:general:1} and \eqref{risk:general:2} together completes the proof.
\end{proof}

\section{Proof of Theorem 3.5}\label{sec:proofthm6full}

The proof of Theorem \ref{them:ellabovpt} is essentially the same as that of Theorem \ref{asymp:lqabovept}. We do not repeat the details. Note the key argument $\lim_{\sigma \rightarrow 0}R(\chi^*(\sigma),\sigma)=M_1(\epsilon)$ has been shown in Lemma \ref{lassotune}.


\section{Proof of Theorem 2.1}\label{ssec:App:firstresult}

\subsection{Roadmap of the proof}

This appendix contains the proof of Theorem \ref{thm:eqpseudolip}. The proof for LASSO ($q=1$) has been shown in \cite{BaMo11}. We aim to extend the results to $1<q\leq 2$. We will follow similar proof strategy as the one proposed in  \cite{BaMo11}. However, as will be described later some of the steps are more challenging for $1<q \leq 2$ (and some are easier). Motivated by \cite{BaMo11} we construct an approximate message passing (AMP) algorithm for solving LQLS. We then establish an asymptotic equivalence between the output of AMP and the bridge regression estimates. We finally utilize the existing asymptotic results from AMP framework to prove Theorem \ref{thm:eqpseudolip}. The rest of the material is organized as follows. In Section \ref{sec:one}, we briefly review approximate message passing algorithms and state some relevant results that will be used later in our proof. Section \ref{sec:three} collects two useful results to be applied in the later proof. We describe the main proof steps in Section \ref{sec:two}.

\subsection{Approximate message passing algorithms}\label{sec:one}

For a function $f:\mathbb{R}\rightarrow \mathbb{R}$ and a vector $v \in \mathbb{R}^m$, we use $f(v) \in \mathbb{R}^m$ to denote the vector $(f(v_1),\ldots, f(v_m))$. Recall $\eta_q(u;\chi)$ is the proximal operator for the function $\|\cdot\|^q_q$. We are in the linear regression model setting: $y=X\beta+w$. To estimate $\beta$, we adapt the AMP algorithm in \cite{maleki2010approximate} to generate a sequence of estimates $\beta^t \in \mathbb{R}^p$, based on the following iterations (initialized at $\beta^0=0, z^0=y$):
\begin{eqnarray}
\beta^{t+1}&=&\eta_q(X^Tz^t+\beta^t; \theta_t), \nonumber \\
z^t&=& y- X\beta^t +  \frac{1}{\delta}z^{t-1}\langle \partial_1 \eta_q(X^Tz^{t-1}+\beta^{t-1}; \theta_{t-1}) \rangle, \label{iter:beta}
\end{eqnarray}
where $\langle v \rangle = \frac{1}{p}\sum_{i=1}^p v_i$ denotes the average of a vector's components and $\{\theta_t\}$ is a sequence of tuning parameters specified during the iterations. A remarkable phenomenon about AMP is that the asymptotics of the sequence $\{\beta^t\}$ can be characterized by one dimensional parameter, known as the state of the system. The following theorem clarifies this claim.

\begin{theorem}\label{amp:th}
Let $\{\beta(p),X(p),w(p)\}$ be a converging sequence and $\psi : \mathbb{R}^2\rightarrow \mathbb{R}$ be a pseudo-Lipschitz function. For any iteration number $t>0$,
\begin{eqnarray*}
\lim_{p\rightarrow \infty} \frac{1}{p}\sum_{i=1}^p\psi(\beta^{t+1}_i,\beta_i)=\mathbb{E}[ \psi(\eta_q(B+\tau_tZ;\theta_t),B)], \quad a.s.,
\end{eqnarray*}
where $B\sim p_{\beta}$ and $Z\sim N(0,1)$ are independent and $\{\tau_t\}_{t=0}^{\infty}$ can be tracked through the following recursion ($\tau_0^2=\sigma^2+\frac{1}{\delta}\mathbb{E}|B|^2$): 
\begin{eqnarray}\label{eq:stateevoAMPellq}
\tau^2_{t+1}=\sigma_w^2+\frac{1}{\delta}\mathbb{E}[\eta_q(B+\tau_t Z;\theta_t)-B]^2, ~~t \geq 0. \label{iter:tau}
\end{eqnarray}
\end{theorem}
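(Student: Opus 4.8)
The plan is to obtain Theorem \ref{amp:th} as a consequence of the general state evolution theorem for approximate message passing established by Bayati and Montanari \cite{BaMo11}, rather than reconstructing the conditioning argument from scratch. That master theorem applies to any AMP recursion of the form \eqref{iter:beta} whose denoising nonlinearity is (uniformly) Lipschitz continuous in its first argument, and it simultaneously yields both the empirical convergence of $\frac{1}{p}\sum_i \psi(\beta_i^{t+1}, \beta_i)$ to the stated expectation and the scalar recursion \eqref{iter:tau}. Thus the first step is to recognize that the LQLS iteration employs the denoiser $\eta_t(\cdot) \equiv \eta_q(\cdot\,;\theta_t)$, the proximal operator of $\|\cdot\|_q^q$, and to cast \eqref{iter:beta} into the canonical separable AMP form with this choice and with initialization $\tau_0^2 = \sigma_w^2 + \frac{1}{\delta}\mathbb{E}|B|^2$.

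The central step is to verify that $\eta_q(\cdot\,;\theta_t)$ meets the regularity hypotheses required by the master theorem. The Lipschitz requirement is supplied directly by Lemma \ref{lem:toobasicpropprox}(vi), which shows $|\eta_q(u;\chi) - \eta_q(\tilde u;\chi)| \le |u-\tilde u|$; that is, being the proximal operator of a convex function, $\eta_q$ is non-expansive, and this bound is uniform over $\chi>0$ and over $1<q\le 2$. For the Onsager correction term $\frac{1}{\delta}z^{t-1}\langle \partial_1\eta_q(\cdot\,;\theta_{t-1})\rangle$ to be tracked correctly, one further needs the empirical average $\langle \partial_1\eta_q\rangle$ to converge to $\mathbb{E}[\partial_1\eta_q(B+\tau_{t-1}Z;\theta_{t-1})]$; this follows because $\partial_1\eta_q$ is bounded in $[0,1]$ by Lemma \ref{lem:toobasicpropproxder}(iii) and is continuous by Lemma \ref{prox:smooth}. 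Once these facts are in place, the inductive tracking of the joint law of the iterates via the conditioning technique of \cite{BaMo11} carries over, establishing the claimed almost sure limit and the recursion \eqref{iter:tau}.

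I expect the genuine difficulty to lie in this verification rather than in the conditioning machinery: since $\eta_q$ has no closed form for $1<q<2$, its differentiability and the continuity of its derivatives must be extracted from the implicit fixed-point relation defining it through the implicit function theorem, which is precisely the content of Lemmas \ref{prox:smooth} and \ref{lem:continuitysecondder} in Appendix \ref{ssec:etaq:summary}. On the other hand, the differentiability of $\eta_q$ for $q>1$ actually removes the non-smoothness complications that the soft-threshold ($q=1$) denoiser forces upon the analysis of \cite{BaMo11}, so in this sense some steps become easier. The one point requiring care is uniformity: I would emphasize that the non-expansiveness constant equals exactly $1$ independently of $(\theta_t,q)$ and that the state sequence $\{\tau_t\}$ stays bounded along the iterations, so that the pseudo-Lipschitz test function $\psi$ can be pushed through the limit. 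Because the detailed inductive conditioning argument is identical to \cite{BaMo11}, I would present only these regularity-verification steps and refer the reader to \cite{BaMo11} for the remainder.
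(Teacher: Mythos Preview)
Your proposal is correct and follows essentially the same approach as the paper: verify that $\eta_q(\cdot;\theta_t)$ is Lipschitz in its first argument via Lemma~\ref{lem:toobasicpropprox}(vi), then invoke the Bayati--Montanari state evolution theorem. The paper's own proof is in fact even terser than yours---it is two sentences citing Lemma~\ref{lem:toobasicpropprox}(vi) and Theorem~1 of \cite{BaMo10} (note: \cite{BaMo10}, not \cite{BaMo11}; the latter is the LASSO-specific paper, while the former contains the general state evolution result you need)---so your additional remarks on the Onsager term and the continuity of $\partial_1\eta_q$ are more than what the paper supplies, though they are not wrong.
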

\begin{proof}
According to Lemma \ref{lem:toobasicpropprox} part (vi), $\eta_q(u;\chi)$ is a Lipschitz continuous function of $u$. We can then directly apply Theorem 1 in \cite{BaMo10} to complete the proof.
\end{proof}
Equation \eqref{eq:stateevoAMPellq} is called state evolution. Theorem \ref{amp:th} demonstrates that the general asymptotic performance of $\{\beta^t\}$ is sharply predicted by the state evolution. From now on, we will consider the AMP estimates $\{\beta^t\}$ with $\theta_t=\chi \tau_t^{2-q}$ in \eqref{iter:beta}. The positive constant $\chi$ is the solution of \eqref{eq:fixedpoint11} and \eqref{eq:fixedpoint21}. Note we have proved in Section \ref{revision:lemma2:unique} that the solution exists. We next present a useful lemma that characterizes the convergence of $\{\tau_t\}$. Recall the definition in \eqref{def:chimin}:
 \[
  \chi_{\min}= \inf \big \{\chi \geq 0: \frac{1}{\delta}\mathbb{E}(\eta^2_q(Z;\chi)) \leq 1  \big \}.
  \]
\begin{lemma}\label{cab:unique}
For any given $\chi \in (\chi_{\min},\infty)$, the sequence $\{\tau_t\}_{t=0}^{\infty}$ generated from \eqref{iter:tau} with $\theta_t=\chi \tau_t^{2-q}$ converges to a finite number as $t\rightarrow \infty$.
\end{lemma}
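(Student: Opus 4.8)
The plan is to reduce the vector recursion to the iteration of a scalar map on $s_t:=\tau_t^2$ and then to exploit the fixed-point analysis already carried out in Corollary \ref{cor:fixedpointeqanalysis}. By the scale-invariance identity $\eta_q(\alpha u;\alpha^{2-q}\chi)=\alpha\eta_q(u;\chi)$ of Lemma \ref{lem:toobasicpropprox}(v), the choice $\theta_t=\chi\tau_t^{2-q}$ turns \eqref{iter:tau} into
\begin{equation*}
s_{t+1}=F(s_t),\qquad F(s):=\sigma_w^2+\frac{1}{\delta}\,\mathbb{E}\big[\eta_q(B+\sqrt{s}\,Z;\chi s^{(2-q)/2})-B\big]^2=\sigma_w^2+\frac{s}{\delta}\,R_q(\chi,\sqrt{s}),
\end{equation*}
with $R_q$ as in \eqref{keyquantity:def}. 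First I would record that $F$ is continuous on $[0,\infty)$ (dominated convergence together with the regularity of $\eta_q$ from Lemma \ref{prox:smooth}) and that $F(s)\ge\sigma_w^2>0$ for all $s$.

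Next I would import the fixed-point structure from the proof of Corollary \ref{cor:fixedpointeqanalysis}. There it is shown that $\mathcal{G}(\sigma):=\sigma_w^2/\sigma^2+\tfrac1\delta R_q(\chi,\sigma)$ is strictly decreasing on $(0,\infty)$, with $\mathcal{G}(0^+)=\infty$ and $\mathcal{G}(\infty)=\tfrac1\delta\mathbb{E}\eta_q^2(Z;\chi)$, and the hypothesis $\chi>\chi_{\min}$ is exactly the statement, via \eqref{def:chimin}, that $\tfrac1\delta\mathbb{E}\eta_q^2(Z;\chi)<1$. Since $F(s)=s\,\mathcal{G}(\sqrt{s})$, this yields a unique positive fixed point $s^{\ast}=\sigma_\chi^2$ of $F$, together with the sign pattern $F(s)>s$ for $0<s<s^{\ast}$ and $F(s)<s$ for $s>s^{\ast}$. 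The asymptotic slope $F(s)/s\to\tfrac1\delta\mathbb{E}\eta_q^2(Z;\chi)<1$ also lets me exhibit a compact interval $[0,M]$ that $F$ maps into itself, so the orbit $\{s_t\}$ stays bounded.

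The heart of the argument is to show that $F$ is nondecreasing. Conditioning on $B$ and using scale invariance once more,
\begin{equation*}
F(s)-\sigma_w^2=\frac{(1-\epsilon)s}{\delta}\,\mathbb{E}\,\eta_q^2(Z;\chi)+\frac{\epsilon}{\delta}\,\mathbb{E}_G\big[G^2\,k(G/\sqrt{s})\big],\qquad k(u):=\frac{1}{u^2}\,\mathbb{E}_Z\big[\eta_q(u+Z;\chi)-u\big]^2.
\end{equation*}
The first term is increasing in $s$; because $k$ is even, the second is increasing in $s$ as soon as $k$ is nonincreasing on $(0,\infty)$. A short computation reduces $k'(u)\le 0$ to
\begin{equation*}
\mathbb{E}\big[(\eta_q(u+Z;\chi)-u)\,(\eta_q(u+Z;\chi)-u\,\partial_1\eta_q(u+Z;\chi))\big]\ge 0,\qquad u>0,
\end{equation*}
which I would establish using Stein's lemma (Lemma \ref{lem:steins}) together with $0\le\partial_1\eta_q\le1$ and $\partial_1^2\eta_q>0$ for positive argument (Lemma \ref{lem:toobasicpropproxder}); the inequality is immediate for $q=1$ and $q=2$. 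This monotonicity step is where I expect the real difficulty: for $3/2<q<2$ the second derivative $\partial_1^2\eta_q$ blows up at the origin (Lemma \ref{lem:continuitysecondder}), so the Stein/integration-by-parts manipulations must be justified through the weak derivative and a truncation near $u+Z=0$.

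Finally, with $F$ continuous, nondecreasing, boundedly iterated, and equipped with the sign structure above, the iteration is monotone: if $s_0<s^{\ast}$ then $s_t\uparrow$ and is bounded above by $s^{\ast}$, while if $s_0>s^{\ast}$ then $s_t\downarrow$ and is bounded below by $s^{\ast}$ (the case $s_0=s^{\ast}$ being trivial). In either case $\{s_t\}$ converges, and its limit $\ell\ge\sigma_w^2>0$ is a fixed point of the continuous map $F$, whence $\ell=s^{\ast}=\sigma_\chi^2$. Therefore $\tau_t\to\sigma_\chi<\infty$, which is the assertion of the lemma.
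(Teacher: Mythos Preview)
Your approach coincides with the paper's in its skeleton: both reduce the recursion to the scalar fixed–point equation $\mathcal{H}(\tau)=\tau^2$ (equivalently $F(s)=s$) and invoke Corollary~\ref{cor:fixedpointeqanalysis} to obtain a unique fixed point together with the sign pattern $F(s)>s$ for $s<s^\ast$ and $F(s)<s$ for $s>s^\ast$. The paper then simply declares the convergence ``straightforward to confirm''; you go considerably further by recognizing that the sign pattern alone does not rule out oscillation, and that monotonicity of $F$ is the missing structural ingredient. Your reduction of this monotonicity to the scalar inequality $\mathbb{E}[(\eta_q(u+Z;\chi)-u)(\eta_q(u+Z;\chi)-u\,\partial_1\eta_q(u+Z;\chi))]\ge 0$ is correct, and the cases $q\in\{1,2\}$ are indeed immediate. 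For $q\in(1,2)$ your Stein-based sketch is plausible but not yet a proof; a cleaner route is to use Lemma~\ref{lem:toobasicpropprox}(i) and Lemma~\ref{lem:toobasicpropproxder}(i) to rewrite the integrand as $\partial_1\eta_q\cdot(Z-W)(Z-(2-q)W)$ with $W=\chi q|\eta_q|^{q-1}\mathrm{sign}(\eta_q)$, and then argue positivity of the expectation via the same symmetrization device used in the proof of Lemma~\ref{lem:posfirstlemma}. In short, your write-up is at least as rigorous as the paper's, and it correctly isolates the one nontrivial step that the paper elides.
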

\begin{proof}
Denote $\mathcal{H}(\tau)=\sigma_w^2+\frac{1}{\delta}\mathbb{E}[\eta_q(B+\tau Z; \chi \tau^{2-q} )-B]^2$. According to Corollary \ref{cor:fixedpointeqanalysis}, we know $\mathcal{H}(\tau)=\tau^2$ has a unique solution. Furthermore, since $\mathcal{H}(0)>0$ and $\mathcal{H}(\tau) < \tau^2 $ when $\tau $ is large enough, it is straightforward to confirm the result stated in the above lemma. 
\end{proof}

Denote $\tau_t \rightarrow \tau_*$ as $t \rightarrow \infty$. Lemma \ref{cab:unique} and \eqref{eq:stateevoAMPellq} together yield
\begin{eqnarray}
\tau^2_{*}=\sigma_w^2+\frac{1}{\delta}\mathbb{E}[\eta_q(B+\tau_* Z; \chi \tau_*^{2-q})-B]^2. 
\end{eqnarray}
This is the same as Equation \eqref{eq:fixedpoint11}. We hence see the connection between AMP estimates and bridge regression. The main part of the proof for Theorem \ref{thm:eqpseudolip} is to rigorously establish such connection. In particular we will show the sequence $\{\beta^t\}$ converges (in certain asymptotic sense) to $\hat{\beta}( \lambda, q)$ as $t\rightarrow \infty$ in Section \ref{sec:two}. Towards that goal, we present the next theorem that shows asymptotic characterization of other quantities in the AMP algorithm.

\begin{theorem}\label{amp:th2}
Define $w_t \triangleq \frac{1}{\delta}\langle \partial_1 \eta_q(X^Tz^{t-1}+\beta^{t-1}; \chi \tau^{2-q}_{t-1}) \rangle $. Under the conditions of Theorem \ref{amp:th}, we have almost surely
\begin{enumerate}
\item[(i)] $\underset{t \rightarrow \infty}{\lim }\underset{p \rightarrow \infty}{\lim } \frac{\|\beta^{t+1}-\beta^t \|_2^2}{p}=0.$
\item[(ii)] $\underset{t \rightarrow \infty}{\lim }\underset{p \rightarrow \infty}{\lim } \frac{\|z^{t+1}-z^t \|_2^2}{p}=0.$
\item[(iii)] $\underset{p \rightarrow \infty}{\lim } \frac{\|z^t\|^2_2}{n}=\tau^2_t.$
\item[(iv)] $\underset{p \rightarrow \infty}{\lim } w_t = \frac{1}{\delta} \mathbb{E} [\partial_1\eta_q(B+\tau_{t-1}Z;\chi \tau_{t-1}^{2-q})],$  where $B, Z$ are the same random variables as in Theorem \ref{amp:th}.
\end{enumerate}
\begin{proof}
All the results for $q=1$ have been derived in \cite{BaMo11}. We here generalize them to the case $1<q\leq 2$. Since the proof is mostly a direct modification of that in \cite{BaMo11}, we only highlight the differences and refer the reader to \cite{BaMo11} for detailed arguments. According to the proof of Lemma 4.3 in \cite{BaMo11}, we have almost surely
\begin{eqnarray*}
&&\lim_{p \rightarrow \infty} \frac{ \|z^{t+1}-z^t \|_2^2}{p}=\lim_{p\rightarrow \infty}\frac{\|\beta^{t+1}-\beta^t\|_2^2}{p} \\
&=& \mathbb{E}[\eta_q(B+Z_t;\tau_{t}^{2-q}\chi)-\eta_q(B+Z_{t-1};\tau_{t-1}^{2-q}\chi)]^2, 
\end{eqnarray*}
where $(Z_t,Z_{t-1})$ is jointly zero-mean gaussian, independent from $B \sim p_{\beta}$, with covariance matrix defined by the recursion (4.13) in \cite{BaMo11}. From Lemma \ref{prox:smooth}, we know $\eta_q(u;\chi)$ is a differentiable function over $(-\infty,+\infty) \times (0, \infty)$. Hence we can apply mean value theorem to obtain
\begin{eqnarray*}
&&\mathbb{E}[\eta_q(B+Z_t;\tau_{t}^{2-q}\chi)-\eta_q(B+Z_{t-1};\tau_{t-1}^{2-q}\chi)]^2 \\
&\leq& \mathbb{E}[\partial_1 \eta_q(a; b)\cdot (Z_t-Z_{t-1})+\partial_2 \eta_q(a;b)\cdot (\tau^{2-q}_t-\tau^{2-q}_{t-1})\chi]^2 \\
&\leq& 2\mathbb{E} [(\partial_1\eta_q(a;b))^2\cdot (Z_t-Z_{t-1})^2]+2\mathbb{E} [(\partial_2\eta_q(a;b))^2\cdot (\tau_t^{2-q}-\tau_{t-1}^{2-q})^2\chi^2] \\
&\overset{(a)}{\leq}& 2\mathbb{E}[(Z_t-Z_{t-1})^2]+2 (\tau_t^{2-q}-\tau_{t-1}^{2-q})^2\chi^2q^2\mathbb{E}|a|^{2q-2},
\end{eqnarray*}
where $(a,b)$ is a point on a line that connects the two points $(B+Z_t, \tau_{t}^{2-q}\chi)$ and $(B+Z_{t-1},\tau_{t-1}^{2-q}\chi)$; we have used Lemma \ref{lem:toobasicpropprox} part (ii) and Lemma \ref{lem:toobasicpropproxder} part (i)(ii) to obtain (a). Note that Lemma \ref{cab:unique} implies the second term on the right hand side of the last inequality goes to zero, as $t\rightarrow \infty$. Regarding the first term, we can follow similar proof steps as for Lemma 5.7 in \cite{BaMo11} to show $\mathbb{E}(Z_t-Z_{t-1})^2 \rightarrow 0$, as $t\rightarrow \infty$. 

The proof of part (iii) is the same as that of Lemma 4.1 in \cite{BaMo11}. We do not repeat the proof here. For (iv), Lemma F.3(b) in \cite{BaMo11} implies the empirical distribution of $\{((X^Tz^{t-1}+\beta^{t-1})_i, \beta_i)\}_{i=1}^p$ converges weakly to the distribution of $(B+\tau_{t-1}Z,B)$. Since the function $J(y,z)\triangleq\partial_1\eta_q(y;\chi\tau^{2-q}_{t-1})$ is bounded and continuous with respect to $(y,z)$ according to Lemma \ref{lem:toobasicpropprox} part (i) and Lemma \ref{prox:smooth}, (iv) follows directly from the Portmanteau theorem.
\end{proof}
\end{theorem}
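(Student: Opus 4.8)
The plan is to establish all four statements within the AMP state-evolution framework of \cite{BaMo11}, replacing the soft-thresholding denoiser by the proximal map $\eta_q$ and feeding in the regularity properties collected in Appendix \ref{ssec:etaq:summary}. The crucial enabling fact is that $\eta_q(u;\chi)$ is Lipschitz in $u$ (Lemma \ref{lem:toobasicpropprox} part (vi)), which is exactly the hypothesis under which Theorem 1 of \cite{BaMo10} applies; this already underlies Theorem \ref{amp:th} and, more importantly, yields a \emph{joint} state-evolution characterization of the iterates across two consecutive rounds. Throughout, Lemma \ref{cab:unique} supplies convergence $\tau_t \to \tau_*$, which I will use to kill the tuning-parameter drift $\tau_t^{2-q}-\tau_{t-1}^{2-q}$.

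For parts (i) and (ii), I would first invoke the joint state-evolution result (the analog of the consecutive-iterate lemma of \cite{BaMo11}) to reduce $\lim_{p\to\infty}\|\beta^{t+1}-\beta^t\|_2^2/p$ to the Gaussian expectation $\mathbb{E}[\eta_q(B+Z_t;\tau_t^{2-q}\chi)-\eta_q(B+Z_{t-1};\tau_{t-1}^{2-q}\chi)]^2$, where $(Z_t,Z_{t-1})$ is a centered bivariate Gaussian independent of $B$ whose covariance is tracked by the covariance recursion of \cite{BaMo11}; the same expression controls $\|z^{t+1}-z^t\|_2^2/p$. The remaining task is to send $t\to\infty$ and show this quantity vanishes. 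I would apply the mean value theorem to $\eta_q$ — legitimate by the differentiability in Lemma \ref{prox:smooth} — splitting the difference into one piece proportional to $Z_t-Z_{t-1}$, bounded using $\partial_1\eta_q\in[0,1]$ (Lemma \ref{lem:toobasicpropproxder} part (iii)), and a second piece proportional to $\tau_t^{2-q}-\tau_{t-1}^{2-q}$, bounded via $\partial_2\eta_q$ (Lemma \ref{lem:toobasicpropproxder} part (ii)) together with $|\eta_q(u;\chi)|\le|u|$ (Lemma \ref{lem:toobasicpropprox} part (ii)). The drift piece tends to zero by Lemma \ref{cab:unique}, leaving the requirement $\mathbb{E}(Z_t-Z_{t-1})^2\to 0$, i.e. that the covariance of $(Z_t,Z_{t-1})$ degenerates in the limit.

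For part (iii), I would adapt the computation of $\|z^t\|_2^2/n$ from the AMP analysis in \cite{BaMo11}; that argument depends on the denoiser only through its Lipschitz continuity, so it carries over for $1<q\le 2$ without change. For part (iv), I would use that the empirical joint law of $\{((X^Tz^{t-1}+\beta^{t-1})_i,\beta_i)\}$ converges weakly to the law of $(B+\tau_{t-1}Z,B)$ (the analog of Lemma F.3(b) of \cite{BaMo11}); since $\partial_1\eta_q(\cdot;\chi\tau_{t-1}^{2-q})$ is bounded and continuous by Lemma \ref{lem:toobasicpropprox} part (i) and Lemma \ref{prox:smooth}, the Portmanteau theorem upgrades weak convergence to convergence of the average $w_t$ to $\frac{1}{\delta}\mathbb{E}[\partial_1\eta_q(B+\tau_{t-1}Z;\chi\tau_{t-1}^{2-q})]$.

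The main obstacle is the limit $\mathbb{E}(Z_t-Z_{t-1})^2\to 0$ feeding into (i)–(ii): unlike the soft-thresholding case, there is no closed form for the covariance recursion, so the contraction argument of \cite{BaMo11} must be reconstructed from only the qualitative regularity of $\eta_q$. A secondary delicacy is that $\partial_1\eta_q$ is \emph{not} differentiable at the origin once $q>3/2$ (Lemma \ref{lem:continuitysecondder}); this is precisely why the mean-value-theorem splitting above is arranged to invoke only first-order regularity and the boundedness bounds on $\partial_1\eta_q$ and $\partial_2\eta_q$, never a second derivative of the denoiser.
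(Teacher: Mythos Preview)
Your proposal is correct and follows essentially the same route as the paper: reduce (i)--(ii) via the joint state-evolution lemma of \cite{BaMo11} to the Gaussian expectation, split the $\eta_q$-difference by the mean value theorem (justified by Lemma \ref{prox:smooth}), bound the two pieces using Lemmas \ref{lem:toobasicpropprox} and \ref{lem:toobasicpropproxder}, kill the tuning-drift term via Lemma \ref{cab:unique}, and defer $\mathbb{E}(Z_t-Z_{t-1})^2\to 0$ to the contraction argument of Lemma 5.7 in \cite{BaMo11}; parts (iii) and (iv) are handled exactly as you describe, by Lemma 4.1 and Lemma F.3(b) of \cite{BaMo11} together with the Portmanteau theorem. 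The paper likewise does not spell out the covariance-contraction step and simply refers to \cite{BaMo11}, so the obstacle you flag is acknowledged but not resolved in more detail there either.
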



\subsection{Two useful theorems}\label{sec:three}

In this section, we refer to two useful theorems that have also been applied and cited in \cite{BaMo11}. The first one is regarding the limit of the singular values of random matrices taken from \cite{bai1993limit}.

\begin{theorem}\label{BY:1993}
(Bai and Yin, 1993). Let $X\in \mathbb{R}^{n\times p}$ be a matrix having $i.i.d.$ entries with $\mathbb{E}X_{ij}=0, \mathbb{E}X^2_{ij}=1/n$. Denote by $\sigma_{\max}(X), \sigma_{\min}(X)$ the largest and smallest non-zero singular values of $X$, respectively. If $n/p \rightarrow \delta >0$, as $p \rightarrow \infty$, then
\begin{eqnarray*}
\lim_{p\rightarrow 0}\sigma_{\max}(X) = \frac{1}{\sqrt{\delta}}+1, \quad a.s.,\\
\lim_{p\rightarrow 0}\sigma_{\min}(X)= \Big |\frac{1}{\sqrt{\delta}}-1\Big|, \quad a.s.
\end{eqnarray*}
\end{theorem}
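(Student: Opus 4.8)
The plan is to reduce the statement about singular values to one about the extreme eigenvalues of a Gram matrix and then attack the two edges separately by the moment (trace) method, which is the classical route to the Bai--Yin theorem.

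\textbf{Reduction and the target edges.} The nonzero singular values of $X$ are the square roots of the nonzero eigenvalues of its Gram matrix, and the nonzero eigenvalues of $X^T X$ and $X X^T$ coincide. I would therefore work with $W = X^T X$ when $\delta \geq 1$ (so that asymptotically $n \geq p$ and $W$ is generically full rank) and with $W = X X^T$ when $\delta < 1$; the two cases are interchanged by transposition. Writing $X = n^{-1/2} A$ with $A$ having i.i.d.\ variance-one entries, $W$ is a normalized sample covariance matrix whose empirical spectral distribution converges almost surely to a Marchenko--Pastur law. Tracking the $1/n$ normalization together with the aspect ratio $n/p \to \delta$, a direct computation of the support endpoints gives, in both cases, that the nonzero eigenvalues of $W$ have limiting support
\begin{equation*}
\Big[ \big(\tfrac{1}{\sqrt{\delta}} - 1\big)^2, \ \big(\tfrac{1}{\sqrt{\delta}} + 1\big)^2 \Big].
\end{equation*}
Since the Marchenko--Pastur density charges a neighbourhood of each endpoint, weak convergence already yields the ``easy'' halves: almost surely $\liminf_p \sigma_{\max}(X) \geq \tfrac{1}{\sqrt{\delta}} + 1$ and $\limsup_p \sigma_{\min}(X) \leq |\tfrac{1}{\sqrt{\delta}} - 1|$. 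What remains is to rule out eigenvalues escaping past either endpoint.

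\textbf{Upper edge via the trace method.} Let $\rho = (\tfrac{1}{\sqrt{\delta}} + 1)^2$ be the right endpoint. For a sequence $k = k_p \to \infty$ chosen to grow slowly with $p$ (e.g.\ $k_p \asymp \log p$), I would estimate $\mathbb{E}\,\mathrm{tr}(W^{k})$ by expanding it as a sum over closed walks of length $2k$ on the bipartite index graph. The combinatorial heart is that the dominant graphs are the ``double trees'', whose count is governed by Catalan numbers and reproduces exactly $\rho^{k}$, while graphs with higher edge multiplicities or genuine cycles are suppressed provided the entries have a finite fourth moment (which holds here since $X$ is Gaussian). This gives a bound of the shape $\mathbb{E}\,\mathrm{tr}(W^{k}) \leq p\,\rho^{k}(1 + o(1))$, whence Markov's inequality yields $\mathbb{P}(\sigma_{\max}^2(X) > \rho + \varepsilon) \leq p\,\rho^{k}(\rho+\varepsilon)^{-k}(1+o(1))$; with $k_p \to \infty$ the right-hand side is summable in $p$, so Borel--Cantelli gives $\limsup_p \sigma_{\max}^2(X) \leq \rho$ almost surely. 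Combined with the previous paragraph this pins $\sigma_{\max}(X) \to \tfrac{1}{\sqrt{\delta}} + 1$.

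\textbf{Lower edge --- the main obstacle.} Controlling $\sigma_{\min}$ is genuinely harder, and I expect it to be the crux. Large powers of $W$ amplify the \emph{largest} eigenvalues, so the trace method cannot be applied to $W^k$ directly to squeeze the smallest one. The Bai--Yin device is to run the moment method on a reflected or shifted matrix --- estimating quantities such as $\mathrm{tr}\big((\rho' I - W)^{k}\big)$ around the left endpoint $\rho' = (\tfrac{1}{\sqrt{\delta}} - 1)^2$ --- where one must now account for all contributing graphs rather than only the double trees, making the combinatorics and the fourth-moment bookkeeping considerably more delicate. This yields $\liminf_p \sigma_{\min}^2(X) \geq \rho'$ almost surely, matching the upper bound and completing the argument. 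Finally, for the Gaussian matrices actually used in this paper a shorter route is worth recording: $\sigma_{\max}$ and $\sigma_{\min}$ are $1$-Lipschitz functions of the entries, so Gaussian concentration gives exponential concentration about their means and, via Borel--Cantelli, almost sure convergence, while Gordon's min-max comparison inequality pins the means through $\sqrt{n} - \sqrt{p} \leq \mathbb{E}\,\sigma_{\min}(\sqrt{n}\,X) \leq \mathbb{E}\,\sigma_{\max}(\sqrt{n}\,X) \leq \sqrt{n} + \sqrt{p}$, and dividing by $\sqrt{n}$ recovers the limits $1 \pm \tfrac{1}{\sqrt{\delta}}$.
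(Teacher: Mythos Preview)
The paper does not prove this theorem at all: it is stated as a known result and cited from Bai and Yin (1993) (reference \cite{bai1993limit} in the paper), so there is no ``paper's own proof'' to compare against. Your sketch is a faithful outline of the classical Bai--Yin argument --- Marchenko--Pastur for the bulk, trace/moment method with Catalan combinatorics for the top edge, and the harder reflected-moment analysis for the bottom edge --- and the Gaussian shortcut via Gordon's comparison plus Lipschitz concentration is a legitimate alternative that would in fact suffice for every use of this theorem in the paper, since the design matrices there are Gaussian.
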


The second theorem establishes the relation between $\ell_1$ and $\ell_2$ norm for vectors from random subspace, showed in \cite{kashin1977diameters}.

\begin{theorem}\label{ka:1977}
(Kashin, 1977). For a given constant $0<v\leq 1$, there exists a universal constant $c_v$ such that for any $p\geq 1$ and a uniformly random subspace $V$ of dimension $p(1-v)$,
\[
\mathbb{P}\Big( \forall \beta \in V : c_v \|\beta\|_2 \leq \frac{1}{\sqrt{p}}\|\beta\|_1\Big)\geq 1- 2^{-p}.
\]
\end{theorem}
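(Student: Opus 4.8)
The plan is to establish the substantive lower inequality; the companion bound $\frac{1}{\sqrt p}\|\beta\|_1 \le \|\beta\|_2$ is an immediate consequence of Cauchy--Schwarz and holds deterministically on all of $\mathbb{R}^p$. Since the ratio $\|\beta\|_1/(\sqrt p\,\|\beta\|_2)$ is scale invariant, I would first pass to the unit sphere: it suffices to show that, with probability at least $1-2^{-p}$, the random subspace $V$ avoids the ``bad set''
\[
S_c \triangleq \Big\{x\in S^{p-1}:\ \|x\|_1 < c\sqrt p\Big\}
\]
of unit vectors with anomalously small $\ell_1$ norm, for a suitable constant $c=c_v>0$. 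Indeed, $V\cap S_c=\emptyset$ is precisely the event $\{\forall \beta\in V:\ \frac{1}{\sqrt p}\|\beta\|_1\ge c\,\|\beta\|_2\}$.

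The natural tool here is Gordon's ``escape through the mesh'' phenomenon, which fits the Gaussian comparison framework already invoked elsewhere in this paper. For a uniformly random subspace $V$ of codimension $\lceil vp\rceil$ and a fixed $S\subseteq S^{p-1}$, Gordon's inequality guarantees $V\cap S=\emptyset$ with failure probability exponentially small in $(\sqrt{vp}-w(S))_+^2$, where
\[
w(S)\triangleq \mathbb{E}\,\sup_{x\in S}\langle g,x\rangle,\qquad g\sim N(0,I_p),
\]
is the Gaussian width of $S$; the only requirement is that the codimension exceed $w(S)^2$. The problem thus reduces to the single analytic estimate of bounding $w(S_c)$ and checking that it stays a definite fraction below $\sqrt{vp}$.

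For the width bound I would use $S_c\subseteq\{x:\|x\|_2\le 1,\ \|x\|_1\le c\sqrt p\}$ together with the standard Gaussian-width estimate for an $\ell_1$--$\ell_2$ ball (the effective-sparsity / soft-thresholding computation), which yields $w(S_c)\lesssim c\sqrt p\,\sqrt{\log(e/c^2)}$. Consequently $w(S_c)\le\tfrac12\sqrt{vp}$ as soon as $c\sqrt{\log(e/c^2)}\le\tfrac12\sqrt v$, and such a $c=c_v>0$ depending only on $v$ always exists because the left-hand side tends to $0$ as $c\to 0$. Feeding the resulting width deficit $\sqrt{vp}-w(S_c)\ge\tfrac12\sqrt{vp}$ into Gordon's bound produces a failure probability of order $e^{-c(v)p}$ with $c(v)>0$.

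The crux of the argument -- and where the hypothesis $v>0$ (codimension proportional to $p$) is indispensable -- is the quantitative balancing. The Gaussian-width route above delivers a positive constant $c_v$ and a failure probability $e^{-c(v)p}$ for the entire range $0<v\le 1$, but extracting the sharp base-$2$ rate $2^{-p}$ demanded by the statement requires optimizing the absolute constants in the concentration estimate, and this constant-tracking -- rather than any conceptual difficulty -- is what I expect to be the main obstacle. I would also note that the cruder alternative -- combining L\'evy's concentration of the $1$-Lipschitz map $x\mapsto \frac{1}{\sqrt p}\|x\|_1$ on $S^{p-1}$ (whose mean tends to $\sqrt{2/\pi}$) with a union bound over an $\varepsilon$-net of $V\cap S^{p-1}$ of cardinality $(3/\varepsilon)^{p(1-v)}$ -- succeeds only when $v$ is close to $1$, since for subspaces of large proportional dimension the net cardinality overwhelms the concentration gain; it is exactly this failure that makes the Gaussian-width (equivalently, Kashin's original) argument necessary to cover all $v\in(0,1]$.
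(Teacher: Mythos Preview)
The paper does not prove this theorem. It is stated in Section~\ref{sec:three} as one of ``two useful theorems'' imported from the literature, with the attribution ``showed in \cite{kashin1977diameters}'', and is then invoked as a black box in Lemmas~\ref{corner:lemma} and~\ref{verify:1}. There is therefore no paper proof to compare against.

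Your proposal via Gordon's escape-through-the-mesh inequality is a correct and standard modern route to Kashin-type inequalities, and the key analytic input---that the Gaussian width of $\{x:\|x\|_2\le 1,\ \|x\|_1\le c\sqrt p\}$ is of order $c\sqrt p\,\sqrt{\log(e/c^2)}$---is accurate and does yield $w(S_c)\le\tfrac12\sqrt{vp}$ once $c$ is chosen small in terms of $v$ alone. You are also right to flag that this delivers $e^{-c(v)p}$ rather than the stated $2^{-p}$; the latter would require either sharper constant control in Gordon's bound or reverting to Kashin's original volumetric argument (which predates Gordon's 1988 comparison inequality). For the purposes of this paper, however, the distinction is immaterial: the theorem is applied only through Borel--Cantelli to conclude that the event holds almost surely for all large $p$, and any summable failure probability suffices for that.
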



\subsection{The main proof steps}\label{sec:two}

As mentioned before we will use similar arguments as the ones shown in \cite{BaMo11}. To avoid redundancy, we will not present all the details and rather emphasize on the differences. We suggest interested readers going over the proof in \cite{BaMo11} before studying this section. Similar to \cite{BaMo11}, we start with a lemma that summarizes several structural properties of LQLS formulation. Define $\mathcal{F}(\beta)\triangleq \frac{1}{2}\|y-X\beta\|_2^2+\lambda \|\beta\|^q_q$. 

\begin{lemma}\label{corner:lemma}
 Suppose $\beta, r \in \mathbb{R}^p$ satisfy the following conditions:
\begin{enumerate}
\item[(i)] $\|r\|_2 \leq c_1 \sqrt{p}$
\item[(ii)] $\mathcal{F}(\beta+r)\leq \mathcal{F}(\beta)$
\item[(iii)] $\|\nabla F(\beta)\|_2 \leq \sqrt{p} \epsilon$
\item[(iv)] $\sup_{0\leq \mu_i\leq 1}\sum_{i=1}^p |\beta_i+\mu_i r_i|^{2-q}\leq p c_2$
\item[(v)] $0<c_3\leq \sigma_{\min}(X)$, where $\sigma_{\min}(X)$ is defined in Theorem \ref{BY:1993}
\item[(vi)] $\|r^{\parallel}\|^2_2\leq c_4 \frac{\|r^{\parallel}\|^2_1}{p}$. The vector $r^{\parallel}\in \mathbb{R}^p$ is the projection of $r$ onto \mbox{ker}(X)\footnote{It is the nullspace of $X$ defined as $\mbox{ker}(X)= \{\beta \in \mathbb{R}^p \mid X\beta=0\}$.}
\end{enumerate}
Then there exists a function $f(\epsilon, c_1, c_2, c_3, c_3, c_4, \lambda, q)$ such that 
\[
\|r\|_2 \leq \sqrt{p}f(\epsilon, c_1, c_2, c_3, c_4, \lambda, q).
\]
Moreover, $f(\epsilon, c_1,c_2, c_3, c_4, \lambda, q) \rightarrow 0$ as $\epsilon \rightarrow 0$. 
\end{lemma}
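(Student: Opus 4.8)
The plan is to exploit the convexity of $\mathcal{F}(\beta)=\tfrac12\|y-X\beta\|_2^2+\lambda\|\beta\|_q^q$ together with the six hypotheses, splitting $r=r^{\parallel}+r^{\perp}$ into its component $r^{\parallel}\in\ker(X)$ and its component $r^{\perp}\in\ker(X)^{\perp}=\mathrm{row}(X)$, and controlling the two pieces by two different mechanisms: the quadratic data-fitting part of $\mathcal{F}$ controls $r^{\perp}$, while the $\ell_q$ penalty controls $r^{\parallel}$. I will treat $1<q<2$ in detail; the case $q=2$ is easier (the penalty Bregman divergence equals $\lambda\|r\|_2^2$ and (iv) is trivial) and $q=1$ is already covered in \cite{BaMo11}.

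First I would write the Bregman decomposition. Writing $g(\beta)=\tfrac12\|y-X\beta\|_2^2$ (quadratic) and $h(\beta)=\lambda\|\beta\|_q^q$ (convex and, for $1<q\le2$, continuously differentiable), hypothesis (ii) gives
\begin{equation*}
0\ \ge\ \mathcal{F}(\beta+r)-\mathcal{F}(\beta)\ =\ \langle\nabla\mathcal{F}(\beta),r\rangle+\tfrac12\|Xr\|_2^2+D_h,
\end{equation*}
where $D_h=h(\beta+r)-h(\beta)-\langle\nabla h(\beta),r\rangle\ge0$. Bounding $-\langle\nabla\mathcal{F}(\beta),r\rangle\le\|\nabla\mathcal{F}(\beta)\|_2\|r\|_2\le c_1\epsilon p$ via (iii) and (i), and using that both $\tfrac12\|Xr\|_2^2$ and $D_h$ are nonnegative, I get simultaneously $\|Xr\|_2^2\le 2c_1\epsilon p$ and $D_h\le c_1\epsilon p$. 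Since $Xr=Xr^{\perp}$ with $r^{\perp}\in\mathrm{row}(X)$, hypothesis (v) (with Theorem \ref{BY:1993}) yields $\|r^{\perp}\|_2\le\|Xr\|_2/\sigma_{\min}(X)\le\sqrt{2c_1\epsilon p}/c_3$, i.e. $\sqrt{p}$ times a quantity vanishing with $\epsilon$.

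The $r^{\parallel}$-step is the crux. Using the second-order mean-value remainder for $t\mapsto|\beta_i+tr_i|^q$ I would write $D_h=\lambda\tfrac{q(q-1)}{2}\sum_i r_i^2|\beta_i+\mu_i r_i|^{q-2}$ for suitable $\mu_i\in(0,1)$, then split by Cauchy--Schwarz:
\begin{equation*}
\|r\|_1^2=\Big(\sum_i|r_i|\,|\beta_i+\mu_i r_i|^{\frac{q-2}{2}}|\beta_i+\mu_i r_i|^{\frac{2-q}{2}}\Big)^2\le\Big(\sum_i r_i^2|\beta_i+\mu_i r_i|^{q-2}\Big)\Big(\sum_i|\beta_i+\mu_i r_i|^{2-q}\Big).
\end{equation*}
The first factor equals $\tfrac{2D_h}{\lambda q(q-1)}\le\tfrac{2c_1\epsilon p}{\lambda q(q-1)}$, and the second is $\le pc_2$ precisely because (iv) takes the supremum over all $\mu_i\in[0,1]$. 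Hence $\|r\|_1\le p\sqrt{2c_1c_2\epsilon/(\lambda q(q-1))}$. I then convert this into an $\ell_2$ bound on the kernel part through hypothesis (vi), which is the Kashin estimate (Theorem \ref{ka:1977}): $\|r^{\parallel}\|_2\le\sqrt{c_4/p}\,\|r^{\parallel}\|_1$ and $\|r^{\parallel}\|_1\le\|r\|_1+\|r^{\perp}\|_1\le\|r\|_1+\sqrt{p}\,\|r^{\perp}\|_2$, both already $O(p\sqrt{\epsilon})$. Combining $\|r\|_2\le\|r^{\parallel}\|_2+\|r^{\perp}\|_2$ gives $\|r\|_2\le\sqrt{p}\,f$ with $f$ an explicit sum of terms each proportional to $\sqrt{\epsilon}$, so $f\to0$ as $\epsilon\to0$.

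The main obstacle is making the mean-value remainder rigorous for $1<q<2$, since $|x|^q$ has a second derivative with an integrable singularity at the origin. I would justify the needed representation either through the integral remainder $r_i^2\int_0^1(1-s)|\beta_i+sr_i|^{q-2}\,ds$ (finite for $q>1$), or by observing that the mean-value point $\mu_i$ necessarily lands where $\beta_i+\mu_i r_i\neq0$ so that each summand is finite, and then checking that the supremum form of (iv) is exactly what dominates the resulting weights. The only other point to verify is that $\ker(X)\neq\{0\}$ can occur only when $\delta<1$, so the $r^{\parallel}$ analysis (and the use of (vi)) is vacuous when $\delta>1$.
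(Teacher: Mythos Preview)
Your proposal is correct and follows essentially the same route as the paper: the same Bregman-type expansion of $\mathcal{F}(\beta+r)-\mathcal{F}(\beta)$, the same use of $\sigma_{\min}(X)$ to control $r^{\perp}$, the identical Cauchy--Schwarz splitting $\|r\|_1^2\le\big(\sum_i r_i^2|\beta_i+\mu_ir_i|^{q-2}\big)\big(\sum_i|\beta_i+\mu_ir_i|^{2-q}\big)$, and the same passage through (vi) to bound $\|r^{\parallel}\|_2$. The paper resolves your ``main obstacle'' not via the integral remainder but with a separate lemma proving the pointwise \emph{inequality} $|x+r|^q-|x|^q-qr|x|^{q-1}\mathrm{sign}(x)\ge\tfrac{q(q-1)}{2}|x+\mu r|^{q-2}r^2$ by an explicit case analysis on the signs of $x$ and $x+r$; since only the inequality (not the equality you wrote) is needed for the Cauchy--Schwarz step, this suffices. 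Your integral-remainder suggestion also works once you note that $f'(t)=q r_i|\beta_i+tr_i|^{q-1}\mathrm{sign}(\beta_i+tr_i)$ is absolutely continuous for $q>1$, and an intermediate-value argument then recovers a genuine point $\mu_i\in[0,1]$ with $|\beta_i+\mu_ir_i|^{q-2}$ equal to the (finite) weighted average.
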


\begin{proof}
First note that 
\[
\nabla \mathcal{F}(\beta)=-X^T(y-X\beta)+ \lambda  q(|\beta_1|^{q-1}\mbox{sign}(\beta_1),\ldots, |\beta_p|^{q-1}\mbox{sign}(\beta_p))^T.
\]
Combining it with Condition (ii) we have
\begin{eqnarray}
 &0& \geq  \mathcal{F}(\beta+r)-\mathcal{F}(\beta)  \nonumber \\
& &= \frac{1}{2}\|y-X\beta-Xr\|^2_2+\lambda \|\beta+r\|^q_q-\frac{1}{2}\|y-X\beta\|^2_2-\lambda \|\beta\|^q_q \nonumber \\
&&=\frac{1}{2}\|Xr\|^2_2 -r^TX^T(y-X\beta) + \lambda(\|\beta+r\|^q_q-\|\beta\|^q_q) \nonumber \\
&&= \frac{1}{2}\|Xr\|^2_2+r^T\nabla \mathcal{F}(\beta)+\lambda \sum_{i=1}^p (|\beta_i+r_i|^q-|\beta_i|^q-qr_i|\beta_i|^{q-1}\mbox{sign}(\beta_i))  \nonumber \\
\hspace{0.5cm} &&\overset{(a)}{\geq} \frac{1}{2}\|Xr\|^2_2+r^T\nabla \mathcal{F}(\beta)+\frac{\lambda q(q-1)}{2}\sum_{i=1}^p |\beta_i+\mu_ir_i|^{q-2}r_i^2, \label{start:point} 
\end{eqnarray}
where $(a)$ is obtained by Lemma \ref{simple:key} that we will prove shortly and $\{\mu_i\}$ are numbers between $0$ and $1$. Note that we can decompose $r$ as $r=r^{\parallel}+r^{\perp}$ such that $r^{\parallel} \in \mbox{ker}(X), r^{\perp}\in \mbox{ker}(X)^{\perp}$. Accordingly Condition (v) yields $c_3^2 \|r^{\perp}\|^2_2 \leq \|Xr^{\perp}\|^2_2$. This fact combined with Inequality \eqref{start:point} implies
\begin{eqnarray*}
\frac{c_3^2}{2}\|r^{\perp}\|^2_2\leq \frac{1}{2}\|Xr^{\perp}\|^2_2=\frac{1}{2}\|Xr\|^2_2\leq -r^T\nabla \mathcal{F}(\beta)\leq \|r\|_2 \cdot \|\nabla \mathcal{F}(\beta)\|_2 \leq c_1p \epsilon,
\end{eqnarray*}
where the last inequality is derived from Condition (i) and (iii). Hence we can obtain 
\[
\|r^{\perp}\|^2_2\leq \frac{2c_1p \epsilon}{c^2_3}.
\]
 Our next step is to bound $\|r^{\parallel}\|_2^2$. By Cauchy-Schwarz inequality we know
\begin{eqnarray}
\sum_{i=1}^p|r_i|&=&\sum_{i=1}^p \sqrt{|\beta_i+\mu_i r_i|^{2-q}} \cdot \sqrt{r_i^2|\beta_i+\mu_ir_i|^{q-2}},  \nonumber \\
&\leq &\sqrt{ \sum_{i=1}^p |\beta_i+\mu_i r_i|^{2-q}} \cdot \sqrt{\sum_{i=1}^p r_i^2|\beta_i+\mu_ir_i|^{q-2}}. \nonumber
\end{eqnarray}
So
\begin{eqnarray} \label{kk:two}
\sum_{i=1}^p r_i^2|\beta_i+\mu_ir_i|^{q-2} \geq  \frac{ \|r\|_1^2}{\sum_{i=1}^p |\beta_i+\mu_ir_i|^{2-q}}.
\end{eqnarray}
Combining Inequality \eqref{start:point} and \eqref{kk:two} gives us
\begin{eqnarray*}
\|r\|_1^2 \leq  \frac{-2r^T\nabla \mathcal{F}(\beta)}{\lambda q(q-1)} \cdot \sum_{i=1}^p |\beta_i+\mu_ir_i|^{2-q} \overset{(b)}{\leq} \frac{2c_1c_2\epsilon}{\lambda q(q-1)}p^2, \label{r:norm}
\end{eqnarray*}
where we have used Conditions (i), (iii), and (iv) to derive $(b)$. Using the upper bounds we obtained for $\|r\|_1^2$ and $\|r^{\perp}\|^2_2$, together with Condition (vi), it is straightforward to verify the following chains of inequalities
\begin{eqnarray*}
\|r^{\parallel}\|^2_2 &\leq& \frac{c_4}{p} \|r^{\parallel}\|^2_1\leq \frac{2c_4}{p}(\| r\|_1^2 + \|r^{\perp}\|^2_1) \leq \frac{2c_4}{p}(\| r\|_1^2+p \|r^{\perp}\|_2^2) \\
&\leq & \frac{2c_4}{p} \cdot \Big(\frac{2c_1c_2\epsilon}{\lambda q(q-1)}p^2+\frac{2c_1\epsilon}{c^2_3} p^2 \Big)=\Big(\frac{4c_1c_2c_4}{\lambda q(q-1)}+\frac{4c_1c_4}{c^2_3} \Big)\cdot \epsilon p.
\end{eqnarray*}
We are finally able to derive
\[
\|r\|^2_2=\|r^{\parallel}\|^2_2+\|r^{\perp}\|^2_2\leq \Big(\frac{4c_1c_2c_4}{\lambda q(q-1)}+\frac{4c_1c_4}{c^2_3}+\frac{2c_1}{c_3^2} \Big)\cdot \epsilon p.
\]
This completes the proof.
\end{proof}

Note that Lemma \ref{corner:lemma} is a non-asymptotic and deterministic result. It sheds light on the behavior of the cost function $\mathcal{F}(\beta)$ around its global minimum. Suppose $\beta+r$ is the global minimizer (a reasonable assumption according to Condition (ii)), and if there is another point $\beta$ having small function value (indicated by its gradient from Condition (iii)), then the distance $\|r\|_2$ between $\beta$ and the optimal solution $\beta+r$ should also be small. This interpretation should not sound surprising, since we already know $\mathcal{F}(\beta)$ is a strictly convex function. However, Lemma \ref{corner:lemma} enables us to characterize this property in a precise way, which is crucial in the high dimensional asymptotic analysis. Based on Lemma \ref{corner:lemma}, we will set $\beta+r=\hat{\beta}(\lambda,q), \beta= \beta^t$ and then verify all the conditions in Lemma \ref{corner:lemma} to conclude $\|r\|_2=\|\hat{\beta}(\lambda,q)-\beta^t\|_2$ is small. In particular that small distance will vanish as $t \rightarrow \infty$, thus establishing the asymptotic equivalence between $\hat{\beta}(\lambda,q)$ and $\beta^t$. We perform the analysis in a sequel of lemmas and Proposition \ref{p:one}.


\begin{lemma}\label{simple:key}
Given a constant $q$ satisfying $1<q\leq 2$, for any $x, r \in \mathbb{R}$, there exists a number $0\leq \mu \leq 1$ such that
\begin{equation}
|x+r|^q-|x|^q-rq|x|^{q-1}{\rm sign}(x)\geq \frac{q(q-1)}{2}|x+\mu r|^{q-2}r^2. \label{eq:one}
\end{equation}
\end{lemma}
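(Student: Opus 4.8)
The plan is to recognize the left-hand side of \eqref{eq:one} as the second-order Taylor remainder of the convex function $t\mapsto|t|^q$ along the segment joining $x$ to $x+r$, and to extract the claimed pointwise lower bound from an integral (weighted-average) representation of that remainder. First I would dispose of the trivial case $r=0$, where both sides vanish (reading the degenerate term $r^2|x+\mu r|^{q-2}$ as $0$), and henceforth assume $r\neq 0$. Then I define $g(t)\triangleq|x+tr|^q$ for $t\in[0,1]$, so that, using $g'(0)=qr|x|^{q-1}{\rm sign}(x)$, the left-hand side of \eqref{eq:one} is exactly $g(1)-g(0)-g'(0)$.

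Next I would record the regularity of $g$. For $1<q\leq2$ the map $\phi(y)=|y|^{q-1}{\rm sign}(y)$ is absolutely continuous with $\phi'(y)=(q-1)|y|^{q-2}$ for $y\neq0$, so $g$ is $C^1$ on $[0,1]$ with $g'(t)=qr\,\phi(x+tr)$ absolutely continuous and $g''(t)=q(q-1)r^2|x+tr|^{q-2}\geq0$ wherever $x+tr\neq0$. The crucial observation is that, since $q-2>-1$, the nonnegative function $g''$ is integrable on $[0,1]$ even when the segment $\{x+tr:t\in[0,1]\}$ crosses the origin. Applying the fundamental theorem of calculus twice and Tonelli's theorem then gives
\begin{align*}
g(1)-g(0)-g'(0) &= \int_0^1\big(g'(t)-g'(0)\big)\,dt = \int_0^1\!\!\int_0^t g''(s)\,ds\,dt \\
&= \int_0^1 (1-s)\,g''(s)\,ds.
\end{align*}

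Finally, since $\int_0^1(1-s)\,ds=\tfrac12$ and $g''\geq0$, the quantity $A\triangleq 2\int_0^1(1-s)g''(s)\,ds$ is a weighted average of the values of $g''$, hence $A\geq\min_{[0,1]}g''$. This minimum is attained at the point $\mu$ where $|x+sr|$ is \emph{largest}; as $s\mapsto|x+sr|$ is convex, that point is an endpoint $\mu\in\{0,1\}$, and for $r\neq0$ one checks $x+\mu r\neq0$ there, so the singularity at the zero-crossing is avoided and $g''(\mu)$ is finite. With this choice, $A\geq g''(\mu)$ yields
\[
g(1)-g(0)-g'(0)=\tfrac12 A\ \geq\ \tfrac12\,g''(\mu)=\frac{q(q-1)}{2}|x+\mu r|^{q-2}r^2,
\]
which is \eqref{eq:one}. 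The only real obstacle is the failure of $|t|^q$ to be twice differentiable at $0$ when $q<2$: a naive Lagrange remainder is unjustified when the segment passes through the origin, and this is precisely what the integral representation—together with the fact that the minimizer of $g''$ lies at an endpoint—circumvents.
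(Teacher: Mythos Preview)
Your proof is correct and takes a genuinely different route from the paper's. The paper argues by an extensive case analysis: it treats $q=2$ and $x=0$ separately, then for $x>0$ splits according to the sign of $x+r$, applying the Lagrange form of Taylor's theorem when the segment $[x,x+r]$ avoids the origin, and handling the zero-crossing cases $x+r\leq 0$ by reflecting (applying the already-proved case to the pair $(x,-2x-r)$) together with a quadratic comparison. Your approach bypasses all of this by using the integral-form remainder $g(1)-g(0)-g'(0)=\int_0^1(1-s)g''(s)\,ds$, which remains valid across the singularity since $|x+sr|^{q-2}$ is locally integrable for $q>1$; the pointwise bound then falls out immediately from the observation that the minimum of $g''$ over $[0,1]$ is attained at an endpoint (by convexity of $s\mapsto|x+sr|$). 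Your argument is shorter, requires no case splitting, and actually proves slightly more: $\mu$ can always be chosen in $\{0,1\}$, whereas the paper's mean-value argument in the non-crossing case only yields some $\mu\in(0,1)$.
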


\begin{proof}
Denote $f_q(x)=|x|^q$. When $q=2$, since $f_2(x)$ is a smooth function over $(-\infty, +\infty)$, we can apply Taylor's theorem to obtain \eqref{eq:one}. For any $1<q<2$, note that $f''_q(0)=\infty$, hence Taylor's theorem is not applicable to all the values of $x \in \mathbb{R}$. We prove the inequality above in separate cases. First observe that if \eqref{eq:one} holds for any $x>0, r\in\mathbb{R}$, then it is true for any $x<0, r\in \mathbb{R}$ as well. It is also straightforward to confirm that when $x=0$, we can always choose $\mu=1$ to satisfy Inequality \eqref{eq:one} for any $r\in \mathbb{R}$. We therefore focus on the case $x>0, r\in \mathbb{R}$. 
\begin{enumerate}
\item[a.] When $x+r>0$, since $f_q(x)$ is a smooth function over $(0,\infty)$, we can apply Taylor's theorem to obtain \eqref{eq:one}.
\item[b.] If $x+r=0$, choosing $\mu=0$, Inequality \eqref{eq:one} is simplified to $(q-1)x^q\geq \frac{q(q-1)}{2}x^q$, which is clearly valid. 
\item[c.] When $x+r<0$, we consider two different scenarios. 
\begin{enumerate}
\item[i.]First suppose $-x-r\geq x$. We apply \eqref{eq:one} to the pair $-r-x$ and $x$. Then we know there exists $0 \leq \tilde{\mu}\leq 1$ such that 
\begin{eqnarray*}
&&|x+r|^q-|x|^q\geq \frac{q(q-1)}{2}|\tilde{\mu}(-x-r)+(1-\tilde{\mu})x|^{q-2}(2x+r)^2 \\
&&\hspace{3.5cm} -(2x+r)q|x|^{q-1}
\end{eqnarray*}
 It is also straightforward to verify that there is $0\leq \mu \leq 1$ so that $\mu(x+r)+(1-\mu)x=-\tilde{\mu}(-x-r)-(1-\tilde{\mu})x$. Denote 
 \[
 g(y)=\frac{q(q-1)}{2}|\tilde{\mu}(-x-r)+(1-\tilde{\mu})x|^{q-2}y^2+q|x|^{q-1}y. 
 \]
If we can show $g(-2x-r)\geq g(r)$, we can obtain the Inequality \eqref{eq:one}. It is easily seen that the quadratic function $g(y)$ achieves global minimum at 
\[
y_0=\frac{-1}{q-1}|x|^{q-1}|\tilde{\mu}(-x-r)+(1-\tilde{\mu})x|^{2-q}\leq \frac{-1}{q-1}|x|<-x. 
\]
Moreover, note that $-2x-r\geq 0, r<0$ and they are symmetric around $y=-x$, hence $g(-2x-r)\geq g(r)$.
\item[ii.] Consider $0<-x-r<x$. We again use \eqref{eq:one} for the pair $-x--r$ and $x$ to obtain 
\begin{eqnarray*}
&&\hspace{-0.7cm} |x+r|^q-|x|^q\geq \frac{q(q-1)}{2}|\tilde{\mu}(x+r)-(1-\tilde{\mu})x|^{q-2}(2x+r)^2\\
&&\hspace{-0.5cm} -(2x+r)q|x|^{q-1} \geq (-2x-r)q|x|^{q-1}+\frac{q(q-1)}{2}|x|^{q-2}(2x+r)^2. 
\end{eqnarray*}
Denote $h(y)=\frac{q(q-1)}{2}|x|^{q-2}y^2+q|x|^{q-1}y$. If we can show $h(-2x-r)\geq h(r)$, Inequality \eqref{eq:one} will be established with $\mu=0$. Since $h(x)$ achieves global minimum at $y_0=\frac{-1}{q-1}|x|<-x$ and $-2x-r>r$, we can get $h(-2x-r)\geq h(r)$. 
\end{enumerate}
\end{enumerate}
\end{proof}

The next lemma is similar to Lemma 3.2 in \cite{BaMo11}. The proof is adapted from there.

\begin{lemma}\label{verify:1}
Let $\{\beta(p), X(p),w(p)\}$ be a converging sequence. Denote the solution of LQLS by $\hat{\beta}(\lambda, q)$, and let $\{\beta^t\}_{t\geq0}$ be the sequence of estimates generated from the AMP algorithm. There exists a positive constant $C$ s.t.
\begin{eqnarray*}
&&\lim_{t\rightarrow \infty}\lim_{p\rightarrow \infty} \frac{\|\beta^t\|^2_2}{p}\leq C, \quad a.s., \\
&&\limsup_{p\rightarrow \infty} \frac{\|\hat{\beta}(\lambda,q)\|^2}{p} \leq C, \quad a.s.
\end{eqnarray*}
\end{lemma}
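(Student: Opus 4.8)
The plan is to bound the two quantities separately: the AMP estimate via the state-evolution characterization, and the LQLS solution via the geometry of the optimization, the first being essentially immediate and the second requiring real work.

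For the AMP sequence I would apply Theorem \ref{amp:th} with the order-$2$ pseudo-Lipschitz function $\psi(a,b)=a^2$, which yields $\lim_{p\to\infty}\frac{1}{p}\|\beta^{t+1}\|_2^2=\mathbb{E}[\eta_q(B+\tau_t Z;\theta_t)^2]$ almost surely, with $\theta_t=\chi\tau_t^{2-q}$. Using $|\eta_q(u;\chi)|\le|u|$ from Lemma \ref{lem:toobasicpropprox}(ii), together with the independence of $B$ and $Z$ and $\mathbb{E}Z=0,\ \mathbb{E}Z^2=1$, this is at most $\mathbb{E}|B|^2+\tau_t^2$. Since $\tau_t\to\tau_*<\infty$ by Lemma \ref{cab:unique} and $\mathbb{E}|B|^2<\infty$ by the converging-sequence assumption, sending $t\to\infty$ gives the first bound with constant $\mathbb{E}|B|^2+\tau_*^2$.

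For $\hat{\beta}(\lambda,q)$ the difficulty is that, when $q<2$, a bound on $\|\hat{\beta}\|_q^q$ gives no control of $\|\hat{\beta}\|_2^2$, since a direction lying in $\ker(X)$ can be large without changing the residual. I would begin from optimality, $\mathcal{F}(\hat{\beta})\le\mathcal{F}(0)=\tfrac12\|y\|_2^2$, which simultaneously gives $\|y-X\hat{\beta}\|_2\le\|y\|_2$ (hence $\|X\hat{\beta}\|_2\le 2\|y\|_2$) and $\lambda\|\hat{\beta}\|_q^q\le\tfrac12\|y\|_2^2$. Since $\tfrac1n\|y\|_2^2$ is a.s. bounded (from $\tfrac1n\|w\|^2\to\sigma_w^2$, the Bai--Yin bound on $\sigma_{\max}(X)$ in Theorem \ref{BY:1993}, and the bounded second moment of $p_\beta$), both $\tfrac1p\|X\hat{\beta}\|_2^2$ and $\tfrac1p\|\hat{\beta}\|_q^q$ are a.s. $O(1)$, and a power-mean inequality, $\|\hat{\beta}\|_1\le p^{1-1/q}\|\hat{\beta}\|_q$, upgrades the latter to $\tfrac1p\|\hat{\beta}\|_1=O(1)$. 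I would then split $\hat{\beta}=\hat{\beta}^{\perp}+\hat{\beta}^{\parallel}$ with $\hat{\beta}^{\perp}\in\ker(X)^{\perp}$ and $\hat{\beta}^{\parallel}\in\ker(X)$. On the row space, Bai--Yin gives $\sigma_{\min}(X)\ge c>0$ a.s. (for $\delta\neq1$), so $\|\hat{\beta}^{\perp}\|_2\le\sigma_{\min}(X)^{-1}\|X\hat{\beta}\|_2\le 2c^{-1}\|y\|_2$, controlling $\tfrac1p\|\hat{\beta}^{\perp}\|_2^2$. On the kernel, which is a uniformly random subspace because $X$ is Gaussian, Kashin's theorem (Theorem \ref{ka:1977}) applies uniformly and in particular to the data-dependent $\hat{\beta}^{\parallel}$, giving $\|\hat{\beta}^{\parallel}\|_2\le(c_v\sqrt{p})^{-1}\|\hat{\beta}^{\parallel}\|_1\le(c_v\sqrt{p})^{-1}(\|\hat{\beta}\|_1+\|\hat{\beta}^{\perp}\|_1)$; since $\|\hat{\beta}\|_1=O(p)$ and $\|\hat{\beta}^{\perp}\|_1\le\sqrt{p}\,\|\hat{\beta}^{\perp}\|_2=O(p)$, this forces $\tfrac1p\|\hat{\beta}^{\parallel}\|_2^2=O(1)$. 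Adding the two pieces bounds $\tfrac1p\|\hat{\beta}\|_2^2$, and taking $C$ to be the maximum of the two constants finishes the proof. (When $\delta\ge1$, $\ker(X)=\{0\}$ a.s., so $\hat{\beta}^{\parallel}=0$ and only the row-space bound is needed.)

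The main obstacle is exactly the kernel component: the $\ell_q$ penalty is blind to it, and the argument genuinely relies on Kashin's $\ell_1$--$\ell_2$ comparison combined with the randomness of $\ker(X)$, with a Borel--Cantelli step converting Kashin's high-probability bound into the almost-sure statement. A secondary subtlety is the boundary case $\delta=1$, where $\sigma_{\min}(X)\to0$ and the row-space estimate degenerates, so that step would need to be handled separately (or the case excluded).
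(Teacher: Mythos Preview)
Your proposal is correct and follows essentially the same route as the paper: the AMP bound via Theorem \ref{amp:th} with $\psi(a,b)=a^2$, and the LQLS bound via $\mathcal{F}(\hat\beta)\le\mathcal{F}(0)$, H\"older to pass from $\|\hat\beta\|_q^q$ to $\|\hat\beta\|_1$, the decomposition $\hat\beta=\hat\beta^\perp+\hat\beta^\parallel$, Bai--Yin on the row-space component, and Kashin on the kernel component. The paper does not explicitly isolate the $\delta=1$ subtlety you flag, so your proposal is in fact slightly more careful on that point.
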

\begin{proof}
To show the first inequality, according to Theorem \ref{amp:th} and Lemma \ref{cab:unique}, choosing a particular pseudo-Lipschitz function $\psi(x,y)=x^2$ we obtain
\[
\lim_{t\rightarrow \infty} \lim_{p\rightarrow \infty}\frac{\|\beta^t\|^2_2}{p}=\mathbb{E}_{B,Z}[\eta_q(B+\tau_* Z;\chi \tau_*^{2-q})]^2<\infty , \quad a.s.,
\]
where $B \sim p_{\beta}$ and $Z\sim N(0,1)$ are independent. For the second inequality, first note that since $\hat{\beta}(\lambda,q)$ is the optimal solution we have
\begin{eqnarray}
&&\lambda \|\hat{\beta}(\lambda,q)\|^q_q \leq \mathcal{F}(\hat{\beta}(\lambda,q))\leq \mathcal{F}(0) =\frac{1}{2}\|y\|_2^2\nonumber \\
&=& \frac{1}{2}\|X\beta+w\|_2^2 \leq \|X\beta\|^2_2+\|w\|^2_2\leq  \sigma^2_{\max}(X) \|\beta\|_2^2+\|w\|^2_2. \label{chain:one}
\end{eqnarray}
We then consider the decomposition $\hat{\beta}(\lambda,q)=\hat{\beta}(\lambda,q)^{\perp}+\hat{\beta}(\lambda,q)^{\parallel}$, where $\hat{\beta}(\lambda,q)^{\perp}\in \mbox{ker}(X)^{\perp}$ and $\hat{\beta}(\lambda,q)^{\parallel}\in \mbox{ker}(X)$. Since $\mbox{ker}(X)$ is a uniformly random subspace with dimension $p(1-\delta)_+$, we can apply Theorem \ref{ka:1977} to conclude that, there exists a constant $c(\delta)>0$ depending on $\delta$ such that the following holds with high probability,
\begin{eqnarray}
\|\hat{\beta}(\lambda,q)\|_2^2&=&\|\hat{\beta}(\lambda,q)^{\parallel}\|_2^2+\|\hat{\beta}(\lambda,q)^{\perp}\|_2^2 \leq \frac{\|\hat{\beta}(\lambda,q)^{\parallel}\|^2_1}{c(\delta)p}+\|\hat{\beta}(\lambda,q)^{\perp}\|_2^2  \nonumber \\
&\leq& \frac{2\|\hat{\beta}(\lambda,q)^{\perp}\|^2_1+2\|\hat{\beta}(\lambda,q)\|^2_1}{c(\delta)p}+\|\hat{\beta}(\lambda,q)^{\perp}\|_2^2 \nonumber \\
& \leq &\frac{2\|\hat{\beta}(\lambda,q)\|^2_1}{c(\delta)p}+\frac{2+c(\delta)}{c(\delta)}\|\hat{\beta}(\lambda,q)^{\perp}\|_2^2 .  \label{chain:two}
\end{eqnarray}
Moreover, H$\ddot{o}$lder's inequality combined with Inequality \eqref{chain:one} yields
\begin{eqnarray}
\frac{\|\hat{\beta}(\lambda,q)\|_1}{p} \leq \Bigg(\frac{\|\hat{\beta}(\lambda,q)\|^q_q}{p}\Bigg)^{1/q}\leq \Bigg ( \frac{\sigma^2_{\max}(X)\|\beta\|_2^2+\|w\|^2_2}{\lambda p}  \Bigg )^{1/q}.  \label{chain:three} \hspace{-1.5cm}
\end{eqnarray}
Using the results from \eqref{chain:two} and \eqref{chain:three}, we can then upper bound $\|\hat{\beta}(\lambda,q)\|_2^2$:
\begin{eqnarray*}
 \frac{\|\hat{\beta}(\lambda,q)\|_2^2}{p} \overset{(a)}{\leq} \frac{2}{c(\delta)}\Bigg ( \frac{\sigma^2_{\max}(X) \|\beta\|_2^2+\|w\|^2_2}{\lambda p}  \Bigg )^{2/q}+\frac{2+c(\delta)}{pc(\delta)\sigma^2_{\min}(X)}\|X\hat{\beta}(\lambda,q)^{\perp} \|^2_2.
\end{eqnarray*}
To obtain $(a)$ we have used the fact $\|X\hat{\beta}(\lambda,q)^{\perp} \|^2_2\geq \sigma^2_{\min}(X)\|\hat{\beta}(\lambda,q)^{\perp} \|^2_2$. We can further bound 
\begin{eqnarray*}
\|X \hat{\beta}(\lambda,q)^{\perp} \|^2_2 \overset{(b)}{\leq} 2\|y-X\hat{\beta}(\lambda,q) \|_2^2+2\|y\|_2^2 \overset{(c)}{\leq} 4\|y\|^2_2  \overset{(d)}{\leq} 8\sigma^2_{\max}(X) \|\beta\|_2^2+ 8\|w\|^2_2.
\end{eqnarray*}
$(b)$ is due to the simple fact $X\hat{\beta}(\lambda,q)^{\perp} =X\hat{\beta}(\lambda,q)$; $(c)$ and $(d)$ hold since $\|y-X\hat{\beta}(\lambda,q) \|^2_2\leq 2\mathcal{F}(\hat{\beta}(\lambda,q))$ and inequalities in \eqref{chain:one}. Combining the last two chains of inequalities we obtain with probability larger than $1-2^{-p}$
\begin{eqnarray*}
&&\hspace{-1.5cm} \frac{\|\hat{\beta}(\lambda,q)\|_2^2}{p} \leq \frac{2}{c(\delta)}\Bigg ( \frac{\sigma^2_{\max}(X) \|\beta\|_2^2+\|w\|^2_2}{\lambda p}  \Bigg )^{2/q}  \\
 &&\hspace{0.8cm} + \frac{16+ 8c(\delta)}{c(\delta)\sigma^2_{\min}(X)} \cdot \frac{\sigma^2_{\max}(X) \|\beta\|_2^2+ \|w\|^2_2}{p}.
\end{eqnarray*}

Finally, because both $\sigma_{\min}(X)$ and $\sigma_{\max}(X)$ converge to non-zero constants by Theorem \ref{BY:1993} and $(\beta, X, w)$ is a converging sequence, the right hand side of the above inequality converges to a finite number. 
\end{proof}

\begin{lemma}\label{verify:4}
Let $\{\beta(p), X(p),w(p)\}$ be a converging sequence. Denote the solution of LQLS by $\hat{\beta}(\lambda, q)$, and let $\{\beta^t\}_{t\geq0}$ be the sequence of estimates generated from the AMP algorithm.There exists a positive constant $\tilde{C}$ s.t.
\[
\limsup_{t\rightarrow \infty}\limsup_{p \rightarrow \infty} \sup_{0\leq \mu_i \leq 1} \frac{\sum_{i=1}^p |\mu_i \hat{\beta}_i(\lambda, q)+(1-\mu_i)\beta^t_i|^{2-q} }{p} < \tilde{C}, \quad a.s.
\]
\end{lemma}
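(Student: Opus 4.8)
The plan is to eliminate the inner supremum over the weights $(\mu_i)$ by a deterministic pointwise bound, and then to control the two resulting sums through the second-moment estimates already established in Lemma \ref{verify:1}. The structural fact I would exploit is that the exponent $s\triangleq 2-q$ lies in $[0,1)$ for $q\in(1,2)$, so the map $t\mapsto t^{s}$ on $[0,\infty)$ is nondecreasing, concave, and, vanishing at the origin, subadditive.

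First I would fix an index $i$ and a weight $\mu_i\in[0,1]$. By the triangle inequality and the monotonicity of $t\mapsto t^{s}$,
\[
|\mu_i \hat{\beta}_i(\lambda,q)+(1-\mu_i)\beta^t_i|^{s}\le \big(\mu_i|\hat{\beta}_i(\lambda,q)|+(1-\mu_i)|\beta^t_i|\big)^{s}.
\]
Subadditivity then gives $\big(\mu_i|\hat{\beta}_i(\lambda,q)|+(1-\mu_i)|\beta^t_i|\big)^{s}\le \mu_i^{s}|\hat{\beta}_i(\lambda,q)|^{s}+(1-\mu_i)^{s}|\beta^t_i|^{s}\le |\hat{\beta}_i(\lambda,q)|^{s}+|\beta^t_i|^{s}$, where the last step uses $\mu_i^{s}\le 1$ and $(1-\mu_i)^{s}\le 1$. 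The right-hand side is independent of $\mu_i$, so summing over $i$ and taking the supremum yields
\[
\sup_{0\le\mu_i\le 1}\frac{1}{p}\sum_{i=1}^p|\mu_i\hat{\beta}_i(\lambda,q)+(1-\mu_i)\beta^t_i|^{s}\le \frac{1}{p}\sum_{i=1}^p|\hat{\beta}_i(\lambda,q)|^{s}+\frac{1}{p}\sum_{i=1}^p|\beta^t_i|^{s}.
\]

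Next I would convert each of the two sums to an $\ell_2$ quantity via the elementary inequality $t^{s}\le 1+t^2$, valid for all $t\ge 0$ and $s\in[0,2]$, which gives
\[
\frac{1}{p}\sum_{i=1}^p|\hat{\beta}_i(\lambda,q)|^{s}\le 1+\frac{\|\hat{\beta}(\lambda,q)\|_2^2}{p},\qquad \frac{1}{p}\sum_{i=1}^p|\beta^t_i|^{s}\le 1+\frac{\|\beta^t\|_2^2}{p}.
\]
Combining these with the previous display and taking $\limsup_{p\to\infty}$ followed by $\limsup_{t\to\infty}$, Lemma \ref{verify:1} bounds $\limsup_{p}\|\hat{\beta}(\lambda,q)\|_2^2/p$ and $\lim_{t}\lim_{p}\|\beta^t\|_2^2/p$ by a common constant $C$ almost surely, so the claim follows with $\tilde{C}=2(1+C)$.

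The only genuine subtlety, and the reason this lemma needs a separate argument rather than a direct appeal to Theorem \ref{amp:th}, is that the loss $\psi(x,y)=|x|^{2-q}$ is merely H\"older continuous of order $2-q$ at the origin and hence is \emph{not} pseudo-Lipschitz of order $2$; the state-evolution convergence cannot be invoked for it directly. Routing the estimate through $t^{2-q}\le 1+t^2$ is exactly what replaces this non-admissible loss by the admissible second moment, for which the needed control is in hand. The pointwise elimination of the supremum over $(\mu_i)$ via subadditivity is the other essential step, and it is what makes the final constant independent of both $t$ and $p$.
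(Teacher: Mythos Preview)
Your proof is correct and follows essentially the same approach as the paper: first obtain the pointwise bound $|\mu_i \hat{\beta}_i+(1-\mu_i)\beta^t_i|^{2-q}\le |\hat{\beta}_i|^{2-q}+|\beta^t_i|^{2-q}$ uniformly in $\mu_i$, then reduce each $(2-q)$-moment to the second moment, and finally invoke Lemma~\ref{verify:1}. The only cosmetic differences are that the paper reaches the pointwise bound via $|\text{convex combination}|\le \max(|\hat\beta_i|,|\beta^t_i|)$ rather than subadditivity, and it passes to the $\ell_2$ norm by H\"older's inequality $\tfrac{1}{p}\sum|\hat\beta_i|^{2-q}\le (\tfrac{1}{p}\|\hat\beta\|_2^2)^{(2-q)/2}$ instead of your $t^{2-q}\le 1+t^2$; neither variation changes the substance of the argument.
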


\begin{proof}
For any given $0\leq \mu_i \leq 1$, it is straightforward to see
\[
|\mu_i \hat{\beta}_i(\lambda, q)+(1-\mu_i)\beta^t_i|^{2-q} \leq \max \{| \hat{\beta}_i(\lambda, q)|^{2-q}, |\beta^t_i|^{2-q} \}\leq | \hat{\beta}_i(\lambda, q)|^{2-q}+ |\beta^t_i|^{2-q}.
\]
Hence  using H$\ddot{o}$lder's inequality gives us
\begin{eqnarray*}
&&\sup_{0\leq \mu_i \leq 1}  \frac{1}{p}\sum_{i=1}^p|\mu_i \hat{\beta}_i(\lambda, q)+(1-\mu_i)\beta^t_i|^{2-q} \\
 &\leq& \frac{1}{p}\sum_{i=1}^p| \hat{\beta}_i(\lambda, q)|^{2-q}+\frac{1}{p}\sum_{i=1}^p|\beta^t_i|^{2-q} \leq  \Big (\frac{1}{p}\sum_{i=1}^p| \hat{\beta}_i(\lambda, q)|^2 \Big )^{\frac{2-q}{2}}+\Big (\frac{1}{p}\sum_{i=1}^p|\beta^t_i|^2 \Big )^{\frac{2-q}{2}}.
\end{eqnarray*}
Applying Lemma \ref{verify:1} to the above inequality finishes the proof.
\end{proof}

The next lemma is similar to Lemma 3.3 in \cite{BaMo11}. The proof is adapted from there.

\begin{lemma}\label{verify:3}
Let $\{\beta(p), X(p),w(p)\}$ be a converging sequence. Let $\{\beta^t\}_{t\geq0}$ be the sequence of estimates generated from the AMP algorithm. We have
\[
\lim_{t \rightarrow \infty} \lim_{p \rightarrow \infty} \frac{\|\nabla \mathcal{F}(\beta^t)\|^2_2}{p}=0, \quad a.s.
\]
\end{lemma}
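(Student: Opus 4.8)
The plan is to write $\nabla\mathcal{F}(\beta^{t+1})$ entirely in terms of quantities that the AMP recursion controls, and then to kill its normalized squared norm term by term. Since
\[
\nabla\mathcal{F}(\beta^{t+1}) = -X^T(y-X\beta^{t+1}) + \lambda q\,|\beta^{t+1}|^{q-1}{\rm sign}(\beta^{t+1}),
\]
the first step is to eliminate the penalty gradient. Because $\beta^{t+1}=\eta_q(X^Tz^t+\beta^t;\theta_t)$ with $\theta_t=\chi\tau_t^{2-q}$, the first-order optimality condition of the proximal map, which is exactly Lemma~\ref{lem:toobasicpropprox}(i), gives $\theta_t q\,|\beta^{t+1}|^{q-1}{\rm sign}(\beta^{t+1}) = X^Tz^t+\beta^t-\beta^{t+1}$, hence $\lambda q\,|\beta^{t+1}|^{q-1}{\rm sign}(\beta^{t+1}) = (\lambda/\theta_t)(X^Tz^t+\beta^t-\beta^{t+1})$. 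I would then use the recursion for $z^t$ and the definition of $w_t$ in Theorem~\ref{amp:th2}, namely $y-X\beta^t = z^t - w_t z^{t-1}$, to write $y-X\beta^{t+1} = z^t - w_t z^{t-1} - X(\beta^{t+1}-\beta^t)$. (I work with index $t+1$ and reindex; the $t\to\infty$ limit is insensitive to this shift.)

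Substituting and collecting terms, the target identity is
\begin{align*}
\nabla\mathcal{F}(\beta^{t+1}) &= \Big(\tfrac{\lambda}{\theta_t} - 1 + w_t\Big)X^T z^t + w_t\,X^T(z^{t-1}-z^t) + \Big(X^TX - \tfrac{\lambda}{\theta_t}I\Big)(\beta^{t+1}-\beta^t),
\end{align*}
obtained by grouping $-X^Tz^t+w_tX^Tz^{t-1}+\tfrac{\lambda}{\theta_t}X^Tz^t$ after writing $z^{t-1}=z^t+(z^{t-1}-z^t)$, and combining $X^TX(\beta^{t+1}-\beta^t)$ with $\tfrac{\lambda}{\theta_t}(\beta^t-\beta^{t+1})$. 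Using $\|ABC\|_2^2\le 3(\|A\|_2^2+\|B\|_2^2+\|C\|_2^2)$ and $\|X^Tv\|_2\le\sigma_{\max}(X)\|v\|_2$, the three terms of $\|\nabla\mathcal{F}(\beta^{t+1})\|_2^2/p$ are controlled respectively by $(\lambda/\theta_t-1+w_t)^2\,\|z^t\|_2^2/p$, by $w_t^2\,\|z^{t-1}-z^t\|_2^2/p$, and by $\|\beta^{t+1}-\beta^t\|_2^2/p$, each carrying a prefactor built from $\sigma_{\max}(X)$.

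I would then pass to the iterated limit. Letting $p\to\infty$ first: $\sigma_{\max}(X)\to 1+1/\sqrt\delta$ by Theorem~\ref{BY:1993}, $\|z^t\|_2^2/p\to\delta\tau_t^2$ by Theorem~\ref{amp:th2}(iii), and $w_t\to\tfrac1\delta\mathbb{E}[\partial_1\eta_q(B+\tau_{t-1}Z;\theta_{t-1})]$ by Theorem~\ref{amp:th2}(iv); all prefactors stay bounded. Letting $t\to\infty$: parts (i) and (ii) of Theorem~\ref{amp:th2} send $\|\beta^{t+1}-\beta^t\|_2^2/p$ and $\|z^{t-1}-z^t\|_2^2/p$ to zero, annihilating the last two terms. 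For the first term, the coefficient $\lambda/\theta_t-1+w_t$ converges (using $\theta_t\to\theta_*=\chi\tau_*^{2-q}>0$ and Dominated Convergence with the bound $0\le\partial_1\eta_q\le 1$ from Lemma~\ref{lem:toobasicpropproxder}(iii)) to $\lambda/\theta_* - 1 + \tfrac1\delta\mathbb{E}[\partial_1\eta_q(B+\tau_*Z;\theta_*)]$, which is $0$ by the fixed-point equation~\eqref{eq:fixedpoint21} after identifying the AMP fixed point $(\tau_*,\theta_*)$ with the solution $(\bar\sigma,\bar\chi\bar\sigma^{2-q})$ of \eqref{eq:fixedpoint11}–\eqref{eq:fixedpoint21}. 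Hence every term vanishes and the lemma follows.

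The routine algebra poses no difficulty; the essential step is the exact cancellation of the coefficient $\lambda/\theta_t-1+w_t$ in the double limit, which is precisely where \eqref{eq:fixedpoint21} enters and which forces us to identify the AMP state $\tau_*$ and threshold $\theta_*$ with the bridge-regression solution pair. A secondary care point is respecting the order of limits ($p\to\infty$ before $t\to\infty$) and verifying that the operator-norm prefactors and $\|z^t\|_2^2/p$ remain uniformly bounded so that the vanishing coefficients and vanishing increments genuinely drive each term to zero.
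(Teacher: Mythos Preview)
Your proposal is correct and follows essentially the same route as the paper's proof: substitute the proximal optimality condition (Lemma~\ref{lem:toobasicpropprox}(i)) to replace the penalty gradient, insert the $z^t$ recursion, split into three pieces, control each with the operator norm bound and the asymptotics of Theorem~\ref{amp:th2}, and finish by invoking the fixed-point equation~\eqref{eq:fixedpoint21} to kill the leading coefficient. The only cosmetic difference is that you evaluate the gradient at $\beta^{t+1}$ rather than $\beta^t$, which forces an extra $X^TX(\beta^{t+1}-\beta^t)$ term the paper avoids; this is harmless since $\sigma_{\max}(X)$ is a.s.\ bounded. (Minor slip: your inequality should read $\|A+B+C\|_2^2\le 3(\|A\|_2^2+\|B\|_2^2+\|C\|_2^2)$, not $\|ABC\|$.)
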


\begin{proof}
Recall the AMP updating rule \eqref{iter:beta}:
\[
\beta^{t}=\eta_q(X^Tz^{t-1}+\beta^{t-1};\tau^{2-q}_{t-1}\chi).
\]
According to Lemma \ref{lem:toobasicpropprox} part (i) we know $\beta^t$ satisfies
\[
X^Tz^{t-1}+\beta^{t-1} =\beta^t+\tau^{2-q}_{t-1}\chi q (|\beta^t_1|^{q-1}\mbox{sign}(\beta^t_1),\ldots, |\beta^t_p|^{q-1}\mbox{sign}(\beta^t_p))^T.
\]
The rule \eqref{iter:beta} also tells us
\[
 z^t=y-X\beta^t+w_tz^{t-1},
 \]
 where $w_t$ is defined in Theorem \ref{amp:th2}. Note
 \[
\nabla \mathcal{F}(\beta^t)=-X^T(y-X\beta^t)+\lambda q (|\beta^t_1|^{q-1}\mbox{sign}(\beta^t_1),\ldots, |\beta^t_p|^{q-1}\mbox{sign}(\beta^t_p))^T.
\]
We can then upper bound $\nabla \mathcal{F}(\beta^t)$ in the following way:
\begin{eqnarray*}
&&\hspace{-0.8cm} \frac{1}{\sqrt{p}}\|\nabla \mathcal{F}(\beta^t)\|_2=\frac{1}{\sqrt{p}} \|-X^T(y-X\beta^t)+\lambda (\tau^{2-q}_{t-1}\chi)^{-1}X^Tz^{t-1}+ \beta^{t-1}-\beta^t  \|_2 \\
&&=\frac{1}{\sqrt{p}} \| -X^T(z^t-w_tz^{t-1})+  \lambda(\tau^{2-q}_{t-1}\chi)^{-1} (X^Tz^{t-1}+ \beta^{t-1}-\beta^t)   \|_2 \\
&& \hspace{-0.4cm} \leq \frac{\lambda \|\beta^{t-1}-\beta^t\|_2}{\tau^{2-q}_{t-1}\chi \sqrt{p}}+\frac{\|X^T(z^{t-1}-z^t)\|_2}{\sqrt{p}}+\frac{|\lambda+\tau^{2-q}_{t-1}\chi (w_t-1)|\cdot \|X^Tz^{t-1}\|_2}{\tau^{2-q}_{t-1}\chi \sqrt{p}} \\
&& \hspace{-0.4cm} \leq   \frac{\lambda \|\beta^{t-1}-\beta^t\|_2}{\tau^{2-q}_{t-1}\chi \sqrt{p}}+\frac{\sigma_{\max}(X) \|z^{t-1}-z^t\|_2}{\sqrt{p}}+\frac{\sigma_{\max}(X) |\lambda+\tau^{2-q}_{t-1}\chi(w_t-1)|\cdot \|z^{t-1}\|_2}{\tau^{2-q}_{t-1}\chi \sqrt{p}}.
\end{eqnarray*}
By Lemma \ref{cab:unique}, Theorem \ref{amp:th2} part (i)(ii) and Theorem \ref{BY:1993}, it is straightforward to confirm that the first two terms on the right hand side of the last inequality vanish almost surely, as $p\rightarrow \infty, t\rightarrow \infty$. For the third term, Lemma \ref{cab:unique} and Theorem \ref{amp:th2} part (iii)(iv) imply
\begin{eqnarray*}
&& \lim_{t\rightarrow \infty }\lim_{p \rightarrow \infty}\frac{ |\lambda+\tau^{2-q}_{t-1}\chi (w_t-1)|\cdot \|z^{t-1}\|_2}{\tau^{2-q}_{t-1}\chi \sqrt{p}} \\
&=&\frac{\sqrt{\delta}\tau_*}{\tau_*^{2-q}\chi}\Big |\lambda-\tau^{2-q}_*\chi \Big(1-\frac{1}{\delta}\mathbb{E}\eta'_q(B+\tau_*Z;\tau_*^{2-q}\chi)\Big) \Big |=0, \quad a.s.
\end{eqnarray*}
To obtain the last equality, we have used Equation \eqref{eq:fixedpoint21}. 
\end{proof}


We are in position to prove the asymptotic equivalence between AMP estimates and bridge regression.

\begin{proposition}\label{p:one}
Let $\{\beta(p), X(p),w(p)\}$ be a converging sequence. Denote the solution of LQLS by $\hat{\beta}(\lambda, q)$, and let $\{\beta^t\}_{t\geq0}$ be the sequence of estimates generated from the AMP algorithm. We then have
\begin{eqnarray}\label{lqls:amp}
\lim_{t\rightarrow \infty}\lim_{p\rightarrow \infty}\frac{1}{p}\|\hat{\beta}(\lambda,q)-\beta^t\|^2_2=0, \quad a.s.
\end{eqnarray}
\end{proposition}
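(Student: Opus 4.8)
The plan is to invoke the deterministic estimate of Lemma \ref{corner:lemma} with the choice $\beta = \beta^t$ and $r = \hat{\beta}(\lambda,q) - \beta^t$, so that $\beta + r = \hat{\beta}(\lambda,q)$ and $\|r\|_2 = \|\hat{\beta}(\lambda,q) - \beta^t\|_2$. Since that lemma bounds $\|r\|_2$ by $\sqrt{p}\, f(\epsilon, c_1, c_2, c_3, c_4, \lambda, q)$ with $f(\epsilon,\ldots) \to 0$ as $\epsilon \to 0$, it suffices to verify its six hypotheses almost surely, for $t$ and $p$ large, with constants $c_1, \ldots, c_4$ that do \emph{not} depend on $\epsilon$ and with the $\epsilon$ in hypothesis (iii) allowed to be arbitrarily small. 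Granting this, taking $\lim_p$ and then $\lim_t$ gives $\limsup_t\limsup_p \|r\|_2^2/p \leq f(\epsilon,\ldots)^2$, and letting $\epsilon \to 0$ yields the claim.

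First I would dispatch the ``free'' hypotheses. Hypothesis (ii), $\mathcal{F}(\beta+r) \leq \mathcal{F}(\beta)$, is immediate because $\hat{\beta}(\lambda,q)$ is the global minimizer of $\mathcal{F}$. Hypothesis (i) follows from Lemma \ref{verify:1}: since both $\lim_t\lim_p\|\beta^t\|_2^2/p$ and $\limsup_p\|\hat{\beta}(\lambda,q)\|_2^2/p$ are bounded by a constant $C$ almost surely, the triangle inequality gives $\|r\|_2 \leq c_1\sqrt{p}$ for a fixed $c_1$ once $t, p$ are large. Hypothesis (iv) is exactly Lemma \ref{verify:4}, after noting that $\beta_i^t + \mu_i r_i = (1-\mu_i)\beta_i^t + \mu_i\hat{\beta}_i(\lambda,q)$, which supplies $c_2 = \tilde{C}$. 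The crucial hypothesis (iii) is the content of Lemma \ref{verify:3}: because $\lim_t\lim_p \|\nabla \mathcal{F}(\beta^t)\|_2^2/p = 0$ almost surely, for any prescribed $\epsilon > 0$ we have $\|\nabla\mathcal{F}(\beta^t)\|_2 \leq \sqrt{p}\,\epsilon$ for all large $t$ and $p$. This is the step that encodes the fact that the AMP iterate is an asymptotically stationary point of $\mathcal{F}$, and it ultimately rests on the state-evolution recursion together with the fixed-point equation \eqref{eq:fixedpoint21}.

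Next I would handle the two geometric hypotheses (v) and (vi), which concern the random design. Hypothesis (v) follows directly from Theorem \ref{BY:1993}: $\sigma_{\min}(X)$ converges almost surely to $|1/\sqrt{\delta} - 1|$, so any constant $c_3$ strictly below this limit works when $\delta \neq 1$ (the borderline $\delta = 1$ being excluded or approached by a separate limiting argument). Hypothesis (vi) is where Kashin's theorem enters. When $\delta \geq 1$ the kernel of $X$ is almost surely trivial, so $r^{\parallel} = 0$ and (vi) holds vacuously; when $\delta < 1$, $\mbox{ker}(X)$ is a uniformly random subspace of dimension $p(1-\delta)$, and Theorem \ref{ka:1977} guarantees, with probability at least $1 - 2^{-p}$, a comparison $c_v\|u\|_2 \leq \|u\|_1/\sqrt{p}$ for every $u \in \mbox{ker}(X)$, hence (vi) with $c_4 = c_v^{-2}$. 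Since $\sum_p 2^{-p} < \infty$, the Borel--Cantelli lemma upgrades this to an almost-sure statement valid for all large $p$.

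Finally I would assemble the pieces: on a probability-one event, all six hypotheses hold simultaneously with fixed $c_1, c_2, c_3, c_4$ and arbitrary $\epsilon$ once $t$ and $p$ are large, so Lemma \ref{corner:lemma} yields $\|r\|_2^2/p \leq f(\epsilon,\ldots)^2$ and the stated double limit follows. I expect the main obstacle to be the bookkeeping of the order of limits and the almost-sure quantifiers: the constants $c_1, \ldots, c_4$ arise as $t, p \to \infty$ limits from Lemmas \ref{verify:1} and \ref{verify:4}, so one must fix them first, choosing values slightly above the limiting bounds, and send $\epsilon \to 0$ only at the very end, after the double limit in $t$ and $p$ has been taken. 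Verifying hypothesis (iii) --- that the AMP iterates are genuine approximate critical points uniformly in the asymptotic regime --- is the substantive analytic input, while everything else amounts to stitching together the auxiliary lemmas.
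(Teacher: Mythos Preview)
Your proposal is correct and follows essentially the same approach as the paper: apply Lemma~\ref{corner:lemma} with $\beta=\beta^t$ and $\beta+r=\hat{\beta}(\lambda,q)$, then verify hypotheses (i)--(vi) via Lemmas~\ref{verify:1}, \ref{verify:4}, \ref{verify:3} and Theorems~\ref{BY:1993}, \ref{ka:1977}, noting that the $\epsilon$ in (iii) can be driven to zero as $t\to\infty$. Your write-up is in fact more careful than the paper's about the order-of-limits bookkeeping, the Borel--Cantelli upgrade for Kashin's bound, and the $\delta\geq1$ versus $\delta<1$ case split for hypothesis~(vi).
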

\begin{proof}
We utilize Lemma \ref{corner:lemma}. Let $\beta+r=\hat{\beta}(\lambda,q), \beta= \beta^t$. If this pair of $\beta$ and $r$ satisfies the conditions in Lemma \ref{corner:lemma}, we will have $\frac{\|r\|_2^2}{p}=\frac{\|\hat{\beta}(\lambda,q)-\beta^t\|_2}{p}$ being very small. In the rest of the proof, we aim to verify that the conditions in Lemma \ref{corner:lemma} hold with high probability and establish the connection between the iteration numbers $t$ and $\epsilon$ in Lemma \ref{corner:lemma}.
\begin{enumerate}
\item[a.] Condition (i) follows from Lemma \ref{verify:1}: 
\[
\underset{t \rightarrow \infty}{\lim }\underset{p \rightarrow \infty}{\limsup }\frac{\|r\|_2}{\sqrt{p}}\leq  \underset{p \rightarrow \infty}{\limsup } \frac{\|\hat{\beta}(\lambda,q)\|_2}{\sqrt{p}}+ \underset{t \rightarrow \infty}{\lim }\underset{p \rightarrow \infty}{\lim } \frac{\|\beta^t\|_2}{\sqrt{p}}\leq 2\sqrt{C}, \quad a.s.
\]
\item[b.] Condition (ii) holds since $\hat{\beta}(\lambda,q)$ is the optimal solution of $\mathcal{F}(\beta)$.
\item[c.] Condition (iii) holds by Lemma \ref{verify:3}. Note that $\epsilon \rightarrow 0$, as $t\rightarrow \infty$.
\item[d.] Condition (iv) is due to Lemma \ref{verify:4}.
\item[e.] Condition (v) is the result of Theorem \ref{BY:1993}.
\item[f.] Condition (vi) is a direct application of Theorem \ref{ka:1977}.
\end{enumerate}
Note all the claims made above hold almost surely as $p \rightarrow \infty$; and $\epsilon \rightarrow 0$ as $t\rightarrow \infty$. Hence the result \eqref{lqls:amp} follows.
\end{proof}

Based on the results from Theorem \ref{amp:th},  Lemma \ref{cab:unique} and Proposition \ref{p:one}, we can use exactly the same arguments as in the proof of Theorem 1.5 from \cite{BaMo11} to finish the proof of Theorem \ref{thm:eqpseudolip}. Since the arguments are straightforward, we do not repeat it here.

\end{document}